\documentclass[12pt, reqno]{amsart}
\usepackage{amssymb,dsfont}
\usepackage{eucal}
\usepackage{amsmath}
\usepackage{amscd}
\usepackage[dvips]{color}
\usepackage{multicol}
\usepackage[all]{xy}           
\usepackage{graphicx}
\usepackage{color}
\usepackage{colordvi}
\usepackage{xspace}
\usepackage{txfonts}
\usepackage{lscape}
\usepackage{tikz} 
\numberwithin{equation}{section}
\usepackage[shortlabels]{enumitem}
\usepackage{ifpdf}
\ifpdf
\usepackage[colorlinks,final,backref=page,hyperindex]{hyperref}
\else
\usepackage[colorlinks,final,backref=page,hyperindex]{hyperref}
\fi
\usepackage{tikz}
\usepackage[active]{srcltx}
\usepackage{array}
\usepackage{tabularx}
\usepackage{colortbl}

\topmargin -.8cm \textheight 22.8cm \oddsidemargin 0cm \evensidemargin -0cm \textwidth 16.3cm

\makeatletter

\theoremstyle{plain}

\numberwithin{equation}{section}

\newtheorem{theo}{Theorem}[section]
\newtheorem{pro}[theo]{Proposition}
\newtheorem{lem}[theo]{Lemma}
\newtheorem{cor}[theo]{Corollary}
\newtheorem{rem}[theo]{Remark}
\theoremstyle{definition}
\newtheorem{defi}[theo]{Definition}
\newtheorem{exa}[theo]{Example}


\def\ot{\otimes}

\def\la{\lambda}

\def\bC{{\mathbb C}}

\def\1{{\bf 1}}

\begin{document}
	\title[The N=1 BMS superalgebra]
	{Singular vectors, characters, and  composition series for the N=1 BMS superalgebra}

	\author[Jiang]{Wei Jiang}
	\address{Department of Mathematics, Changshu Institute of Technology, Changshu 215500, China}\email{jiangwei@cslg.edu.cn}
	
	\author[Liu]{Dong Liu}
	\address{Department of Mathematics, Huzhou University, Zhejiang Huzhou, 313000, China}\email{liudong@zjhu.edu.cn}

	\author[Pei]{Yufeng Pei}
	\address{Department of Mathematics, Huzhou University, Zhejiang Huzhou, 313000, China}\email{pei@zjhu.edu.cn}
	
	\author[Zhao]{Kaiming Zhao}
	\address{Department of Mathematics, Wilfrid Laurier University, Waterloo, ON, Canada N2L3C5, and School of
		Mathematical Science, Hebei Normal (Teachers) University, Shijiazhuang, Hebei, 050024 P. R. China}\email{kzhao@wlu.ca}
	
	\subjclass[2020]{17B65,17B68,17B69,17B70 (primary); 17B10,81R10 (secondary)}
	\keywords{N=1 BMS superalgebra, Verma module, singular vector, character, composition series}
	\thanks{}

	\begin{abstract}
		This paper investigates the structure of Verma modules over the N=1 BMS superalgebra. 
		We provide a detailed classification of singular vectors, establish necessary and sufficient conditions for the existence of subsingular vectors, uncover the structure of maximal submodules,  present the composition series of Verma modules,  and derive character formulas for irreducible highest weight modules.
	\end{abstract}
	
	\maketitle

	\tableofcontents
	
	\section{Introduction}

	Infinite-dimensional symmetries play a significant role in physics. Specifically, Virasoro-type symmetries have  significant applications in two-dimensional field theory, string theory, gravity, and other areas. The representation theory of the Virasoro algebra has also been widely and deeply studied \cite{FF, IK, MY,RW}. In recent years, two-dimensional non-relativistic conformal symmetries have gained importance in establishing holographic dualities beyond the AdS/CFT correspondence \cite{Ba0,BT,SZ}.  The Bondi-Metzner-Sachs  algebra, commonly known as BMS algebra, generates the asymptotic symmetry group of three-dimensional Einstein gravity \cite{BM,BH,Sa}. Although BMS algebra extends the Virasoro algebra, its representation theory differs fundamentally. Studies on  special highest weight modules for the BMS algebra have explored various aspects: determinant formulas \cite{BGMM}, character formulas \cite{O}, free field realizations \cite{BJMN}, and modular invariance \cite{BSZ, BNSZ}. However, a complete understanding of   highest weight modules is still lacking.

	In mathematical literature, the BMS algebra is   known as the Lie algebra $W(2,2)$, an infinite-dimensional Lie algebra first introduced in \cite{ZD} to study the classification of moonshine-type vertex operator algebras generated by two weight-2 vectors. They examined the vacuum modules of the $W(2,2)$ algebra (with a VOA structure) and established necessary and sufficient conditions for these modules to be irreducible. Their key insight was creating a total ordering on the PBW bases, which facilitated   computations of determinant formulas (see also \cite{JPZ}). It is worth mentioning that the $W(2,2)$ algebra has also been discovered and studied in several different mathematical fields, such as \cite{FK, HSSU, Wi}. The irreducibility conditions for Verma modules over the $W(2,2)$ algebra are also given in \cite{Wi}. In \cite{JP}, it was proposed that maximal submodules of reducible Verma modules are generated by singular vectors. However, Radobolja \cite{R} pointed out that this is true only for typical highest weights. For atypical weights, the maximal submodules are generated by both a singular vector and a subsingular vector. He also derived a character formula for irreducible highest weight modules and established necessary conditions for subsingular vector existence. The study further conjectured that these necessary conditions are also sufficient. Later, \cite{JZ} provided additional support for this conjecture.
	Adamovic et al.   used the free field realization of the twisted Heisenberg-Virasoro algebra at level zero \cite{ACKP,Bi}, along with constructing screening operators in lattice vertex algebras, to derive an expression for singular vectors of Verma modules for the $W(2,2)$ algebra under certain conditions in  \cite{AR1,AR2}. To our knowledge, explicit formulas for singular and subsingular vectors, as well as the composition series for general Verma modules over the $W(2,2)$ algebra, remain unresolved prior to the present paper.

	The {N}=1 BMS superalgebra, introduced in \cite{BDMT}, is the minimal supersymmetric extension of the BMS$_3$ algebra with central extensions. It incorporates a set of spin-$\frac{3}{2}$ generators $ Q_n $ within the BMS$_3$ algebra framework. Although this superalgebra is a subalgebra of the truncated Neveu-Schwarz superalgebra, its representation theory differs significantly from that of the {N}=1 Neveu-Schwarz superalgebra \cite{BMRW,IK0,IK1,MR}.
	In recent paper \cite{LPXZ, DGL}, the authors classified simple smooth modules including Whittaker modules over the N=1 BMS superalgebra under mild conditions and provided necessary and sufficient conditions for the irreducibility of Verma modules and Fock modules. Further detailed analysis on the structure of reducible Verma modules over the {N}=1  BMS superalgebra will be carried out in the present paper.

	As established in \cite{LPXZ}, the Verma module $V(c_L,c_M,h_L,h_M)$ over the N=1 BMS superalgebra $\frak g$ is irreducible if and only if  $2h_M+\frac{p^2-1}{12}c_M\ne 0$ for any positive integer $p$. If further $c_M=0$, then $h_M=0$, resulting in the degeneration of the irreducible highest weight module into an irreducible highest weight module over the Virasoro algebra (refer to Lemma \ref{degenerated-case}).  In this paper, we  study the structure of the Verma module $V(c_L,c_M,h_L,h_M)$ over $\frak g$ under the obvious  conditions that 
	$$
	c_M\ne 0\  \text{and}\ \ 2h_M+\frac{p^2-1}{12}c_M=0\  \text{for some}\   p\in\mathbb Z_+.
	$$	
	We classify all singular vectors of the Verma module  $V(c_L,c_M,h_L,h_M)$  and provide explicit formulas. We also identify the necessary and sufficient conditions for  the existence of subsingular vector and list them all.  Our first main result is as follows:
	
	\vskip 0.2cm
	
	\noindent {\bf Main Theorem 1.}  (Theorems \ref{main1}, \ref{main2}, \ref{main3} below)  {\it Let $(c_L,c_M,h_L,h_M)\in\bC^4$ such that $2h_M+\frac{p^2-1}{12}c_M=0$ for some $p\in\mathbb Z_+$ with $c_M\ne0$.
		
		$ (1)$ All singular vectors in $V(c_L,c_M,h_L,h_M)$ are of the form ${\rm S}^i\1$ (when $p$ even) or ${\rm R}^i\1$ (when $p$ odd) for $ i\in \mathbb N$, where  ${\rm S}$ and ${\rm R}$ are given in Proposition \ref{singular-S1} and Proposition \ref{singular-R11}, respectively.
		
		$(2)$ There exists a subsingular vector of  $V(c_L,c_M,h_L,h_M)$ if and only if  $h_L=h_{p,r}$ for some $r\in\mathbb Z_+$, where
		\begin{eqnarray*}\label{e3.37}\label{atypical}
			h_{p,r}=-\frac{p^2-1}{24}c_L+\frac{(41p+5)(p-1)}{48}+\frac{(1-r)p}{2}-\frac{1+(-1)^p}8p.
		\end{eqnarray*}
		In this case, ${\rm T}_{p, r}\1$ is the unique subsingular vector, up to a scalar multiple, where ${\rm T}_{p, r}$ are given in Theorem \ref{main3}.
		
	}

	\vskip 0.2cm

	By utilizing the information provided in Main Theorem 1 regarding singular and subsingular vectors, we can derive the character formulas for irreducible highest weight modules over $\mathfrak{g}$ as follows:
	
	\vskip 0.2cm
	
	\noindent {\bf Main Theorem 2.}  (Theorems \ref{irreducibility}, \ref{irreducibility1} below)	{\it
		Let $(c_L,c_M,h_L,h_M)\in\bC^4$ such that $2h_M+\frac{p^2-1}{12}c_M=0$ for some $p\in\mathbb Z_+$ with $c_M\ne0$.
		
		$ (1)$  If $V(c_L,c_M,h_L,h_M)$ is typical (i.e., $h_L\ne h_{p,r}$ for any $r\in\mathbb Z_+$), then the maximal submodule  of $V(c_L,c_M,h_L,h_M)$ is generated by ${\rm S}\1$ (when $p$ is even), or by ${\rm R}\1$ (when $p$ is  odd). Additionally, the character formula of the irreducible highest weight module $L(c_L,c_M,h_L,h_M)$ can be expressed as follows:
		$$
		{\rm char}\, L(c_L,c_M,h_L,h_M)= q^{h_L}(1-q^{\frac{p}2})\left(1+\frac12(1+(-1)^p)q^{\frac p2}\right)\prod_{k=1}^{\infty}\frac{1+q^{k-\frac{1}{2}}}{(1-q^{k})^{2}}.		$$
		
		$ (2)$ If $V(c_L,c_M,h_L,h_M)$ is atypical (i.e., $h_L=h_{p,r}$ for some $r\in\mathbb Z_+$), then the maximal submodule of $V(c_L,c_M,h_L,h_M)$ is generated by the subsingular vector ${\rm T}_{p,r}\1$.  Additionally, the character formula of the irreducible highest weight module $L(c_L,c_M,h_L,h_M)$ can be expressed as follows:  
		$$
		{\rm char}\, L(c_L,c_M,h_L,h_M)= q^{h_{p,r}}(1-q^{\frac{p}2})\left(1+\frac12(1+(-1)^p)q^{\frac p2}\right)(1-q^{rp})\prod_{k=1}^{\infty}\frac{1+q^{k-\frac{1}{2}}}{(1-q^{k})^{2}}.
		$$}

	\vskip 0.2cm

	Following Main Theorems 1 and 2, we derive the composition series of the Verma modules as follows:
	
	\vskip 0.2cm
	
	\noindent {\bf Main Theorem 3.}  (Theorems \ref{main4-1}, \ref{main4-2} below)
	{\it Let  $(c_L,c_M,h_L,h_M)\in\bC^4$ such that  $2h_M+\frac{p^2-1}{12}c_M=0$ for some $p\in\mathbb Z_+$ with $c_M\ne0$.

		$(1)$ If $V(c_L,c_M,h_L,h_M)$ is typical, then the Verma module $V(c_L,c_M,h_L,h_M)$ has the following infinite composition  series of submodules:
		\begin{eqnarray*}
			V(c_L,c_M,h_L,h_M)\supset\langle {\rm S}\1 \rangle \supset \langle {\rm S}^2\1 \rangle\supset\cdots\supset \langle {\rm S}^n\1 \rangle\supset \cdots, \text{ if $p$ is even};
	\\
			V(c_L,c_M,h_L,h_M)\supset \langle {\rm R}\1 \rangle\supset\langle {\rm R}^2\1 \rangle\supset\cdots\supset \langle {\rm R}^{n}\1 \rangle\supset \cdots, \text{ if $p$ is odd}.
		\end{eqnarray*} 
		
		$(2)$ If $V(c_L,c_M,h_L,h_M)$ is  atypical, 	then	the Verma module $V(c_L,c_M,h_L,h_M)$ has the following infie nit composition  series of submodules:
		$$\aligned\label{filtration-aS1}
		V(c_L,&c_M,h_L,h_M)=\langle {\rm S}^0\1 \rangle\supset\langle {\rm T}_{p, r}\1 \rangle \supset\langle {\rm S}\1 \rangle \supset \langle {\rm T}_{p, r-2}({\rm S}\1) \rangle\supset\cdots\nonumber\\ 
		&\supset\langle {\rm S}^{[\frac{r-1}2]}\1 \rangle \supset\langle {\rm T}_{p, r-2[\frac{r-1}2]}({\rm S}^{[\frac{r-1}2]}\1) \rangle\supset\langle {\rm S}^{[\frac{r-1}2]+1}\1 \rangle\supset\langle {\rm S}^{[\frac{r-1}2]+2}\1 \rangle\supset \cdots, \text{ if $p$ is even};\\
		V(c_L,&c_M,h_L,h_M)=\langle {\rm R}^0\1 \rangle\supset\langle {\rm T}_{p, r}\1 \rangle \supset\langle {\rm R}\1 \rangle \supset \langle {\rm T}_{p, r-1}{\rm R}\1 \rangle \supset  \langle {\rm R}^2\1 \rangle \supset \langle {\rm T}_{p, r-2}{\rm R}^2\1 \rangle\supset\cdots\\ 
		&\supset\langle {\rm R}^{r-1}\1 \rangle \supset\langle {\rm T}_{p, 1}{\rm R}^{r-1}\1 \rangle\supset\langle {\rm R}^{r}\1 \rangle\supset\langle {\rm R}^{r+1}\1 \rangle\supset \cdots, \text{ if $p$ is odd}.
		\endaligned  $$
	}
	\vskip 0.2cm

	As a byproduct, we also explicitly determine all singular vectors, subsingular vectors, and the composition series of Verma modules over the algebra \( W(2,2) \) (see Corollary \ref{main1-w22}, Corollary \ref{main2-w22}, and Corollary \ref{main3-w22} below).

	It is worth noting that subsingular vectors have been observed in studies of Verma modules for the N=1 Ramond algebra \cite{IK0}, the N=2 superconformal algebra \cite{DG}, and the N=1 Heisenberg-Virasoro superalgebra at level zero \cite{AJR}.
	Our main theorems reveal significant differences between the structure of Verma modules for the N=1 BMS superalgebra and those for the Virasoro algebra and N=1 Neveu-Schwarz algebra. For Verma modules over the Virasoro algebra, the maximal submodule is usually generated by two distinct weight vectors \cite{As,AF}. In contrast, the maximal submodule of a Verma module \( V(c_L, c_M, h_{p,r}, h_M) \) can always be generated by a single weight vector. Additionally, some submodules of \( V(c_L, c_M, h_L, h_M) \)  cannot be generated by singular vectors.

	The methods used to prove the main theorems differ from those in \cite{Bi, R} and also from the one in \cite{AJR, AR1}.
	Motivated by \cite{JZ}, in the present paper we introduce key operators \( {\rm S} \), \( {\rm R} \), and \( {\rm T} \), derive their crucial properties, and reveal significant relationships among them. 
	Our method of classifying  singular and subsingular vectors differs from those used for the Virasoro, super-Virasoro, and $W(2,2)$ algebras \cite{As, JZ, R}. 
	A significant advancement is the application of the total ordering on PBW bases, as defined in \cite{LPXZ}, to analyze the coefficient of the highest order terms of the  vectors $ {\rm S}\1, {\rm R}\1$ or ${\rm T}\1$ with respect to  $L_{-p}$. This approach facilitates the recursive identification of all singular and subsingular vectors. Our future research will focus on the Fock modules of the N=1 BMS superalgebra as introduced in \cite{LPXZ}, with the aim of deepening our understanding of Verma modules.

	The paper is organized as follows. In Section 2, we briefly review the relevant results on representations of the N=1 BMS superalgebra. In Section 3, in order to determine the maximal submodule of the Verma module $V(c_L,c_M,h_L,h_M)$ over $\frak g$ when it is reducible, we   investigate all singular vectors in $V(c_L,c_M,h_L,h_M)_n$ for both $n\in\mathbb Z_+$ and  $n\in\frac{1}{2}+\mathbb N$ cases. All singular vectors in $V(c_L,c_M,h_L,h_M)$ are actually determined by one element ${\rm S}$ (or ${\rm R}$) in $U(\frak{g}_-)$, see Theorems \ref{main1}, \ref{main2}.  In Section 4,  We study the quotient module of the Verma module by the submodule generated by all singular vectors determined in Section 3. In particular, we find the necessary and sufficient conditions for the existence of a subsingular vector in $V(c_L,c_M,h_L,h_M)$, and determine	
	all subsingular  vectors. See Theorems \ref{necessity}, \ref{subsingular}.
	In Section 5, we give the maximal submodules of $V(c_L,c_M,h_L,h_M)$ (which is always generated by  one weight vector) and the character formula for irreducible highest weight modules in both typical and atypical cases, see Theorems \ref{irreducibility}, \ref{irreducibility1}.
	We obtain the composition series (of infinite length)  of Verma modules $V(c_L,c_M,h_L,h_M)$ in both typical and atypical cases, see Theorems \ref{main4-1}, \ref{main4-2}.

	Throughout this  paper,  $\mathbb C$, $\mathbb N$, $\mathbb Z_+$ and $\mathbb Z$ refer to the set of complex numbers, non-negative integers, positive integers, and integers, respectively.  All vector
	spaces and algebras  are over $\mathbb C$. For a Lie (super)algebra $L$, the universal enveloping algebra of  $L$ will be denoted by
	$U(L)$. We consider a $\mathbb Z_2$-graded vector space $V = V_{\bar 0} \oplus V_{\bar 1}$, where an element $u\in V_{\bar 0}$ (respectively, $u\in V_{\bar 1}$) is called even (respectively, odd). We define $|u|=0$ if $u$ is even and $|u|=1$ if $u$ is odd. The elements in $V_{\bar 0}$ or $V_{\bar 1}$ are referred to as homogeneous, and whenever $|u|$ is used, it means that $u$ is homogeneous.

	\section{Preliminaries}
	In this section, we recall some notations and results related to the N=1 BMS superalgebra.

	\subsection{The N=1 BMS superalgebra}

	\begin{defi}\cite{BDMT}\label{Def2.1}
		The   {\bf N=1 BMS superalgebra}
		$$\frak g=\bigoplus_{n\in\mathbb{Z}}\mathbb{C} L_n\oplus\bigoplus_{n\in\mathbb{Z}}\mathbb{C} M_n\oplus\bigoplus_{n\in\mathbb{Z}+\frac{1}{2}}\mathbb{C} Q_n\oplus\mathbb{C} {\bf c}_L\oplus\mathbb{C} {\bf c}_M$$
		is a Lie superalgebra, where
		$$
		\frak g_{\bar0}=\bigoplus_{n\in\mathbb{Z}}\mathbb{C} L_n\oplus\bigoplus_{n\in\mathbb{Z}}\mathbb{C} M_n\oplus\mathbb{C}  {\bf c}_L\oplus\mathbb{C}  {\bf c}_M,\quad \frak g_{\bar1}=\bigoplus_{r\in\mathbb{Z}+\frac12} \mathbb{C}  Q_r,
		$$
		with the following commutation relations:
		\begin{align*}\label{SBMS}
			& {[L_m, L_n]}=(m-n)L_{m+n}+{1\over12}\delta_{m+n, 0}(m^3-m){\bf c}_L,\nonumber    \\
			& {[L_m, M_n]}=(m-n)M_{m+n}+{1\over12}\delta_{m+n, 0}(m^3-m){\bf c}_M,\nonumber    \\
			& {[Q_r, Q_s]}=2M_{r+s}+{1\over3}\delta_{r+s, 0}\left(r^2-\frac14\right){\bf c}_M, 
			\end{align*} 
			\begin{align*}
			& {[L_m, Q_r]}=\left(\frac{m}{2}-r\right)Q_{m+r},\nonumber                         \\
			& {[M_m,M_n]}=[M_n,Q_r]=0,                                                         \\
			& [{\bf c}_L,\frak g]=[{\bf c}_M, \frak g]=0, \nonumber
		\end{align*} for any $m, n\in\mathbb{Z}, r, s\in\mathbb{Z}+\frac12$.
	\end{defi}
	
	Note that the even part $\mathfrak{g}_{\bar 0}$ corresponds to the  BMS algebra $W(2,2)$. Additionally, the subalgebra ${{\mathfrak{vir}}} = \bigoplus_{n \in \mathbb{Z}} \mathbb{C} L_n \oplus \mathbb{C} \mathbf{c}_L$ represents the Virasoro algebra.

	The N=1 BMS superalgebra $\mathfrak{g}$ has a $\frac{1}{2}\mathbb{Z}$-grading by the eigenvalues of the adjoint action of $L_0$. It is clear that   $\mathfrak{g}$ has the following triangular decomposition:
	\begin{eqnarray*}
		\mathfrak{g}={\mathfrak{g}}_{-}\oplus {\mathfrak{g}}_{0}\oplus {\mathfrak{g}}_{+},
	\end{eqnarray*}
	where
	\begin{eqnarray*}
		&&{\mathfrak{g}}_{\pm}=\bigoplus_{n\in \mathbb{Z}_+}\bC L_{\pm n}\oplus \bigoplus_{n\in \mathbb{Z}_+}\bC M_{\pm n}\oplus \bigoplus_{r\in \frac{1}{2}+\mathbb{N}}\bC Q_{\pm r},\\
		&&{\mathfrak{g}}_{0}=\bC L_0\oplus\bC M_0\oplus\bC {\bf c}_L\oplus
		\bC {\bf c}_M.
	\end{eqnarray*}

	\subsection{Verma modules}
	For $(c_L,c_M,h_L,h_M)\in\bC^4$, let $\bC$ be the module over 
	${\mathfrak{g}}_{0}\oplus{\mathfrak{g}}_{+}$  defined  by
	\begin{eqnarray*}
		{\bf c}_L{ 1}=c_L{ 1},\quad {\bf c}_M{  1}=c_M{ 1},\quad L_0{ 1}=h_L{ 1},\quad  M_0{ 1}=h_M{ 1},\quad{\mathfrak{g}}_{+}1=0.
	\end{eqnarray*}   
	The Verma module over ${\mathfrak{g}}$ is defined as follows 
	\begin{eqnarray*}
		V(c_L,c_M,h_L,h_M)=U({\mathfrak{g}})\ot_{U({\mathfrak{g}}_{0}\oplus{\mathfrak{g}}_{+})}\bC\simeq U({\mathfrak{g}}_{-})\1,
	\end{eqnarray*}
	where $\1=1\ot 1$.
	It follows that
	$V(c_L,c_M,h_L,h_M)=\bigoplus_{n\in \mathbb{N}}V(c_L,c_M,h_L, h_M)_{n}$ and $U({\mathfrak{g}_-})=\bigoplus_{n\in \mathbb{Z}_+}U(\mathfrak{g}_-)_{n},$
	where
	$$V(c_L,c_M,h_L,h_M)_{n}
	=\{v \in V(c_L,c_M,h_L,h_M)\,|\,L_0v =(h_L+n)v\}
	$$ and
	$$U(\mathfrak{g}_-)_{n}
	=\{x \in U(\mathfrak{g}_-)\,|\,[L_0, x]=nx\}.
	$$
	Moreover,
	$
	V(c_L,c_M,h_L,h_M)=V(c_L,c_M,h_L,h_M)_{\bar{0}}\oplus V(c_L,c_M,h_L,h_M)_{\bar{1}}$
	with
	$$\aligned V(c_L,c_M,h_L,h_M)_{\bar{0}}=&\bigoplus_{n\in\mathbb{N}}V(c_L,c_M,h_L,h_M)_{n},\\ V(c_L,c_M,h_L,h_M)_{\bar{1}}=&\bigoplus_{n\in \mathbb{N}}V(c_L,c_M,h_L,h_M)_{\frac{1}{2}+n}.\endaligned$$

	It is clear that $V(c_L,c_M,h_L,h_M)$ has
	a unique maximal submodule $J(c_L,c_M,h_L,h_M)$ and the factor module
	$$
	L(c_L,c_M,h_L,h_M)=V(c_L,c_M,h_L,h_M)/J(c_L,c_M,h_L,h_M)
	$$
	is an irreducible highest weight   ${\mathfrak{g}}$-module. Define
	$${\rm char}\, V(c_L,c_M,h_L,h_M)=q^{h_L}\sum_{i\in\frac12\mathbb N }{\rm dim}\, V(c_L,c_M,h_L,h_M)_iq^{i}.$$
	
	An eigenvector   $u$ in $V(c_L,c_M,h_L,h_M)$ with respect to $\mathfrak{g}_0$ is called a {\bf singular vector} if $\mathfrak{g}_{+} u=0$.
	Let $J'(c_L,c_M,h_L,h_M)$ be the submodule of $V(c_L,c_M,h_L,h_M)$ generated by all singular vectors.
	A weight vector $u'$ in $V(c_L,c_M,h_L,h_M)$ is called a {\bf subsingular vector} if $u'+J'(c_L,c_M,h_L,h_M)$ is a singular vector in $V(c_L,c_M,h_L,h_M)/J'(c_L,c_M,h_L,h_M)$.

	Recall that a partition of a positive integer $n$ is a finite non-increasing sequence of
	positive integers $\la=(\la_1,\la_2,\dots, \la_r)$ such that $n=\sum_{i=1}^r\la_i$. The positive integer $\la_i$ is called the $i$-th entry of the partition $\la$. We call $r$ the length of $\la$, denoted by $\ell(\la)$, and call the sum of $\la_i$'s the weight of $\la$, denoted
	by $|\la|$. Denote $\la-\frac12=\left(\la_1-\frac12,\la_2-\frac12,\dots, \la_r-\frac12\right)$ and  $-\la=(-\la_1,-\la_2,\dots, -\la_r)$.
	The number of partitions of $n$ is given by the partition function ${\tt p}(n)$.  Denote by $\mathcal P$ the set of all partitions (including the empty partition) and
	$\mathcal P(n)$ the set of all partitions with   weight   $n\in\mathbb Z_+$.
	A partition $\la=(\la_1,\la_2,\dots, \la_r)$ is called strict if $\la_1 >\la_2 >\dots >\la_r >0$. The set $\mathcal{SP}$ consists of all strict partitions (including the empty partition). Recall that the natural ordering on $\mathcal P$ and $\mathcal{SP}$ is defined as follows:
	\begin{eqnarray*}
		&&\la> \mu\iff |\la|> |\mu|, \text{ or } |\la|= |\mu|, \la_1=\mu_1,\dots, \la_k=\mu_k, \text{ and }\la_{k+1}>\mu_{k+1} \text{ for some }\ k\geq0;\\
		&&\la=\mu\iff \la_i=\mu_i \quad\text{for all }\ i.
	\end{eqnarray*}

	According to the Poincar\'{e}-Birkhoff-Witt ($\mathrm{PBW}$)  theorem, every vector $v$ of $V(c_L,c_M,h_L,h_M)$ can be uniquely written in
	the following form
	\begin{equation}\label{def2.1}
		v=\sum_{\lambda, \nu\in\mathcal P, \mu\in\mathcal{SP}}a_{\lambda, \mu, \nu}M_{-\la}Q_{-\mu+\frac12}L_{-\nu}{\bf 1},
	\end{equation}
	where $a_{\lambda, \mu, \nu}\in\mathbb C$ and only finitely many of them are non-zero, and
	$$M_{-\la}:=M_{-\la_1}\cdots M_{-\la_r},\  Q_{-\mu+\frac12}:=Q_{-\mu_1+\frac12}\cdots Q_{-\mu_s+\frac12},\  L_{-\nu}:=L_{-\nu_1}\cdots L_{-\nu_t}.$$ For any $v\in V(c_L,c_M,h_L,h_M)$ as in  \eqref{def2.1}, we denote by $\mathrm{supp}(v)$ the set of all $(\lambda, \mu, \nu)\in \mathcal P\times\mathcal{SP}\times \mathcal P$  such that $a_{\lambda, \mu, \nu}\neq0$.
	Next, we define
	\begin{eqnarray*}
		&&\mathcal{M}={\rm span}_{\mathbb C}\{M_i \mid i\in\mathbb Z\},\quad \mathcal{M}_-={\rm span}_{\mathbb C}\{M_{-i} \mid i\in\mathbb Z_+\},\\
		&&\mathcal{Q}={\rm span}_{\mathbb C}\{Q_{-i+\frac12}\mid i\in \mathbb Z\},\ \mathcal{Q}_-={\rm span}_{\mathbb C}\{Q_{-i+\frac12}\mid i\in \mathbb Z_+\}.
	\end{eqnarray*}
	Note that $\mathcal{M}+\mathbb{C} {\bf c}_M$ and $ \mathcal{M}+\mathcal{Q}+\mathbb{C} {\bf c}_M$ are ideals of $\mathfrak{g}$.

	For $y=M_{-\la}Q_{-\mu+\frac12}L_{-\nu}$ or $y\1$, we define  $$\ell(y)=\ell(y\1)=\ell(\lambda)+\ell(\mu)+\ell(\nu), \  {\rm deg}(y)=|\la|+|\mu-\frac12|+|\nu|.$$
	
	For \eqref{def2.1}, we define
	$${\ell}_M(v):={\rm max}\{\ell(\lambda)\mid (\lambda, \mu, \nu)\in {\rm supp}(v)\}.$$
	Similarly, we can define ${\ell}_Q(v)$, ${\ell}_L(v)$ and ${\rm deg}(v)$.

	For $n\in \frac{1}{2}\mathbb Z_+$, let
	$$
	B_{n}=\{M_{-\la}Q_{-\mu+\frac12}L_{-\nu}{\bf 1}\mid |\la|+|\mu-\frac12|+|\nu|=n, \forall\  \la,\nu\in \mathcal P, \mu\in\mathcal{SP} \}.
	$$
	Clearly, $B_{n}$ is a basis of $V(c_L,c_M,h_L,h_M)_n$. Then
	$$
	|B_{n}|=\dim V(c_L,c_M,h_L,h_M)_{n},
	$$
	\begin{eqnarray*}\label{2.6}
		{\rm char}\, V(c_L,c_M,h_L,h_M)=q^{h_L}\prod_{k=1}^{\infty}\frac{1+q^{k-\frac{1}{2}}}{(1-q^{k})^{2}}.
	\end{eqnarray*}

	Now we can define a total ordering $\succ$ on  $B_{n}$:
	$M_{-\la}Q_{-\mu+\frac{1}{2}}L_{-\nu}{\bf 1} \succ
	M_{-\la'}Q_{-\mu'+\frac{1}{2}}L_{-\nu'}{\bf 1}$ if and only if one of the following condition is satisfied:
	\begin{itemize}
		\item[(i)]$|\nu|>|\nu'|;$
		\item[(ii)]$|\nu|=|\nu'|\ \mbox{and}\
		\ell(\nu)>\ell(\nu')$;
		\item[(iii)]$|\nu|=|\nu'|,
		\ell(\nu)=\ell(\nu')\ \mbox{and}\ \nu>\nu'$;
		\item[(iv)]$\nu=\nu',\  
		\mu>\mu';$
		\item[(v)]$\nu=\nu',\ \mu=\mu',\ \mbox{and}\
		\la>\la'.$
	\end{itemize}

	Let
	\begin{eqnarray*}
		B_{n}=\{b_i\mid b_{i}\succ b_{j}\  \text{for}\  i>j\},\quad\text{where}\quad b_{i}=M_{-\la^{(i)}}G_{-\mu^{(i)}}L_{-\nu^{(i)}}\1,
	\end{eqnarray*}
	with $\la^{(i)},\nu^{(i)}\in \mathcal P ,\mu^{(i)}\in \mathcal{SP}$ and $|\la^{(i)}|+|\mu^{(i)}-\frac{1}{2}|+|\nu^{(i)}|=n$ for any $i$.
	

	Any non-zero homogenous vector $X\in V_n=V(c_L,c_M,h_L,h_M)_{n}$  can be uniquely written  as a linear
	combination of elements in $B_{n}$ for some $n\in\mathbb Z_+$: 
	$$X=\Sigma_{i=1}^m a_iX_i,\text{ where }
	0\neq a_i\in\mathbb C, X_i\in B_{n}\text{ and }X_1\succ X_2\succ\cdots\succ
	X_m.$$ We define the {\bf highest term} of $X$ as ${\rm hm}(X)=X_1$.

	Now we define on $V(c_L,c_M,h_L,h_M)$ the operations of formal partial derivative $\frac{\partial}{\partial Q_{- i+\frac12}}, i\in \mathbb{Z}_+$ as follows
	\begin{eqnarray*}		
		\frac{\partial}{\partial Q_{- i+\frac12}}M_{- j}=\frac{\partial}{\partial Q_{- i+\frac12}}L_{- j}=0,\ \frac{\partial}{\partial Q_{- i+\frac12}}Q_{- j+\frac12}=\delta_{ji},\ \frac{\partial}{\partial Q_{- i+\frac12}}\1=0\end{eqnarray*}
	and then define their actions on monomials (\ref{def2.1}) by the super-Leibniz rule. Finally, we
	extend these to $U(\frak{g}_-)$ by linearity.
	
	Let us recall the necessary and sufficient conditions for the Verma module $V(c_L,c_M,h_L,h_M)$ to be irreducible.
	
	\begin{theo} \label{Sim} \cite[Theorem 3.2]{LPXZ}
		For $(c_L,c_M,h_L,h_M)\in\bC^4$, the Verma module $V(c_L,c_M,h_L,h_M)$ over $\mathfrak{g}$ is irreducible if and only if
		$$
		2h_M+\frac{i^2-1}{12}c_M\neq 0,\ \forall  i\in \mathbb Z_{+}.
		$$
	\end{theo}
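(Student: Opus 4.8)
The plan is to control irreducibility through the \emph{contravariant (Shapovalov) form}. First I would introduce the anti-involution $\sigma\colon\mathfrak g\to\mathfrak g$ determined by $\sigma(L_n)=L_{-n}$, $\sigma(M_n)=M_{-n}$, $\sigma(Q_r)=Q_{-r}$ and $\sigma({\bf c}_L)={\bf c}_L$, $\sigma({\bf c}_M)={\bf c}_M$, with the standard sign conventions making $\sigma$ an anti-automorphism of the Lie superalgebra. Through $\sigma$ one obtains a unique contravariant bilinear form $\langle\cdot,\cdot\rangle$ on $V:=V(c_L,c_M,h_L,h_M)$ normalized by $\langle\1,\1\rangle=1$ and satisfying $\langle xu,v\rangle=\langle u,\sigma(x)v\rangle$ for all $x\in\mathfrak g$. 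Distinct weight spaces are orthogonal, and the radical of $\langle\cdot,\cdot\rangle$ coincides with the unique maximal proper submodule $J(c_L,c_M,h_L,h_M)$. Hence $V$ is irreducible if and only if the Gram matrix $G_n$ of $\langle\cdot,\cdot\rangle$ on each weight space $V_n=V(c_L,c_M,h_L,h_M)_n$ (in the PBW basis $B_n$) is nondegenerate for every $n\in\frac12\mathbb{Z}_+$, so the whole statement reduces to locating the zero set of the family $\{\det G_n\}_n$ as a function of the parameters.

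Next I would isolate the relevant linear factors from the diagonal couplings. The basic identities
\begin{align*}
L_iM_{-i}\1 &= i\Bigl(2h_M+\frac{i^2-1}{12}c_M\Bigr)\1,\\
Q_{i-\frac12}Q_{-i+\frac12}\1 &= \Bigl(2h_M+\frac{(2i-1)^2-1}{12}c_M\Bigr)\1,
\end{align*}
together with $M_mM_{-m}\1=0$ (the $M$-modes commute), already exhibit the expression $2h_M+\frac{k^2-1}{12}c_M$ — with $k=i$ from the $M$/$L$ pairing and $k=2i-1$ from the $Q$ pairing, jointly covering every $k\in\mathbb{Z}_+$ — as the mechanism by which the form degenerates. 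As a sanity check, on $V_1=\mathrm{span}\{L_{-1}\1,M_{-1}\1\}$ the Gram matrix is $\left(\begin{smallmatrix}2h_L & 2h_M\\ 2h_M & 0\end{smallmatrix}\right)$, whose determinant $-4h_M^2$ is visibly independent of $h_L,c_L$ and vanishes exactly on the $k=1$ hyperplane. The striking structural feature I would emphasize is that $h_L$ and $c_L$ drop out of $\det G_n$: this should be traced to the fact that $\mathcal M+\mathbb C{\bf c}_M$ is an \emph{abelian} ideal with $[\mathcal M,\mathcal Q]=0$, so that after ordering $B_n$ appropriately the $L$-modes enter $G_n$ only through the couplings $[L_i,M_{-i}]$ and $[L_i,Q_r]$, each of which is itself proportional to a factor $2h_M+\frac{k^2-1}{12}c_M$.

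The crux is to establish the full factorization
$$\det G_n=\gamma_n\prod_{k\ge 1}\Bigl(2h_M+\frac{k^2-1}{12}c_M\Bigr)^{m_k(n)},$$
with a nonzero constant $\gamma_n$, exponents $m_k(n)\in\mathbb{N}$ that are positive once $n$ is large relative to $k$, and \emph{no} factor depending on $h_L$ or $c_L$. This I would carry out in Kac-determinant style: (a) prove divisibility by each linear form, by exhibiting a null vector of $\langle\cdot,\cdot\rangle$ in $V_n$ whenever $2h_M+\frac{k^2-1}{12}c_M=0$, so that the corresponding hyperplane lies in the vanishing locus; and (b) match the total degree of $\det G_n$ (computed from the PBW dimension count $\dim V_n=|B_n|$, using $\mathrm{char}\,V=q^{h_L}\prod_{k\ge1}\frac{1+q^{k-1/2}}{(1-q^k)^2}$) against $\sum_k m_k(n)$, forcing equality and ruling out any extra factors. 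The equivalent and perhaps cleaner way to handle (b) is to prove directly that for \emph{generic} $(h_M,c_M)$ — all factors nonzero — the module $V$ is irreducible for \emph{every} $(h_L,c_L)$, using that the $Q$-modes generate all negative $M$-modes (e.g. $Q_{-i+\frac12}^2=M_{-(2i-1)}$ and $2M_{-k}=[Q_a,Q_b]$ for $a+b=-k$), so that $\1$ is cyclic and the $(h_M,c_M)$-degeneracy is the only possible source of a proper submodule. The hard part will be precisely this determinant bookkeeping — simultaneously producing enough null vectors along each hyperplane and proving the cancellation of all $h_L,c_L$-dependence — which is exactly the structural miracle distinguishing this algebra from the Virasoro case.

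Granting the factorization, both directions follow at once. If $2h_M+\frac{p^2-1}{12}c_M=0$ for some $p\in\mathbb{Z}_+$, then $\det G_n=0$ for $n$ large enough, so the radical of $\langle\cdot,\cdot\rangle$ is a nonzero proper submodule (it cannot contain $\1$ since $\langle\1,\1\rangle=1$), and $V$ is reducible; note this direction uses only divisibility, part (a). Conversely, if $2h_M+\frac{i^2-1}{12}c_M\neq 0$ for all $i\in\mathbb{Z}_+$, then every factor is nonzero, hence $\det G_n\neq 0$ for all $n$, the form is nondegenerate, its radical $J(c_L,c_M,h_L,h_M)$ vanishes, and $V=L(c_L,c_M,h_L,h_M)$ is irreducible, completing the proof.
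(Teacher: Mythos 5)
A point of reference first: the paper does not prove Theorem \ref{Sim} at all---it is quoted from \cite[Theorem 3.2]{LPXZ}---so there is no in-paper argument to compare yours against. That said, your Shapovalov-form strategy is the standard and correct route, and it is essentially the method of the cited source (and of \cite{ZD} for $W(2,2)$): one orders the PBW basis so that the relevant pairing matrix becomes triangular with diagonal entries that are products of the factors $\phi(k)=2h_M+\frac{k^2-1}{12}c_M$. Your preliminary computations check out: $L_iM_{-i}\1=i\,\phi(i)\1$ and $Q_{i-\frac12}Q_{-i+\frac12}\1=\phi(2i-1)\1$ do jointly produce every $k\in\mathbb Z_+$, the independence of the Gram determinants from $h_L,c_L$ is genuine and does come from $\mathcal M+\mathcal Q+\mathbb C{\bf c}_M$ being an ideal with $[\mathcal M,\mathcal M]=[\mathcal M,\mathcal Q]=0$, and the $V_1$ sanity check is right.

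The one substantive gap is that the nondegeneracy direction is only planned, not proved, and your fallback (b) is circular as stated: ``for generic $(h_M,c_M)$ --- all factors nonzero --- the module is irreducible'' is exactly the assertion to be established, and ``so that $\1$ is cyclic'' proves nothing, since $\1$ is cyclic in every Verma module. Note also that divisibility of $\det G_n$ by $\phi(p)$ cannot be read off a single row (e.g.\ $\langle M_{-p}\1,L_{-1}L_{-(p-1)}\1\rangle=(p+1)(p-1)\phi(p-1)$ need not vanish when $\phi(p)=0$), so part (a) really does require producing a null vector, such as ${\rm S}\1$ of Proposition \ref{singular-S1} or ${\rm R}\1$ of Proposition \ref{singular-R1}. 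What closes the converse direction --- and what the total ordering $\succ$ of Section 2.2 is designed to deliver --- is the concrete triangularity statement: for any nonzero $v\in V_n$ with highest term $M_{-\la}Q_{-\mu+\frac12}L_{-\nu}\1$, the transposed operator $L_{\la}Q_{\mu-\frac12}M_{\nu}\in U(\mathfrak g_+)$ sends $v$ to a nonzero multiple of $\1$, the coefficient being a product of values $k\,\phi(k)$ that are all nonzero by hypothesis, while every $\succ$-lower term of $v$ contributes zero. Spelling this out (rather than degree-matching the full determinant, which is more than you need) would complete your argument.
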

	
	From now on 
	we always assume that $$\phi(p)=2h_M+\frac{p^2-1}{12}c_M=0$$ for some $p\in \mathbb Z_+$.  This assumption indicates  that the Verma module $V(c_L,c_M,h_L,h_M)$ is reducible and  contains a
	singular vector not in $\mathbb{C}{\bf 1}$.

	\begin{theo}\label{cor3.3}\cite[Theorem 3.3 and Proposition 5.2]{LPXZ} Suppose $(c_L,c_M,h_L,h_M)\in\bC^4$ such that $\phi(1)=0$, which implies that $h_M=0$.   
		\begin{itemize}
			\item[$(1)$] The vectors $M_{-1}{\bf 1}$ and  $Q_{-\frac{1}{2}}{\bf 1}$ of $V(c_L,c_M,h_L,0)$ are singular vectors. If further $h_L =0$, then $L_{-1}{\bf 1}$ is a subsingular vector of  $V(c_L,c_M,0,0)$, i.e., a  singular vector of \\ $V(c_L,c_M,0,0)/\langle Q_{-\frac12}\mathbf 1\rangle$, where $\langle Q_{-\frac12}\mathbf 1\rangle$ is the $\mathfrak{g}$-submodule generated by $Q_{-\frac12}\mathbf 1$.
			
			\item[$(2)$] 
			The vaccum  module $V(c_L,c_M)=V(c_L,c_M,0,0)/\langle L_{-1}\1\rangle$ is irreducible if and only if $c_M\neq 0$.  	
			
			\item[$(3)$] The vacuum module $V(c_L,c_M)$ with $c_M\ne0$ is endowed with a simple vertex superalgebra structure. There is a one-to-one correspondence between smooth $\mathfrak{g}$-modules of central charge $(c_L, c_M)$ and   $V(c_L,c_M)$-modules.
		\end{itemize}
	\end{theo}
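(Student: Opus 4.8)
\emph{Plan.} The three parts are of different natures, and I would treat them separately. Part $(1)$ I would settle by direct computation. Because a bracket of two elements of $\mathfrak{g}_+$ that both annihilate a vector again annihilates it, it suffices to test a generating set of $\mathfrak{g}_+$, and from the relations of Definition \ref{Def2.1} one checks that $\{L_1,L_2,Q_{1/2},Q_{3/2}\}$ generates $\mathfrak{g}_+$ (for instance $[Q_{1/2},Q_{1/2}]=2M_1$, $[Q_{1/2},Q_{3/2}]=2M_2$, $[L_1,Q_{3/2}]=-Q_{5/2}$, after which the remaining $L_n,M_n,Q_r$ are produced by further brackets). Applying these four operators to $M_{-1}\1$ and $Q_{-1/2}\1$ and using $h_M=0$ — which kills the two terms $2M_0\1=2h_M\1$ coming from $[L_1,M_{-1}]$ and from $[Q_{1/2},Q_{-1/2}]=2M_0$ (its central part vanishes at $r=\tfrac12$) — shows both vectors are singular. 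For the subsingular claim, with $h_L=h_M=0$ I would compute $\mathfrak{g}_+\cdot L_{-1}\1$: each generator annihilates $L_{-1}\1$ except that $Q_{1/2}L_{-1}\1=Q_{-1/2}\1$, which lies in $\langle Q_{-1/2}\1\rangle$. Hence $L_{-1}\1$ is a singular vector of $V(c_L,c_M,0,0)/\langle Q_{-1/2}\1\rangle$, i.e. a subsingular vector.

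For part $(2)$, the easy direction is $c_M=0\Rightarrow$ reducible. Here the assignment $L_n\mapsto L_n$, $M_n\mapsto 0$, $Q_r\mapsto 0$, ${\bf c}_L\mapsto c_L$, ${\bf c}_M\mapsto 0$ is a Lie superalgebra homomorphism $\mathfrak{g}\to\mathfrak{vir}$ \emph{precisely} because the central term $\tfrac13(r^2-\tfrac14){\bf c}_M$ in $[Q_r,Q_{-r}]$ disappears when $c_M=0$; it descends to a surjection from $V(c_L,c_M)$ onto the Virasoro vacuum module, whose kernel is a proper submodule that is nonzero, since e.g.\ $Q_{-3/2}\1\neq 0$ in $V(c_L,c_M)$ (a short count shows the degree-$\tfrac32$ part of $\langle L_{-1}\1\rangle$ is spanned by $Q_{-1/2}L_{-1}\1$ and $M_{-1}Q_{-1/2}\1$, which do not involve $Q_{-3/2}\1$). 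The hard direction, $c_M\neq 0\Rightarrow$ irreducible, is the main obstacle. My plan is to prove that $V(c_L,c_M)$ has no singular vector of positive degree. I would first record that in the quotient one has $Q_{-1/2}\1=Q_{1/2}L_{-1}\1=0$ and hence $M_{-1}\1=Q_{-1/2}^2\1=0$, so the entire level-one obstruction has been removed; then I would reduce irreducibility to nondegeneracy of the contravariant form on $V(c_L,c_M)$, whose Shapovalov-type determinant is controlled by the factors $2h_M+\tfrac{i^2-1}{12}c_M=\tfrac{i^2-1}{12}c_M$, nonzero for every $i\ge 2$ exactly when $c_M\neq 0$ (cf.\ Theorem \ref{Sim}). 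The delicate point — and where most of the work lies — is verifying that no further singular vector survives the quotient, equivalently that the maximal submodule of $V(c_L,c_M,0,0)$ equals $\langle L_{-1}\1\rangle$ when $c_M\neq 0$.

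Part $(3)$ I would obtain from standard machinery once simplicity is in hand. Granting part $(2)$, I equip $V(c_L,c_M)$ with a vertex superalgebra structure via the reconstruction (existence) theorem, using the mutually local generating fields $L(z)=\sum_n L_nz^{-n-2}$, $M(z)=\sum_n M_nz^{-n-2}$ and $Q(z)=\sum_r Q_rz^{-r-3/2}$, whose operator products are dictated by the relations of Definition \ref{Def2.1}; simplicity of the vertex superalgebra is then exactly irreducibility of $V(c_L,c_M)$ as a module over itself, which is part $(2)$. The asserted bijection between smooth $\mathfrak{g}$-modules of central charge $(c_L,c_M)$ and $V(c_L,c_M)$-modules is the general equivalence between restricted modules over a $\tfrac12\mathbb{Z}$-graded Lie superalgebra at a fixed level and modules over the associated vacuum vertex superalgebra, and I would invoke the standard reference rather than reprove it.
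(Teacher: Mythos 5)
This statement is imported verbatim from \cite[Theorem 3.3 and Proposition 5.2]{LPXZ}; the paper gives no proof of it, so there is nothing internal to compare your argument against. Judged on its own terms, your part $(1)$ is complete and correct: the generating set $\{L_1,L_2,Q_{1/2},Q_{3/2}\}$ does generate $\mathfrak{g}_+$ (you need $Q_{3/2}$ separately since $[L_1,Q_{1/2}]=0$), the computations $L_1M_{-1}\1=2M_0\1=0$, $Q_{1/2}Q_{-1/2}\1=2M_0\1=0$, and $Q_{1/2}L_{-1}\1=Q_{-1/2}\1$ with $h_L=h_M=0$ are all right, and since $M_{-1}\1=\tfrac12 Q_{-1/2}^2\1\in\langle Q_{-1/2}\1\rangle$ your conclusion matches the paper's definition of subsingular vector. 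The reducibility direction of $(2)$ (the quotient onto the Virasoro vacuum module when $c_M=0$, with $Q_{-3/2}\1$ surviving in the kernel) is also sound, and part $(3)$ is legitimately an appeal to the standard reconstruction theorem and the restricted-module/vacuum-VOA equivalence.

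The genuine gap is the forward direction of $(2)$: $c_M\neq0\Rightarrow V(c_L,c_M)$ irreducible. You explicitly defer "where most of the work lies," namely showing that the maximal submodule of $V(c_L,c_M,0,0)$ is exactly $\langle L_{-1}\1\rangle$, and the route you sketch (nondegeneracy of the contravariant form controlled by the factors $\tfrac{i^2-1}{12}c_M$) is not self-supporting: Theorem \ref{Sim} gives the determinant criterion for the \emph{Verma} module, whose determinant at $p=1$ vanishes regardless of $c_M$, so you would need the determinant formula for the quotient $V(c_L,c_M)$ or, equivalently, a classification of singular and subsingular vectors at $h_L=h_M=0$ — which is precisely the content of the cited reference and of this paper's Sections 3--5. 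Within the present paper the missing step is Theorem \ref{irreducibility1} specialized to $p=r=1$ (there $h_{1,1}=0$, ${\rm T}_{1,1}=L_{-1}$, and $J(c_L,c_M,0,0)=\langle L_{-1}\1\rangle$), but that machinery is developed downstream of the statement you are proving, so you cannot simply invoke it without checking that no circularity arises (in fact the paper only uses part $(1)$-type computations from Theorem \ref{cor3.3} later, so the dependence could be untangled — but your proposal does neither the untangling nor the direct argument). As written, $(2)$ is a plan, not a proof.
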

	
	The following result is obvious.
	
	\begin{lem}\label{degenerated-case} If   $ c_M=0$ and $h_M=0$, then the Verma module $V(c_L,0,h_L,0)$ possesses a submodule   $ \langle \mathcal{M}_-\mathbf1, \mathcal{Q}_-\mathbf1\rangle$ and the quotient module $V(c_L,0,h_L,0)/  \langle \mathcal{M}_-\mathbf1, \mathcal{Q}_-\mathbf1\rangle$ is isomorphic to the Verma module $V_{\mathfrak{vir}}(c_L,h_L)$ over the Virasoro algebra.
	\end{lem}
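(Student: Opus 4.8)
The plan is to realize the Virasoro Verma module $V_{\mathfrak{vir}}(c_L,h_L)$ as the quotient of $V:=V(c_L,0,h_L,0)$ by the action of the ideal $\mathfrak I:=\mathcal M+\mathcal Q+\mathbb C\mathbf c_M$, and then to identify the submodule $\mathfrak I V$ with the stated submodule $N:=\langle \mathcal M_-\mathbf 1,\mathcal Q_-\mathbf 1\rangle$. Since $\mathfrak I$ is an ideal of $\mathfrak g$ (as recalled in the preliminaries), the quotient $\mathfrak g/\mathfrak I$ is a Lie algebra; inspecting the brackets shows that the images of $\{L_n:n\in\mathbb Z\}\cup\{\mathbf c_L\}$ form a basis satisfying exactly the Virasoro relations, so $\mathfrak g/\mathfrak I\cong\mathfrak{vir}$. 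Because $\mathfrak I$ is an ideal, $\mathfrak I V:=\mathrm{span}\{av:a\in\mathfrak I,\ v\in V\}$ is a $\mathfrak g$-submodule: for $X\in\mathfrak g$, $a\in\mathfrak I$, $v\in V$ one has $X(av)=[X,a]v+a(Xv)\in\mathfrak I V$ since $[X,a]\in\mathfrak I$. Consequently $V/\mathfrak I V$ is naturally a module over $\mathfrak g/\mathfrak I\cong\mathfrak{vir}$.

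The central step is to identify $\mathfrak I V$ both with $N$ and with the PBW-span $W$ of all monomials $M_{-\la}Q_{-\mu+\frac12}L_{-\nu}\mathbf 1$ having $(\la,\mu)\neq(\emptyset,\emptyset)$. The inclusion $W\subseteq\mathfrak I V$ is immediate by factoring off a leading $M_{-\la_1}$ or $Q_{-\mu_1+\frac12}$. For $\mathfrak I V\subseteq W$ one moves a generator $M_n$ or $Q_r$ of $\mathfrak I$ to the right through an arbitrary PBW monomial and collects terms: here the relations $[\mathcal M,\mathcal M]=[\mathcal M,\mathcal Q]=0$, the vanishing $c_M=0$ of every $\mathbf c_M$-term (including the central term of $[Q_r,Q_s]$), together with the highest-weight conditions $M_n\mathbf 1=0\ (n\ge 0,\text{ using }h_M=0)$ and $Q_r\mathbf 1=0\ (r>0)$, guarantee that each resulting monomial still carries at least one $M$- or $Q$-factor, so it lies in $W$. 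Thus $W=\mathfrak I V$ is a submodule; as $M_{-i}\mathbf 1,Q_{-j+\frac12}\mathbf 1\in W$ we obtain $N\subseteq W$. Conversely, an induction on the number of $L$-factors (and on $\ell(\mu)$) shows every monomial in $W$ is produced by applying a suitable element of $U(\mathfrak g_-)$ to some $M_{-i}\mathbf 1$ or $Q_{-j+\frac12}\mathbf 1$ and reordering into PBW form — the reordering commutators only lower the number of $L$- or $Q$-factors — giving $W\subseteq N$. Hence $N=W=\mathfrak I V$; in particular $N$ is a proper submodule, since $\mathbf 1\notin W$.

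Finally, the isomorphism follows by a graded-dimension count. By the previous step the images of $\{L_{-\nu}\mathbf 1:\nu\in\mathcal P\}$ form a basis of $V/N$, and these are in degree-preserving bijection with the PBW basis of $V_{\mathfrak{vir}}(c_L,h_L)$, so $\dim(V/N)_n={\tt p}(n)$ for $n\in\mathbb N$ and $(V/N)_{\frac12+n}=0$. On $V/N$ the class $\bar{\mathbf 1}$ is a $\mathfrak{vir}$ highest-weight vector with $\mathbf c_L\bar{\mathbf 1}=c_L\bar{\mathbf 1}$ and $L_0\bar{\mathbf 1}=h_L\bar{\mathbf 1}$, so the universal property of the Virasoro Verma module yields a surjective $\mathfrak{vir}$-homomorphism $V_{\mathfrak{vir}}(c_L,h_L)\twoheadrightarrow V/N$; comparing the equal graded dimensions forces this map to be an isomorphism.

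I expect the only genuine work to be the PBW bookkeeping that establishes $N=W=\mathfrak I V$. This is exactly where the hypotheses $c_M=0$ and $h_M=0$ are indispensable: they are precisely what prevents an $M$- or $Q$-generator from collapsing into a scalar or into pure $L$-terms during reordering, so that $W$ is genuinely stable and exhausts $N$; everything else in the argument is formal.
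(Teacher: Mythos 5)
Your proposal is correct. The paper offers no argument at all here (it simply declares the lemma obvious), and your proof is exactly the natural one implicit in that remark: since $\mathcal{M}+\mathcal{Q}+\mathbb{C}{\bf c}_M$ is an ideal with quotient $\mathfrak{vir}$, the span of the PBW monomials carrying at least one $M$- or $Q$-factor is the submodule generated by $\mathcal{M}_-\1$ and $\mathcal{Q}_-\1$ (the hypotheses $c_M=h_M=0$ being what keeps the reordering from producing scalars or pure $L$-terms), and the graded-dimension count then identifies the quotient with $V_{\mathfrak{vir}}(c_L,h_L)$. All steps check out; you have simply written out in full what the authors left to the reader.
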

	
	For the remaining case $p=1$ and  $c_M\ne0$ (in this case $h_M=0$),
	the structure of the Verma module $V(c_L,c_M,h_L,0)$ will be determined in the next sections.


	\section{Classification of singular vectors of Verma modules}
	
	Fix  $(c_L,c_M,h_L,h_M)\in\bC^4$. In this section we will determine all singular vectors of the Verma module $V(c_L,c_M,h_L,h_M)$ when it is reducible.
	
	From now on we will   assume that $\phi(p)=0$ for some $p\in\mathbb Z_+$ with $c_M \ne0$. The case $c_M=0$ (then $h_M=0$) was solved in Theorem \ref{cor3.3} and Lemma \ref{degenerated-case}.

	\subsection{Singular vectors in $V(c_L,c_M,h_L,h_M)_n$ for $ n\in\mathbb Z_+$}


	First, we construct a singular vector ${\rm S}\1$ in $V(c_L,c_M,h_L,h_M)_n$ for some $ n\in\mathbb Z_+$.
	
	\begin{pro}
		\label{singular-S1}  The Verma module
		$V(c_L,c_M,h_L,h_M)$ possesses a singular vector 
		\begin{eqnarray}\label{e3.7}
			u={\rm S}\1=M_{-p}\1+\sum_{\mu\in \mathcal P(p), \lambda<(p) }s_{\mu}M_{-\mu}\in U(\mathfrak{g}_{-})\1\in
			V(c_L,c_M,h_L,h_M)_p, 
		\end{eqnarray} where
		$$
		s_{\mu}=(-1)^{\ell(\mu)-1}\prod_{i=1}^{\ell(\mu)-1}\frac{2(p-\sum_{j=0}^{i-1}\mu_j)-\mu_{i}}{2(p-\sum_{j=1}^{i}\mu_j)\phi(p-\sum_{j=1}^{i}\mu_j)},
		$$
		and $\mu_0=0$, $\mu=(\mu_1, \mu_2, \cdots, \mu_s)\in\mathcal P(p)$.
		
	\end{pro}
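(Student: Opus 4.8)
The plan is to exploit the fact that the candidate vector $u={\rm S}\mathbf 1$ lies in $U(\mathcal M_-)\mathbf 1$, i.e.\ it is a polynomial in the pairwise commuting operators $M_{-i}$. First I would record that for such a $u$ the positive modes act trivially: since $[M_k,M_{-\mu_i}]=0$ and $[Q_r,M_{-\mu_i}]=0$, each operator $M_k$ ($k\in\mathbb Z_+$) and $Q_r$ ($r\in\tfrac12+\mathbb N$) commutes past all the factors $M_{-\mu_i}$ and then annihilates $\mathbf 1$. Consequently the singularity condition $\mathfrak g_+u=0$ collapses to the single family $L_nu=0$ for all $n\in\mathbb Z_+$.

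Next I would compute the action of $L_n$ on a monomial $M_{-\mu}\mathbf 1$ explicitly. Moving $L_n$ through the product via $[L_n,M_{-\mu_i}]=(n+\mu_i)M_{n-\mu_i}+\tfrac1{12}\delta_{n,\mu_i}(n^3-n)\mathbf c_M$ and again using commutativity of the $M$'s, each part $\mu_i$ falls into one of three cases: if $\mu_i>n$ the part is lowered to $\mu_i-n$ with coefficient $n+\mu_i$; if $\mu_i<n$ the resulting positive mode $M_{n-\mu_i}$ commutes to $\mathbf 1$ and the term dies; and if $\mu_i=n$ the part is deleted while one picks up the scalar $2nh_M+\tfrac{n^3-n}{12}c_M=n\,\phi(n)$. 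This is the crucial observation: the hypothesis $\phi(p)=0$ gives $L_pM_{-p}\mathbf 1=p\,\phi(p)\mathbf 1=0$, and more generally forces $L_pu=0$ automatically (every $\mu<(p)$ has all parts $<p$), so the genuine conditions are only $L_nu=0$ for $1\le n\le p-1$.

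Since the operators $L_n$ obey $L_n=\tfrac1{2-n}[L_1,L_{n-1}]$ for $n\ge3$ while $M_k$ and $Q_r$ act as zero, it suffices to verify $L_1u=0$ and $L_2u=0$. I would then prove these by extracting, for each partition $\nu$ of $p-n$, the coefficient of $M_{-\nu}\mathbf 1$ in $L_nu$. By the action formula this coefficient is a sum over the monomials mapping onto $M_{-\nu}\mathbf 1$: those $\mu$ obtained from $\nu$ by enlarging one part by $n$ (lowering case) together with the single partition $\nu\cup\{n\}$ (deletion case, with factor $n\,\phi(n)$). Setting it to zero yields a recursion for the $s_\mu$, and rewriting the formula with the partial sums $P_i=p-\sum_{j\le i}\mu_j$ (so $P_0=p$) as $s_\mu=(-1)^{\ell(\mu)-1}\prod_{i=1}^{\ell(\mu)-1}\frac{P_{i-1}+P_i}{2P_i\,\phi(P_i)}$, with $s_{(p)}=1$, one checks that the per-step ratio $\frac{P_{i-1}+P_i}{2P_i\,\phi(P_i)}$ is exactly what the recursion prescribes. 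Here one also confirms the formula is well defined, since for $1\le P_i\le p-1$ one has $\phi(P_i)=\phi(P_i)-\phi(p)=\frac{(P_i-p)(P_i+p)}{12}c_M\neq0$ because $c_M\neq0$.

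The main obstacle is the combinatorial bookkeeping in this last step. A given target $M_{-\nu}\mathbf 1$ is reachable from several different $\mu$, and when $\nu$ or $\mu$ has repeated parts the derivative-type action carries multiplicity factors; one must show that all contributions group into the intended cancellation rather than producing spurious terms, so that the lowering contribution from one $\mu$ precisely cancels the deletion contribution from another. Making this accounting airtight --- tracking multiplicities, handling the boundary terms where a part equals $n$, and confirming the telescoping across the product --- is where the real work lies. Finally I would note that the mere existence of a nonzero singular vector at level $p$ is consistent with the reducibility of $V(c_L,c_M,h_L,h_M)$ under $\phi(p)=0$ guaranteed by Theorem~\ref{Sim}, so the computation confirms an anticipated vector and pins down its coefficients in closed form.
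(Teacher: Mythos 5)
Your reductions are sound and in one respect cleaner than the paper's setup: since $u\in U(\mathcal M_-)\1$ and $[\mathcal M,\mathcal M]=[\mathcal Q,\mathcal M]=0$, the conditions $M_ku=Q_ru=0$ are automatic; $L_nu=0$ holds for free when $n\ge p$ once $\phi(p)=0$; the identity $L_n=\tfrac{1}{2-n}[L_1,L_{n-1}]$ reduces everything to $L_1u=L_2u=0$; and your observation that $\phi(m)=\phi(m)-\phi(p)=\tfrac{m^2-p^2}{12}c_M\neq0$ for $1\le m\le p-1$ correctly justifies that the denominators are nonzero. But the argument stops exactly where it must start. Extracting the coefficient of $M_{-\nu}\1$ (for $\nu\in\mathcal P(p-n)$) from $L_nu=0$ does not give a two-term relation that telescopes one factor at a time: it couples $s_{\nu\cup\{n\}}$, weighted by $n\phi(n)$ times the multiplicity of $n$ in $\nu\cup\{n\}$, with $s_{\nu\setminus\{k\}\cup\{k+n\}}$ for \emph{every} distinct part $k$ of $\nu$, each weighted by $2n+k$ times the multiplicity of $k+n$ in the source partition. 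The multiplicity factors you set aside as bookkeeping are not cosmetic. A spot check makes this concrete: for $p=3$ the relation from $L_2u=0$ at $\nu=(1)$ forces $s_{(2,1)}=6/c_M$ and then $L_1u=0$ at $\nu=(1,1)$ forces $s_{(1,1,1)}=9/c_M^2$ (matching the paper's Example), whereas a literal one-factor-at-a-time reading of the displayed product yields $3/c_M$ and $27/(4c_M^2)$. So ``confirming the telescoping'' is the entire proof, and as described it does not close up.

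The paper takes a different and logically safer route that you should compare against. It never verifies the closed formula against the defining equations: it orders the ${\tt p}(p)$ monomials $M_{-\lambda}\1$ by $\succ$, pairs them with the operators $L_p,\,L_{p-1}L_1,\,\dots,\,L_1^p$, and observes that the resulting ${\tt p}(p)\times{\tt p}(p)$ coefficient matrix is lower triangular with first row zero (the $(1,1)$ entry being $p\phi(p)=0$), all other diagonal entries nonzero and all other first-column entries nonzero; hence the solution space is exactly one-dimensional and a singular vector with leading term $M_{-p}\1$ exists and is unique, after which the coefficients are read off recursively. Your write-up contains no existence statement independent of the unverified identity, so if the closed-form coefficients do not satisfy $L_1u=L_2u=0$ on the nose, nothing in it produces the singular vector. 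To repair the proposal, either carry out the corank computation (which is what the triangularity of the paper's system delivers) or genuinely prove the multi-term recursion with the multiplicities included.
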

	\begin{proof} Suppose that $${\rm S}=\sum_{\lambda\in \mathcal P(p) }s_{\lambda}M_{-\lambda}\in U(\mathfrak{g}_{-}), s_{\lambda}\in \mathbb{C},$$  
		where the ordering of all summands of ${\rm S}$ is according to "$\succ$" defined in Section 2.2 as follows
		\begin{eqnarray*}
			M_{-p}, M_{-(p-1)}M_{-1}, M_{-(p-2)}M_{-2}, M_{-(p-2)}M_{-1}^{2}, \cdots, M_{-1}^p.\label{W-ordering}
		\end{eqnarray*}
		
		Now we consider the ${\tt p}(p)$ linear equations:	
		\begin{eqnarray}
			L_{p}u=0,\  L_{p-1}L_{1}u=0,\  L_{p-2}L_{2}u=0,\  L_{p-2}L_{1}^{2}u=0,\ \cdots, L_{1}^{p}u=0.\label{o2.110}
		\end{eqnarray}
		%
		%
		%
		%
		%
		%
		
		The coefficient matrix $A_{p}$ of the linear equations \eqref{o2.110} is
		$$A_{p}=\left(
		\begin{array}{ccccc}
			p\phi(p) & 0 & 0 & \cdots & 0 \\
			\star & \star & 0 & \cdots & 0 \\
			\star & \star & \star & \cdots & 0 \\
			\vdots &  \vdots &  \vdots & \cdots &  \vdots \\
			\star & \star & \star & \cdots & \star \\
		\end{array}
		\right).
		$$

		Clearly, $A_{p}$ is an lower triangular whose first row is zero, its other diagonal entries and other entries in the first column $\star$ are non-zero. So there exists a unique solution for ${\rm S}$ with $1$ as the coefficient of $M_{-p}$ up to a scalar multiple.
		
		Certainly, by the actions of $L_i, i=p-1, p-2, \cdots, 1$ on $u={\rm S}\1$ we can get all $s_{\mu}$.
	\end{proof}

	\begin{exa}
		\begin{eqnarray*}
			&(1)&p=1,h_M=0: {\rm S}=M_{-1};\\
			&(2)&p=2,h_M=-\frac{1}{8}c_M: {\rm S}=M_{-2}+\frac{6}{c_M}M_{-1}^2;\\
			&(3)&p=3,h_M=-\frac{1}{3}c_M: {\rm S}=M_{-3}+\frac{6}{c_M}M_{-2}M_{-1}+\frac{9}{c_M^2}M_{-1}^3.
		\end{eqnarray*}
	\end{exa}

	\begin{lem}\label{l3.15'} For any $x\in \frak g_+$, we have
		\begin{eqnarray*}
			[x, {\rm S}]\subset U(\mathcal{M}_-) \left(2M_0+\frac{p^2-1}{12}{\bf c}_M\right)+U(\mathcal{M}_-)\mathcal M_+.
		\end{eqnarray*}
	\end{lem}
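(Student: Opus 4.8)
The plan is to verify the stated inclusion on a homogeneous spanning set of $\mathfrak g_+$, namely on the generators $L_n,M_n$ $(n\in\mathbb Z_+)$ and $Q_r$ $(r\in\frac12+\mathbb N)$, which suffices since $\ad$ is linear. Two of the three cases are immediate. Because $\mathcal M$ is abelian and $[M_m,Q_r]=0$, while ${\rm S}\in U(\mathcal M_-)$ is a polynomial in the $M_{-i}$ alone, one has $[M_n,{\rm S}]=0$ and $[Q_r,{\rm S}]=0$, and $0$ lies trivially in the right-hand side. Hence the entire content is the case $x=L_n$.

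For $x=L_n$ I would use that $\ad L_n$ is a derivation of $U(\mathfrak g)$ and that $[L_n,M_{-i}]=(n+i)M_{n-i}+\frac{n^3-n}{12}\delta_{n,i}{\bf c}_M$ produces only $M$'s and ${\bf c}_M$; thus $[L_n,{\rm S}]\in U(\mathcal M+\mathbb C{\bf c}_M)$, a commutative algebra in which all factors commute. Expanding ${\rm S}=\sum_\lambda s_\lambda M_{-\lambda}$ and differentiating one factor $M_{-\lambda_k}$ at a time, I would sort the resulting monomials by the sign of the shifted index $n-\lambda_k$ into three groups: those with $\lambda_k>n$, summing to a purely negative element $X\in U(\mathcal M_-)$; those with $\lambda_k=n$, summing to $(2nM_0+\frac{n^3-n}{12}{\bf c}_M)\,Y$ with $Y\in U(\mathcal M_-)$ (the $M_0$/central factor pulls out since it commutes with $U(\mathcal M_-)$); and those with $\lambda_k<n$, summing to $Z\in U(\mathcal M_-)\mathcal M_+$. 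Writing $z:=2M_0+\frac{p^2-1}{12}{\bf c}_M$ and using $\phi(p)=0$, the elementary identity $2nM_0+\frac{n^3-n}{12}{\bf c}_M=nz+\frac{n(n^2-p^2)}{12}{\bf c}_M$ yields
$$[L_n,{\rm S}]=nYz+Z+\Big(X+\tfrac{n(n^2-p^2)}{12}{\bf c}_M\,Y\Big).$$
The first two summands already lie in $U(\mathcal M_-)z+U(\mathcal M_-)\mathcal M_+$, so everything reduces to showing that the remainder $R:=X+\frac{n(n^2-p^2)}{12}{\bf c}_M Y$ vanishes.

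The vanishing of $R$ is the crux, and the efficient route is to feed back the fact, already established in Proposition \ref{singular-S1}, that ${\rm S}\1$ is singular. Applying the displayed identity to $\1$ and using $L_n\1=0$ gives $[L_n,{\rm S}]\1=0$; moreover $z\1=\phi(p)\1=0$ forces $nYz\1=0$, and $\mathcal M_+\1=0$ forces $Z\1=0$, so $R\1=0$. Since ${\bf c}_M$ acts on $V(c_L,c_M,h_L,h_M)$ by the scalar $c_M$ and $X,Y\in U(\mathcal M_-)$, this says $\big(X+\frac{n(n^2-p^2)}{12}c_M\,Y\big)\1=0$ with $X+\frac{n(n^2-p^2)}{12}c_M Y\in U(\mathcal M_-)$; by the PBW theorem the map $U(\mathcal M_-)\to V(c_L,c_M,h_L,h_M)$, $f\mapsto f\1$, is injective, so $X+\frac{n(n^2-p^2)}{12}c_M Y=0$. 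As ${\bf c}_M$ acts by $c_M$, this is exactly $R=0$, and the inclusion follows.

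I expect the main obstacle to be precisely this last vanishing: verifying $X=-\frac{n(n^2-p^2)}{12}c_M Y$ directly from the explicit recursion for the $s_\mu$ in Proposition \ref{singular-S1} would be a delicate induction, and reading it off instead from the singular-vector property of ${\rm S}\1$ together with PBW injectivity is what keeps the argument short. One should also be careful to treat ${\bf c}_M$ as the central scalar $c_M$ of the fixed module throughout, since the identity is understood in $U(\mathfrak g)/({\bf c}_M-c_M)$ (equivalently, as an identity of operators on the module) rather than in $U(\mathfrak g)$ itself.
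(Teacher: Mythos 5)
Your proof is correct and follows essentially the same route as the paper, whose entire argument is the one-line observation that $[x,{\rm S}]\in U(\mathcal M\oplus\mathbb C{\bf c}_M)$ and $[x,{\rm S}]\1=0$; you simply carry out the bookkeeping (splitting by the sign of $n-\lambda_k$, isolating the $M_0$/central part, and killing the remainder via the singular-vector property plus PBW injectivity) that the paper leaves implicit. Your closing remark that the inclusion should be read modulo ${\bf c}_M-c_M$ (i.e.\ as operators on the module) is a valid and worthwhile clarification of a point the paper glosses over.
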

	\begin{proof} 
		It follows from the fact that $[x, {\rm S}]\1=0$ in $V(c_L,c_M,h_L,h_M)$  and $[x, {\rm S}]\in U(\mathcal{M}\oplus \mathbb C{\bf c}_M) $ for any $x\in\frak g_+$.
	\end{proof}

	\begin{lem}\label{singular-Sk}
		Let $u={\rm S}{\bf 1}$ be the singular vector in  Proposition \ref{singular-S1}.
		Then ${\rm S}^k{\bf 1}$ is   a singular vector in $V(c_L,c_M,h_L,h_M)_{kp}$ for any $k\in\mathbb Z_+$.
	\end{lem}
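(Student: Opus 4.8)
The plan is to verify directly the three defining conditions of a singular vector for ${\rm S}^k\1$ — annihilation by $L_n$ and by $M_n$ for $n\in\mathbb Z_+$, annihilation by $Q_r$ for $r\in\frac12+\mathbb N$, and the $\mathfrak g_0$-eigenvector property — proceeding by induction on $k$, with the base case $k=1$ supplied by Proposition \ref{singular-S1} (and $k=0$ being the trivial case $\1$). The key structural feature I would exploit is that ${\rm S}\in U(\mathcal M_-)$, so that the commutativity of $\mathcal M$ makes the ``matter'' directions essentially automatic, while the single nontrivial direction $L_n$ is controlled by Lemma \ref{l3.15'}.

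First I would dispose of the two matter directions, which need no induction. Since ${\rm S}\in U(\mathcal M_-)$ and the relations $[M_m,M_n]=0=[M_n,Q_r]$ show that every $M_n$ and every $Q_r$ commutes with ${\rm S}$, we get, for $n\in\mathbb Z_+$ and $r\in\frac12+\mathbb N$,
$$M_n{\rm S}^k\1={\rm S}^kM_n\1=0,\qquad Q_r{\rm S}^k\1={\rm S}^kQ_r\1=0.$$
The same commutativity gives $M_0{\rm S}^k\1={\rm S}^kM_0\1=h_M{\rm S}^k\1$, so that ${\rm S}^k\1$ is an $M_0$-eigenvector; since it lies in $V(c_L,c_M,h_L,h_M)_{kp}$ it is also an $L_0$-eigenvector of eigenvalue $h_L+kp$, and ${\bf c}_L,{\bf c}_M$ act as scalars, so ${\rm S}^k\1$ is a $\mathfrak g_0$-eigenvector. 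This computation also yields the crucial scalar identity
$$\Big(2M_0+\tfrac{p^2-1}{12}{\bf c}_M\Big){\rm S}^k\1=\phi(p)\,{\rm S}^k\1=0,$$
which will feed the remaining direction.

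The only nontrivial direction is the action of $L_n$, $n\in\mathbb Z_+$, and here I would invoke Lemma \ref{l3.15'}. Assuming inductively that ${\rm S}^{k-1}\1$ is singular, I write
$$L_n{\rm S}^k\1=[L_n,{\rm S}]\,{\rm S}^{k-1}\1+{\rm S}\,L_n{\rm S}^{k-1}\1=[L_n,{\rm S}]\,{\rm S}^{k-1}\1,$$
the second term vanishing by the inductive hypothesis. By Lemma \ref{l3.15'}, $[L_n,{\rm S}]$ lies in $U(\mathcal M_-)\big(2M_0+\tfrac{p^2-1}{12}{\bf c}_M\big)+U(\mathcal M_-)\mathcal M_+$, so it suffices to check that this subspace annihilates ${\rm S}^{k-1}\1$. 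The first summand does so by the scalar identity above (with $k$ replaced by $k-1$), since $U(\mathcal M_-)$ preserves the zero vector; the second does so because any $M_i$ with $i\in\mathbb Z_+$ already kills ${\rm S}^{k-1}\1$ by the matter computation, and again $U(\mathcal M_-)$ preserves $0$. Hence $L_n{\rm S}^k\1=0$, completing the induction.

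The step I expect to carry the real weight is the passage through Lemma \ref{l3.15'}: the entire argument hinges on the fact that the commutator $[L_n,{\rm S}]$ produces no $L$- or $Q$-terms and no stray $M_{-j}$-terms, but only elements of $U(\mathcal M_-)$ times the distinguished operator $2M_0+\tfrac{p^2-1}{12}{\bf c}_M$ or times $\mathcal M_+$, so that reducibility is governed entirely by the single scalar $\phi(p)$. Granting that lemma, the induction is purely formal, relying only on the commutativity of $\mathcal M$ and the centrality of ${\bf c}_M$; no further computation of the coefficients $s_\mu$ is needed.
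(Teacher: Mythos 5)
Your proof is correct and follows essentially the same route as the paper, which simply cites Lemma \ref{l3.15'} for this statement; you have merely spelled out the routine details (commutativity of $\mathcal M$ for the $M_n$, $Q_r$, and $\mathfrak g_0$ directions, and the induction on $k$ through Lemma \ref{l3.15'} for the $L_n$ direction) that the paper leaves implicit.
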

	\proof  It follows from Lemma \ref{l3.15'}.\qed
	%
	%

	\begin{lem}\label{l3.6}
		If $u$ is a singular vector in $V(c_L,c_M,h_L,h_M)_n$ for $n\in \mathbb Z_+$ with $\ell_L(u)=0$, then $\ell_Q(u)=0$.
	\end{lem}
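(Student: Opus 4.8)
The plan is to contradict $\mathfrak g_+u=0$ by applying a single odd generator $Q_s\in\mathfrak g_+$ chosen from the top of the $Q$-content of $u$. Since $\ell_L(u)=0$, we may write $u=\sum_{\lambda\in\mathcal P,\,\mu\in\mathcal{SP}}a_{\lambda,\mu}M_{-\lambda}Q_{-\mu+\frac12}\mathbf 1\in U(\mathcal M_-+\mathcal Q_-)\mathbf 1$. As $n\in\mathbb Z_+$ is an integer and $\deg\big(M_{-\lambda}Q_{-\mu+\frac12}\mathbf 1\big)=|\lambda|+|\mu|-\tfrac12\ell(\mu)$, every $\mu$ in the support must have even length; hence $q:=\ell_Q(u)$ is even. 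Assume for contradiction that $q\ge 2$, and set
\[
P=\max\big\{\mu_1 \ \big|\ (\lambda,\mu)\in\mathrm{supp}(u),\ \ell(\mu)=q\big\}.
\]
Since any strict partition of length $q$ has largest part $\ge q$, we have $P\ge q\ge 2$.

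I would then apply $Q_s$ with $s=P-\tfrac12\in\tfrac12+\mathbb N$. Because $[Q_s,\mathcal M_-]=0$, it commutes past every $M_{-\lambda}$, and the super-Leibniz rule together with $Q_s\mathbf 1=0$ yields, on each monomial,
\[
Q_s\big(M_{-\lambda}Q_{-\mu+\frac12}\mathbf 1\big)=\sum_{j}(-1)^{j-1}M_{-\lambda}\,\{Q_s,Q_{-\mu_j+\frac12}\}\,Q_{-\mu^{(\hat\jmath)}+\frac12}\mathbf 1,
\]
where $\{Q_s,Q_{-\mu_j+\frac12}\}=2M_{P-\mu_j}+\tfrac13P(P-1)\,\delta_{\mu_j,P}\,{\bf c}_M$ and $\mu^{(\hat\jmath)}$ deletes the $j$-th part. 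There are three cases: if $\mu_j<P$ the positive mode $M_{P-\mu_j}$ commutes with the surviving $\mathcal Q_-$-factors and annihilates $\mathbf 1$; if $\mu_j>P$ a negative mode $M_{-(\mu_j-P)}$ is created; and if $\mu_j=P$ only the central scalar survives. The crucial point of the choice $s=P-\tfrac12$ is that $\mu_j>P$ is impossible for any $\mu$ with $\ell(\mu)=q$, so the $M$-creating case never occurs. Moreover $Q_s$ lowers $Q$-length by exactly $1$, so only the length-$q$ part of $u$ contributes to the length-$(q-1)$ component of $Q_su$.

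Consequently the length-$(q-1)$ component of $Q_su$ reduces to
\[
\tfrac13P(P-1)c_M\!\!\sum_{\substack{(\lambda,\mu)\in\mathrm{supp}(u)\\ \ell(\mu)=q,\ \mu_1=P}}\!\! a_{\lambda,\mu}\,M_{-\lambda}Q_{-\hat\mu+\frac12}\mathbf 1,\qquad \hat\mu=(\mu_2,\dots,\mu_q),
\]
since a part equal to $P$ can only be the (unique) largest part $\mu_1$. The assignment $(\lambda,\mu)\mapsto(\lambda,\hat\mu)$ is injective on such terms (one recovers $\mu=\hat\mu\cup\{P\}$), so the displayed monomials are pairwise distinct PBW basis vectors; as $P$ is attained, at least one term occurs with $a_{\lambda,\mu}\ne0$, and $\tfrac13P(P-1)c_M\ne0$ because $P\ge2$ and $c_M\ne0$. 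Hence $Q_su\ne0$, contradicting $Q_su=0$, and therefore $q=0$, i.e.\ $\ell_Q(u)=0$. I expect the only real obstacle to be this no-cancellation step, which is resolved precisely by tying $s$ to the global maximal part $P$ rather than to a single leading monomial: this eliminates the $M$-creating contributions and turns the surviving action into an injective part-deletion, so distinct source monomials map to distinct basis vectors and no cancellation can occur. The parity remark is what guarantees $P\ge2$, hence a nonzero central coefficient, in the integer-degree regime.
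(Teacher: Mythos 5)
There is a genuine gap, and it sits exactly at the point you flag as the crux. When $\mu_j=P$ the anticommutator is $\{Q_{P-\frac12},Q_{-P+\frac12}\}=2M_0+\tfrac13P(P-1){\bf c}_M$, and $M_0$ does \emph{not} annihilate $\mathbf 1$: it acts by $h_M=-\tfrac{p^2-1}{24}c_M$, which is nonzero for every $p>1$. So the scalar that actually multiplies your length-$(q-1)$ component is not $\tfrac13P(P-1)c_M$ but
$$2h_M+\tfrac13P(P-1)c_M=2h_M+\frac{(2P-1)^2-1}{12}c_M=\phi(2P-1)=\frac{(2P-1)^2-p^2}{12}\,c_M,$$
which vanishes precisely when $2P-1=p$. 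Since $P\ge q\ge 2$ only forces $p\ge 3$ odd in that case (e.g.\ $p=3$, $P=2$, a term $Q_{-\frac32}Q_{-\frac12}\mathbf 1$ in degree $2$), this degenerate case is genuinely reachable, and there your computation yields $0=0$ rather than a contradiction. Everything else in your argument is fine: the parity observation forcing $\ell(\mu)$ even, the bound $P\ge q$, the fact that tying $s$ to the global maximum $P$ kills the $M$-creating contributions for length-$q$ terms, and the injectivity of part-deletion. But the no-cancellation step is not where the difficulty lies; the difficulty is that the surviving coefficient can itself be zero, and that is the whole reason the hypothesis $\phi(p)=0$ enters this lemma.

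This is exactly the point where the paper's proof of Lemma \ref{l3.6} does more work. It first applies $Q_{\bar\mu_1-\frac12}$ with $\bar\mu_1$ maximal (your step, with the coefficient correctly identified as $\phi(2\bar\mu_1-1)$), concluding unless $2\bar\mu_1-1=p$. In that remaining case it applies a second odd generator $Q_{\bar\mu_2-\frac12}$ attached to the second-largest part; now $\phi(2\bar\mu_2-1)\ne 0$ because $2\bar\mu_2-1<p$, and one argues that the extra terms produced by the negative modes $M_{-(\mu_j-\bar\mu_2)}$ (which your choice of $s$ no longer suppresses) cannot cancel the leading contribution. To repair your proof you would need to add this second step, or an equivalent argument handling $P=\tfrac{p+1}{2}$ with $p$ odd.
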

	\begin{proof} Assume that $\ell_Q(u)\ne0$.
		Set
		\begin{eqnarray*}
			u=\sum_{\mu\in\mathcal {SP}}a_\mu Q_{-\mu+\frac12}\1\in V(c_L,c_M,h_L,h_M)_n,
		\end{eqnarray*}
		where $a_\mu \in U(\mathcal M_-)$.
		Take $\bar\mu=(\bar\mu_1, \cdots, \bar\mu_s)$ among all $\mu$ with $a_{\mu}\ne0$ such that ${\bar\mu}_1$ is maximal.
		
		Certainly, $s\ge 2$ since $\ell(\bar\mu)$ is even and  $n\in\mathbb Z$.
		
		\begin{eqnarray*}
			0=Q_{{\bar\mu}_1-\frac12}u=\left(2h_M+\frac{(2\bar \mu_1-1)^2-1}{12}c_M\right)\sum_{\mu_1=\bar{\mu_1}} a_{\mu}\frac{\partial}{\partial Q_{-\bar\mu_1+\frac12}}Q_{-\mu+\frac12}{\bf 1}.
		\end{eqnarray*}
		If $2\bar\mu_1-1\ne p$, then $Q_{{\bar\mu}_1-\frac12}u\ne 0$, which is  a contradiction. 
		
		Now we only consider the case of $p=2\bar\mu_1-1$ being odd, and in this case $p>2\bar\mu_2-1$.
		
		By acting $Q_{\bar\mu_2-\frac12}$ on $u$, we get
		\begin{eqnarray*}
			0=Q_{{\bar\mu}_2-\frac12}u=\left(2h_M+\frac{(2\bar \mu_2-1)^2-1}{12}c_M\right)\sum_{\mu_1=\bar{\mu_1}} a_{\mu}\frac{\partial}{\partial Q_{-\bar\mu_2+\frac12}}Q_{-\mu+\frac12}{\bf 1}+B\ne 0,
		\end{eqnarray*}
		where $\nu_1<\bar{\mu}_1$ for all summand $a_\nu Q_{-\nu+\frac12}$ in $B$ with $a_\nu\ne0$.
		It also gets a contradiction.
	\end{proof}
	
	Now we shall determine all singular vectors in $V(c_L,c_M,h_L,h_M)$ with ${\ell}_L(u)=0$.
	
	\begin{theo}\label{singular-W} Let  $(c_L,c_M,h_L,h_M)\in\bC^4$ such that  $\phi(p)=0$  with $c_M\ne 0$,  ${\rm S}$ be defined in  Proposition \ref{singular-S1} and $u\in V(c_L,c_M,h_L,h_M)_n$  for some $p\in \mathbb Z_+$ with ${\ell}_L(u)=0$.  Then $u$ is   a 
		singular vector  if and only if $n=kp$ for some $k\in\mathbb Z_+$. In this case  $u={\rm S}^k{\bf 1}$ up to a scalar multiple if $n=pk$.
	\end{theo}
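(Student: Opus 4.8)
The plan is to reduce the statement to a clean problem about a commutative polynomial algebra and then run a leading-term induction controlled by the total order $\succ$. First I would invoke Lemma~\ref{l3.6}: since $\ell_L(u)=0$, it gives $\ell_Q(u)=0$, so $u\in U(\mathcal M_-)\1$, say $u=\sum_{\lambda}a_{\lambda}M_{-\lambda}\1$ with $|\lambda|=n$. Because $[M_m,M_{-a}]=0$ and $[Q_r,M_{-a}]=0$, the operators $M_m$ ($m\in\mathbb Z_+$) and $Q_r$ ($r\in\frac12+\mathbb N$) commute with $U(\mathcal M_-)$ and annihilate $\1$, so $M_mu=Q_ru=0$ automatically. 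Hence $u$ is singular if and only if $L_mu=0$ for all $m\in\mathbb Z_+$.

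Next I would transport the problem to $R=\mathbb C[x_1,x_2,\dots]$ via $x_j\leftrightarrow M_{-j}\1$, with $\deg x_j=j$. Since $L_m\1=0$ for $m>0$, each $L_m$ acts on $U(\mathcal M_-)\1$ as the degree-lowering derivation
$$D_m=m\,\phi(m)\,\partial_{x_m}+\sum_{a>m}(m+a)\,x_{a-m}\,\partial_{x_a},$$
coming from $[L_m,M_{-a}]\1=(m+a)M_{m-a}\1+\tfrac1{12}\delta_{m,a}(m^3-m)c_M\1$ together with $M_0\1=h_M\1$, so that $[L_m,M_{-m}]\1=m\,\phi(m)\1$. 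The arithmetic fact I would record here is that, under $\phi(p)=0$ and $c_M\neq0$, one has $\phi(m)=\tfrac{m^2-p^2}{12}c_M\neq0$ for every $m\in\mathbb Z_+\setminus\{p\}$. Thus the task is to classify homogeneous $u\in R$ of degree $n$ with $D_mu=0$ for all $m\ge1$, where on pure-$M$ monomials $\succ$ restricts to the natural order on partitions.

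The heart of the argument is a two-step leading-term analysis applied to $\mathrm{hm}(u)=M_{-\lambda}$ (normalized). In step one, with $q=\lambda_1$ the largest part, a direct computation gives $D_qM_{-\lambda}=c\,q\,\phi(q)\,M_{-(\lambda\setminus q)}$ ($c$ the multiplicity of $q$), and no $M_{-\sigma}$ with $\sigma\prec\lambda$ can contribute to $M_{-(\lambda\setminus q)}$ under $D_q$, since producing it would require $\sigma$ to possess a part $>\lambda_1$; hence $q>p$ would force $D_qu\neq0$ via $\phi(q)\neq0$, so $\lambda_1\le p$. In step two, write $\lambda=(p^a,\mu)$ with all parts of $\mu$ strictly below $p$; if $\mu\neq\emptyset$ and $t=\mu_1$, the $\succ$-highest monomial of $D_tM_{-\lambda}$ is $d\,t\,\phi(t)\,M_{-(p^a,\mu\setminus t)}$, and the same no-cancellation bookkeeping (any competing $\sigma$ would exceed $\lambda$) shows it survives in $D_tu$, contradicting $D_tu=0$ since $\phi(t)\neq0$. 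Therefore $\lambda=(p^k)$ and $n=kp$, which proves the ``only if'' direction. I expect this no-cancellation verification to be the main obstacle: one must use the precise definition of $\succ$ and the constraint $\phi(m)\neq0$ for $m\neq p$ to guarantee that the leading monomials produced from the top term of $u$ are not killed by the images of strictly lower monomials.

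For uniqueness, given $n=kp$ the analysis forces $\mathrm{hm}(u)=c\,M_{-p}^k$ with $c\neq0$. By Lemma~\ref{singular-Sk}, $\mathrm{S}^k\1$ is a pure-$M$ singular vector of weight $kp$, and since $\mathrm{hm}(\mathrm{S})=M_{-p}$ and $\succ$ respects products we have $\mathrm{hm}(\mathrm{S}^k\1)=M_{-p}^k$. Then $w=u-c\,\mathrm{S}^k\1$ is again a pure-$M$ singular vector of weight $kp$, but with $\mathrm{hm}(w)\prec M_{-p}^k$; applying the two-step analysis to $w$ would force $\mathrm{hm}(w)=M_{-p}^k$, a contradiction unless $w=0$. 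Hence $u=c\,\mathrm{S}^k\1$, completing the proof.
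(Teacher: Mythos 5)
Your proposal is correct and follows essentially the same strategy as the paper's proof: reduce to $U(\mathcal{M}_-)\1$ via Lemma \ref{l3.6}, then run a leading-term argument with respect to the order $\succ$ using the action of positive Virasoro modes together with the fact that $\phi(m)\ne 0$ for every $m\in\mathbb Z_+$ with $m\ne p$. The paper packages this by first rewriting $u$ as a polynomial in ${\rm S}$ and applying $L_\lambda$ to the highest non-constant coefficient, whereas you identify ${\rm hm}(u)=M_{-p}^k$ directly and then subtract $c\,{\rm S}^k\1$; your explicit no-cancellation bookkeeping fills in a step the paper's terse proof leaves implicit, but the underlying mechanism is the same.
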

	\proof

	Let $u\in V(c_L,c_M,h_L,h_M)_n$ be a singular vector. By Lemma \ref{l3.6} we can suppose that
	\begin{equation}
		u= (a_0{\rm S}^k+a_1{\rm S}^{k-1}+\cdots a_{k-1}{\rm S}+a_k)\mathbf1,\label{E3.5}
	\end{equation}
	where $k\in\mathbb Z_+$ and  each $a_i\in U(\mathcal M_-)$ does not involve $M_{-p}$ for any $i=0,1, \cdots, k$. We may assume that $a_0\ne0$.
	
	If ${\rm hm}\{a_0, a_1, \cdots, a_k\}\notin \mathbb C$, set ${\rm hm}\{a_0, a_1, \cdots, a_k\}=M_{-\lambda}$. By the action of $L_{\lambda}$ on (\ref{E3.5}),  we can get 
	$L_{\lambda}u\ne0$ since all $a_i\in U(\mathcal M_-)$ are not involving $M_{-p}$.
	So $a_0\in\mathbb C$ and $u=a_0{\rm S}^k{\bf 1}.$  The theorem follows.
	\qed

	\begin{lem}\label{l3.1}  If $u$ is a singular vector in $V(c_L,c_M,h_L,h_M)_n$, then $\ell_{L}(u)=0$.
	\end{lem}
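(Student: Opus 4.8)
The plan is to argue by contradiction: assume $u$ is singular with $d:=\ell_L(u)\ge 1$. The guiding observation is that, among the raising operators in $\mathfrak{g}_+$, the even modes $M_m$ ($m\in\mathbb{Z}_+$) are distinguished. Since $[M_m,M_n]=0$ and $[M_m,Q_r]=0$, such an $M_m$ commutes past every $M$- and $Q$-factor of a PBW monomial and interacts with the Virasoro part only through $[M_m,L_{-n}]=(m+n)M_{m-n}$. Hence $M_m$ converts an $L$ into an $M$, and strictly lowers both the $L$-length and the $L$-weight of any monomial; it is therefore the natural tool to detect the $L$-content of $u$. I would first isolate and identify the top $L$-weight part of $u$, and then use the $M_m$ to strip the $L$'s, forcing a contradiction.

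First I would group $u$ by $L$-weight and write its top component as $\sum_{|\nu|=d}c_\nu L_{-\nu}\1$ with $c_\nu\in U(\mathcal{M}\oplus\mathcal{Q})$ supported on negative modes. For $X\in\mathfrak{g}_+$ one has $[X,c_\nu]\in U(\mathcal{M}\oplus\mathcal{Q}\oplus\mathbb{C}{\bf c}_M)$, and when this acts on $L_{-\nu}\1$ the full factor $L_{-\nu}$ survives only through the negative modes and the scalars $M_0\mapsto h_M$, ${\bf c}_M\mapsto c_M$ (non-negative modes strictly lower the $L$-weight). Thus the $L$-weight-$d$ component of $Xu$ is $\sum_{|\nu|=d}\big(X(c_\nu\1)\big)L_{-\nu}\1$, and since the $L_{-\nu}\1$ ($|\nu|=d$) are PBW-independent over $U(\mathcal{M}\oplus\mathcal{Q})$, the conditions $L_mu=0$ and $Q_ru=0$ give $L_m(c_\nu\1)=0$ and $Q_r(c_\nu\1)=0$ for all $m,r>0$; the relation $2M_m=Q_{m-\frac12}Q_{\frac12}+Q_{\frac12}Q_{m-\frac12}$ then yields $M_m(c_\nu\1)=0$ as well. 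So each $c_\nu\1$ is a singular vector with $\ell_L=0$, whence by Lemma \ref{l3.6} and Theorem \ref{singular-W} it is a scalar multiple of some ${\rm S}^{k}\1$; comparing weights shows $k$ is independent of $\nu$. Consequently the top $L$-weight part of $u$ equals ${\rm S}^{k}P\1$, where $P$ is a nonzero homogeneous element of degree $d\ge1$ built only from the $L_{-n}$.

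To finish I would eliminate $P$. Because $[M_m,{\rm S}^k]=0$ and $M_m$ ignores all $M$- and $Q$-factors, the $L$-weight-$(d-m)$ component of $M_mu$ receives no contribution from the lower-$L$-weight part of $u$ (that part has $L$-weight $\le d-1$, and an $M_m$-image has $L$-weight $\le d-1-m<d-m$), and it equals ${\rm S}^k\big(h_M\,\mathcal{D}_mP\big)\1$, where $\mathcal{D}_m$ is the combinatorial operator deleting parts of total size $m$ (the terminal $M_0$ of a full local cascade producing the factor $h_M$). Hence $M_mu=0$ forces $h_M\,\mathcal{D}_mP=0$ for $1\le m\le d$. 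When $h_M\ne0$ (equivalently $p\ge2$) I would prove that the family $\{\mathcal{D}_m\}_{m=1}^{d}$ is triangular with respect to $\succ$ — precisely the highest-order-coefficient-in-$L_{-p}$ bookkeeping emphasized in the paper — so that $\mathcal{D}_mP=0$ for all $m$ forces $P=0$, contradicting $d\ge1$.

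The main obstacle is this last combinatorial step: controlling the cross-cancellations among the monomials of $P$ and establishing the triangularity of $\{\mathcal{D}_m\}$ in the ordering $\succ$. A secondary, genuinely different, obstacle is the degenerate case $h_M=0$, which occurs exactly for $p=1$; there every $M_m$-probe is vacuous, and instead I would use the odd generators $Q_r$, which still commute with $\mathcal{M}$ and detect the $L$'s through $[Q_r,L_{-n}]=(r+\tfrac n2)Q_{r-n}$, reducing the analysis to the explicit singular vectors $M_{-1}\1$ and $Q_{-\frac12}\1$ supplied by Theorem \ref{cor3.3}.
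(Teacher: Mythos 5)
Your reduction in the first two steps is correct and is in fact a cleaner route than the paper's: isolating the top $L$-weight component $\sum_{|\nu|=d}c_\nu L_{-\nu}\1$, showing that each $c_\nu\1$ is itself singular with $\ell_L=0$, and invoking Theorem \ref{singular-W} to conclude that the top part is ${\rm S}^kP\1$ with $P=\sum_{|\nu|=d}\alpha_\nu L_{-\nu}\neq 0$. This recovers, in one stroke, what the paper obtains through the claims ``$\ell_L(a_0)=0$'' and ``$a_0=d_0{\rm S}^l$''.

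The final step, however, contains a genuine gap, and it is located exactly where the real difficulty of this lemma lives. Your cascade bookkeeping for $M_m$ is off by a central term: the terminal commutator is $[M_{m_j},L_{-m_j}]=2m_jM_0+\frac{m_j^3-m_j}{12}{\bf c}_M$, so a full cascade ending on $\1$ produces the factor $m_j\bigl(2h_M+\frac{m_j^2-1}{12}c_M\bigr)=m_j\phi(m_j)$, not $2m_jh_M$. Since $\phi(m)=\frac{c_M}{12}(m^2-p^2)$ vanishes precisely at $m=p$, every cascade whose last deleted part equals $p$ contributes zero. Consequently $\{\mathcal{D}_m\}_{m=1}^{d}$ is \emph{not} jointly injective: one checks directly that $\mathcal{D}_mL_{-p}^{r}=0$ for all $m$, so the whole line $\mathbb{C}[L_{-p}]_d$ lies in $\bigcap_m\ker\mathcal{D}_m$, and your argument only reduces $P$ to a scalar multiple of $L_{-p}^{r}$ (when $p\mid d$). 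This failure is not an artifact of the computation: the vector ${\rm T}\1=L_{-p}\1+\cdots$ constructed in Section 4 satisfies $M_m{\rm T}\1=0$ for all $m>0$ in $V(c_L,c_M,h_L,h_M)$ itself (Lemma \ref{g+T}(b)), regardless of $h_L$, and its top $L$-weight part is exactly $L_{-p}\1$; so no amount of $M$-probing can exclude $P\in\mathbb{C}[L_{-p}]$. Ruling out ${\rm hm}(u)=L_{-p}^{r}\1$ requires the $L_m$-probes or the $M_0$-eigenvector condition (the term $rp\,{\rm S}^kM_{-p}L_{-p}^{r-1}\1$ in $(M_0-h_M)u$ must cancel against lower $L$-weight parts of $u$, leading to the impossible relation $[M_0,f_0]=kpd_0\in\mathbb{C}^*$), which is precisely the bulk of the paper's proof and is absent from your plan. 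Separately, your $p=1$ patch via $Q_r$-probes does not enjoy the ``no contamination from lower $L$-weight parts'' property you rely on: $[Q_r,Q_{-s+\frac12}]=2M_{r-s+\frac12}+\cdots$ can produce a negative $M$-mode and hence preserve the $L$-weight of a lower component, so the signal and the contamination collide.
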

	\begin{proof}
		To the contrary we assume that  $\ell_{L}(u)\neq 0$. Write $u=y\1\in V(c_L,c_M,h_L,h_M)_{n}$, where $y\in U(\mathfrak{g}_-)$. Then $M_0 u=M_0y\1=[M_0,y]\1+h_Mu$. If $[M_0,y]\neq 0$, then $\ell_{L}([M_0,y])<\ell_{L}(y)$. This implies $[M_0,y]\neq ay $ for any $a\in\mathbb{C}^*$, showing that $u$ is not a singular vector of $V(c_L,c_M,h_L,h_M)$.
		So $[M_0,y]=0$.
		
		%

		We write
		\begin{equation*}
			u=y\1= (a_0L_{-p}^k+a_1L_{-p}^{k-1}+a_2L_{-p}^{k-2}+\cdots+a_k)\1, \label{singularL}
		\end{equation*}
		where $k\in\mathbb Z_+, a_i\in U(\frak g_-), i=0, 1, \cdots, k, a_0\ne 0$ and any $a_i$ does not involve $L_{-p}$.
		
		We claim that ${\ell}_L(a_0)=0$. 
		Otherwise, ${\rm hm}(a_0)=a'_0L_{-\nu}$ for some $a'_0\in \mathcal{MQ}$ and $\emptyset\ne\nu\in\mathcal P$.
		Then $[M_{\nu}, y]\1=a'_0[M_{\nu}, L_{-\nu}]L_{-p}^k\1+a'_1L_{-p}^{k-1}\1+\cdots+a'_k\1\ne 0$, where $a'_i\in U(\frak g_-),  i=1, \cdots, k$ with any $a'_i$ not involving $L_{-p}$. This is a contradiction since $[M_{\nu}, L_{-\nu}]\ne 0$ by the assumption of $L_{-\nu}$ not involving $L_{-p}$.
		This claim follows and $a_0\in U(\mathcal M_-+\mathcal Q_-)$.

		Now we shall prove that $k=0$ and get the lemma.
		To the contrary, assume that $k\ge 1$.
		In the case if $[M_0, a_1]=0$ we see that \begin{equation*} [M_0, y]= kpa_0M_{-p}L_{-p}^{k-1}+A=0, \label{M0-act}\end{equation*}
		where the degree of $L_{-p}$ in $A$ is no more than $k-2$.  
		It is a contradiction.
		
		So $[M_0, a_1]\ne0$. If ${\ell}_L(a_1)\ge 2$, then 
		\[ [M_0, y]= kpa_0M_{-p}L_{-p}^{k-1}+[M_0, a_1]L_{-p}^{k-1}+B=0, \] where the degree of $L_{-p}$ in $B$ is no more than $k-2$.
		We see that  ${\ell}_L([M_0, a_0L_{-p}^{k}])=k-1$, ${\ell}_L([M_0, a_1]L_{-p}^{k-1})\ge k$, yielding that $[M_0, y]\ne0$, which is a contradiction.
		
		Now we obtain that  ${\ell}_L(a_1)=1$ and set
		\begin{equation*}a_1=\sum_{i=1}^{s}b_iL_{-i}+b_0, \label{eqa1}\end{equation*} where $b_i\in U(\mathcal{M}_-+\mathcal Q_-)$ and $b_s\ne 0$.
		$$[M_s, y]=a_0[M_s, L_{-p}^k]+b_1[M_s, L_{-s}]L_{-p}^{k-1}+B',$$ where the degree of $L_{-p}$ in  $B'$ is no more than $k-2$.
		
		If $s>p$,  then  ${\ell}_L(a_0[M_s, L_{-p}^k])\le k-2$ and ${\ell}_L([M_s, L_{-s}]L_{-p}^{k-1})=k-1$.  In this case $[M_s, y]\1\ne 0$, it is a contradiction.
		So $s<p$. 
		
		Note that if $p=1$, then $s=0$, which means ${\ell}_L (a_1)=0$. This is a contradiction.

		So we can suppose that $p>1$. By action of $L_i$ for any $i\in\mathbb Z_+$ on $u$ we get
		$$L_iu= L_{-p}^k[L_i, a_0]\1+A=0, $$
		where the degree of $L_{-p}$ in $A$ is no more than $k-1$. So $[L_i, a_0]\1=0$ for any $i\in\mathbb Z_+$.
		In this case, $a_0\1$ becomes a singular vector of $V(c_L,c_M,h_L,h_M)$ with ${\ell}_L(a_0\1)=0$.

		By Theorem \ref{singular-W}, we get
		$	a_0=d_0{\rm S}^l $
		where $l\in\mathbb N,  d_0 \in\mathbb C^*$.
		In this case,
		\begin{equation*}[M_0, y]\1=kpa_0M_{-p}L_{-p}^{k-1}\1+[M_0, a_1]L_{-p}^{k-1}\1+B\1=0,\label{eqMp}\end{equation*} where the degree of $L_{-p}$ in $B$ is no more than $k-2$.
		So
		\begin{equation}kpd_0{\rm S}^lM_{-p}+[M_0, a_1]=0.\label{eqMp1}\end{equation}
		By considering the degree of ${\rm S}$ in \eqref{eqMp1},  we have $a_1=f_0{\rm S}^{l+1}+f_2{\rm S}^l+\cdots+f_{l+1}$, where $f_i\in U(\frak g_-)$ not involving $L_{-p}, M_{-p}$.
		Comparing the coefficients of ${\rm S}^{l+1}$ in \eqref{eqMp1}, we get $$[M_0, f_0]=kpd_0\in\mathbb C^*,$$
		a contradiction. \end{proof}

	\begin{theo} \label{main1} Let  $(c_L,c_M,h_L,h_M)\in\bC^4$ such that  $\phi(p)=0$  with $c_M\ne 0$. Let ${\rm S}\1$ be the singular vector  in  Proposition \ref{singular-S1}, $n\in\mathbb Z_+$. Then  $V(c_L,c_M,h_L,h_M)_n$ possesses a singular vector $u$ if and only if $n=kp$ for some $k\in\mathbb Z_+$. In this case  $u={\rm S}^k{\bf 1}$ if $n=pk$ up to a scalar multiple.
	\end{theo}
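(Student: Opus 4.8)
The plan is to assemble Theorem \ref{main1} from the structural results already established, splitting the equivalence into an existence (``if'') half and a classification (``only if'') half, each of which is essentially covered by a preceding statement. Since $n \in \mathbb{Z}_+$ throughout, the Virasoro content of a candidate singular vector is the quantity I would track first, and the strategy is to reduce every case to one in which $\ell_L(u) = 0$, where the answer is already known.

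For the existence direction, I would suppose $n = kp$ for some $k \in \mathbb{Z}_+$ and simply invoke Lemma \ref{singular-Sk}, which produces the singular vector ${\rm S}^k\1 \in V(c_L,c_M,h_L,h_M)_{kp}$. The underlying mechanism is Lemma \ref{l3.15'}: each commutator $[x,{\rm S}]$ with $x \in \mathfrak{g}_+$ lands in $U(\mathcal{M}_-)\bigl(2M_0+\tfrac{p^2-1}{12}{\bf c}_M\bigr)+U(\mathcal{M}_-)\mathcal{M}_+$, so it annihilates $\1$ under the hypothesis $\phi(p)=0$; iterating lets ${\rm S}$ commute past $\mathfrak{g}_+$ up to terms killing $\1$, whence ${\rm S}^k\1$ is annihilated by all of $\mathfrak{g}_+$ and carries the correct $\mathfrak{g}_0$-weight. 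For the classification direction, I would take an arbitrary nonzero singular vector $u \in V(c_L,c_M,h_L,h_M)_n$ and first apply Lemma \ref{l3.1} to force $\ell_L(u) = 0$, so that $u$ involves no $L_{-i}$ and lies in $U(\mathcal{M}_- + \mathcal{Q}_-)\1$. With $\ell_L(u)=0$ in hand, Theorem \ref{singular-W} applies verbatim: it forces $n = kp$ for some $k \in \mathbb{Z}_+$ and pins down $u = {\rm S}^k\1$ up to a nonzero scalar. Combining the two halves yields the stated equivalence together with the explicit form of $u$.

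Because the heavy lifting is done in Lemma \ref{l3.1} and Theorem \ref{singular-W}, the assembly itself is routine; the genuine obstacle sits inside those two results. If one had to argue from scratch, the hard part is the reduction to $\ell_L = 0$ carried out in Lemma \ref{l3.1}: one writes $u$ as a polynomial $\sum_i a_i L_{-p}^{\,k-i}\1$ in the highest available Virasoro generator and extracts a contradiction from the leading $L_{-p}$-power by acting with $M_0$, with $M_s$ for a suitable $s < p$, and with the $L_i$. The decisive point is that the term $kp\,a_0 M_{-p}L_{-p}^{\,k-1}$ produced by $[M_0,\,\cdot\,]$ cannot be cancelled once $a_0\1$ is recognized (via Theorem \ref{singular-W} applied to $a_0\1$) as a scalar multiple of a power of ${\rm S}$. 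A secondary subtlety, handled in Theorem \ref{singular-W} through Lemma \ref{l3.6}, is ruling out odd-length $Q$-content, which exploits $\phi(p)=0$ together with the half-integer mismatch forcing $\ell(\mu)$ to be even; I would lean on this rather than reprove it.
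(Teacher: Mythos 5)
Your proposal is correct and follows essentially the same route as the paper: the paper's proof of Theorem \ref{main1} is precisely the combination of Lemma \ref{l3.1} (reducing to $\ell_L(u)=0$) with Theorem \ref{singular-W} (classifying that case), with existence supplied by Lemma \ref{singular-Sk}. Your account of the internal mechanics of Lemma \ref{l3.1} and Theorem \ref{singular-W}, including the non-circular use of Theorem \ref{singular-W} applied to $a_0\1$ inside the proof of Lemma \ref{l3.1}, matches the paper's argument.
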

	\begin{proof}
		It follows from Theorem \ref{singular-W} and Lemma \ref{l3.1}.
	\end{proof}

	\subsection{Singular vectors in $V(c_L,c_M,h_L,h_M)_{n-\frac12}$ for $ n\in\mathbb Z_+$}
	
	In this subsection, we shall determine all singular vectors in $V(c_L,c_M,h_L,h_M)_{n-\frac12}$ for $ n\in\mathbb Z_+$.
	\begin{lem}\label{singular-Q1}
		If there exists a singular vector in $V(c_L,c_M,h_L,h_M)_{n-\frac12}$ for $ n\in\mathbb Z_+$ with $\ell_{L}(u)=0$, then $p$ is odd and $\ell_Q(u)=1$.
	\end{lem}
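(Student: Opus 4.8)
The starting point is that $u$ sits in a half-integer weight space while $\ell_L(u)=0$, so by the PBW theorem and $[M_m,Q_r]=0$ one may write $u=\sum_{\mu\in\mathcal{SP}}a_\mu Q_{-\mu+\frac12}\1$ with $a_\mu\in U(\mathcal{M}_-)$; since the weight is a half-integer, every $\mu$ occurring has odd length, so $s:=\ell_Q(u)$ is odd and in particular $s\ge 1$ (as $u\neq0$). The goal is then to show $s=1$ and that $p$ is odd. The whole argument rests on three facts I would isolate first: for a positive integer $q$ one has $\phi(q)=0$ iff $q=p$ (using $c_M\ne0$); the modes $M_j$ with $j>0$ are central with respect to $\mathcal{M}$ and $\mathcal{Q}$ and kill $\1$; and, since $Q_r\1=0$ for $r>0$,
$$Q_{\bar\mu_k-\frac12}\,Q_{-\mu+\frac12}\1=\sum_{i}(-1)^{i-1}\Big(2M_{\bar\mu_k-\mu_i}+\tfrac13\delta_{\bar\mu_k,\mu_i}\big((\bar\mu_k-\tfrac12)^2-\tfrac14\big){\bf c}_M\Big)Q_{-\mu+\frac12}^{(\hat\imath)}\1,$$
where the central coefficient, evaluated on $\1$, is exactly $\phi(2\bar\mu_k-1)$.

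The key observation is that when $Q_{\bar\mu_k-\frac12}$ passes a fermionic mode $Q_{-\mu_i+\frac12}$ it produces either the scalar $\phi(2\bar\mu_k-1)$ (only if $\mu_i=\bar\mu_k$) or the bosonic operator $2M_{\bar\mu_k-\mu_i}$; the latter annihilates $\1$ when $\mu_i<\bar\mu_k$, and otherwise creates $M_{-(\mu_i-\bar\mu_k)}$, raising the $M$-weight by $\mu_i-\bar\mu_k>0$. I would therefore grade simultaneously by $\ell_Q$ and by the $M$-weight $|\lambda|$, and project $Q_{\bar\mu_k-\frac12}u$ onto the component of $\ell_Q=s-1$ and minimal $M$-weight $d:=\min\{|\lambda|:a_\mu\ \text{has an}\ M_{-\lambda}\text{-term with}\ \ell(\mu)=s\}$. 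This double projection discards all monomials coming from $\mu$ of length $<s$ (whose images have $\ell_Q\le s-3$) and all $M$-creation cross terms (which have $M$-weight $\ge d+1$), leaving only the central contributions and reducing the computation to the clean ``$Q$-only'' situation already met in the proof of Lemma \ref{l3.6}.

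For the first mode, among the length-$s$, minimal-$M$-weight terms I choose $\bar\mu$ with $\bar\mu_1$ maximal and let $b_{\bar\mu}\neq0$ be its $M$-weight-$d$ coefficient. Acting by $Q_{\bar\mu_1-\frac12}$ and applying the projection, only the central terms attached to those $\mu$ containing $\bar\mu_1$ survive, giving $\phi(2\bar\mu_1-1)\sum_{\mu:\,\bar\mu_1\in\mu}\pm\,b_\mu Q_{-\mu+\frac12}^{(\hat\imath)}\1$; distinct such $\mu$ yield distinct reduced partitions, so the $\bar\mu$-term cannot be cancelled and the sum is nonzero. Singularity of $u$ forces $\phi(2\bar\mu_1-1)=0$, whence $2\bar\mu_1-1=p$; in particular $p$ is odd and $\bar\mu_1=\frac{p+1}2$.

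For the second mode, if $s\ge3$ then $\bar\mu_2$ exists with $\bar\mu_2<\bar\mu_1=\frac{p+1}2$, so $2\bar\mu_2-1<p$ and $\phi(2\bar\mu_2-1)\neq0$. Repeating the projected computation with $Q_{\bar\mu_2-\frac12}$ yields $\phi(2\bar\mu_2-1)\sum_{\mu:\,\bar\mu_2\in\mu}\pm\,b_\mu Q_{-\mu+\frac12}^{(\text{remove }\bar\mu_2)}\1$, in which the $\bar\mu$-summand again survives uncancelled; hence $Q_{\bar\mu_2-\frac12}u\neq0$, contradicting that $u$ is singular. Therefore $s=1$, i.e.\ $\ell_Q(u)=1$, and $p$ is odd. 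The main obstacle is exactly the appearance of the $M$-creation cross terms $M_{-(\mu_i-\bar\mu_k)}$ when $Q_{\bar\mu_2-\frac12}$ anticommutes past the larger fermionic modes: these can retain first $Q$-entry $\bar\mu_1$, so bookkeeping by the leading $Q$-mode alone does not separate them from the central terms; the minimal-$M$-weight projection is what effects the separation, and checking that it leaves the central term genuinely nonzero is the crux.
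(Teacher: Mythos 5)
Your proof is correct and follows the same strategy as the paper, which disposes of this lemma by appealing to the two-step argument of Lemma \ref{l3.6}: act first with $Q_{\bar\mu_1-\frac12}$ to force $\phi(2\bar\mu_1-1)=0$ (hence $p=2\bar\mu_1-1$ is odd), then with $Q_{\bar\mu_2-\frac12}$ to rule out $\ell_Q(u)\ge 3$. Your additional device --- projecting onto the component with $\ell_Q=s-1$ and minimal $M$-weight $d$ --- is a genuine tightening rather than a different route: it cleanly eliminates the $M$-creation cross terms $M_{-(\mu_j-\bar\mu_2)}Q^{(\hat\jmath)}_{-\mu+\frac12}\1$ in the second step, which can share their leading $Q$-mode with the surviving central terms and which the paper's bookkeeping by the leading $Q$-entry alone (the claim that $\nu_1<\bar\mu_1$ for all summands of the remainder $B$) leaves implicit.
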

	
	\proof 
	It follows similar arguments as in that of  Lemma \ref{l3.6} and the fact that $\ell_Q(u)\ge 1$ here.
	%
	%
	%
	%
	%
	\qed

	\begin{pro}
		\label{singular-R1} Let $p\in 2\mathbb Z_+-1$. Then
		the Verma module
		$V(c_L,c_M,h_L,h_M)$ possesses a singular vector
		$u\in
		V(c_L,c_M,h_L,h_M)_{\frac{p}{2}}$ with $\ell_{L}(u)=0$. Up to a scalar multiple, it is unique and can be written as
		\begin{eqnarray}
			u={\rm R}\1=Q_{-\frac{p}{2}}\1+\sum_{i=1}^{\frac{p-1}{2}}f_{i}(M)Q_{-\frac{p}{2}+i}\1,
		\end{eqnarray}
		where $f_i(M)=\sum_{|\lambda|=i}c_{\lambda}M_{-\lambda}$ for some $c_{\lambda}\in \mathbb{C}$.
	\end{pro}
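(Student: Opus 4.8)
The plan is to mirror the proof of Proposition \ref{singular-S1}, but with the bosonic ansatz replaced by a single fermionic mode dressed by elements of $U(\mathcal M_-)$. By Lemma \ref{singular-Q1} any singular vector $u$ with $\ell_L(u)=0$ in degree $\frac p2$ has $\ell_Q(u)=1$; matching degrees then forces $u=\sum_{i=0}^{(p-1)/2}f_i(M)Q_{-\frac p2+i}\1$ with each $f_i(M)\in U(\mathcal M_-)$ homogeneous of degree $i$, and after normalization the top coefficient is $f_0=1$. Writing $\mathcal M_+$ and $\mathcal Q_+$ for the spans of the positive $M$- and $Q$-modes, the relations $[M_m,M_n]=[M_m,Q_r]=0$ together with $M_i\1=0$ for $i>0$ give $\mathcal M_+u=0$ for free, so being singular reduces to $\mathcal Q_+u=0$ together with $L_ju=0$ for all $j\ge1$.

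First I would extract the conditions coming from $\mathcal Q_+$. Since $Q_{\frac p2-k}$ commutes with every $f_i(M)$ and $[Q_{\frac p2-k},Q_{-\frac p2+i}]=2M_{i-k}+\delta_{i,k}\cdot(\text{central})$, acting on $\1$ and dropping the terms with $i>k$ (for which $M_{i-k}\1=0$) yields, for each $k=1,\dots,\frac{p-1}2$,
\[ 0=Q_{\frac p2-k}u=\phi(p-2k)\,f_k(M)\1+2\sum_{i=0}^{k-1}f_i(M)M_{-(k-i)}\1. \]
Because $\phi(q)=\phi(p)+\frac{q^2-p^2}{12}c_M=\frac{(q-p)(q+p)}{12}c_M$ and $c_M\ne0$, we have $\phi(p-2k)\ne0$ for $1\le k\le\frac{p-1}2$, as then $0<p-2k<p$. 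Since $U(\mathcal M_-)\1$ is free, this is a triangular recursion $f_k=-\frac{2}{\phi(p-2k)}\sum_{i<k}f_iM_{-(k-i)}$ that determines $f_1,\dots,f_{(p-1)/2}$ uniquely from $f_0=1$; this gives uniqueness up to a scalar and, read in the opposite direction, constructs the candidate $u$. A direct check shows $Q_ru=0$ also for $r\ge\frac p2$ (the surviving $M$-modes are then nonnegative and kill $\1$, while the $r=\frac p2$, $i=0$ term reproduces $\phi(p)=0$), so $\mathcal Q_+u=0$.

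The hard part is to verify that this $u$, constructed purely from the $\mathcal Q_+$-conditions, is also killed by every $L_j$, $j\ge1$. The crucial structural point is that $[L_j,M_n]$ is again an $M$-mode and $[L_j,Q_r]$ again a $Q$-mode, so $\ad L_j$ never produces an $L_{-n}$; as $L_j\1=0$, this forces $\ell_L(L_ju)=0$, i.e. $L_ju\in U(\mathcal M_-+\mathcal Q_-)\1$. Next, $L_ju$ is annihilated by all of $\mathcal Q_+$, since $Q_rL_ju=L_jQ_ru+[Q_r,L_j]u=(r-\frac j2)Q_{j+r}u=0$ using $Q_ru=0$ and $Q_{j+r}u=0$. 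Finally $L_ju$ lies in degree $\frac p2-j<\frac p2$, so every mode $Q_{-\mu_1+\frac12}$ occurring in it has $2\mu_1-1<p$ and hence $\phi(2\mu_1-1)\ne0$. Running the annihilation argument of Lemma \ref{singular-Q1}—take the surviving term with maximal leading index $\mu_1$ and apply $Q_{\mu_1-\frac12}$, whose self-contraction contributes the nonzero scalar $\phi(2\mu_1-1)$—then forces $L_ju=0$. Hence $\mathfrak{g}_+u=0$ and $u$ is singular. I expect this last annihilation step to be the delicate point: exactly as in the proof of Lemma \ref{singular-Q1}, one must argue that the leading contraction is not cancelled by contributions from the lower terms, which is what the maximality of $\mu_1$ and the nonvanishing of $\phi(2\mu_1-1)$ guarantee.
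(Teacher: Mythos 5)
Your proof is correct, and it diverges from the paper's in two respects worth recording. For the uniqueness/determination of the coefficients, the paper solves the square linear system $Q_{\frac p2}u=L_{\nu}Q_{\frac p2-i}u=0$ (with $\nu$ running over partitions of $i$, $0\le i\le\frac{p-1}{2}$), whose coefficient matrix is lower triangular with only the first diagonal entry vanishing; you instead act by the pure modes $Q_{\frac p2-k}$, which yields the cleaner triangular recursion $f_k=-\frac{2}{\phi(p-2k)}\sum_{i<k}f_iM_{-(k-i)}$ with $\phi(p-2k)=-\frac{k(p-k)}{3}c_M\ne0$ --- essentially the computation the paper defers to Proposition \ref{singular-R11}. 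The more substantive difference is in the existence half: the paper stops at the corank-one statement and asserts that ``the proposition follows,'' whereas you verify that the vector forced by the $\mathcal{Q}_+$-conditions is genuinely singular. Your bootstrapping step is sound: $\mathrm{ad}\,L_j$ preserves $U(\mathcal{M}+\mathcal{Q}+\mathbb C{\bf c}_M)$, so $\ell_L(L_ju)=0$; the identity $Q_rL_ju=(r-\frac j2)Q_{j+r}u=0$ shows $L_ju$ is $\mathcal{Q}_+$-invariant; and $L_ju$ lives in the half-integer degree $\frac p2-j<\frac p2$, so every term carries at least one $Q$-factor whose maximal leading index $\mu_1$ satisfies $2\mu_1-1<p$, hence $\phi(2\mu_1-1)\ne0$ and the maximal-index contraction argument of Lemma \ref{singular-Q1} kills $L_ju$ (the terms with smaller leading index contribute nothing, since their contractions produce only nonnegative $M$-modes annihilating $\1$). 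What your route buys is a fully explicit sufficiency check that the paper leaves implicit; what the paper's route buys is a uniform template (the triangular matrix $A_p$ indexed by the PBW ordering) that it reuses verbatim for Propositions \ref{singular-S1} and \ref{subsingular}.
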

	\proof It suffices to prove the case of $p>1$. 
	
	By Lemma \ref{singular-Q1}, we can suppose that
	\begin{eqnarray*}
		{\rm R}=f_0Q_{-\frac{p}{2}}+\sum_{i=1}^{\frac{p-1}{2}}f_i(M)Q_{-i+\frac{1}{2}},
	\end{eqnarray*}
	where $ f_0\in \mathbb C, f_i(M)\in U(\mathcal M_-), i=1, 2, \cdots, \frac{p-1}{2}$.
	
	Here the ordering  of all summands of ${\rm R}$ is according to the ordering $\succ$ defined in Section 2.2 as follows
	\begin{eqnarray*}
		Q_{-\frac{p}{2}}, M_{-1}Q_{-\frac{p}{2}+1}, M_{-2}Q_{-\frac{p}{2}+2}, M_{-1}^{2}Q_{-\frac{p}{2}+2}, \cdots, M_{-1}^{\frac{p-1}{2}}Q_{-\frac{1}{2}}.\label{o2.10}
	\end{eqnarray*}
	Now we consider the following linear equations.
	\begin{eqnarray}\label{eee4.8}
		Q_{\frac{p}{2}}u=L_{1}Q_{\frac{p}{2}-1}u=L_{2}Q_{\frac{p}{2}-2}u=L_{1}^{2}Q_{\frac{p}{2}-2}u=\cdots=L_{1}^{\frac{p-1}{2}}Q_{\frac{1}{2}}u=0.
	\end{eqnarray}
	
	The number of these equations is exactly $\sum_{i=0}^{\frac{p-1}{2}}{\tt p}(i)$. 
	By direct calculations we can see that the coefficient  matrix $A_{p}$ of \eqref{eee4.8} is lower triangular and the first row is zero.  All other diagonal elements are non-zero's by assumption. So there exists a unique solution with a non-zero coefficient of $Q_{-\frac p2}$ up to a scalar multiple.  The proposition follows.
	\qed

	In the following, we provide an explicit formula for ${\rm R}$.
	\begin{pro}\label{singular-R11}
		Let $p\in2\mathbb Z_+-1$. Then the singular vector ${\rm R}\1$ in  Proposition \ref{singular-R1} can be determined as 
		\begin{eqnarray}\label{R-exp}
			{\rm R}\1=Q_{-\frac{p}{2}}\1+\sum_{i=1}^{\frac{p-1}{2}}f_{i}(M)Q_{-\frac{p}{2}+i}\1,
		\end{eqnarray}
		where  
		\begin{eqnarray}
			f_{1}(M)=c_1M_{-1},
			f_{i}(M)=c_iM_{-i}+\sum_{j=1}^{i-1}c_if_j(M)M_{-(i-j)},
		\end{eqnarray}
		and $c_i=\frac{6}{i(p-i)c_M}$ for $i=1,\cdots,\frac{p-1}{2}$. 
	\end{pro}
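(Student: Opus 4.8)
The plan is to avoid verifying the singular–vector equations for the explicit vector head-on, and instead exploit the uniqueness already supplied by Proposition \ref{singular-R1}. Write the (abstractly known) singular vector of that proposition as
$$u=\sum_{i=0}^{\frac{p-1}{2}}g_i(M)\,Q_{-\frac{p}{2}+i}\1,\qquad g_0=1,\ g_i(M)\in U(\mathcal M_-).$$
I would show that the annihilation conditions $Q_s u=0$ force the coefficients $g_i$ to satisfy exactly the recursion asserted for the $f_i$; since that recursion determines the sequence uniquely once $g_0=1$ is fixed, it follows that $g_i=f_i$ for all $i$, i.e. $u={\rm R}\1$, which is \eqref{R-exp}.

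First I would record two simplifications coming straight from the brackets. Since $[M_n,M_m]=[M_n,Q_r]=0$, every $M_n$ commutes with all of $\mathcal M$ and $\mathcal Q$, so $M_n$ contributes nothing when applied to $u$; and for $s>0$ the operator $Q_s$ commutes with the $M$-part of each summand and kills $\1$, whence
$$Q_s u=\sum_{i=0}^{\frac{p-1}{2}}g_i(M)\,[Q_s,Q_{-\frac{p}{2}+i}]\1,\qquad [Q_s,Q_{-\frac{p}{2}+i}]=2M_{s-\frac{p}{2}+i}+\tfrac13\delta_{s-\frac{p}{2}+i,0}\bigl(s^2-\tfrac14\bigr){\bf c}_M.$$
Using $M_j\1=0$ for $j>0$ and $M_0\1=h_M\1$, the only surviving contributions for fixed $s$ come from indices $i$ with $s-\frac p2+i\le 0$. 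Because $g_i(M)\1\mapsto g_i(M)$ identifies these expressions with elements of the commutative algebra $U(\mathcal M_-)$ (by PBW), the resulting relations are genuine polynomial identities that can be solved for the $g_i$.

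The key computation is the case $s=\frac p2-k$ with $1\le k\le\frac{p-1}{2}$, where the surviving indices are $i\le k$, the value $i=k$ producing the $M_0$/central contribution. This collapses to
$$Q_{\frac p2-k}\,u=2\sum_{i=0}^{k-1}g_i(M)\,M_{-(k-i)}\1+g_k(M)\Bigl(2h_M+\tfrac13\bigl((\tfrac p2-k)^2-\tfrac14\bigr)c_M\Bigr)\1 .$$
Here I would simplify the scalar using $\phi(p)=2h_M+\frac{p^2-1}{12}c_M=0$ to $-\tfrac{k(p-k)}{3}c_M$, which is nonzero precisely because $c_M\ne0$ and $1\le k\le\frac{p-1}{2}$. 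Setting $Q_{\frac p2-k}u=0$ and solving for $g_k$ then gives
$$g_k(M)=\frac{6}{k(p-k)c_M}\Bigl(M_{-k}+\sum_{j=1}^{k-1}g_j(M)\,M_{-(k-j)}\Bigr),$$
which is exactly the claimed recursion with $c_k=\frac{6}{k(p-k)c_M}$ (and $g_1=c_1M_{-1}$ since the sum is empty). As a consistency check, $s=\frac p2$ yields $Q_{\frac p2}u=\phi(p)\1=0$ and $s>\frac p2$ yields $Q_s u=0$ automatically, so these impose no further constraints on the $g_i$.

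The only real obstacle is the bookkeeping in this step: correctly isolating the index $i=k$ that produces the central and $M_0$ terms, and verifying the scalar identity $2h_M+\frac13\bigl((\frac p2-k)^2-\frac14\bigr)c_M=-\frac{k(p-k)}{3}c_M$ under $\phi(p)=0$ — everything else is formal. Finally, since the recursion is triangular (each $g_k$ is determined by $g_0,\dots,g_{k-1}$), its solution with $g_0=1$ is unique, so $g_i=f_i$ and the explicit vector \eqref{R-exp} coincides with the singular vector of Proposition \ref{singular-R1}. I would emphasize that this route never requires checking $L_m{\rm R}\1=0$ directly: annihilation by all of $\mathfrak g_+$, including the Virasoro part, is inherited automatically from the fact that $u$ is already known to be singular.
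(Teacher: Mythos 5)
Your proposal is correct and follows essentially the same route as the paper: apply $Q_{\frac p2-k}$ for $1\le k\le\frac{p-1}{2}$ to the singular vector whose existence and form are guaranteed by Proposition \ref{singular-R1}, use $[Q_r,Q_s]=2M_{r+s}+\frac13\delta_{r+s,0}(r^2-\frac14){\bf c}_M$ together with $\phi(p)=0$ to reduce the scalar to $-\frac{k(p-k)}{3}c_M\ne0$, and solve the resulting triangular recursion. The scalar simplification and the resulting recursion match the paper's exactly.
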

	\begin{proof}
		Let ${\rm R}\1$ be as \eqref{R-exp}, a singular vector in $V(c_L,c_M,h_L,h_M)_{\frac p2}$,
		where $f_{i}(M)\in U(\mathcal{M}_-)$ with degree $i$, $i=1,\cdots,\frac{p-1}{2}$.

		For $i=1, 2, \cdots,\frac{p-1}{2}$, using the action of $Q_{\frac{p}{2}-i}$ on \eqref{R-exp}, we deduce that
		\begin{eqnarray*}
			0=Q_{\frac{p}{2}-i}{\rm R}\1&=&[Q_{\frac{p}{2}-i},Q_{-\frac{p}{2}}]\1+\sum_{j=1}^if_j(M)[Q_{\frac{p}{2}-i},Q_{-\frac{p}{2}+j}]\1 \\
			&=&2M_{-i}\1+2f_1(M)M_{-i+1}\1+\cdots+f_{i}(M)\left(2M_0+\frac{(p-2i)^2-1}{12}{\bf c}_M\right)\1.
		\end{eqnarray*}
		Applying $h_M=-\frac{p^2-1}{24}c_M$ we deduce that 
		$$2M_{-i}+2f_1(M)M_{-i+1}+\cdots-f_{i}(M) \frac{i(p-i)}{3}c_M =0,$$
		\begin{eqnarray*}
			f_{1}(M)=c_1M_{-1},
			f_{i}(M)=c_iM_{-i}+\sum_{j=1}^{i-1}c_if_j(M)M_{-(i-j)},
		\end{eqnarray*}
		and $c_i=\frac{6}{i(p-i)c_M}$ for $i=1,\cdots,\frac{p-1}{2}$. 
	\end{proof}

	\begin{exa}
		\begin{eqnarray*}
			&(1)&p=1,h_M=0: {\rm R}=Q_{-\frac{1}{2}};\\
			&(2)&p=3,h_M=-\frac{1}{3}c_M: {\rm R}=Q_{-\frac{3}{2}}+\frac{3}{c_M}M_{-1}Q_{-\frac{1}{2}};\\
			&(3)&p=5,h_M=-c_M: {\rm R}=Q_{-\frac{5}{2}}+\frac{3}{2c_M}M_{-1}Q_{-\frac{3}{2}}+\frac{1}{c_M}M_{-2}Q_{-\frac{1}{2}}+\frac{3}{2c_M^2}M_{-1}^{2}Q_{-\frac{1}{2}}.
		\end{eqnarray*}
	\end{exa}
	
	By direct calculation, we have the following  lemma.

	\begin{lem}\label{l3.15} For any $x\in \frak g_+$, we have
		\begin{eqnarray*}
			[x, {\rm R}]\subset U(\mathcal{M}_-+\mathcal{Q}_-)\left(2M_0+\frac{p^2-1}{12}{\bf c}_M\right)+U(\mathcal{M}_-+\mathcal{Q}_-)\frak g_+.
		\end{eqnarray*}
	\end{lem}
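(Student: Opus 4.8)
The plan is to follow the strategy behind the one-line proof of Lemma \ref{l3.15'}, adapting it to accommodate the odd sector carried by ${\rm R}$. Recall from Proposition \ref{singular-R11} that ${\rm R}\in U(\mathcal{M}_-+\mathcal{Q}_-)$ and from Proposition \ref{singular-R1} that ${\rm R}\1$ is a singular vector. Since $\mathcal{M}+\mathcal{Q}+\mathbb{C}{\bf c}_M$ is an ideal of $\mathfrak{g}$ and $\mathrm{ad}(x)$ is a superderivation of $U(\mathfrak{g})$, we get $[x,{\rm R}]\in U(\mathcal{M}+\mathcal{Q}+\mathbb{C}{\bf c}_M)$ for every $x\in\mathfrak{g}_+$. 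Moreover $[x,{\rm R}]\1=x{\rm R}\1-(-1)^{|x|}{\rm R}x\1=0$, because ${\rm R}\1$ is singular and $x\1=0$.

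Next I would put $[x,{\rm R}]$ into PBW-ordered form inside $U(\mathcal{M}+\mathcal{Q}+\mathbb{C}{\bf c}_M)$, with all negative modes on the left, then the zero mode $M_0$, then the raising modes $\mathcal{M}_++\mathcal{Q}_+$ on the right. Collecting every monomial that ends in a raising mode gives a summand lying in $U(\mathcal{M}_-+\mathcal{Q}_-)\mathfrak{g}_+$. Since the central charge is fixed, ${\bf c}_M$ acts by the scalar $c_M$, and as there is no $Q_0$, the remaining part can be written as $A=\sum_{a}g_aM_0^{a}$ with $g_a\in U(\mathcal{M}_-+\mathcal{Q}_-)$. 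Because raising modes annihilate $\1$, the relation $[x,{\rm R}]\1=0$ collapses to $\sum_a h_M^{a}g_a=0$ in $U(\mathcal{M}_-+\mathcal{Q}_-)$.

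The crux is converting this single scalar-weighted relation into the asserted factorization. Here I would exploit that $\phi(p)=2h_M+\frac{p^2-1}{12}c_M=0$, which yields the identity
\[
2M_0+\frac{p^2-1}{12}{\bf c}_M=2(M_0-h_M),
\]
together with the fact that $M_0$ is central in $U(\mathcal{M}+\mathcal{Q})$ (indeed $[M_0,M_n]=[M_0,Q_r]=0$). Expanding $M_0^{a}=\sum_{j}\binom{a}{j}h_M^{a-j}(M_0-h_M)^{j}$ and commuting $M_0$ freely past each $g_a$, the $j=0$ contribution to $A$ is exactly $\sum_a h_M^{a}g_a=0$, while every $j\ge 1$ contribution carries a factor $(M_0-h_M)$. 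Hence $A\in U(\mathcal{M}_-+\mathcal{Q}_-)(M_0-h_M)=U(\mathcal{M}_-+\mathcal{Q}_-)\bigl(2M_0+\frac{p^2-1}{12}{\bf c}_M\bigr)$, which combined with the raising-mode summand gives the claim.

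The main obstacle is precisely the potentially nonzero ``constant'' part of $A$, namely the monomials involving neither $M_0$ nor a raising mode; these genuinely appear, for instance from $[L_m,M_{-n}]=(m+n)M_{m-n}+\cdots$ producing lower negative modes when $m<n$. A naive appeal to $[x,{\rm R}]\1=0$ supplies only one relation and does not annihilate this part on its own; what rescues the argument is the observation that $\phi(p)=0$ turns $2M_0+\frac{p^2-1}{12}{\bf c}_M$ into $2(M_0-h_M)$, so that the entire non-raising part is governed by the single central element $M_0-h_M$. The only new feature relative to Lemma \ref{l3.15'} is the presence of the odd generators $Q_r$, but they are handled uniformly, since $\mathcal{M}+\mathcal{Q}+\mathbb{C}{\bf c}_M$ is still an ideal and $M_0$ remains central.
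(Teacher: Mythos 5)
Your proof is correct and takes essentially the same approach as the paper: the paper's own proof is exactly the one-line observation that $[x,{\rm R}]\1=0$ and $[x,{\rm R}]\in U(\mathcal{M}+\mathcal{Q}+\mathbb{C}{\bf c}_M)$, which are the two facts you start from. Your write-up simply supplies the routine details the paper leaves implicit (PBW ordering within the ideal, centrality of $M_0$ there, injectivity of $U(\mathfrak{g}_-)\to U(\mathfrak{g}_-)\1$, and the identity $2M_0+\frac{p^2-1}{12}{\bf c}_M=2(M_0-h_M)$ from $\phi(p)=0$ after identifying ${\bf c}_M$ with the scalar $c_M$, a convention the paper also uses tacitly).
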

	\begin{proof} 
		It follows from the fact that $[x, {\rm R}]\1=0$ in $V(c_L,c_M,h_L,h_M)$ and $[x, {\rm R}]\in U(\mathcal{M}+\mathcal Q+\mathbb C{\bf c}_M)$ for any $x\in\frak g_+$.
	\end{proof}
	%
	%
	%
	%
	%
	%
	%

	\begin{pro}\label{t3.15} Let $p\in2\mathbb Z_+-1$. Then
		${\rm R}^{2}={\rm S}$, ${\rm R}^n\1$ is also a singular vector for any $n\in 2\mathbb Z_+-1$.
	\end{pro}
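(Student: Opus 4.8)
The plan is to establish the operator identity ${\rm R}^2={\rm S}$ first, and then to deduce that ${\rm R}^n\1$ is singular for all odd $n$ as a formal consequence. First I would show that ${\rm R}^2\1$ is a singular vector. Since ${\rm R}$ is odd, the super-Leibniz rule gives $[x,{\rm R}^2]\1=[x,{\rm R}]({\rm R}\1)\pm {\rm R}([x,{\rm R}]\1)$ for $x\in\frak g_+$. As ${\rm R}\1$ is singular (Proposition \ref{singular-R1}) we have $x\1=0$ and $\frak g_+({\rm R}\1)=0$, whence $[x,{\rm R}]\1=0$ and the second summand dies. For the first summand I would invoke Lemma \ref{l3.15} to write $[x,{\rm R}]=A\bigl(2M_0+\frac{p^2-1}{12}{\bf c}_M\bigr)+B$ with $A\in U(\mathcal M_-+\mathcal Q_-)$ and $B\in U(\mathcal M_-+\mathcal Q_-)\frak g_+$. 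Because $M_0$ commutes with every element of $\mathcal M$ and $\mathcal Q$, it acts on ${\rm R}\1$ by the scalar $h_M$, so $\bigl(2M_0+\frac{p^2-1}{12}{\bf c}_M\bigr){\rm R}\1=\phi(p){\rm R}\1=0$, while $B\,{\rm R}\1\in U(\mathcal M_-+\mathcal Q_-)\frak g_+({\rm R}\1)=0$. Thus $\frak g_+{\rm R}^2\1=0$, and since ${\rm R}^2\1$ is a $\frak g_0$-eigenvector of $L_0$-weight $p$, it is singular.

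Next, since ${\rm R}^2\in U(\mathcal M_-+\mathcal Q_-)$ involves no $L_{-n}$, we have $\ell_L({\rm R}^2\1)=0$, so Theorem \ref{main1} (equivalently Theorem \ref{singular-W}) forces ${\rm R}^2\1=c\,{\rm S}\1$ for some scalar $c$. To pin down $c$ I would compare the coefficient of $M_{-p}\1$ on both sides: on the right it is $1$ by Proposition \ref{singular-S1}. On the left, writing ${\rm R}=\sum_{i}f_i(M)Q_{-\frac p2+i}$ with $f_0=1$ and expanding ${\rm R}^2=\sum_{i,j}f_i(M)f_j(M)\,Q_{-\frac p2+i}Q_{-\frac p2+j}$, one uses $[M_m,Q_r]=0$ and $[Q_r,Q_s]=2M_{r+s}$ (the central term vanishing since all relevant indices are nonzero for odd $p$) to collapse each pair of $Q$'s into $U(\mathcal M_-)$. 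A monomial $f_i(M)f_j(M)M_{-(p-i-j)}$ reduces to the bare generator $M_{-p}$ only when $i=j=0$, i.e.\ from $Q_{-\frac p2}^2=M_{-p}$, contributing coefficient $f_0^2=1$. Hence $c=1$, so ${\rm R}^2\1={\rm S}\1$, and therefore ${\rm R}^2={\rm S}$ in $U(\frak g_-)$ by the PBW isomorphism $U(\frak g_-)\cong V(c_L,c_M,h_L,h_M)$, $y\mapsto y\1$.

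Finally, for odd $n$ I would argue by induction, the base case $n=1$ being Proposition \ref{singular-R1}. Since ${\rm S}\in U(\mathcal M_-)$ commutes with $U(\mathcal M_-+\mathcal Q_-)$, the identity ${\rm R}^2={\rm S}$ gives ${\rm R}^{n+2}\1={\rm S}({\rm R}^n\1)$. It then suffices to check that multiplying a singular vector $v=z\1$ with $z\in U(\mathcal M_-+\mathcal Q_-)$ by ${\rm S}$ again yields a singular vector. For $x\in\frak g_+$, since ${\rm S}$ is even, $[x,{\rm S}z]\1=[x,{\rm S}]v+{\rm S}([x,z]\1)$; here $[x,z]\1=xv=0$ because $v$ is singular, and $[x,{\rm S}]v=0$ by the Lemma \ref{l3.15'} argument used above (using $M_0v=h_Mv$ and $\frak g_+v=0$). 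As ${\rm S}v$ is again a $\frak g_0$-eigenvector, it is singular, so applying this $\tfrac{n-1}2$ times to ${\rm R}\1$ shows ${\rm R}^n\1={\rm S}^{(n-1)/2}({\rm R}\1)$ is singular.

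I expect the main obstacle to be the coefficient bookkeeping in the second paragraph: making the reordering of the double $Q$-sum precise and confirming that the bare generator $M_{-p}$ can arise only from the $i=j=0$ term. Everything else reduces cleanly to the two bracket Lemmas \ref{l3.15} and \ref{l3.15'}, the vanishing $\phi(p)=0$, and the uniqueness of singular vectors furnished by Theorem \ref{main1}.
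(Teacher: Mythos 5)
Your proposal is correct and follows essentially the same route as the paper: Lemma \ref{l3.15} (together with $\phi(p)=0$ and the singularity of ${\rm R}\1$) to show ${\rm R}^2\1$ is singular, Theorem \ref{main1} to identify it with ${\rm S}\1$, and iteration of the bracket argument for higher odd powers. The only difference is that you spell out the normalization $c=1$ by matching the coefficient of $M_{-p}$ via $Q_{-\frac p2}^2=M_{-p}$, a detail the paper leaves implicit; this check is sound.
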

	\proof 
	It follows from Lemma \ref{l3.15} that ${\rm R}^{2}\1$ is a singular vector in $ V(c_L,c_M,h_L,h_M)$. By Theorem \ref{main1}, ${\rm R}^{2}={\rm S}$.
	Moreover, for any $n\in 2\mathbb Z_+-1$, ${\rm R}^n\1$ is also a singular vector by Lemma \ref{l3.15}.
	\qed

	\begin{lem}\label{l3.1Q}  If $u$ is a singular vector in $V(c_L,c_M,h_L,h_M)_{n-\frac12}$ for $n\ge 1$, then $\ell_{L}(u)=0$.
	\end{lem}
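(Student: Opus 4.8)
The plan is to split on the parity of $p$ and, for odd $p$, to reduce directly to the integer-weight classification already obtained in Theorem \ref{main1} by multiplying $u$ on the left by ${\rm R}$. So let $u\in V(c_L,c_M,h_L,h_M)_{n-\frac12}$ be a singular vector and first suppose $p$ is odd, so that ${\rm R}$ is available (Proposition \ref{singular-R11}). I claim ${\rm R}u$ is again singular: for $x\in\frak g_+$ one has $x({\rm R}u)=[x,{\rm R}]u\pm{\rm R}(xu)=[x,{\rm R}]u$ since $xu=0$, and by Lemma \ref{l3.15} the element $[x,{\rm R}]$ lies in $U(\mathcal M_-+\mathcal Q_-)\bigl(2M_0+\tfrac{p^2-1}{12}{\bf c}_M\bigr)+U(\mathcal M_-+\mathcal Q_-)\frak g_+$, and both summands annihilate $u$ — the first because $\bigl(2M_0+\tfrac{p^2-1}{12}{\bf c}_M\bigr)u=\phi(p)u=0$, the second because $\frak g_+ u=0$. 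Hence ${\rm R}u$ is a singular vector of integer weight, so Theorem \ref{main1} gives ${\rm R}u=\nu\,{\rm S}^m\1$ for some $\nu\in\mathbb C$ and $m\in\mathbb N$; in particular $\ell_L({\rm R}u)=0$.

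The odd-$p$ case is then completed by the identity $\ell_L({\rm R}u)=\ell_L(u)$, which also shows ${\rm R}u\neq0$ and hence $\ell_L(u)=0$. To prove this identity, note that ${\rm R}\in U(\mathcal M_-+\mathcal Q_-)$ contains no $L$'s and that the commutators $[Q_{-r},L_{-j}]$ and $[M_{-i},L_{-j}]$ produce no new $L$'s; consequently left multiplication by ${\rm R}$ preserves the filtration by $\ell_L$ and, on the top graded piece, acts only on the $U(\mathcal M_-+\mathcal Q_-)$-coefficients of the PBW expansion. Thus it suffices to show that left multiplication by ${\rm R}$ is injective on $U(\mathcal M_-+\mathcal Q_-)$, and this follows from ${\rm R}^2={\rm S}$ (Proposition \ref{t3.15}): since $\mathcal M_-$ is central in $\mathcal M_-+\mathcal Q_-$, the algebra $U(\mathcal M_-+\mathcal Q_-)$ is free over the polynomial domain $U(\mathcal M_-)$ (a basis being the strict $Q$-monomials), and ${\rm S}=M_{-p}+(\text{lower order})$ is a nonzero central element, hence a nonzerodivisor; therefore multiplication by ${\rm S}$, and a fortiori by ${\rm R}$, is injective.

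For even $p$ the operator ${\rm R}$ does not exist, and I would instead mirror the proof of Lemma \ref{l3.1}. Assuming $\ell_L(u)\neq0$, write $u=(a_0L_{-p}^{k}+a_1L_{-p}^{k-1}+\cdots+a_k)\1$ with each $a_i$ free of $L_{-p}$ and $a_0\neq0$. The same $M_\nu$-action argument as in Lemma \ref{l3.1}, applied to the singular vector $u$, forces $\ell_L(a_0)=0$, so $a_0\in U(\mathcal M_-+\mathcal Q_-)$. If $k=0$ this gives $\ell_L(u)=0$, contradicting the assumption; if $k\geq1$, then acting by $L_i$ and $Q_r$ and comparing the coefficients of $L_{-p}^{k}$ yields $[L_i,a_0]\1=[Q_r,a_0]\1=0$ (the action of $M_j$ being automatic), so $a_0\1$ is a singular vector of half-integer weight with $\ell_L(a_0\1)=0$ — which Lemma \ref{singular-Q1} forbids for even $p$. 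Either way $\ell_L(u)=0$. The main obstacle is the odd-$p$ injectivity and $\ell_L$-preservation of multiplication by ${\rm R}$, where the fermionic relation $Q_{-r}^2=M_{-2r}$ must be treated carefully; once ${\rm R}^2={\rm S}$ and the freeness of $U(\mathcal M_-+\mathcal Q_-)$ over $U(\mathcal M_-)$ are established, everything else is formal bookkeeping.
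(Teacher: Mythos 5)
Your proof is correct, and for the essential case ($p$ odd) it follows the same route as the paper: multiply the half-integer-weight singular vector $u$ on the left by ${\rm R}$, use Lemma \ref{l3.15} to see that ${\rm R}u$ is again singular, and then invoke the integer-weight result (the paper cites Lemma \ref{l3.1}, you cite Theorem \ref{main1}; either works) to get $\ell_L({\rm R}u)=0$ and hence $\ell_L(u)=0$. You are in fact more careful than the paper at the one point where its argument is terse: the paper silently assumes that ${\rm R}u\neq 0$ and that $\ell_L({\rm R}u)=\ell_L(u)$, whereas you justify both by observing that left multiplication by ${\rm R}$ acts purely on the $U(\mathcal M_-+\mathcal Q_-)$-coefficients of the PBW expansion and is injective there because ${\rm R}^2={\rm S}$ (Proposition \ref{t3.15}) and ${\rm S}$ is a nonzerodivisor of the polynomial domain $U(\mathcal M_-)$ acting on a free module. (Your phrase ``nonzero central element, hence a nonzerodivisor'' is not a valid implication in general, but the surrounding freeness argument you set up does deliver the injectivity, so this is only a matter of wording.) The more substantive difference is your separate treatment of even $p$: the paper's proof uses ${\rm R}$, which exists only for odd $p$, so as written it does not literally cover even $p$; you close this by mirroring Lemma \ref{l3.1} to reduce to a half-integer-weight singular vector $a_0\1$ with $\ell_L(a_0\1)=0$, which Lemma \ref{singular-Q1} rules out for even $p$. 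This is a genuine (if small) improvement in completeness. The only detail left implicit is that $M_0u=h_Mu$ (needed for $(2M_0+\tfrac{p^2-1}{12}{\bf c}_M)u=0$); this follows from the standard observation, used at the start of the proof of Lemma \ref{l3.1}, that $[M_0,y]$ strictly lowers $\ell_L$, and you should record it.
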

	
	\begin{proof} 	 Write $u=y\1\in V(c_L,c_M,h_L,h_M)_{n-\frac12}$, where $y\in U(\mathfrak{g}_-)$. Then $M_0 u=M_0y\1=[M_0,y]\1+h_Mu$. By a similar argument in the proof of Lemma \ref{l3.1}, we have $M_0y\1=h_My\1$.  For any $x\in \frak g_+$, $x{\rm R}y\1={\rm R}xy\1+[x, {\rm R}]y\1=0$ by Lemma \ref{l3.15}.
		Then ${\rm R}y\1$ is a singular vector in $V(c_L,c_M,h_L,h_M)_{n+\frac{p-1}2}$. So ${\ell}_L({\rm R}y)=0$ by Lemma \ref{l3.1} and then ${\ell}_L(y)=0$.
	\end{proof}

	Now we get the following result.
	
	\begin{theo}
		\label{main2} Let  $(c_L,c_M,h_L,h_M)\in\bC^4$ such that  $\phi(p)=0$  with $c_M\ne 0$. Then $V(c_L,c_M,h_L,h_M)_{n-\frac{1}{2}}$ for $n\in\mathbb Z_+$ has a singular vector $u$ if and only if $p\in 2\mathbb Z_+-1$ and there exists $k\in \mathbb Z_+$ such that $n-\frac12=\frac{p}{2}(2k-1)$. Moreover,
		all singular vectors of $V(c_L,c_M,h_L,h_M)_{kp-\frac{p}{2}}$, up to a scalar multiple, are ${\rm R}^{2k-1}{\bf 1}$ for $k\in \mathbb{Z}_+$.
	\end{theo}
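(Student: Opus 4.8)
The plan is to settle the half-integer case by reducing it to the already-established integer case (Theorem~\ref{main1}), using left multiplication by the operator ${\rm R}$. For the ``if'' direction, suppose $p$ is odd and $n-\frac12=\frac{p}{2}(2k-1)$ for some $k\in\mathbb Z_+$. Then Proposition~\ref{t3.15} provides the singular vector ${\rm R}^{2k-1}\1$, whose $L_0$-grading is $(2k-1)\frac{p}{2}=kp-\frac{p}{2}=n-\frac12$; this immediately exhibits a singular vector in $V(c_L,c_M,h_L,h_M)_{n-\frac12}$, so existence requires no further work.

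For the converse and for uniqueness, let $u\ne0$ be a singular vector in $V(c_L,c_M,h_L,h_M)_{n-\frac12}$. By Lemma~\ref{l3.1Q} we have $\ell_L(u)=0$, and then Lemma~\ref{singular-Q1} forces $p$ to be odd and $\ell_Q(u)=1$; in particular $u\in U(\mathcal M_-+\mathcal Q_-)\1$, so I may write $u=\sum_i g_i(M)Q_{-i+\frac12}\1$ with $g_i(M)\in U(\mathcal M_-)$. I first check that ${\rm R}u\1$ is again singular: for $x\in\frak g_+$, Lemma~\ref{l3.15} gives $x{\rm R}u\1={\rm R}xu\1+[x,{\rm R}]u\1$, where $xu\1=0$, the factor $(2M_0+\frac{p^2-1}{12}{\bf c}_M)$ acts as $\phi(p)=0$, and the remaining $U(\mathcal M_-+\mathcal Q_-)\frak g_+$-part annihilates $u\1$. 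Since ${\rm R}u\in U(\mathcal M_-+\mathcal Q_-)$, we have $\ell_L({\rm R}u\1)=0$, so ${\rm R}u\1$ is a singular vector of integer grading with $\ell_L=0$; by Theorem~\ref{singular-W} it is either $0$ or a scalar multiple of ${\rm S}^k\1={\rm R}^{2k}\1$.

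The crux is to rule out ${\rm R}u\1=0$ and then to divide back by ${\rm R}$, and the relation ${\rm R}^2={\rm S}$ from Proposition~\ref{t3.15} does exactly this. If ${\rm R}u\1=0$, then ${\rm S}u\1={\rm R}({\rm R}u\1)=0$; but ${\rm S}\in U(\mathcal M_-)$ commutes with every $Q_{-i+\frac12}$, so ${\rm S}u\1=\sum_i({\rm S}g_i)Q_{-i+\frac12}\1$, and the single-$Q$ PBW monomials $M_{-\la}Q_{-i+\frac12}\1$ are linearly independent. Since $U(\mathcal M_-)\cong\mathbb C[M_{-1},M_{-2},\dots]$ is an integral domain and ${\rm S}\ne0$, each ${\rm S}g_i=0$ forces $g_i=0$, i.e. $u=0$, a contradiction. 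Hence ${\rm R}u\1=c\,{\rm S}^k\1$ with $c\ne0$; comparing gradings yields $n-\frac12+\frac{p}{2}=kp$, i.e. $n-\frac12=\frac{p}{2}(2k-1)$, which is the asserted condition. Finally, I set $w=u-c\,{\rm R}^{2k-1}\1\in V(c_L,c_M,h_L,h_M)_{n-\frac12}$; then ${\rm R}w\1={\rm R}u\1-c\,{\rm R}^{2k}\1=c\,{\rm S}^k\1-c\,{\rm S}^k\1=0$, and $w$ still has the single-$Q$ form $\sum_i\tilde g_i(M)Q_{-i+\frac12}\1$, so the same domain argument gives $w=0$. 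Therefore $u=c\,{\rm R}^{2k-1}\1$, establishing uniqueness.

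I expect the only genuine obstacle to be the non-vanishing/division step, namely showing that multiplication by ${\rm R}$ kills no nonzero vector of the relevant form; the clean route around it is the observation that ${\rm R}^2={\rm S}$ lies in the commutative polynomial algebra $U(\mathcal M_-)$, which reduces injectivity to the integrality of $U(\mathcal M_-)$ together with linear independence of the single-$Q$ PBW monomials. The remaining care is purely bookkeeping: verifying that ${\rm R}u$ stays in $U(\mathcal M_-+\mathcal Q_-)$ so that $\ell_L=0$ is preserved and Theorem~\ref{singular-W} applies, and checking that $u$, ${\rm R}^{2k-1}\1$, and hence $w$ all share the single-$Q$ shape required by the domain argument.
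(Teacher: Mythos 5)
Your proposal is correct, and the existence direction plus the reduction to $\ell_L(u)=0$, $\ell_Q(u)=1$ via Lemmas \ref{l3.1Q} and \ref{singular-Q1} match the paper. But your treatment of the converse and of uniqueness is genuinely different. The paper writes $u=(a_0{\rm R}^{2k-1}+a_1{\rm R}^{2k-3}+\cdots+a_{k-1}{\rm R}+a_k)\1$ with $a_0,\dots,a_{k-1}\in U(\mathcal M_-)$ not involving $M_{-p}$ and $a_k$ not involving $M_{-p},Q_{-\frac p2}$, then kills $a_k$ by acting with $Q_{\frac q2}$ on its highest term and forces $a_0,\dots,a_{k-1}\in\mathbb C$ by acting with $L_\lambda$ on the maximal ${\rm hm}(a_i)$ --- a highest-term elimination in the spirit of the total ordering $\succ$. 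You instead multiply by ${\rm R}$ to land in integer degree, invoke Theorem \ref{singular-W} to identify ${\rm R}u$ with a multiple of ${\rm S}^k\1$, and then divide back by ${\rm R}$ using ${\rm R}^2={\rm S}\in U(\mathcal M_-)$ together with the integrality of $U(\mathcal M_-)$ and the linear independence of the single-$Q$ PBW monomials; this handles both the non-vanishing of ${\rm R}u$ and the final identification $u=c\,{\rm R}^{2k-1}\1$ in one stroke. Your route buys a cleaner, more conceptual argument (injectivity of left multiplication by ${\rm R}$ on the relevant subspace replaces case-by-case coefficient chasing), at the cost of leaning harder on Theorem \ref{singular-W} and Proposition \ref{t3.15}; the paper's route is more self-contained at this spot and parallels its proof of Theorem \ref{singular-W}. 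I see no gap: the bookkeeping points you flag (that ${\rm R}u$ stays in $U(\mathcal M_-+\mathcal Q_-)$ so $\ell_L=0$ is preserved, that $M_0$ acts on $u$ by $h_M$ so $\phi(p)=0$ kills the $(2M_0+\frac{p^2-1}{12}{\bf c}_M)$-part of $[x,{\rm R}]$, and that $u$, ${\rm R}^{2k-1}\1=S^{k-1}{\rm R}\1$, and $w$ all have the single-$Q$ shape) all check out.
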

	\proof 
	By Lemmas \ref{singular-Q1}, \ref{l3.1Q} and Propositions \ref{singular-R1} and \ref{t3.15},  we can suppose that
	\begin{equation}
		u= (a_0{\rm R}^{2k-1}+a_1{\rm R}^{2k-3}+\cdots a_{k-1}{\rm R}+a_k)\mathbf1,\label{singularSM}
	\end{equation}
	where $k\in\mathbb Z_+, a_i\in U(\mathcal{M}_-)$ not involving $M_{-p}$ for any $i=1, \cdots, k-1$, and $a_k\in U(\mathcal{M}_-+\mathcal{Q}_-)$ not involving $M_{-p}, Q_{-\frac p2}$.
	
	Assume that $a_k\ne 0$, then $\ell_Q(a_k)=1$. Set ${\rm hm}(a_k)=M_{-\mu}Q_{-\frac q2}$, where $\mu\in\mathcal P, q\ne p$. By action of $Q_{\frac q2}$ on $u$, we get a contradiction. So $a_k=0$.
	
	Set ${\rm max}\{{\rm hm}(a_0), \cdots, {\rm hm}(a_{k-1})\}=M_{-\lambda}$. By actions of $L_{\lambda}$ on \eqref{singularSM},  we can get 
	$L_{\lambda}u\ne0$ since all $a_i\in U(\mathcal M_-)$ are not involving $M_{-p}$.
	So $a_i\in\mathbb C$  for any $i=0,1, \cdots, k-1$.  The theorem follows.
	\qed

	Combining Theorem $\ref{main1}$ with Theorem $\ref{main2}$, we get the following result about all singular vectors of the Verma $\mathfrak{g}$-module $V(c_L,c_M,h_L,h_M)$.
	
	\begin{theo}\label{t3.19}  Let  $(c_L,c_M,h_L,h_M)\in\bC^4$ such that  $\phi(p)=0$  with $c_M\ne 0$.
		\begin{itemize}
			\item[$(1)$] If $p$ is even,  all singular vectors of the Verma module $V(c_L,c_M,h_L,h_M)$ are ${\rm S}^k{\bf 1}$ for   $k\in \mathbb N$, up to a scalar multiple.
			\item[$(2)$]   If $p$ is odd,  all singular vectors of the Verma module $V(c_L,c_M,h_L,h_M)$ are ${\rm R}^{k}{\bf 1}$ for   $k\in \mathbb N$, up to a scalar multiple.	\end{itemize}
		
	\end{theo}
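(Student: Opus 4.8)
The plan is to assemble Theorem \ref{t3.19} directly from the two structural theorems already proved in this section, treating it as a bookkeeping consolidation rather than a fresh argument. The module $V(c_L,c_M,h_L,h_M)$ decomposes as $\bigoplus_{n\in\mathbb N} V(c_L,c_M,h_L,h_M)_n \oplus \bigoplus_{n\in\mathbb N} V(c_L,c_M,h_L,h_M)_{n+\frac12}$, so every singular vector lives in a single weight space of integer or half-integer grade (it is an $\mathfrak g_0$-eigenvector, hence homogeneous under the $\frac12\mathbb Z$-grading). First I would split into the integer-grade and half-integer-grade cases and invoke Theorem \ref{main1} and Theorem \ref{main2} respectively.

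For part $(1)$, when $p$ is even, the half-integer case is vacuous: Theorem \ref{main2} asserts that a singular vector in $V(c_L,c_M,h_L,h_M)_{n-\frac12}$ can exist only when $p\in 2\mathbb Z_+-1$, so $p$ even forces no singular vectors in half-integer grades. Thus every singular vector sits in some $V(c_L,c_M,h_L,h_M)_n$ with $n\in\mathbb Z_+$, and Theorem \ref{main1} identifies it, up to scalar, as ${\rm S}^k\1$ for the appropriate $k$ (with $n=kp$), together with the trivial case $k=0$ giving $\1$ itself. Allowing $k\in\mathbb N$ absorbs the highest weight vector $\1={\rm S}^0\1$, yielding exactly the claimed list.

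For part $(2)$, when $p$ is odd, both grades contribute and I would combine them using Proposition \ref{t3.15}, which is the key identity making the statement clean. By Theorem \ref{main1} the integer-grade singular vectors are the ${\rm S}^k\1$; by Theorem \ref{main2} the half-integer-grade singular vectors are the ${\rm R}^{2k-1}\1$. Since ${\rm R}^2={\rm S}$ by Proposition \ref{t3.15}, we have ${\rm S}^k\1={\rm R}^{2k}\1$, so the even powers of ${\rm R}$ recover precisely the integer-grade family and the odd powers give the half-integer family. Hence the two lists merge into the single uniform family $\{{\rm R}^k\1 : k\in\mathbb N\}$, again with $k=0$ supplying $\1$.

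The argument is essentially a case-split plus the substitution ${\rm S}={\rm R}^2$, so there is no genuine obstacle; the only point requiring care is confirming the grade parities match, namely that ${\rm R}^{2k}\1$ lands in integer grade $kp$ and ${\rm R}^{2k-1}\1$ in half-integer grade $kp-\frac p2$, which is immediate since $\deg({\rm R})=\frac p2$ with $p$ odd. I would also note explicitly that the indexing convention has shifted from $\mathbb Z_+$ in Theorems \ref{main1} and \ref{main2} to $\mathbb N$ here precisely to include the trivial singular vector $\1$, so that the statement genuinely captures \emph{all} singular vectors.
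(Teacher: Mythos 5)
Your proposal is correct and follows exactly the paper's route: the paper obtains Theorem \ref{t3.19} by combining Theorems \ref{main1} and \ref{main2} (with ${\rm R}^2={\rm S}$ from Proposition \ref{t3.15} merging the two families in the odd case), which is precisely the case-split and substitution you describe. Your additional remarks on homogeneity of singular vectors and on the indexing shift to $\mathbb N$ are accurate but routine.
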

	
	Applying this theorem we can easily get the following consequence.

	\begin{cor}
		Let $(c_L,c_M,h_L,h_M)\ne (c_L',c_M',h_L',h_M')\in\bC^4$.
		Then $${\rm Hom}_{\frak g} (V(c_L,c_M,h_L,h_M),  V(c_L',c_M',h_L',h_M'))\ne 0$$ if and only if $c_M=c_M', c_L=c_L', h_M=h_M'$, $2h'_M+\frac{p^2-1}{12}c'_M=0$ for some $p\in \mathbb Z_+$,  and $h_L=h_L'+ip$ for some $i\in \mathbb N$ (when $p$ even) or $i\in \frac12\mathbb N$ (when $p$ odd).
	\end{cor}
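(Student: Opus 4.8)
The plan is to reduce the statement to the classification of singular vectors already obtained in Theorem~\ref{t3.19}, using the universal property of Verma modules. Write $V=V(c_L,c_M,h_L,h_M)$ and $V'=V(c_L',c_M',h_L',h_M')$, and put $\phi'(p)=2h_M'+\frac{p^2-1}{12}c_M'$. Since $V=U(\frak g_-)\1$ is freely generated over $U(\frak g_-)$ by $\1$, the adjunction $\mathrm{Hom}_{\frak g}(V,W)\cong\{w\in W:\mathbf c_L w=c_Lw,\ \mathbf c_M w=c_Mw,\ L_0w=h_Lw,\ M_0w=h_Mw,\ \frak g_+w=0\}$ holds for every $\frak g$-module $W$. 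Thus $\mathrm{Hom}_{\frak g}(V,V')\ne0$ if and only if $V'$ contains a nonzero singular vector $w$ whose $\mathbf c_L,\mathbf c_M,L_0,M_0$-eigenvalues are $c_L,c_M,h_L,h_M$, so the whole proof consists in deciding when such a $w$ exists.

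For the forward implication, assume $0\ne w\in V'$ is such a singular vector. Comparing the action of the central elements on $V'$ forces $c_L=c_L'$ and $c_M=c_M'$. To pin down $M_0$, I would invoke Lemma~\ref{l3.1}: every singular vector of $V'$ has $\ell_L=0$, so $w=y\1$ with $y\in U(\mathcal M_-+\mathcal Q_-)$; as $M_0$ commutes with $\mathcal M_-$ and $\mathcal Q_-$, we get $M_0w=h_M'w$, whence $h_M=h_M'$. Because the two weight tuples are distinct, $w$ is not a scalar multiple of $\1$, so $V'$ carries a nontrivial singular vector and is reducible; Theorem~\ref{Sim} then yields $\phi'(p)=0$ for some $p\in\mathbb Z_+$. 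Finally Theorem~\ref{t3.19} identifies $w$, up to a scalar, with ${\rm S}^k\1$ when $p$ is even and with ${\rm R}^k\1$ when $p$ is odd, for some $k\ge1$; reading off the $L_0$-degrees $\deg({\rm S}^k\1)=kp$ and $\deg({\rm R}^k\1)=\tfrac{kp}{2}$ (recall ${\rm R}^2={\rm S}$ from Proposition~\ref{t3.15}) gives $h_L=h_L'+ip$ with $i=k\in\mathbb N$ for $p$ even and $i=\tfrac{k}{2}\in\tfrac12\mathbb N$ for $p$ odd.

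For the converse, I would simply produce the vector. Given $c_L=c_L'$, $c_M=c_M'\ne0$, $h_M=h_M'$, $\phi'(p)=0$ and $h_L=h_L'+ip$, Theorem~\ref{t3.19} (together with Lemma~\ref{singular-Sk} and Proposition~\ref{t3.15}) guarantees that ${\rm S}^{i}\1$ (for $p$ even, $i\in\mathbb N$), respectively ${\rm R}^{2i}\1$ (for $p$ odd, so $2i\in\mathbb N$), is a singular vector of $V'$ of $L_0$-degree exactly $ip=h_L-h_L'$ and of $M_0$-eigenvalue $h_M'=h_M$. By the adjunction above, sending $\1\mapsto{\rm S}^{i}\1$ (resp.\ $\1\mapsto{\rm R}^{2i}\1$) defines a $\frak g$-homomorphism $V\to V'$, which is nonzero because its value on $\1$ is nonzero.

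The one step deserving real care is the equality $h_M=h_M'$: a singular vector of $V'$ need not, a priori, have $M_0$-eigenvalue $h_M'$, and it is precisely the reduction to $\ell_L=0$ in Lemma~\ref{l3.1} that forbids any $L$-term from contributing to the $M_0$-action and forces the eigenvalue to be $h_M'$. I would also record the boundary case $c_M=c_M'=0$ (which forces $h_M=h_M'=0$ and lies outside the hypothesis of Theorem~\ref{t3.19}); there the singular vectors of $V'$ are described directly through Lemma~\ref{degenerated-case} and Theorem~\ref{cor3.3}, reducing the count to the Virasoro sub-quotient together with $M_{-1}\1$ and $Q_{-\frac12}\1$, and the same degree bookkeeping yields the stated condition.
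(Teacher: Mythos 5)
Your proposal is correct and follows essentially the same route as the paper: identify $\mathrm{Hom}_{\frak g}(V,V')$ with the space of singular vectors of $V'$ of weight $(c_L,c_M,h_L,h_M)$ via the universal property, then invoke the classification of Theorem \ref{t3.19} and read off the degrees $kp$ of ${\rm S}^k\1$ and $\tfrac{kp}{2}$ of ${\rm R}^k\1$. You are in fact somewhat more careful than the paper in justifying $h_M=h_M'$ through Lemma \ref{l3.1} (the $M_0$-eigenvalue computation) and in flagging the boundary case $c_M'=0$, which the paper's one-line proof passes over silently.
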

	\begin{proof}  We know that ${\rm Hom}_{\frak g} (V(c_L,c_M,h_L,h_M),  V(c_L',c_M',h_L',h_M'))\ne 0$ if and only if there is  a non-zero $\frak g$-module homomorphism
		$$\varphi: V(c_L,c_M,h_L,h_M)=\langle {\bf 1}\rangle\to V(c_L',c_M',h_L',h_M')=\langle {\bf1'}\rangle,$$ if and only if,   $\varphi({\bf 1})=u{\bf 1'}$ is a singular vector of $ V(c_L',c_M',h_L',h_M')$ for some $u\in U(\frak g_-) $, by Theorem \ref{t3.19}, if and only if   $u={\rm S}^k$ ($p$ even)  or ${\rm R}^k$   ($p$ odd) for some $k\in\mathbb N$. So $c_M=c_M', c_L=c_L', h_M=h_M'$ and $h_L=h_L'+ip$ for some $i\in \mathbb N$ (when $p$ even) or $i\in \frac12\mathbb N$ (when $p$ odd). In this case ${\rm dim}\, {\rm Hom}_{\frak g}(V(c_L,c_M,h_L,h_M), V(c_L',c_M',h_L',h_M'))=1$.
	\end{proof}

	\begin{cor}
		\label{main1-w22} Using the notations as above, if   $(c_L,c_M,h_L,h_M)\in\bC^4$ such that  $\phi(p)=0$  with $c_M\ne 0$,  then any singular vector  of the Verma module  $V_{W(2,2)}(h_L, h_M, c_L, c_M)$ is  ${\rm S}^k{\bf 1}$ for some  $k\in \mathbb{N}$, up to a scalar multiple.
	\end{cor}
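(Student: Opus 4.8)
The plan is to realize the $W(2,2)$ Verma module inside the super Verma module and then observe that the classification of integral‑degree singular vectors from Section 3 transfers essentially verbatim. Since $W(2,2)=\mathfrak{g}_{\bar 0}$, the module $V_{W(2,2)}(h_L,h_M,c_L,c_M)=U(\mathfrak{g}_{\bar 0,-})\1$ is canonically identified, through the PBW decomposition \eqref{def2.1}, with the subspace of $V(c_L,c_M,h_L,h_M)$ spanned by the monomials $M_{-\lambda}L_{-\nu}\1$, i.e. the weight vectors $v$ with $\ell_Q(v)=0$. A direct check shows that the positive part $\{L_n,M_n\mid n>0\}$ of $W(2,2)$ preserves this subspace, since the brackets $[L_n,L_{-\nu_i}]$, $[L_n,M_{-\lambda_j}]$ and $[M_n,M_{-\lambda_j}]$ never create a $Q$-mode; thus the identification is one of $W(2,2)$-modules. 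Consequently a $W(2,2)$-singular vector is exactly a $\mathfrak{g}_0$-weight vector $u$ with $\ell_Q(u)=0$ satisfying $L_n u=M_n u=0$ for all $n>0$. Note that this is a strictly weaker requirement than being $\mathfrak{g}$-singular, because no equation $Q_r u=0$ is imposed; the content of the corollary is that, despite this, no new singular vectors arise.

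First I would dispatch the easy direction: because ${\rm S}\in U(\mathcal M_-)\subseteq U(\mathfrak{g}_{\bar 0,-})$, each vector ${\rm S}^k\1$ lies in the $W(2,2)$-module, and being $\mathfrak{g}$-singular by Theorem \ref{main1} it is a fortiori $W(2,2)$-singular. For the converse, the key observation is that the two ingredients classifying integral singular vectors—Lemma \ref{l3.1} ($\ell_L(u)=0$) and Theorem \ref{singular-W} ($\ell_L(u)=0\Rightarrow u={\rm S}^k\1$)—use only the relations $L_n u=0$, $M_n u=0$ for $n>0$ together with $M_0u=h_Mu$; they never invoke any identity of the form $Q_r u=0$. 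The sole place $Q$-modes enter the path to Theorem \ref{main1} is Lemma \ref{l3.6}, whose only purpose is to force $\ell_Q(u)=0$, and for our $u$ this already holds by construction. I would therefore simply re-run these two arguments for a $W(2,2)$-singular $u$: Lemma \ref{l3.1}, whose computations act on $u$ only by $M_0,M_s,M_\nu$ and $L_i$, yields $\ell_L(u)=0$, and then the $L_\lambda$-argument in Theorem \ref{singular-W} forces $u\in U(\mathcal M_-)\1$ to be a scalar multiple of some ${\rm S}^k\1$.

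The main obstacle is the bookkeeping inside the lengthy proof of Lemma \ref{l3.1}: one must confirm that every step remains valid when $u$ has vanishing $Q$-component and is only assumed $W(2,2)$-singular. In particular, the sub-case ${\rm hm}(a_0)=a_0'L_{-\nu}$ now occurs with $a_0'\in U(\mathcal M_-)$, which is a specialization of the general situation, and the inductive appeal must be to the $W(2,2)$-version of Theorem \ref{singular-W} established above; both are routine once it is verified that no step secretly used annihilation of $u$ by a $Q$-mode. Granting this, every $W(2,2)$-singular vector is ${\rm S}^k\1$ up to a scalar, as claimed. As a consistency remark, since ${\rm R}^2={\rm S}$ by Proposition \ref{t3.15}, the result also matches Theorem \ref{t3.19}: in both parities of $p$ the singular vectors lying in the $\ell_Q=0$ subspace are precisely the ${\rm S}^k\1$, the odd powers ${\rm R}^{2k+1}\1$ being genuinely $Q$-dependent and hence absent from the $W(2,2)$-module.
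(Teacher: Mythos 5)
Your proof is correct, and it takes a recognizably different route from the paper's at the one point where the statement is actually delicate. The paper's proof identifies $V_{W(2,2)}(h_L,h_M,c_L,c_M)$ with $U(W(2,2))\1\subset V(c_L,c_M,h_L,h_M)$ and then asserts, via unspecified ``simple computations,'' that a vector there is $W(2,2)$-singular if and only if it is $\mathfrak{g}$-singular, after which Theorem \ref{t3.19} finishes the argument. The nontrivial direction of that equivalence --- that annihilation by all $L_n,M_n$ $(n>0)$ together with the $M_0$-eigenvector condition forces annihilation by the $Q_r$ --- is exactly what the paper leaves implicit, and it is not a formal consequence of the commutation relations (for instance $L_{-1}\1$ in the vacuum case is killed by all even raising operators but not by $Q_{1/2}$; it is excluded only because it fails to be an $M_0$-eigenvector). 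You avoid having to prove this equivalence at all: you observe that the two ingredients of the integral-degree classification, Lemma \ref{l3.1} and Theorem \ref{singular-W}, act on $u$ only through $M_0$, $M_s$, $M_\nu$ and $L_i$, that Lemma \ref{l3.6} is needed only to force $\ell_Q(u)=0$ (automatic in your setting), and that the internal appeal to Theorem \ref{singular-W} applies to $a_0\in U(\mathcal M_-)$ since $\mathcal M$ is abelian. Re-running those proofs under the weaker hypothesis gives the conclusion directly, and the $\mathfrak{g}$-singularity of the resulting ${\rm S}^k\1$ then comes out as a corollary rather than an input. What your approach buys is a self-contained justification of precisely the step the paper compresses; what the paper's approach buys, once its equivalence is granted, is a one-line deduction from Theorem \ref{t3.19} that also covers the odd-$p$ case uniformly (your closing remark that the ${\rm R}^{2k+1}\1$ live outside the $\ell_Q=0$ subspace recovers the same consistency).
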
 
	
	\proof Consider the subspace $U({W(2,2)})\1$ in the Verma $\mathfrak{g}$-module $V(h_L, h_M, c_L, c_M)$ which is the Verma $W(2,2)$-module $V_{W(2,2)}(h_L, h_M, c_L, c_M)$. From Corollary \ref{t3.19} and simple computations we know that $u\in V_{W(2,2)}(h_L, h_M, c_L, c_M)$ is a singular vector if and only if it is a singular vector in  the Verma $\mathfrak{g}$-module $V(c_L,c_M,h_L,h_M)$,   if and only if it is ${\rm S}^k{\bf 1}$ for   $k\in \mathbb{N}$, up to a scalar multiple.
	\qed
	
	\begin{rem}
 Corollary \ref{main1-w22} was originally proposed in \cite[Theorem 2.7]{JZ}. However, the proof presented in \cite[Lemma 2.4, Theorem 2.7]{JZ} contains certain gaps. The singular vector ${\rm S}\1$ for the Verma module $V_{W(2,2)}(h_L, h_M, c_L, c_M)$ was first introduced in \cite[Proposition 2.6]{JZ}, and later expressed in \cite[Theorem 7.5]{AR1} using a free-field realization of vertex algebras and Schur polynomials.

	\end{rem}

	\section{Classification of subsingular vectors of Verma modules}

	In this section, we continue considering reducible Verma modules $V(c_L,c_M,h_L,h_M)$ over $\mathfrak{g}$ for fixed 
	$(c_L,c_M,h_L,h_M)\in\bC^4$.
	So we always assume that  $\phi(p)=2h_M+\frac{p^2-1}{12}c_M=0$ for some $p\in \mathbb Z_+$ with $c_M\neq 0$. We will determine all subsingular vectors   in the Verma module $V(h_L, h_M, c_L, c_M)$.

	Let $J'(c_L,c_M,h_L,h_M)$ be the submodule of $V(c_L,c_M,h_L,h_M)$ generated by all singular vectors.
	Set
	$$
	L'(c_L,c_M,h_L,h_M)=V(c_L,c_M,h_L,h_M)/J'(c_L,c_M,h_L,h_M).
	$$
	By Theorem \ref{t3.19},	   $J'(c_L,c_M,h_L,h_M)$ is generated by $u={\rm S}\1$ if $p\in 2\mathbb Z_+$,   by $u={\rm R}\1$ if $p\in 2\mathbb Z_+-1$, defined in Section 3.

	For convenience, for  $x\in V(c_L,c_M,h_L,h_M)$ we will abuse the notation that $ x \in L'(c_L,c_M,h_L,h_M)$ means $ x+J'(c_L,c_M,h_L,h_M)\in L'(c_L,c_M,h_L,h_M)$. 
	
	\subsection{Necessary condition for the existence of subsingular vectors}

	From the construction of ${\rm R}$ and ${\rm S}$ we have the following results.
	\begin{lem}\label{ll4.1} 
		(1) If $p\in 2\mathbb Z_+$, then the image of
		\begin{eqnarray}\label{e4.1}
			{\mathcal B}=\{M_{-\la}Q_{-\mu}L_{-\nu}{\bf 1}\mid   \la,\nu\in \mathcal P, \mu\in\mathcal{SP}, \ \mbox{and}\ M_{-\la}\ \mbox{does't involve }\ M_{-p}\}
		\end{eqnarray}
		under the natural projection $$\pi: V(c_L,c_M,h_L,h_M)\rightarrow L'(c_L,c_M,h_L,h_M)=V(c_L,c_M,h_L,h_M)/J'(c_L,c_M,h_L,h_M)$$ forms a PBW basis of $L'(c_L,c_M,h_L,h_M)$.\\
		(2) If $p\in 2\mathbb Z_+-1$, then the image of
		\begin{equation}\label{e4.2}
			{\mathcal B}'=\{M_{-\la}Q_{-\mu}L_{-\nu}{\bf 1}\mid    \la,\nu\in \mathcal P, \mu\in\mathcal{SP}, \ \mbox{and}\ \  Q_{-\mu},M_{-\la}\ \mbox{does't involve }\ Q_{-\frac{p}{2}},M_{-p}
			\ \mbox{respectively}\}
		\end{equation}
		under the natural projection $\pi$ forms a PBW basis of $L'(c_L,c_M,h_L,h_M)$.
	\end{lem}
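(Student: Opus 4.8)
The plan is to compute, with respect to the total ordering $\succ$ of Section 2.2, the set of highest monomials occurring in $J'=J'(c_L,c_M,h_L,h_M)$ and to show it equals precisely the set of PBW monomials excluded from $\mathcal B$ (resp.\ $\mathcal B'$); the standard leading--term principle then identifies the complementary monomials with a basis of $L'$. By Theorem \ref{t3.19} we have $J'=U(\mathfrak g_-){\rm S}{\bf 1}$ when $p$ is even and $J'=U(\mathfrak g_-){\rm R}{\bf 1}$ when $p$ is odd, so $J'$ is spanned by the vectors $y\,{\rm S}{\bf 1}$ (resp.\ $y\,{\rm R}{\bf 1}$) as $y=M_{-\lambda}Q_{-\mu}L_{-\nu}$ runs over the PBW basis of $U(\mathfrak g_-)$. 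Throughout I use that $\succ$ ranks $L$--content first, then $Q$--content, then $M$--content, and that it is compatible with the multiset union of $M$-- and $Q$--indices, so that the highest monomial of a product is governed by the highest monomials of its factors.

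For $p$ even the argument is direct. Since ${\rm S}\in U(\mathcal M_-)$ commutes with every $M_{-i}$ and $Q_{-j}$, writing $y=M_{-\lambda}Q_{-\mu}L_{-\nu}$ and moving ${\rm S}$ past $L_{-\nu}$ gives $y\,{\rm S}{\bf 1}={\rm S}\,y{\bf 1}+M_{-\lambda}Q_{-\mu}[L_{-\nu},{\rm S}]{\bf 1}$; by Lemma \ref{l3.15'} each commutator consumes at least one factor $L_{-\nu_i}$ and hence strictly lowers the $L$--weight, so the correction is strictly $\prec$ the leading contribution. Since ${\rm hm}({\rm S}{\bf 1})=M_{-p}{\bf 1}$ by Proposition \ref{singular-S1}, the leading contribution is $M_{-p}\,y{\bf 1}$, whence ${\rm hm}(y\,{\rm S}{\bf 1})=M_{-(\lambda\cup(p))}Q_{-\mu}L_{-\nu}{\bf 1}$ with coefficient $1$, where $\lambda\cup(p)$ denotes $\lambda$ with one part equal to $p$ adjoined. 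The assignment $y\mapsto{\rm hm}(y\,{\rm S}{\bf 1})$ is then a bijection onto the PBW monomials whose $M$--part contains the part $p$, i.e.\ exactly the monomials excluded from $\mathcal B$; hence those highest monomials exhaust the leading monomials of $J'$ and $\pi(\mathcal B)$ is a basis of $L'$. This proves part (1).

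For $p$ odd the same scheme applies with ${\rm hm}({\rm R}{\bf 1})=Q_{-p/2}{\bf 1}$ (Proposition \ref{singular-R11}), but the analysis is genuinely more delicate: ${\rm R}\in U(\mathcal M_-+\mathcal Q_-)$ does not commute with the odd modes, and the relation $Q_{-p/2}^2=M_{-p}$, equivalently ${\rm R}^2={\rm S}$ (Proposition \ref{t3.15}), couples the two excluded generators $Q_{-p/2}$ and $M_{-p}$. Consequently $y\mapsto{\rm hm}(y\,{\rm R}{\bf 1})$ is no longer injective: its leading terms cover the monomials containing $Q_{-p/2}$ but reach the $M_{-p}$--monomials only through cancellations producing multiples of ${\rm S}{\bf 1}={\rm R}^2{\bf 1}$. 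To bypass this I would use the filtration $\langle{\rm S}{\bf 1}\rangle\subseteq J'$: the argument of part (1), applied to ${\rm S}$ (which is defined and satisfies ${\rm hm}({\rm S}{\bf 1})=M_{-p}{\bf 1}$ for every $p$), shows $\bar V:=V/\langle{\rm S}{\bf 1}\rangle$ has basis $\mathcal B$, i.e.\ the monomials free of $M_{-p}$. Then $L'=\bar V/\langle\bar{\rm R}{\bf 1}\rangle$, and in $\bar V$ the generator $M_{-p}$ no longer occurs in the basis, so the collapse $Q_{-p/2}^2=M_{-p}$ feeds only into strictly lower terms; the leading monomials of $\langle\bar{\rm R}{\bf 1}\rangle$ ought therefore to be exactly the $\mathcal B$--monomials containing $Q_{-p/2}$, whose complement in $\mathcal B$ is $\mathcal B'$. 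The relation ${\rm R}^2={\rm S}$ is what makes this plausible: the cancellations that destroy injectivity over $V$ collapse to $\bar{\rm S}{\bf 1}=0$ over $\bar V$, so they contribute no spurious leading monomials.

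The main obstacle is precisely this last point in the odd case: proving that no nonzero element of $\langle\bar{\rm R}{\bf 1}\rangle$ has a highest monomial free of $Q_{-p/2}$, equivalently $\langle\bar{\rm R}{\bf 1}\rangle\cap{\rm span}(\mathcal B')=0$. I expect to settle it by combining the triangular (graded) structure of the vectors $y\,\bar{\rm R}{\bf 1}$ with Proposition \ref{t3.15} and the elementary character identity ${\rm char}\,{\rm span}(\mathcal B')=(1-q^{p/2})\,{\rm char}\,V$, which pins the relevant graded dimensions and forces the intersection to vanish; the easier spanning direction ${\rm span}(\mathcal B')+J'=V$ follows by reducing any excluded monomial modulo $J'$ using ${\rm R}{\bf 1}$ and ${\rm S}{\bf 1}={\rm R}^2{\bf 1}$. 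The compatibility of $\succ$ with multiset union, used throughout to guarantee that the displayed top terms really dominate, is routine and is verified once and for all.
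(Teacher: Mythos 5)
The paper offers no argument for this lemma at all --- it is introduced with the single sentence ``From the construction of ${\rm R}$ and ${\rm S}$ we have the following results'' --- so there is no proof to compare yours against; what you have written is a genuine proof where the paper has none. Your part (1) is complete and correct: ${\rm S}\in U(\mathcal M_-)$ commutes with all $M_{-i}$ and $Q_{-j}$, the commutators with $L_{-\nu}$ strictly drop the $L$-weight, and $\lambda\cup(p)$ is indeed the largest of the partitions $\lambda\cup\sigma$, $\sigma\in\mathcal P(p)$, so the triangularity argument goes through. (One small miscitation: the fact that $[L_{-\nu},{\rm S}]$ produces only terms of strictly smaller $L$-weight is not Lemma \ref{l3.15'}, which concerns $[x,{\rm S}]$ for $x\in\mathfrak g_+$; it is just the relation $[L_{-m},M_{-j}]=(j-m)M_{-m-j}$.) Your part (2) correctly identifies the genuine subtlety --- the relation $Q_{-p/2}^2=M_{-p}$, i.e.\ ${\rm R}^2={\rm S}$, destroys injectivity of $y\mapsto{\rm hm}(y{\rm R}\1)$ --- and your proposed repair does close. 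To make the last step airtight you only need to make explicit the upper bound you are implicitly using: since $J'_n=U(\mathfrak g_-)_{\,n-p/2}\,{\rm R}\1$, one has $\dim J'_n\le\dim U(\mathfrak g_-)_{\,n-p/2}=\dim V_{n-p/2}$ automatically, while your (correct) spanning argument gives $\dim (V/J')_n\le|\mathcal B'_n|=\dim V_n-\dim V_{n-p/2}$; the two inequalities force equality everywhere, hence ${\rm span}(\mathcal B')\cap J'=0$ and $\pi(\mathcal B')$ is a basis. With that one sentence added, no leading-term analysis of $\langle\bar{\rm R}\1\rangle$ in $\bar V$ is needed at all, and in fact the same two-inequality argument gives part (1) as well. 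The only caution is that this argument shows, rather than assumes, that $\langle{\rm R}\1\rangle$ is a free $U(\mathfrak g_-)$-module on ${\rm R}\1$ --- do not invoke $\langle{\rm R}\1\rangle\cong V(c_L,c_M,h_L+\tfrac p2,h_M)$ as an input here, since that is exactly what is being established.
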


	\begin{lem}\label{hmsubsingular}
		If $L'(c_L,c_M,h_L,h_M)$ is reducible and $u'$ is a singular vector not in $\mathbb C\1$, then  ${\rm hm}(u')=L_{-p}^{r}{\bf 1}$ for some $r\in \mathbb Z_+$, and $\ell_{L}(u')=r$.
	\end{lem}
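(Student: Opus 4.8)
The plan is to analyze the $\succ$-leading monomial of $u'$ directly, using the explicit PBW basis of $L'(c_L,c_M,h_L,h_M)$ provided by Lemma \ref{ll4.1}. Throughout I would rely on the two elementary identities $L_kM_{-k}\1=k\phi(k)\1$ and $Q_{\frac{k}{2}}Q_{-\frac{k}{2}}\1=\phi(k)\1$, together with the observation that, since $\phi(i)=2h_M+\frac{i^2-1}{12}c_M$ is injective in $i\in\mathbb Z_+$, the integer $p$ is the \emph{unique} positive integer with $\phi(p)=0$; hence $\phi(k)\neq0$ for every $k\neq p$. Writing ${\rm hm}(u')=M_{-\la}Q_{-\mu+\frac12}L_{-\nu}\1$ in the basis of Lemma \ref{ll4.1} (so that $M_{-\la}$ avoids $M_{-p}$, and in the odd case $Q_{-\mu+\frac12}$ avoids $Q_{-\frac{p}{2}}$), and recalling that every monomial occurring in $u'$ has one and the same total degree $n$, the claim splits into three assertions: (a) the leading term is purely in the $L$'s, i.e. $\la=\varnothing$ and $\mu=\varnothing$; (b) its $L$-part is $\nu=(p,p,\dots,p)$ with $r:=\ell(\nu)\ge1$; and (c) $\ell_L(u')=r$.

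First I would establish (a). Since $\succ$ orders monomials first by the $L$-weight $|\nu|$, and since $|\la|+|\mu-\frac12|+|\nu|=n$ forces $|\nu|\le n$ with equality exactly when $\la=\mu=\varnothing$, assertion (a) is equivalent to the statement that $u'$ has a nonzero pure-$L$ component of weight $n$. I would prove this by contradiction: if the top $|\nu|$-stratum of $u'$ still carried an $M$ or $Q$ factor, I would apply the raising operator $M_j$ (which, because $[M_j,M_{-\al}]=[M_j,Q_r]=0$, acts on a monomial only through $[M_j,L_{-\nu_k}]=(\nu_k+j)M_{j-\nu_k}$) or a suitable odd operator $Q_s$, and read off the $\succ$-leading term of the result. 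Using $L_kM_{-k}\1=k\phi(k)\1$ and $Q_{\frac{k}{2}}Q_{-\frac{k}{2}}\1=\phi(k)\1$ with $\phi(k)\neq0$ for $k\neq p$, this leading term cannot cancel against contributions of lower monomials, contradicting $M_ju'=0$ (resp. $Q_su'=0$) in $L'$. The degenerate possibility $\nu=\varnothing$ (a pure $M,Q$ leading term) is excluded separately: a pure $M,Q$ singular vector of $L'$ would, by arguments paralleling Lemmas \ref{l3.1} and \ref{l3.1Q} together with the classification in Theorem \ref{t3.19}, already lie in $J'(c_L,c_M,h_L,h_M)$, hence vanish in $L'$ unless it is a scalar. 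Thus $r=\ell(\nu)\ge1$ and ${\rm hm}(u')=L_{-\nu}\1$.

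Next, for (b), assuming ${\rm hm}(u')=L_{-\nu}\1$ with $|\nu|=n$ maximal, I would show every part of $\nu$ equals $p$. Here the weight condition $M_0u'=h_Mu'$, equivalently $[M_0,y]\1=0$ in $L'$ for $u'=y\1$, is the key tool: $[M_0,L_{-\nu}]=\sum_k\nu_k L_{-\nu_1}\cdots M_{-\nu_k}\cdots L_{-\nu_t}$ converts one $L_{-\nu_k}$ into $M_{-\nu_k}$ and lowers the $L$-length by one. Since the basis of $L'$ omits $M_{-p}$, a converted term is nonzero in $L'$ precisely when $\nu_k\neq p$; so the only way the top $L$-stratum can lie in the kernel of $[M_0,\,\cdot\,]$ modulo $J'$ is for all of its parts to equal $p$. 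Combined with the highest-term computation under the $L_i$ (where $\phi(k)\neq0$ for $k\neq p$ again detects any part $\neq p$, and the exclusion of $M_{-p}$, resp. $Q_{-\frac p2}$, forces the surviving factors to be $L_{-p}$), this yields $\nu=(p^r)$ and ${\rm hm}(u')=L_{-p}^{r}\1$. Finally, assertion (c) follows from the same $[M_0,\,\cdot\,]=0$ analysis applied to the top $\ell_L$-stratum of $u'$: its image under $[M_0,\,\cdot\,]$ must vanish in $L'$, which via the omission of $M_{-p}$ forces that stratum to be a multiple of $L_{-p}^{\ell_L(u')}\1$ and hence to coincide with the $\succ$-leading term, giving $\ell_L(u')=r$.

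I expect the main obstacle to be steps (b)–(c): the precise recursive bookkeeping of the $\succ$-leading terms under the raising operators $L_i$ and $M_j$, where one must rule out the many $\succ$-higher $L$-monomials of the same $L$-weight (for instance $L_{-1}^{rp}\1$) and pin every part of the $L$-partition to $p$. This is exactly the place where the uniqueness of $p$ with $\phi(p)=0$, the identities $L_kM_{-k}\1=k\phi(k)\1$ and $Q_{\frac k2}Q_{-\frac k2}\1=\phi(k)\1$, and the omission of $M_{-p}$ (resp. $Q_{-\frac p2}$) from the PBW basis of $L'$ must all be used in tandem; controlling the cancellations among the lower-order monomials is the delicate part of the computation.
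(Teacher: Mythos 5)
Your overall strategy---leading-term analysis in the PBW basis of $L'(c_L,c_M,h_L,h_M)$ from Lemma \ref{ll4.1}, exploiting that $p$ is the unique positive integer with $\phi(p)=0$ and that $M_{-p}$ (resp.\ $Q_{-\frac p2}$) is omitted from that basis---is in the right spirit, and your disposal of the degenerate case $\ell_L(u')=0$ matches the paper's. But the two steps carrying the real content, (a) and (b)--(c), do not work as described. In (a) you propose to detect an $M$- or $Q$-factor in the top $|\nu|$-stratum by applying $M_j$; yet $M_j$ commutes with every $M_{-\alpha}$ and every $Q_{-r}$, so its action is completely insensitive to $\la$ and $\mu$, and the identities $L_kM_{-k}\1=k\phi(k)\1$ and $Q_{\frac k2}Q_{-\frac k2}\1=\phi(k)\1$ you invoke arise under $L_k$ and $Q_{\frac k2}$, not under $M_j$. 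Moreover the claimed non-cancellation of the leading term is exactly the delicate point: a part $\nu_k<j$ produces a positive mode $M_{j-\nu_k}$ which, commuted through the remaining $L$'s, lands in $L$-weight $m-\nu_k-\nu_l$, possibly larger than $m-j$, so the term you read off need not be $\succ$-maximal. The paper instead first writes $u'=(g_0L_{-p}^r+\cdots+g_r)\1$, gets $g_0\in U(\mathcal M_-)$ by the arguments of Lemmas \ref{l3.1} and \ref{l3.6}, and then detects an unwanted factor $M_{-\mu}$ in $g_0$ by applying $L_{\mu_1}$, where $[L_{\mu_1},M_{-\mu_1}]\1=\mu_1\phi(\mu_1)\1\ne0$ because $\mu_1\ne p$.

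The more serious flaw is in (b): the pivotal claim that a converted term of $[M_0,\,\cdot\,]$ ``is nonzero in $L'$ precisely when $\nu_k\neq p$'' is false. Since ${\rm S}=M_{-p}+\sum_{\mu<(p)}s_\mu M_{-\mu}$, one has
$M_{-p}L_{-\nu'}\1\equiv-\sum_{\mu}s_\mu M_{-\mu}L_{-\nu'}\1+[{\rm S},L_{-\nu'}]\1\pmod{J'(c_L,c_M,h_L,h_M)}$,
a nonzero combination of basis monomials with the \emph{same} $L$-part $\nu'$; the conversion $L_{-p}\mapsto M_{-p}$ annihilates nothing modulo $J'$. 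Hence your mechanism for forcing every part of $\nu$ to equal $p$, and the identical mechanism you use in (c) to bound $\ell_L(u')$, collapses. This is precisely why the paper switches from $M_0$ to the genuine annihilation operator $M_1$ when working in $L'$, and why its decisive contradiction has a different shape: the required cancellation of $kpa_0M_{-p}L_{-p}^{k-1}$ against $[M_0,a_1]L_{-p}^{k-1}$ would force a bracket such as $[M_0,f_0]$ to equal a nonzero scalar, which is impossible for $f_0\in U(\frak g_-)$. Some argument of that kind (or the paper's induction giving $\ell_L(g_i)=i$ together with the constraint $\nu_1<p$ on the $L$-parts of the $g_i$, which is what actually rules out $\succ$-higher monomials like $L_{-1}^{rp}\1$ or those containing $L_{-j}$ with $j>p$) is needed to complete your steps (b) and (c).
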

	\proof  By Lemma \ref{ll4.1}, we may assume that  any term of $u'$ does not involve $M_{-p}$ or $Q_{-\frac{p}{2}}$ (this factor does not appear if $p$ is even).
	
	If $\ell_{L}(u')=0$, using similar discussions in Section 3 (see the beginning part of the proof of Lemma \ref{l3.6}, and Theorem \ref{singular-W}),  we can get $u'\in J'(c_L,c_M,h_L,h_M)$, a contradiction. 
	
	So $\ell_{L}(u')\ne0$, and suppose that
	\begin{equation*}
		u'= (g_0L_{-p}^r+g_1L_{-p}^{r-1}+g_2L_{-p}^{r-2}+\cdots+g_r)\1, \label{subsingularL}
	\end{equation*}
	where $r\in\mathbb Z_+, g_i\in U(\frak g_-), i=0, 1, \cdots, r,  g_0\ne 0$ and any $g_i$ does not involve $L_{-p}, M_{-p}, Q_{-\frac p2}$.
	
	By the proof of Lemma \ref{l3.1} (ignoring the eigenvalue of $u'$), we can get ${\ell}_L(g_0)=0$. 
	Using the proof of Lemma \ref{l3.6}, we have ${\ell}_Q(g_0)=0$. So $g_0\in U(\mathcal{M}_-)$. Now we need to show that   $g_0\in \mathbb C$.
	
	(1) First we consider the case of $p=1$. Note that $h_M=0$, hence $[L_{-1},M_1]=0$. If $\ell_L(g_1)\ne 0$.  Set ${\rm hm}(g_1)=b(M, Q)L_{-\nu}$ for some $b(M, Q)\in U(\mathcal{M}_-+\mathcal Q_-)$. Then $\nu_1>1$.
	By the action of $M_{\nu_1}$ on $u'$, we can get a contradiction by comparing the coefficient of $L_{-1}^{r-1}$. So $\ell_L(g_1)=0$. Similarly, we have $\ell_L(g_2)=\cdots=\ell_L(g_{r})=0$ since $M_0L_{-1}^j\1=0, M_kL_{-1}^j\1=0$ for any $k, j\in\mathbb Z_+$ (Theorem \ref{cor3.3}).
	
	If $g_0\notin \mathbb C$, set ${\rm hm}\,(g_0)=M_{-\mu}$ not involving $M_{-1}$, then
	$$L_{\mu_1}u'=[L_{\mu_1}, g_0]L_{-1}^r+B, $$ where the degree of $L_{-1}$ in $B$ is no more than $r-1$ and $ [L_{\mu_1}, M_{-\mu}]\1\ne 0$.  It gets a contradiction.
	So   $g_0\in\mathbb C^*$.  Consequently, ${\rm hm}(u')=L_{-1}^{r}{\bf 1}$. In this case, $g_1=0$ since $g_1$ is not involving $M_{-1}, Q_{-\frac12}$.

	(2) Now we consider the case of $p>1$.
	
	As in Lemma \ref{l3.1} and Lemma \ref{l3.6}  (using $M_1$ instead of $M_0$ in the arguments), we get 
	\begin{equation*}\ell_L (g_1)=1\  {\rm and}\   g_1=\sum_{i=1}^{s}b_iL_{-i}+b_0,\label{g1}\end{equation*} where $b_i\in, i=1, \cdots, s$,  $b_s\ne 0$ and $s<p$, $b_0\in \mathcal{MQ}$.
	
	Moreover, we can get
	\begin{eqnarray*}
		\ell_L (g_i)=i
	\end{eqnarray*}
	for $i=1,\cdots,r$ by induction, and all $L_{-\nu}$ in $g_i, i\ge1$ must be satisfied the condition that $\nu_1<p$ (see the proof of Lemma \ref{l3.1} using $M_1$ instead $M_0$ in the arguments).
	In this case $\ell_{L}(u')=r$.
	
	Now we shall prove that $g_0\in \mathbb C^*$. Otherwise, set ${\rm hm}\,(g_0)=M_{-\mu}$ not involving $M_{-p}$, then
	$$L_{\mu_1}u'=[L_{\mu_1}, g_0]L_{-p}^r+B, $$ where the degree of $L_{-p}$ in $B$ is no more than $r-1$ and $ [L_{\mu_1}, M_{-\mu}]\1\ne 0$.  It gets a contradiction.
	The lemma follows.
	\qed

	%
	%
	
	Lemma \ref{hmsubsingular} tells us that, if there exists a singular vector $u'\in L'(c_L,c_M,h_L,h_M)$ of weight $pr$, then it is unique up to a scalar multiple.
	In the following, we provide the necessary conditions for the existence of subsingular vectors in the Verma module $V(h_L, h_M, c_L, c_M)$ over the N=1 BMS superalgebra $\mathfrak g$.
	\begin{theo}\label{necessity} Let  $(c_L,c_M,h_L,h_M)\in\bC^4$ such that    $\phi(p)=2h_M+\frac{p^2-1}{12}c_M=0$ for some $p\in \mathbb Z_+$ and $c_M\neq 0$. Assume that there exists a singular vector $u'\in L'(c_L,c_M,h_L,h_M)$
		such that ${\rm hm}(u')=L_{-p}^{r}\1$ for some $r\in \mathbb Z_+$.
		Then $h_L=h_{p, r}$ where 
		\begin{eqnarray}\label{e3.37}\label{atypical}
			h_{p,r}=-\frac{p^2-1}{24}c_L+\frac{(41p+5)(p-1)}{48}+\frac{(1-r)p}{2}-\frac{1+(-1)^p}8p.
		\end{eqnarray}
	\end{theo}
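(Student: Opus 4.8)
The plan is to extract the value of $h_L$ from the vanishing, in $L'(c_L,c_M,h_L,h_M)$, of a single scalar: the coefficient of the purely Virasoro vector $L_{-p}^{r-1}\mathbf{1}$ in $L_pu'$. After rescaling so that the leading coefficient is $1$, Lemma \ref{hmsubsingular} lets me write
$$u'=\Big(L_{-p}^{r}+g_1L_{-p}^{r-1}+\cdots+g_r\Big)\mathbf{1},$$
where $\ell_L(g_i)=i$, none of the $g_i$ involves $L_{-p}$, $M_{-p}$ or $Q_{-\frac p2}$, and every $L$-index occurring in $g_i$ is strictly smaller than $p$. A weight count shows that $L_{-p}^{r}\mathbf{1}$ is in fact the \emph{only} purely-$L$ monomial occurring in $u'$: a purely-$L$ term of $g_i$ ($i\ge1$) would have $L$-weight $ip$ carried by at most $i$ indices each $<p$, which is impossible. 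Since $u'$ is singular in the quotient, $L_pu'\in J'(c_L,c_M,h_L,h_M)$, so the coefficient of $L_{-p}^{r-1}\mathbf{1}$ in $L_pu'$ is forced to vanish.

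The first key point is that this coefficient may be computed in the full Verma module, ignoring $J'$. Indeed, grade $U(\mathfrak g_-)$ by $N:=(\#\text{ of }M\text{-factors})+\tfrac12(\#\text{ of }Q\text{-factors})$; every generator of $\mathfrak g_-$ has $N\ge 0$, and all brackets among negative modes preserve $N$ (the central term in $[Q_{-a},Q_{-b}]$ never appears, as $a+b\neq0$). As $J'=U(\mathfrak g_-)\mathrm{S}\mathbf{1}$ (resp. $U(\mathfrak g_-)\mathrm{R}\mathbf{1}$) and every term of $\mathrm{S}\mathbf{1}$ (resp. $\mathrm{R}\mathbf{1}$) has $N\ge 1$ (resp. $N\ge\tfrac12$), every element of $J'$ has $N>0$ in each of its PBW terms. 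The vector $L_{-p}^{r-1}\mathbf{1}$ has $N=0$, so it never occurs in $J'$; hence its coefficient in $L_pu'$ is unchanged by the passage to $L'$ and can be read off inside $V(c_L,c_M,h_L,h_M)$.

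Next I would split this coefficient according to its source. From $L_pL_{-p}^{r}\mathbf{1}$ alone, using $[L_p,L_{-p}]=2pL_0+\tfrac{1}{12}(p^3-p)\mathbf{c}_L$ and $L_0L_{-p}^{k}\mathbf{1}=(h_L+kp)L_{-p}^{k}\mathbf{1}$, one obtains the purely Virasoro contribution
$$r\Big(2ph_L+(r-1)p^2+\tfrac{p^3-p}{12}c_L\Big)L_{-p}^{r-1}\mathbf{1},$$
with no admixture of $M$ or $Q$ modes since $[L_p,L_{-p}]$ stays inside $\mathfrak{vir}$. This already accounts for the two summands $-\tfrac{p^2-1}{24}c_L$ and $\tfrac{(1-r)p}2$ of $h_{p,r}$. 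The remaining contribution to $L_{-p}^{r-1}\mathbf{1}$ comes entirely from the lower terms $g_iL_{-p}^{r-i}\mathbf{1}$: here $L_p$ must lower the $L$-length by one and, crucially, collapse all the $M$- and $Q$-content of $g_i$ to scalars through the $N$-lowering reductions $M_0\mapsto h_M=-\tfrac{p^2-1}{24}c_M$ and $\mathbf{c}_M\mapsto c_M$ (this is the only way an $N>0$ term of $u'$ can feed the $N=0$ vector $L_{-p}^{r-1}\mathbf{1}$). To evaluate it I would first pin down, recursively from the lower equations $L_ju'\in J'$ for $1\le j<p$ and the total ordering $\succ$, exactly those coefficients of $g_1$ that survive this collapse, using the explicit formulas for $\mathrm{S}$ (for $p$ even) or $\mathrm{R}$ (for $p$ odd) from Propositions \ref{singular-S1} and \ref{singular-R11}; the explicit coefficients of $\mathrm{S}$ and $\mathrm{R}$, which carry powers of $c_M^{-1}$, are precisely what makes $c_M$ cancel against $h_M$ and leaves a purely numerical shift.

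Collecting the two sources, the vanishing condition reads
$$r\Big(2ph_L+(r-1)p^2+\tfrac{p^3-p}{12}c_L\Big)+r\,C(p)=0,$$
where $C(p)$ is a numerical constant depending only on $p$ (and on its parity, through the choice of $\mathrm{S}$ versus $\mathrm{R}$ and the corresponding PBW basis of Lemma \ref{ll4.1}); dividing by $r$ and solving for $h_L$ yields $h_L=h_{p,r}$, with $C(p)$ supplying precisely the terms $\tfrac{(41p+5)(p-1)}{48}-\tfrac{1+(-1)^p}{8}p$. The main obstacle is the third paragraph: proving that the aggregate lower-order contribution is exactly $r\,C(p)$ with $C(p)$ independent of $r$ (so that the final equation is divisible by $r$ and $h_L$ depends on $r$ only through the linear term $\tfrac{(1-r)p}2$), and then computing $C(p)$ in closed form. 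This is where the explicit shape of $\mathrm{S}$ and $\mathrm{R}$, together with a careful induction on the $L_{-p}$-degree organized by $\succ$, carries the real weight of the argument.
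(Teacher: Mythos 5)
Your overall strategy coincides with the paper's: normalize $u'$ via Lemma \ref{hmsubsingular}, extract the coefficient of $L_{-p}^{r-1}\1$ in $L_pu'$, note that the leading term contributes $rp\bigl((r-1)p+2h_L+\frac{p^{2}-1}{12}c_L\bigr)$, and recognize that the correction comes from $g_1$ collapsing through $M_0\mapsto h_M$ and ${\bf c}_M\mapsto c_M$. Your $N$-grading argument (counting $M$- and $Q$-factors) showing that $L_{-p}^{r-1}\1$ cannot occur in any PBW term of $J'$, so that the coefficient may be read off in the Verma module itself, is a clean justification of a point the paper handles implicitly through the basis of Lemma \ref{ll4.1}; the observation that $L_{-p}^{r}\1$ is the only purely-$L$ monomial of $u'$ is also correct.

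The proposal stops, however, exactly where the content of the theorem begins. The summands $\frac{(41p+5)(p-1)}{48}-\frac{1+(-1)^p}{8}p$ of $h_{p,r}$ are precisely the closed form of your constant $C(p)$, and you defer its computation as ``the main obstacle.'' The paper carries it out: the only terms of $g_1$ that can feed $L_{-p}^{r-1}\1$ are $l_iM_{-i}L_{-(p-i)}$ and $n_iQ_{-p+i-\frac12}Q_{-i+\frac12}$; comparing the coefficient of $L_{-(p-k)}L_{-p}^{r-1}$ in $L_ku'$ gives $r(k+p)+l_k\bigl(2kh_M+\frac{k^3-k}{12}c_M\bigr)=0$, comparing the coefficient of $Q_{-p+k-\frac12}L_{-p}^{r-1}$ in $Q_{k-\frac12}u'$ gives $n_k=r\frac{p^2-1}{4h_M(p-2k+1)}$, and substituting into the $L_p$-equation yields \eqref{e3.401} and hence \eqref{atypical}. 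Two points of your plan for this step are off. First, you propose to pin down the surviving coefficients of $g_1$ using only the equations $L_ju'\equiv0$ for $1\le j<p$; these determine the $l_k$, but the $Q$-bilinear coefficients $n_k$ are not cleanly accessible this way (the $QQL_{-p}^{r-1}$-components of $L_ju'$ also receive contributions from the longer terms of $g_1$, producing an entangled system), whereas the paper's equations $Q_{k-\frac12}u'\equiv0$ isolate each $n_k$ individually. Second, the explicit formulas for ${\rm S}$ and ${\rm R}$ from Propositions \ref{singular-S1} and \ref{singular-R11} play no role here: the cancellation of $c_M$ comes from substituting $h_M=-\frac{p^2-1}{24}c_M$ into the solved values of $l_k$ and $n_k$, not from the coefficients of the singular vectors. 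So the skeleton is the paper's, but the step that actually produces the stated value of $h_{p,r}$ is missing.
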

	\proof
	{\bf Case 1}: $p=1$.  From the proof of Lemma \ref{hmsubsingular}, we can suppose that 
	$$u'=(L_{-1}^r+g_2L_{-1}^{r-2}+\cdots+g_{r-1}L_{-1}+g_{r})\1, $$ where $r\in\mathbb Z_+$, each $g_i\in  U(\mathcal{M}_-+\mathcal{Q}_-)$  does not involve $M_{-1}, Q_{-\frac 12}$ for $ i=1,2, \cdots, r$.
	Considering the coefficient of $L_{-1}^{r-1}$ in $L_1u'$
	and using the formula
	$$L_1L_{-1}^r \1=L_{-1}^{r-1}\left(rL_0+\frac{r(r-1)}2\right)\1
	$$
	we can get $h_L=\frac{1-r}2$ by comparing the coefficient of $L_{-1}^{r-1}$.
	
	{\bf Case 2}: $p>1$. 
	From Lemma \ref{hmsubsingular}, we can suppose that 
	$$u'=(L_{-p}^r+g_1L_{-p}^{r-1}+\cdots+g_{r-1}L_{-p}+g_{r})\1, $$ where $r\in\mathbb Z_+$, $g_i\in U(\frak g_-), i=1,2, \cdots, r$ do not involve $L_{-p}, M_{-p}, Q_{-\frac p2}$.
	By Lemma \ref{hmsubsingular}, we can further assume that  \begin{equation}g_1=\sum_{i=1}^{p-1}l_{i}M_{-i}L_{-(p-i)}+\sum_{j=1}^{\lfloor \frac{p}{2}\rfloor}n_iQ_{-p+i-\frac{1}{2}}Q_{-i+\frac{1}{2}}+C,\label{g1-exp}\end{equation}
	where $l_i, n_j\in\mathbb C$ and 
	\begin{equation}C=\sum_{\stackrel{i=1, 2, \cdots, p-1}{\ell(\la)\ge 2}}a_{\la, i}M_{-\la}L_{-i}+\sum_{\ell(\la)+\ell(\mu)\ge 3}b_{\la, \mu}M_{-\la}Q_{-\mu+\frac12}\label{g1-C}\end{equation} for some $a_{\mu, i}, b_{\la, \mu}\in\mathbb C$.
	

	For any $k=1,2,\cdots, p-1$, 
	\begin{eqnarray*}\label{Lkaction}
		L_ku'&=&[L_k, L_{-p}^r+g_1L_{-p}^{r-1}+\cdots+g_{r-1}L_{-p}+g_{r}]\mathbf 1\\
		&=&([L_k, L_{-p}^r]+[L_k, g_1]L_{-p}^{r-1}+B)\1,
	\end{eqnarray*}
	where the degree of $L_{-p}$ in $B$ is less than $r-2$.
	The coefficient with $L_{-p}^{r-1}\1$ in $L_{k}u'$ should be zero.
	
	Comparing the coefficients of $L_{-p+k}L_{-p}^{r-1}$ in $L_{k}u'$, we can get 
	$r(k+p)+l_k(2kh_M+\frac{k^3-k}{12}c_M)=0$, yielding that 
	
	\begin{eqnarray*}
		l_k=-r\frac{p^2-1}{2h_Mk(p-k)}
	\end{eqnarray*}
	for $k=1,\ldots,p-1$. Note that here the degree of $L_{-p}$ of $[L_k, C]L_{-p}^{r-1}\1$ is $r-1$, or $r-2$. For the former,  the length of  any non-zero summand
	in $[L_k, C]$ is not less than $2$ with respect to $M$ (see \eqref{g1-C}).
	
	For any $k=1,2,\cdots, \lfloor \frac{p}{2}\rfloor$, comparing the coefficients of $Q_{-p+k-\frac 12}L_{-p}^{r-1}$ in $Q_{k-\frac 12}u'$, we obtain that 
	$ \frac{p+2k-1}2+\left(2h_M-\frac{8(k^2-k)h_M}{p^2-1}\right)n_k=0$, yielding that 
	\begin{eqnarray*}
		n_{k}=r\frac{p^{2}-1}{4h_M(p-2k+1)}.
	\end{eqnarray*}
	
	Note that 
	\begin{eqnarray*}
		[L_{p}, L_{-p}^{r}]=rpL_{-p}^{r-1}\Big((r-1)p+2L_{0}+\frac{p^{2}-1}{12}c_L\Big).
	\end{eqnarray*}
	The coefficient with $L_{-p}^{r-1}\1$ in $L_{p}u'$ is
	\begin{eqnarray}\label{e3.401}
		rp\Big((r-1)p+2h_L+\frac{p^{2}-1}{12}c_L\Big)&+&\sum_{i=1}^{p-1}2l_{i}h_Mi(2p-i)\frac{p^{2}-i^{2}}{p^{2}-1}\notag\\
		&+&\sum_{i=1}^{\lfloor \frac{p}{2}\rfloor}n_{i}h_M(3p-2i+1)\frac{p^{2}-(1-2i)^{2}}{p^{2}-1}=0,
	\end{eqnarray}
	i.e.,
	\begin{eqnarray*}
		&&rp\Big((r-1)p+2h_L+\frac{p^{2}-1}{12}c_L\Big)-2rp^2(p-1)-\frac{rp(p^2-1)}6 +\frac14 r(p-1)(3p+1)\left\lfloor \frac{p}{2}\right\rfloor\notag\\
		+&&\frac12 r(p+1)\left\lfloor \frac{p}{2}\right\rfloor(\left\lfloor \frac{p}{2}\right\rfloor+1)-\frac r6\left\lfloor \frac{p}{2}\right\rfloor(\left\lfloor \frac{p}{2}\right\rfloor+1)(2\left\lfloor \frac{p}{2}\right\rfloor+1)=0.\label{e3.40}
	\end{eqnarray*}
	It gives \eqref{atypical}.
	\qed
	
	In the above proof we did not use the actions of $M_k$ for $1\le k<p$ because they can be generated by $Q_{i-\frac12}$ (for example $M_1=Q_{\frac12}^2$). This tells us that, for $u'$, the  summands $\sum_{i=1}^{p-1}l_{i}M_{-i}L_{-(p-i)}+\sum_{j=1}^{\left\lfloor \frac{p}{2}\right\rfloor}n_iQ_{-p+i-\frac{1}{2}}Q_{-i+\frac{1}{2}}$ in \eqref{g1-exp} are unique determined. We will particularly use this fact for $r=1$ later.
	
	Now we first determine singular vectors in $L'(c_L,c_M,h_L,h_M)_p$ under the condition $h_L=h_{p, 1}$.
	
	\begin{lem}\label{l4.4}  If $u'$ is a singular vector in $L'(c_L,c_M,h_L,h_M)_p$ (implying that $h_L=h_{p, 1}$), then $u'$ can be written as follows.
		\begin{eqnarray}\label{subsingular2}
			u'={\rm T}\1=L_{-p}\1+\sum_{i=1}^{p-1}g_{p-i}(M)L_{-i}\1+u_p(M,Q)\1,
		\end{eqnarray}
		where $g_{i}(M)\in  U(\mathcal{M}_-)$, $u_{p}(M,Q)\in  U(\mathcal{M}_-+\mathcal{Q}_-)$ not involving   $M_{-p}$ or $Q_{-\frac{p}{2}}$, and $\ell_Q(u_{p}(M,Q))= 2$.
	\end{lem}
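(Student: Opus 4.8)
The plan is to combine Lemma \ref{hmsubsingular}, Theorem \ref{necessity}, and the annihilation technique of Lemma \ref{l3.6}. Since $u'$ has weight $p$ and Lemma \ref{hmsubsingular} gives ${\rm hm}(u')=L_{-p}^{r}\1$ with $\ell_L(u')=r$, the weight forces $r=1$; hence, after rescaling, ${\rm hm}(u')=L_{-p}\1$, $\ell_L(u')=1$, and Theorem \ref{necessity} yields $h_L=h_{p,1}$. Expanding $u'$ in the PBW basis of $L'(c_L,c_M,h_L,h_M)$ of Lemma \ref{ll4.1} (so that no term involves $M_{-p}$, nor $Q_{-\frac p2}$ when $p$ is odd), the condition $\ell_L(u')=1$ lets me write
\begin{equation*}
u'=L_{-p}\1+\sum_{i=1}^{p-1}\phi_i L_{-i}\1+\psi\1 ,
\end{equation*}
with $\phi_i,\psi\in U(\mathcal{M}_-+\mathcal{Q}_-)$ $L$-free of weights $p-i$ and $p$. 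As $p$ is an integer, $u'$ is even, so each $\phi_i$ and $\psi$ has even $Q$-length. Writing $\phi_i=g_{p-i}(M)+\theta_i$ with $g_{p-i}(M)\in U(\mathcal{M}_-)$ and $\theta_i$ the part of positive $Q$-length, the remaining goal is $\theta_i=0$ together with $\ell_Q(\psi)=2$.

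For the pure-$M$ step I would apply a raising operator $Q_s$ to $u'$ and isolate the $L_{-i}$-sector in $L'$; two observations make this sector clean. First, the generators ${\rm S}$ (for $p$ even) and ${\rm R}$ (for $p$ odd) of $J'(c_L,c_M,h_L,h_M)$ are $L$-free, so reduction to the basis of Lemma \ref{ll4.1} carries $L$-free vectors to $L$-free vectors; thus the $L$-free pieces $Q_sL_{-p}\1$, $\phi_j\,(Q_sL_{-j}\1)$ and $Q_s\psi\1$ stay out of the $L_{-i}$-sector. Second, the $L_{-j}$-contribution of $Q_su'$ equals $[Q_s,\phi_j]L_{-j}\1=[Q_s,\theta_j]L_{-j}\1$ (the pure-$M$ part commutes with $Q_s$), and each $[Q_s,\theta_j]$ involves neither $M_{-p}$ (its $Q$-modes have weight $<p$, so no anticommutator with $Q_s$, $s>0$, can create $M_{-p}$) nor $Q_{-\frac p2}$ (the bracket only deletes $Q$'s); these are already basis vectors, needing no reduction. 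Consequently the $L_{-i}$-sector of $Q_su'$ is exactly $[Q_s,\theta_i]L_{-i}\1$, and $Q_su'=0$ forces $[Q_s,\theta_i]\1=0$. Then $\theta_i\1$ is $L$-free, avoids $Q_{-\frac p2}$, and is annihilated by $Q_s$; the argument of Lemma \ref{l3.6}—whose first step already applies, since avoiding $Q_{-\frac p2}$ guarantees $\phi(2\bar\mu_1-1)\ne0$ for the largest part $\bar\mu_1$—gives $\theta_i=0$, i.e.\ $\phi_i=g_{p-i}(M)\in U(\mathcal{M}_-)$.

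With the $\phi_i$ purely in $M$, the same device controls $\psi$. If $\ell_Q(\psi)=2t\ge4$ with top-$Q$-length part $\psi_0$, then in the $L$-free sector of $Q_su'$ every contribution besides $[Q_s,\psi_0]\1$ has $Q$-length at most $1$ (the surviving pieces $\phi_j(Q_{s-j}\1)$ and $Q_{s-p}\1$ are now single-$Q$), and reduction cannot raise $Q$-length; hence the top-$Q$-length part yields $[Q_s,\psi_0]\1=0$, and Lemma \ref{l3.6} forces $\psi_0=0$, a contradiction. So $\ell_Q(\psi)\le2$. Finally, the two-$Q$ part of $\psi$ is nonzero: applying $Q_{k-\frac12}$ and reading off the coefficient of $Q_{-p+k-\frac12}\1$ for $1\le k\le\lfloor p/2\rfloor$ reproduces the computation in the proof of Theorem \ref{necessity}, giving $n_k=\frac{p^2-1}{4h_M(p-2k+1)}\ne0$ when $p\ge2$. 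Therefore $\ell_Q(\psi)=2$, and with $u_p=\psi$ we obtain the asserted form \eqref{subsingular2}. (For $p=1$ the weight-$1$ subspace of $L'$ is spanned by $L_{-1}\1$, so $u'=L_{-1}\1$ with $u_p=0$, the degenerate situation already recorded in Theorem \ref{cor3.3}.)

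The main obstacle is the pure-$M$ step: the clean decoupling of $Q_su'=0$ sector by sector rests on the two structural facts above—that reduction modulo $J'$ preserves $L$-freeness because ${\rm S}$ and ${\rm R}$ are $L$-free, and that $[Q_s,\theta_i]$ can never produce $M_{-p}$ or $Q_{-\frac p2}$—so that the offending term $[Q_s,\theta_i]L_{-i}\1$ is a genuine, non-cancellable basis vector of $L'$. Establishing these, while keeping track of the total ordering of Section 2.2 and the even-$Q$-length parity, is where the real work lies; once they are in place, the reduction to Lemma \ref{l3.6} is immediate.
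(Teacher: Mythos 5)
Your argument is correct, but it takes a genuinely different route at the central step. Both proofs begin the same way: Lemma \ref{hmsubsingular} with the weight forcing $r=1$ gives $u'=L_{-p}\1+\sum_{i}g_{p-i}(M,Q)L_{-i}\1+u_p(M,Q)\1$. To kill the $Q$-parts of the $g_{p-i}$, the paper never touches the $Q_s$-action: it uses that $u'$ is an $M_0$-eigenvector in $L'$, observes that $[M_0,{\rm T}]=pM_{-p}+\sum_i i\,g_{p-i}(M,Q)M_{-i}$ is $L$-free and of strictly smaller $L$-length than ${\rm T}\1$, hence must lie in $J'(c_L,c_M,h_L,h_M)_p$, and identifies it with $p{\rm S}$; comparing with ${\rm S}=M_{-p}+f_p(M)\in U(\mathcal M_-)$ then forces $g_{p-i}\in U(\mathcal M_-)$. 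This is shorter and, as a by-product, establishes the identity $[M_0,{\rm T}]=p{\rm S}$ (equation \eqref{W0T}), which the paper reuses repeatedly later (Lemmas \ref{ST}, \ref{W0Tk}, \ref{R-S-lemma}). Your route --- decomposing $Q_su'$ sector by sector in the reduced basis of Lemma \ref{ll4.1}, verifying that reduction modulo $J'$ preserves $L$-freeness and that $[Q_s,\theta_i]$ cannot create $M_{-p}$ or $Q_{-\frac p2}$, and then feeding $[Q_s,\theta_i]\1=0$ into the first step of Lemma \ref{l3.6} --- is more laborious but sound, and it makes fully explicit a point the paper treats briskly (namely why a relation of the form $\sum_i i\,g_{p-i}(M,Q)M_{-i}\in U(\mathcal M_-)$ actually forces each $g_{p-i}$ to be pure in $M$). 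For $\ell_Q(u_p)=2$ the two proofs coincide in substance: both read it off from $Q_{k-\frac12}u'=0$, with your version adding the top-$Q$-length argument for the upper bound and the nonvanishing of the coefficients $n_k$ from Theorem \ref{necessity} for the lower bound. One small imprecision: in the $\psi$-step the contribution $[Q_s,\psi-\psi_0]\1$ has $Q$-length up to $2t-3$, not at most $1$; this does not affect the conclusion, since $2t-3<2t-1$.
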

	\proof The case for $p=1$ is clear since $h_{1,1}=0$ and $u'=L_{-1}$. So we need only to consider  the case for $p>1$.  By Lemma \ref{hmsubsingular}, we may assume that   $u'={\rm T}\1$ where
	\begin{eqnarray}
		{\rm T}=L_{-p}+\sum_{i=1}^{p-1}g_{p-i}(M,Q)L_{-i} +u_p(M,Q),
	\end{eqnarray}
	and $g_{i}(M,Q)\in  U(\mathcal{M}_-+\mathcal{Q}_-)$ not involving  $M_{-p},Q_{-\frac{p}{2}}$. Note that $M_0u'=ku'$ with $k\in \mathbb{C}$. On one hand, $[M_0,{\rm T}]=pM_{-p}+\sum_{i=1}^{p-1}ig_{p-i}(M,Q)M_{-i}$. Then $[M_0,{\rm T}]\neq k'{\rm T}$ for any $k'\in \mathbb{C}$. So $[M_0,{\rm T}]\1\in J'(c_L,c_M,h_L,h_M)_p$. It implies $[M_0,{\rm T}]=l{\rm S}$ for some $l\in \mathbb{C}^*$. On the other hand, ${\rm S}=M_{-p}+f_p(M)$. So $l=p$ and in $U(\mathfrak{g_{-}})$, we get
	\begin{equation}
		[M_0,{\rm T}]=p{\rm S}.\label{W0T}  \end{equation} This implies $\sum_{i=1}^{p-1}ig_{p-i}(M,Q)M_{-i}=p{\rm S}$. So  $g_{p-i}(M,Q)\in U(\mathcal M_-)$. We denote it by $g_{p-i}(M)$ for any $i=1,2, \cdots, p-1$. 
	
	For $1\le k\le p,$ considering $$0=Q_{k-\frac12}u'=\left(\frac p2+k-\frac12\right)Q_{-p+k-\frac12}\1+\sum_{i=1}^{p-1}\left(\frac i2+k-\frac12\right)g_{p-i}(M)Q_{-i+k-\frac12}\1+[Q_{k-\frac12},u_p(M,Q)]\1,
	$$  we see that  $\ell_Q(u_{p}(M,Q))=2$.
	This completes the proof.
	\qed
	
	
	%
	\begin{rem}
		We found the element ${\rm T}\in U(\frak{g}_-)$ when $h_L=h_{p,1}$. From the above proof we know that  (\ref{W0T})
		holds whenever $\phi(p)=0$, no need to assume that $h_L=h_{p, 1}$. 
	\end{rem}
	
	\begin{theo}\label{subsingular} Let  $(c_L,c_M,h_L,h_M)\in\bC^4$ such that   $\phi(p)=0$ and $h_L=h_{p, 1}$ for some $p\in \mathbb{Z_+}$. Then there exists a unique subsingular vector $u'={\rm T}{\bf 1}$ in $L'(c_L,c_M,h_L,h_M)_p$  up to a scalar multiple, where ${\rm T}$ is defined in Lemma \ref{l4.4}. 
	\end{theo}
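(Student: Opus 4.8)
The plan is to treat uniqueness and existence separately; since $h_L=h_{p,1}$ is assumed, existence is the substantive part.

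\emph{Uniqueness.} Here I would invoke Lemma~\ref{hmsubsingular} directly. If $u_1'$ and $u_2'$ are singular vectors of $L'(c_L,c_M,h_L,h_M)_p$, then (the case $r=1$) Lemma~\ref{hmsubsingular} forces ${\rm hm}(u_i')=L_{-p}\1$ and $\ell_L(u_i')=1$, so after normalizing the $L_{-p}\1$-coefficients to $1$ their difference is a singular vector of weight $p$ whose $L_{-p}\1$-coefficient vanishes. Were this difference non-zero, Lemma~\ref{hmsubsingular} would give it a highest term $L_{-p}^{r'}\1$ with $r'\in\mathbb Z_+$; comparing weights forces $r'=1$, contradicting the vanishing of that coefficient. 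Hence $u_1'=u_2'$ up to a scalar.

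\emph{Existence.} For this I would construct ${\rm T}$ directly, following the triangular scheme used for ${\rm S}$ and ${\rm R}$ in Propositions~\ref{singular-S1} and~\ref{singular-R1}. I would begin from the ansatz dictated by Lemma~\ref{l4.4},
$$
{\rm T}\1=L_{-p}\1+\sum_{i=1}^{p-1}g_{p-i}(M)L_{-i}\1+u_p(M,Q)\1,
$$
with $g_{p-i}(M)\in U(\mathcal M_-)$ and $u_p(M,Q)\in U(\mathcal M_-+\mathcal Q_-)$, $\ell_Q(u_p(M,Q))=2$ (involving neither $M_{-p}$ nor $Q_{-\frac{p}{2}}$), carrying undetermined coefficients, and impose the singular-vector conditions $L_k{\rm T}\1=0$ and $Q_{k-\frac12}{\rm T}\1=0$ in $L'$ for $1\le k\le p$; the conditions $M_k{\rm T}\1=0$ for $k\ge1$ are then automatic, since $2M_k=[Q_r,Q_s]$ for any $r,s>0$ with $r+s=k$. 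The identity \eqref{W0T}, $[M_0,{\rm T}]=p{\rm S}$ (valid whenever $\phi(p)=0$), is what guarantees that the $L$-coefficients $g_{p-i}$ are pure in $M$ and ties them to the known ${\rm S}$ of Proposition~\ref{singular-S1} via $\sum_{i=1}^{p-1}i\,g_{p-i}(M)M_{-i}=p\,f_p(M)$. Reading the equations $L_k{\rm T}\1=0$ and $Q_{k-\frac12}{\rm T}\1=0$ against the total order $\succ$ of Section~2.2 yields a lower-triangular system with nonzero diagonal entries (using $c_M\ne0$), so every coefficient below the leading $L_{-p}\1$ is uniquely determined; in particular the scalars $l_i,n_i$ of \eqref{g1-exp} come out exactly as in the proof of Theorem~\ref{necessity}.

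The decisive step is the top equation from $L_p$: once the lower coefficients are fixed, the coefficient of $\1$ in $L_p{\rm T}\1$ is precisely the expression \eqref{e3.401} specialized to $r=1$, which vanishes exactly when $h_L=h_{p,1}$. Under our hypothesis this single consistency condition holds, so all the equations $\mathfrak g_+{\rm T}\1=0$ are satisfied in $L'$ and ${\rm T}\1$ is a genuine subsingular vector of degree $p$ (the case $p=1$ is immediate, with ${\rm T}=L_{-1}$ and $h_{1,1}=0$, matching Theorem~\ref{cor3.3}). The step I expect to be the main obstacle is verifying that the sub-top system from $L_1,\dots,L_{p-1}$ and $Q_{\frac12},\dots,Q_{p-\frac12}$ is genuinely triangular with nonvanishing diagonal, so that every lower coefficient is forced and the \emph{only} obstruction is the single $L_p$-equation; concretely, one must check that the $Q$-equations are compatible with \eqref{W0T} rather than over-determining the tail $u_p(M,Q)$, and that no further independent constraint hides among the lower equations. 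This is exactly where the highest-term bookkeeping with respect to $\succ$ must be carried out with care.
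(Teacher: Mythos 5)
Your proposal is correct and follows essentially the same route as the paper: the paper likewise starts from the ansatz of Lemma \ref{l4.4}, imposes the dual system of equations $x\,{\rm T}\1=0$ for $x\in U(\frak g_+)_{-p}$, and shows via the block lower-triangular coefficient matrices $A_{p,1}$, $A_p$, $B_p$ (Tables 1 and 2) that all lower coefficients are forced and the single remaining obstruction is the $L_p$-equation, i.e.\ the $r=1$ case of \eqref{e3.401}, which vanishes precisely when $h_L=h_{p,1}$. The triangularity check you flag as the main obstacle is exactly what the paper's tables carry out, and your uniqueness argument via Lemma \ref{hmsubsingular} is the same observation the paper records immediately after that lemma.
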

	\begin{proof} By Lemma \ref{l4.4}, we can suppose that
		\begin{eqnarray}\label{subsingular2'}
			u'={\rm T}\1=L_{-p}\1+\sum_{i=1}^{p-1}g_{i}(M)L_{-p+i}\1+u_p(M,Q)\1,
		\end{eqnarray}
		where $g_{i}(M)\in U(\mathcal{M}_-)$, $u_{p}(M,Q)\in  U(\mathcal{M}_-+\mathcal{Q}_-)$ not involving  $M_{-p},Q_{-\frac{p}{2}}$,  and $\ell_Q(u_p(M, Q))=2$.

		We order all the possible summands of ${\rm T}$ in \eqref{subsingular2}
		by the ordering $\succ$ defined in Section 2.2:
		\begin{eqnarray} \nonumber
			&&L_{-p}, M_{-1}L_{-(p-1)}, M_{-2} L_{-(p-2)}, M_{-1}^2L_{-(p-2)}, \cdots, M_{-(p-1)}L_{-1}, \cdots,
			M_{-1}^{p-1}L_{-1},\\ \nonumber
			&&Q_{-p+\frac{1}{2}}Q_{-\frac{1}{2}}, Q_{-p+\frac{3}{2}}Q_{-\frac{3}{2}}, 
			\cdots,Q_{-p+\lfloor \frac{p}{2}\rfloor -\frac{1}{2}}Q_{-\lfloor \frac{p}{2}\rfloor +\frac{1}{2}}, \\ \nonumber
			&&M_{-1}Q_{-p+\frac{3}{2}}Q_{-\frac{1}{2}}, M_{-2}Q_{-p+\frac{5}{2}}Q_{-\frac{1}{2}}, M_{-1}^2Q_{-p+\frac{5}{2}}Q_{-\frac{1}{2}},\cdots, M_{-p+2}Q_{-\frac{3}{2}}Q_{-\frac{1}{2}},  M_{-1}^{p-2}Q_{-\frac{3}{2}}Q_{-\frac{1}{2}},\\
			&&M_{-(p-1)}M_{-1}, M_{-(p-2)}M_{-2}, M_{-(p-2)}M_{-1}^2,\cdots, M_{-1}^{p}. \label{singu-order}
		\end{eqnarray}
		The coefficients of above monomials in $u'$ are determined by some elements in $U(\mathfrak{g}_{+})_{-p}$ which act on $u'$ getting $0$. 
		Namely, we need to consider the linear equations
		\begin{equation}xu'=0\label{singular-equation}\end{equation}  for some particular $x\in U(\frak g_+)_{-p}$.
		
		We  choose $x$ from \eqref{singu-order} by changing  $L_{-p}$ to $L_p$, $L_{-i}$ to $M_i$, $M_{-i}$ to $L_i, i=1, \cdots p-1$, $Q_{-r}$ to $Q_r$, and arrange them according to the original ordering as follows:
		$L_{p}$, $L_{1}M_{p-1}$, $L_{2}M_{p-2}$, $L_{1}^2M_{p-2}$, $\cdots$, $L_{p-1}M_{1}$, $\cdots$, $L_{-1}^{p-1}M_{1}$, $Q_{\frac{1}{2}}Q_{p-\frac{1}{2}}$, $Q_{\frac{3}{2}}Q_{p-\frac{3}{2}}$, $\cdots$, $Q_{\lfloor \frac{p}{2}\rfloor -\frac{1}{2}}Q_{p-\lfloor \frac{p}{2}\rfloor +\frac{1}{2}}$,
		$L_1Q_{\frac{1}{2}}Q_{p-\frac{3}{2}},\cdots, L_{1}^{p-2}Q_{\frac{1}{2}}Q_{\frac{3}{2}}$,
		$L_{1}L_{p-1}$, $L_{2}L_{p-2}$, $L_{1}^2L_{p-2}$, $\cdots$, $L_{1}^p$. We consider the following Table 1.
		
		\setlength{\belowcaptionskip}{-10pt}		
		\begin{table}[htbp]\label{table1}
			\centering\caption{The matrix $A_{p, 1}$}
			\begin{eqnarray*}\label{sub-table} \fontsize{4.98pt}{\baselineskip}\selectfont 
				\begin{tabular}{|c|c|c|c|c|c|c|c|c|c|c|c|c|c|}\hline 
					&$L_{-p}{\bf 1}$  & $M_{-1} L_{-(p-1)}{\bf 1}$ & $\cdots$ & $M_{-1}^{p-1}L_{-1}{\bf 1}$ &$Q_{-p+\frac{1}{2}}Q_{-\frac{1}{2}}{\bf 1}$ & $\cdots$ &$Q_{-p+\lfloor \frac{p}{2}\rfloor -\frac{1}{2}}Q_{-\lfloor \frac{p}{2}\rfloor +\frac{1}{2}}{\bf 1}$ & $M_{-1}Q_{-p+\frac{3}{2}}Q_{-\frac{1}{2}}{\bf 1}$ & $\cdots$   & $M_{-1}^{p-2}Q_{-\frac{3}{2}}Q_{-\frac{1}{2}}{\bf 1}$   & $M_{-(p-1)}M_{-1}{\bf 1}$   & $\cdots$   & $M_{-1}^p{\bf 1}$  \\ \hline
					$L_{p}$  & $\cellcolor{gray!50}\star$ & $\cellcolor{gray!50}\star$ & $\cellcolor{gray!50}\cdots$ & $\cellcolor{gray!50}\star$ & $\cellcolor{gray!50}\star$ & $\cellcolor{gray!50}\cdots$ &  $\cellcolor{gray!50}\star$ & $0$ & $\cdots$   & $0$ & $0$ & $0$ & $0$  \\ \hline
					$L_{1}M_{p-1}$  &  $\cellcolor{gray!50}\star$ & $\cellcolor{gray!50}\star$ & $\cellcolor{gray!50}\cdots$   & $\cellcolor{gray!50}\star$ & $\cellcolor{gray!50}0$ & $\cellcolor{gray!50}\cdots$ & $\cellcolor{gray!50}0$  & $0$& $\cdots$   & $0$& $0$& $0$& $0$ \\ \hline
					$\vdots$  & $ \cellcolor{gray!50}\vdots$& $\cellcolor{gray!50}\vdots$& $\cellcolor{gray!50}\vdots$   & $\cellcolor{gray!50}\vdots$& $\cellcolor{gray!50}\vdots$& $\cellcolor{gray!50}\vdots$& $\cellcolor{gray!50}\vdots$  & $0$& $\cdots$   & $0$& $0$& $0$& $0$  \\ \hline
					$L_{1}^{p-1}M_{1}$    & $ \cellcolor{gray!50}\star$& $\cellcolor{gray!50}\star$& $\cellcolor{gray!50}\cdots$   & $\cellcolor{gray!50}\star$& $\cellcolor{gray!50}0$& $\cellcolor{gray!50}\cdots$& $\cellcolor{gray!50}0$  & $0$& $\cdots$   & $0$& $0$& $0$& $0$   \\ \hline
					$Q_{p-\frac{1}{2}}Q_{\frac{1}{2}}$    & $ \cellcolor{gray!50}\star$& $\cellcolor{gray!50}\star$& $\cellcolor{gray!50}\cdots$   & $\cellcolor{gray!50}\star$& $\cellcolor{gray!50}\star$& $\cellcolor{gray!50}\cdots$& $\cellcolor{gray!50}\star$  & $0$& $\cdots$   & $0$& $0$& $0$& $0$  \\ \hline
					$\vdots$   & $ \cellcolor{gray!50}\vdots$& $\cellcolor{gray!50}\vdots$& $\cellcolor{gray!50}\vdots$   & $\cellcolor{gray!50}\vdots$& $\cellcolor{gray!50}\vdots$& $\cellcolor{gray!50}\vdots$& $\cellcolor{gray!50}\vdots$  & $0$& $\cdots$   & $0$& $0$& $0$& $0$  \\ \hline
					$Q_{p-\lfloor \frac{p}{2}\rfloor +\frac{1}{2}}Q_{\lfloor \frac{p}{2}\rfloor -\frac{1}{2}}$  & $ \cellcolor{gray!50}\star$& $\cellcolor{gray!50}\star$& $\cellcolor{gray!50}\cdots$   & $\cellcolor{gray!50}\star$& $\cellcolor{gray!50}\star$& $\cellcolor{gray!50}\cdots$& $\cellcolor{gray!50}\star$  &  $0$& $\cdots$   & $0$& $0$& $0$& $0$ \\ \hline
					$L_1Q_{p-\frac{3}{2}}Q_{\frac{1}{2}}$ & $\star$  & $\star$  & $\star$  & $\star$ & $\star$ & $\star$ & $\star$  & $\cellcolor{gray!50}\star$ & $\cellcolor{gray!50} 0$ & $\cellcolor{gray!50} 0$   & $0$   & $0$   & $0$  \\ \hline
					$\vdots$ & $\vdots$  & $\vdots$ & $\vdots$ & $\vdots$ & $\vdots$ & $\vdots$  & $\vdots$  & $\cellcolor{gray!50}\star$& $\cellcolor{gray!50}\star$ & $\cellcolor{gray!50} 0$   & $0$   & $0$   & $0$  \\ \hline
					$L_1^{p-2}Q_{\frac{3}{2}}Q_{\frac{1}{2}}$ & $\star$  & $\star$  & $\star$  & $\star$ & $\star$ & $\star$ & $\star$  & $\cellcolor{gray!50}\star$  & $\cellcolor{gray!50}\star$ & $\cellcolor{gray!50}\star$ & $0$ & $0$ & $0$  \\ \hline
					$L_{p-1}L_1$ & $\star$  & $\star$  & $\star$  & $\star$ & $\star$ & $\star$ & $\star$   & $\star$  & $\star$   & $\star$ & $\cellcolor{gray!50}\star$   & $\cellcolor{gray!50} 0$& $\cellcolor{gray!50} 0$  \\ \hline
					$\vdots$  & $\vdots$  & $\vdots$  & $\vdots$  & $\vdots$  & $\vdots$  & $\vdots$  & $\vdots$  & $\star$  & $\star$ & $\star$ & $\cellcolor{gray!50}\star$ & $\cellcolor{gray!50}\star$  & $\cellcolor{gray!50} 0$  \\ \hline
					$L_1^p$ & $\star$  & $\star$  & $\star$  & $\star$ & $\star$ & $\star$ & $\star$   & $\star$ & $\star$ & $\star$ & $\cellcolor{gray!50}\star$ & $\cellcolor{gray!50}\star$   & $\cellcolor{gray!50}\star$  \\ \hline
				\end{tabular},
			\end{eqnarray*}
		\end{table}
		
		The $(i, j)$-entry in Table 1 is the coefficient of ${\bf 1}$ produced by the   $i$-th operator from Column $0$ acting on the monomial of the $j$-th element on Row $0$. 
		
		Now we shall investigate  the coefficient matrix $A_{p,1}$ of the linear equations \eqref{singular-equation} by using Table 1. This matrix $A_{p,1}$ is a lower trianglar block matrix. 
		Note that the lower two shaded submatrices in Table 1 are nonsingular lower triangular matrices (with nonzero diagonal entries). So we need only to consider the upper-left shaded submatrix which will be denoted by $A_p$. In addition, these operators ($L_{p}$, $L_{1}M_{p-1}$, $\cdots$, $Q_{\lfloor \frac{p}{2}\rfloor -\frac{1}{2}}Q_{p-\lfloor \frac{p}{2}\rfloor +\frac{1}{2}}$) from Column $0$ except for $L_{\la}Q_{\mu-\frac12}$ with $\ell(\la)\ge 2$ in Table 1 act trivially on the monomial $M_{-\la}Q_{-\mu+\frac12}$ with $\ell(\la)\ge 2$ respectively. In order to calculate the rank of matrix $A_p$  we only need to consider a better submatrix $B_p$ of the matrix $A_p$ as Table 2. Actually,  after row and column operations, $A_p$ can be arranged as a lower block-triangular matrix with $B_p$ to be the upper-left block with corank$(A_p)=$corank$(B_p)$. It is clear that corank$(B_p)=0$ or $1$.
		
		\setlength{\belowcaptionskip}{-10pt}
		\begin{table}[htbp]
			\centering\caption{The matrix $B_p$}\label{table 2} 
			\begin{eqnarray*}\tiny\label{sub-table}
				\begin{tabular}
					{|c|c|c|c|c|c|c|c|c|}\hline
					& $L_{-p}{\bf 1}$  & $M_{-1} L_{-(p-1)}{\bf 1}$ & $M_{-2} L_{-(p-2)}{\bf 1}$ & $\cdots$ & $M_{-(p-1)} L_{-1}{\bf 1}$ & $Q_{-p+\frac{1}{2}}Q_{-\frac{1}{2}}{\bf 1}$ & $\cdots$ & $Q_{-p+\lfloor \frac{p}{2}\rfloor -\frac{1}{2}}Q_{-\lfloor \frac{p}{2}\rfloor +\frac{1}{2}}{\bf 1}$ \\ \hline
					$L_{p}$     & $\cellcolor{gray!50}\star$  & $\cellcolor{gray!50}\star$   & $\cellcolor{gray!50}\star$   & $\cellcolor{gray!50}\cdots$ & $\cellcolor{gray!50}\star$  & $\cellcolor{gray!50}\star$   & $\cellcolor{gray!50}\cdots$ & $\cellcolor{gray!50}\star$     \\ \hline
					$L_{1}M_{p-1}$    & $\cellcolor{gray!50}\star$  & $\cellcolor{gray!50}\star$   & $0$  & $0$ & $0$   & $0$   & $0$  & $0$   \\ \hline
					$L_{2}M_{p-2}$   & $\cellcolor{gray!50}\star$  & $0$  & $\cellcolor{gray!50}\star$  & $0$  & $0$  & $0$   & $0$   & $0$  \\ \hline
					$\vdots$   & $\cellcolor{gray!50}\vdots$ & $0$   & $0$  & $\cellcolor{gray!50}\ddots$ & $0$ & $0$   & $0$   & $0$  \\ \hline
					$L_{p-1}M_{1}$   & $\cellcolor{gray!50}\star$  & $0$   & $0$ & $0$   & $\cellcolor{gray!50}\star$  & $0$  & $0$ & $0$  \\ \hline
					$Q_{p-\frac{1}{2}}Q_{\frac{1}{2}}$  & $\cellcolor{gray!50}\star$  & $0$   & $0$  & $0$  & $0$  & $\cellcolor{gray!50}\star$  & $0$  & $0$   \\ \hline
					$\vdots$   & $\cellcolor{gray!50}\vdots$ & $0$  & $0$  & $0$  & $0$  & $0$  & $\cellcolor{gray!50}\ddots$  & $0$  \\\hline
					$Q_{p-\lfloor \frac{p}{2}\rfloor +\frac{1}{2}}Q_{\lfloor \frac{p}{2}\rfloor -\frac{1}{2}}$ & $\cellcolor{gray!50}\star$  & $0$   & $0$   & $0$  & $0$ & $0$   & $0$   & $\cellcolor{gray!50}\star$    \\ \hline
				\end{tabular}.
			\end{eqnarray*}
		\end{table}

		%
		%
		%

		From the proof of Theorem \ref{necessity} with $r=1$, we know that the matrix $ B_p$ is of corank $1$ if and only if   $h_L=h_{p,1}$, that is,  the matrix $A_{p,1}$ is of corank $1$ if and only if   $h_L=h_{p,1}$, in which case there is only one singular vector $u'$ in $L'(c_L,c_M,h_L,h_M)_p$, up to a scalar multiple. 
	\end{proof}
	
	From th proof of Theorem \ref{necessity} we see that  that 
	\begin{equation*}\label{T-exp'}
		{\rm T}=L_{-p}+\sum_{i=1}^{p-1} \frac{12}{i(p-i)c_M} M_{-p+i}L_{-i}-\sum_{i=1}^{\lfloor \frac{p}{2}\rfloor }\frac{6}{(p-2k+1)c_M}Q_{i-p-\frac{1}{2}}Q_{-i+\frac{1}{2}}+\text{some other terms}.
	\end{equation*}
	We further have the following formula for {\rm T}.

	\begin{cor}\label{subsingular-T}   Let  $(c_L,c_M,h_L,h_M)\in\bC^4$ such that    $\phi(p)=2h_M+\frac{p^2-1}{12}c_M=0$ for some $p\in \mathbb Z_+$, $c_M\neq 0$ and $h_L=h_{p, 1}$. Let $k_i=\frac{12}{i(p-i)c_M},\ i=1, 2,\cdots, p-1$.
		Then the subsingular vector ${\rm T}\1$ can be determined as follows:
		\begin{equation}\label{T-exp}
			{\rm T}=L_{-p}+\sum_{i=1}^{p-1}g_{p-i}(M)L_{-i}+u_p(M, Q),
		\end{equation}
		where 
		\begin{eqnarray}\label{T-exp-ki}
			g_{1}(M)=k_1M_{-1},
			g_{i}(M)=k_iM_{-i}+k_i\sum_{j=1}^{i-1}\left(1-\frac{j}{2p-i}\right)g_{j}(M)M_{-(i-j)}, i=2, \cdots, p-1.
		\end{eqnarray}  and 
		\begin{eqnarray*}\label{T-exp-u_p}
			u_p(M, Q)&=&\sum_{\nu\in\mathcal P(p), \ell(\mu)\ge 2} d_\mu M_{-\mu}
			+\sum_{i=1}^{\lfloor \frac{p}{2}\rfloor }d_iQ_{i-p-\frac{1}{2}}Q_{-i+\frac{1}{2}}  +\sum_{\stackrel{\frac p2\ne l_1>l_2\ge 1}{\mu\in\mathcal P(p-l_1-l_2+1)}}d_{\mu}^{l_1, l_2}Q_{-l_1+\frac{1}{2}}Q_{-l_2+\frac12}M_{-\mu}
		\end{eqnarray*} with unique coefficients $d_\mu, d_{\mu}^{l_1, l_2}, d_i\in\mathbb C$.
	\end{cor}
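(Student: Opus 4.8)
The plan is to treat the existence, uniqueness and coarse shape of ${\rm T}$ as already secured by Lemma \ref{l4.4} and Theorem \ref{subsingular}, and to extract the coefficients from the lowest-weight consequences of singularity. Write ${\rm T}\1=L_{-p}\1+\sum_{i=1}^{p-1}g_{i}(M)L_{-(p-i)}\1+u_p(M,Q)\1$ with $g_i(M)\in U(\mathcal M_-)$ of degree $i$. The device that isolates the $g_i(M)$ is the family of annihilators $M_k$, $1\le k\le p-1$: the vector $M_k{\rm T}\1$ lies in $U(\mathcal M_-)\1$ and has weight $p-k<p$, so it cannot involve $M_{-p}$ (nor, trivially, $Q_{-\frac p2}$). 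By Lemma \ref{ll4.1} such a vector maps injectively into $L'(c_L,c_M,h_L,h_M)$, and since ${\rm T}\1$ is singular there its image vanishes; hence $M_k{\rm T}\1=0$ already in $V(c_L,c_M,h_L,h_M)$.

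I would then compute $M_k{\rm T}\1$ directly. As $M_k$ commutes with $\mathcal M$ and $\mathcal Q$, the summand $u_p(M,Q)$ contributes nothing and only $L_{-p}$ and the $g_i(M)L_{-(p-i)}$ matter. Using $[M_k,L_{-j}]=(j+k)M_{k-j}+\tfrac1{12}\delta_{k,j}(j^3-j){\bf c}_M$, and paying attention to the central term (which fires exactly when $j=k$, i.e.\ on the term $g_{p-k}(M)L_{-k}$), collecting the coefficient of each $M$-monomial gives, with $m:=p-k$,
\[
0=(2p-m)M_{-m}\1+C_m\,g_m(M)\1+\sum_{i=1}^{m-1}(2p-m-i)\,g_i(M)M_{-(m-i)}\1,
\]
where $C_m=2(p-m)h_M+\tfrac{(p-m)^3-(p-m)}{12}{\bf c}_M$. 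The decisive simplification is that, under $\phi(p)=0$ (i.e.\ $h_M=-\tfrac{p^2-1}{24}{\bf c}_M$), this scalar collapses to $C_m=-\tfrac{m(p-m)(2p-m)}{12}{\bf c}_M$, which is nonzero for $1\le m\le p-1$. Dividing by $C_m$ and using $-1/C_m=k_m/(2p-m)$ with $k_m=\tfrac{12}{m(p-m){\bf c}_M}$ turns the display into the asserted recursion $g_m(M)=k_mM_{-m}+k_m\sum_{j=1}^{m-1}\bigl(1-\tfrac{j}{2p-m}\bigr)g_j(M)M_{-(m-j)}$, the base case $g_1(M)=k_1M_{-1}$ coming from the empty sum at $m=1$. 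This determines all $g_i(M)$ triangularly, in close parallel with the derivation of ${\rm R}$ in Proposition \ref{singular-R11}.

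For $u_p(M,Q)$ I would invoke the constraints from Lemma \ref{l4.4}: it is homogeneous of weight $p$, lies in $U(\mathcal M_-+\mathcal Q_-)$, involves neither $M_{-p}$ nor $Q_{-\frac p2}$, and has $\ell_Q(u_p)=2$. Enumerating the weight-$p$ monomials allowed by these conditions produces exactly the three families displayed: the pure $M$-monomials $M_{-\mu}$ with $\ell(\mu)\ge2$, the quadratic terms $Q_{i-p-\frac12}Q_{-i+\frac12}$ for $1\le i\le\lfloor p/2\rfloor$, and the mixed terms $Q_{-l_1+\frac12}Q_{-l_2+\frac12}M_{-\mu}$ with $l_1>l_2\ge1$, $l_1\ne\frac p2$ and $\mu\in\mathcal P(p-l_1-l_2+1)$. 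That their coefficients $d_\mu,d_i,d_\mu^{l_1,l_2}$ are uniquely determined is guaranteed abstractly by the uniqueness of the subsingular vector in Theorem \ref{subsingular}; concretely they are fixed by the remaining equations $L_k{\rm T}\1\equiv Q_{k-\frac12}{\rm T}\1\equiv0\pmod{J'}$, and the values $d_i$ of the quadratic block coincide with the $n_i$ computed in the proof of Theorem \ref{necessity}, which already exhibits the stated leading form of ${\rm T}$.

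The main obstacle will be the central-term bookkeeping: a dropped or mis-signed ${\bf c}_M$-contribution in $[M_k,L_{-j}]$ would make the $M_k$-equations inconsistent with the $L_k$-equations used in Theorem \ref{necessity}, and it is precisely the central term that produces the weight-dependent factor $1-\tfrac{j}{2p-m}$ distinguishing $g_i(M)$ from the simpler recursion for ${\rm R}$. Once $C_m$ is correctly reduced to $-\tfrac{m(p-m)(2p-m)}{12}{\bf c}_M$, the remainder is a mechanical triangular solve.
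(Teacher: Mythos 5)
Your proposal is correct and follows essentially the same route as the paper: the paper likewise extracts the recursion for the $g_i(M)$ by applying $M_{p-i}$ to ${\rm T}\1$ (your $M_k$ with $k=p-m$ is the same computation up to relabelling), reduces the zero-mode scalar via $h_M=-\frac{p^2-1}{24}c_M$ to the nonzero factor $-\frac{i(p-i)(2p-i)}{12}c_M$, and then pins down the remaining coefficients $d_\mu, d_i, d_\mu^{l_1,l_2}$ inductively from the actions of $Q_{i-\frac12}$ and $L_i$, with uniqueness supplied by Theorem \ref{subsingular}. Your explicit justification that $M_k{\rm T}\1$ vanishes in $V$ itself (via Lemma \ref{ll4.1}, since it lies in $U(\mathcal M_-)\1$ of weight $p-k$ and avoids $M_{-p}$) is a slightly more careful statement of what the paper leaves implicit, but it is not a different argument.
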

	\begin{proof}
		For $i=p-1, p-2, \cdots, 1$, using \eqref{subsingular2} we deduce that
		$$ 0=  M_{p-i}{\rm T}\1=[M_{p-i},L_{-p}]\1+\sum_{j=1}^{p-1}g_{p-j}(M)[M_{p-i},L_{-j}]\1 =[M_{p-i},L_{-p}]\1+\sum_{j=p-i}^{p-1}g_{p-j}(M)[M_{p-i},L_{-j}]\1\\
		$$
		$$\aligned=&(2p-i)M_{-i}\1+(2p-i-1)g_1(M)M_{-i-1}\1+\cdots+
		g_{i}(M)\left( 2(p-i)M_0+\frac{(p-i)^3-(p-i)}{12}c_M\right)\1.
		\endaligned$$
		Applying $h_M=-\frac{p^2-1}{24}c_M$ we deduce that 
		$$(2p-i)M_{-i} +(2p-i-1)g_1(M)M_{-i-1} +\cdots+
		g_{i}(M)\left( 2(p-i)M_0+\frac{(p-i)^3-(p-i)}{12}c_M\right)=0$$
		\begin{eqnarray*}\label{giw}
			g_{1}(M)=k_1M_{-1},
			g_{i}(M)=k_iM_{-i}+k_i\sum_{j=1}^{i-1}\left(1-\frac{j}{2p-i}\right)g_{j}(M)M_{-(i-j)}, i=2, \cdots, p-1.
		\end{eqnarray*}
		So \eqref{T-exp-ki} follows by induction.
		
		By actions of $Q_{i-\frac12}, i=p, p-1, \cdots, 1$ on \eqref{T-exp}, we can get all $d_i$ by induction.
		Meanwhile, by actions of $L_i, i=p-1, \cdots, 1$ on \eqref{T-exp}, we can get all $d_{\mu}^{l_1, l_2}, d_\mu$ by induction.
	\end{proof}

	\begin{exa}
		(1) $p=4, h_M=-\frac{5}{8}c_M, h_L=-\frac{5}{8}c_L+\frac{153}{16}:
				$
			\begin{eqnarray*}{\rm T}{=}&L_{-4}+\frac{4}{c_M}M_{-1}L_{-3}+\left(\frac{3}{c_M}M_{-2}+\frac{10}{c_M^{2}}M_{-1}^{2}\right)L_{-2}
			+\left(\frac{4}{c_M}M_{-3}+\frac{20}{c_M^2}M_{-2}M_{-1}+\frac{24}{c_M^3}M_{-1}^{3}\right)L_{-1}\\
			&-\frac{2}{c_M}Q_{-\frac{7}{2}}Q_{-\frac{1}{2}}-\frac{6}{c_M}Q_{-\frac{5}{2}}Q_{-\frac{3}{2}}
			-\frac{16}{c_M^2}M_{-1}Q_{-\frac{5}{2}}Q_{-\frac{1}{2}}+\frac{6}{c_M^2}M_{-2}Q_{-\frac{3}{2}}Q_{-\frac{1}{2}}
			-\frac{12}{c_M^3}M_{-1}^2Q_{-\frac{3}{2}}Q_{-\frac{1}{2}}\\
			&+\left(\frac{66}{c_M^2}-\frac{4c_L}{c_M^2}\right)M_{-3}M_{-1}+\left(\frac{51}{4c_M^2}-\frac{3c_L}{2c_M^2}\right)M_{-2}^2
			+\left(\frac{342}{c_M^3}-\frac{20c_L}{c_M^3}\right)M_{-2}M_{-1}^2+\left(\frac{321}{c_M^4}-\frac{18c_L}{c_M^4}\right)M_{-1}^4.
		\end{eqnarray*}
		
		{\small  (2) $p=5, h_M=-c_M, h_L=-c_L+\frac{35}{2}$: 
			\begin{eqnarray*}
				{\rm T}\hskip -7pt&=&\hskip -7pt L_{-5}+\frac{3}{c_M}M_{-1}L_{-4}+\left(\frac{2}{c_M}M_{-2}+\frac{21}{4c_M^{2}}M_{-1}^{2}\right)L_{-3}
				+\left(\frac{2}{c_M}M_{-3}+\frac{8}{c_M^2}M_{-2}M_{-1}+\frac{15}{2c_M^3}M_{-1}^3\right)L_{-2}\\
				&&+\left(\frac{3}{c_M}M_{-4}+\frac{21}{2c_M^{2}}M_{-3}M_{-1}+\frac{4}{c_M^{2}}M_{-2}^2
				+\frac{45}{2c_M^{3}}M_{-2}M_{-1}^2+\frac{45}{4c_M^{4}}M_{-1}^4\right)L_{-1}\\
				&&-\frac{3}{2c_M}Q_{-\frac{9}{2}}Q_{-\frac{1}{2}}
				-\frac{3}{c_M}Q_{-\frac{7}{2}}Q_{-\frac{3}{2}}-\frac{27}{4c_M^{2}}M_{-1}Q_{-\frac{7}{2}}Q_{-\frac{1}{2}}
				+\frac{3}{2c_M^{2}}M_{-3}Q_{-\frac{3}{2}}Q_{-\frac{1}{2}}\\
				&&-\frac{3}{c_M^{3}}M_{-2}M_{-1}Q_{-\frac{3}{2}}Q_{-\frac{1}{2}}+\frac{9}{4c_M^{4}}M_{-1}^3Q_{-\frac{3}{2}}Q_{-\frac{1}{2}}
				+\left(\frac{105}{2c_M^{2}}-\frac{3c_L}{c_M^{2}}\right)M_{-4}M_{-1}+\left(\frac{31}{c_M^{2}}-\frac{2c_L}{c_M^{2}}\right)M_{-3}M_{-2}\\
				&&+\left(\frac{369}{2c_M^{3}}-\frac{21c_L}{2c_M^{3}}\right)M_{-3}M_{-1}^2
				+\left(\frac{148}{c_M^{3}}-\frac{8c_L}{c_M^{3}}\right)M_{-2}^2M_{-1}
				+\left(\frac{1653}{4c_M^{4}}-\frac{45c_L}{2c_M^{4}}\right)M_{-2}M_{-1}^3
				+\left(\frac{675}{4c_M^{5}}-\frac{9c_L}{c_M^{5}}\right)M_{-1}^5.
			\end{eqnarray*}
		}
	\end{exa}
	Note that we have the particular element ${\rm T}\in U(\frak{g})$ but we will use ${\rm T}$ without assuming the condition that $h_L=h_{p, 1}$.
	Now we provide some key properties of the operators ${\rm S}, {\rm R}$ and ${\rm T}$ in $L'(c_L,c_M,h_L,h_M) $ without assuming that $h_L=h_{p, 1}$.
	
	\begin{lem}\label{ST}
		Let $p$ be even and ${\rm S}, {\rm T}$ be defined as above. In $L'(c_L,c_M,h_L,h_M) $, we have that $[{\rm S},{\rm T}]\1=0$, and consequently, ${\rm S}{\rm T}^i\1=0$ for any $i\in\mathbb Z_+$.  
	\end{lem}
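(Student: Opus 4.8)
The plan is to prove that $W:=[{\rm S},{\rm T}]$ actually lies in $U(\mathcal{M}_-)$, that $W\1$ lies in the submodule $J'(c_L,c_M,h_L,h_M)$ generated by the singular vectors, and then to upgrade this to ${\rm S}{\rm T}^i\1\in J'$ using the pure-$M$ structure of $J'$. First I would compute $W$ structurally. Because ${\rm S}\in U(\mathcal{M}_-)$ and $\mathcal{M}$ commutes with both $\mathcal{M}$ and $\mathcal{Q}$, the element ${\rm S}$ commutes with each coefficient $g_{p-i}(M)$ and with $u_p(M,Q)$ in the expression for ${\rm T}$ of Corollary \ref{subsingular-T}, so that
\begin{equation*}
W=[{\rm S},L_{-p}]+\sum_{i=1}^{p-1}g_{p-i}(M)\,[{\rm S},L_{-i}].
\end{equation*}
Each $[{\rm S},L_{-j}]$ lies in $U(\mathcal{M}_-)$ since $[L_{-j},M_{-k}]=(k-j)M_{-j-k}$ has no central term for $j,k\in\mathbb{Z}_+$. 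Hence $W\in U(\mathcal{M}_-)$ is homogeneous of degree $2p$, $W\1$ is a pure-$M$ weight vector with $\ell_L(W\1)=\ell_Q(W\1)=0$, and $M_kW\1=Q_rW\1=0$ for all positive $k,r$. For $p=2$ a direct computation gives $W=0$ and the lemma is immediate, so I would assume $p\ge 4$ henceforth.

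The core step is $W\1\in J'(c_L,c_M,h_L,h_M)$. As ${\rm T}{\rm S}\1\in J'$, it suffices to prove that $\pi(W\1)$ is a singular vector of $L'(c_L,c_M,h_L,h_M)$; and since $J'$ is a submodule, it is enough to check that the generators $L_1,L_2,Q_{\frac12},Q_{\frac32}$ of $\mathfrak{g}_+$ carry $W\1$ into $J'$. The odd generators annihilate $W\1$, which is pure-$M$. For $L_k$ with $k\in\{1,2\}\subseteq\{1,\dots,p-1\}$, I would use Jacobi to write $L_kW\1=[L_k,{\rm S}]{\rm T}\1+[{\rm S},[L_k,{\rm T}]]\1$. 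By Lemma \ref{l3.15'} the first term is a $U(\mathcal{M}_-)$-combination of $\bigl(2M_0+\tfrac{p^2-1}{12}{\bf c}_M\bigr){\rm T}\1$ and of the $M_j{\rm T}\1$ with $j\ge 1$; the former equals $2p\,{\rm S}\1$ by \eqref{W0T} and $\phi(p)=0$, while the latter all vanish because $M_j{\rm T}\1=0$ in $V(c_L,c_M,h_L,h_M)$ for every $j\ge 1$ (for $1\le j\le p-1$ this is built into the defining relations $M_{p-i}{\rm T}\1=0$ of Corollary \ref{subsingular-T}, and for $j\ge p$ it follows from $\phi(p)=0$ together with $\mathfrak{g}_+\1=0$). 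Thus $[L_k,{\rm S}]{\rm T}\1\in U(\mathcal{M}_-){\rm S}\1\subseteq J'$. For the second term, $[{\rm S},[L_k,{\rm T}]]\1={\rm S}(L_k{\rm T}\1)-[L_k,{\rm T}]{\rm S}\1$, where $[L_k,{\rm T}]{\rm S}\1\in J'$ and, crucially, $L_k{\rm T}\1\in J'$ for $1\le k\le p-1$ by the construction of ${\rm T}$ in Corollary \ref{subsingular-T}, so ${\rm S}(L_k{\rm T}\1)\in{\rm S}J'\subseteq J'$. Hence $L_kW\1\in J'$, and $\pi(W\1)$ is a singular vector of $L'$ of positive weight $2p$ with $\ell_L=0$; by Lemma \ref{hmsubsingular} (or by irreducibility of $L'$ when $h_L$ is typical) no such nonzero vector exists, so $\pi(W\1)=0$, i.e.\ $[{\rm S},{\rm T}]\1\in J'$.

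For the final assertion I would first determine the pure-$M$ part of $J'$. By Lemma \ref{ll4.1}(1) the pure-$M$ vectors of $L'$ are spanned by the classes of $M_{-\lambda}\1$ whose partitions $\lambda$ omit the part $p$; matching graded dimensions with $U(\mathcal{M}_-)/({\rm S})$ --- valid since ${\rm S}=M_{-p}+(\text{terms in }M_{-1},\dots,M_{-(p-1)})$ is monic in $M_{-p}$ and hence a non-zero-divisor --- identifies $\{f\in U(\mathcal{M}_-):f\1\in J'\}$ with the ideal $U(\mathcal{M}_-){\rm S}$. Since $W\1$ is pure-$M$ and lies in $J'$, this forces $W=h{\rm S}$ for some $h\in U(\mathcal{M}_-)$. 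Then ${\rm S}{\rm T}^i\1=({\rm T}{\rm S}+W){\rm T}^{i-1}\1={\rm T}\bigl({\rm S}{\rm T}^{i-1}\1\bigr)+h\bigl({\rm S}{\rm T}^{i-1}\1\bigr)$, and an induction on $i$ (with base ${\rm S}{\rm T}\1={\rm T}{\rm S}\1+W\1\in J'$) places both summands in $J'$, giving ${\rm S}{\rm T}^i\1=0$ in $L'$.

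I expect the core step to be the main obstacle: controlling $L_kW\1$ forces one to use simultaneously the commutator shape of Lemma \ref{l3.15'}, the two exact identities $M_j{\rm T}\1=0$ and $\bigl(2M_0+\tfrac{p^2-1}{12}{\bf c}_M\bigr){\rm T}\1=2p\,{\rm S}\1$, and the fact that ${\rm T}$ was engineered so that $L_k{\rm T}\1\in J'$ precisely for $k\le p-1$. It is exactly the restriction $k\le p-1$ that makes the argument work, since the single exceptional mode $L_p$ --- the one detecting the atypical condition $h_L=h_{p,r}$ --- never needs to be applied.
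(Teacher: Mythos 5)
Your overall strategy is sound and close in spirit to the paper's: show that $[{\rm S},{\rm T}]\1$ would have to be a singular vector of $L'$, rule that out via Lemma \ref{hmsubsingular} because its highest term is not a power of $L_{-p}$, conclude $[{\rm S},{\rm T}]=h{\rm S}$, and induct. The reduction of $W=[{\rm S},{\rm T}]$ to an element of $U(\mathcal M_-)$, the treatment of $[L_k,{\rm S}]{\rm T}\1$ via Lemma \ref{l3.15'} together with \eqref{W0T} and $M_j{\rm T}\1=0$, and the identification of the pure-$M$ part of $J'$ with $U(\mathcal M_-){\rm S}$ are all correct.

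However, there is a genuine gap at the step you yourself flag as crucial: the claim that $L_k{\rm T}\1\in J'$ for $1\le k\le p-1$ ``by the construction of ${\rm T}$.'' This is false for general $h_L$. The element ${\rm T}$ is a fixed element of $U(\mathfrak g_-)$ (depending on $p,c_M,c_L$ but not on $h_L$), and the only place $h_L$ enters $L_k{\rm T}\1$ is through $g_{p-k}(M)[L_k,L_{-k}]\1=g_{p-k}(M)\bigl(2kh_L+\tfrac{k^3-k}{12}c_L\bigr)\1$; consequently $L_k{\rm T}\1\equiv 2k(h_L-h_{p,1})\,g_{p-k}(M)\1\pmod{J'}$, which is a nonzero element of $U(\mathcal M_-(p))\1$ whenever $h_L\ne h_{p,1}$ (this is exactly the content of the term $a_0(M)\beta(L_0,{\bf c}_L)$ with $\beta(h_{p,1},c_L)=0$ in Lemma \ref{g+T}(a)). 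So as written the step fails in the typical case, which is precisely the case the lemma must cover. The argument is repairable: the defect of $L_k{\rm T}\1$ modulo $J'$ is a pure-$M$ element not involving $M_{-p}$, and since ${\rm S}\in U(\mathcal M_-)$ commutes with it, one still gets ${\rm S}(L_k{\rm T}\1)\in U(\mathcal M_-){\rm S}\1+{\rm S}J'\subseteq J'$, which is all you actually need. The paper sidesteps this issue entirely by a cleaner route: it first works in $L'(c_L,c_M,h_{p,1},h_M)$, where ${\rm T}\1$ genuinely is a singular vector so that $x[{\rm S},{\rm T}]\1=[x,{\rm S}]{\rm T}\1=0$ for all $x\in\mathfrak g_+$ with no case analysis, deduces the module-independent identity $[{\rm S},{\rm T}]=y{\rm S}$ in $U(\mathfrak g_-)$, and only then transfers the conclusion to arbitrary $h_L$. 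You should either adopt that specialization trick or replace your false intermediate claim by the weaker (and true) statement $L_k{\rm T}\1\in U(\mathcal M_-(p))\1+J'$ from Lemma \ref{g+T}(a).
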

	\begin{proof}  Note that $p>1$.
		We claim that if $[{\rm S}, {\rm T}]\1\ne 0$, then $[{\rm S}, {\rm T}]\1$ is a subsingular vector in $V(c_L,c_M, h_L,h_M)$ in the case of $h_L=h_{p, 1}$. In fact, using $[M_0,[{\rm S}, {\rm T}]]\1=0$ and \eqref{W0T} it is easy to see $[{\rm S},{\rm T}]\1$ is a $\mathfrak{g}_0$ eigenvector.
		For any $x\in\frak g_+$, 
		$$x[{\rm S},{\rm T}]\1=x{\rm S}{\rm T}\1  =[x, {\rm S}]{\rm T}\1,  \text{  in  } L'(c_L,c_M, h_{p, 1},h_M).$$
		By Lemma \ref{l3.15'}, we get $[x,{\rm S}]{\rm T}\1=0$. So the claim holds. However,$[{\rm S},{\rm T}]\1$ is not a subsingular vector in $V(c_L,c_M, h_{p, 1},h_M)_{2p}$ by ${\rm hm}([{\rm S},{\rm T}]\1)\neq L_{-p}^{2}{\bf 1}$ and Lemma \ref{hmsubsingular}.  So $[{\rm S}, {\rm T}]\1=0$. It means that 
		$[{\rm S}, {\rm T}]= y{\rm S}$ for some $y\in U(\frak g_-)$ since $p$ is even. So ${\rm S}{\rm T}\1=0$ in $L'(c_L,c_M,h_L,h_M)$ for arbitrary $h_L$.  Moreover, $${\rm S}{\rm T}^2\1=[{\rm S},{\rm T}]{\rm T}\1+{\rm T}{\rm S}{\rm T}\1=y{\rm S}{\rm T}\1+{\rm T}{\rm S}{\rm T}\1=0.$$ 
		By induction we can get ${\rm S}{\rm T}^i\1=0$ for any $i\in\mathbb Z_+$.
	\end{proof}
	
	\begin{lem}\label{RTcomm} If $p$ is odd, in $L'(c_L,c_M,h_L,h_M) $, we have	$[{\rm R}, {\rm T}]\1=0$, and   ${\rm R}{\rm T}^i\1=0$ for any $i\in\mathbb Z_+$.
		Consequently,	${\rm S}{\rm T}^i\1=0$ for any $i\in\mathbb Z_+$.
	\end{lem}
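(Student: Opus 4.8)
The plan is to follow the template of Lemma \ref{ST}, replacing $\rm S$ by $\rm R$ and taking advantage of the half-integer grading. Throughout I work in $L'(c_L,c_M,h_L,h_M)$, where ${\rm R}\1=0$ because ${\rm R}\1$ is a singular vector (Theorem \ref{t3.19}); I will repeatedly use ${\rm S}={\rm R}^2$ from Proposition \ref{t3.15} and the relation $[M_0,{\rm T}]=p{\rm S}$ in $U(\mathfrak g_-)$ from \eqref{W0T}, which holds whenever $\phi(p)=0$. First I would check that $[{\rm R},{\rm T}]\1$ (the super-bracket ${\rm R}{\rm T}-{\rm T}{\rm R}$, since $\rm T$ is even) is a $\mathfrak g_0$-eigenvector. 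As $M_0$ commutes with $\mathcal M$ and $\mathcal Q$, we have $[M_0,{\rm R}]=0$, so $[M_0,[{\rm R},{\rm T}]]=[{\rm R},[M_0,{\rm T}]]=p[{\rm R},{\rm R}^2]=0$; combined with $M_0\1=h_M\1$ this gives $M_0[{\rm R},{\rm T}]\1=h_M[{\rm R},{\rm T}]\1$, while $L_0$ acts by a scalar by homogeneity.

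The crux is to show that $[{\rm R},{\rm T}]\1$ is a singular vector of $L'(c_L,c_M,h_{p,1},h_M)$, and here I would specialize to $h_L=h_{p,1}$ precisely so that ${\rm T}\1$ is itself singular in $L'$ by Theorem \ref{subsingular}, i.e. $\mathfrak g_+{\rm T}\1=0$ in $L'$. Using ${\rm R}\1=0$ we get $[{\rm R},{\rm T}]\1={\rm R}{\rm T}\1$ in $L'$, so for homogeneous $x\in\mathfrak g_+$ one has $x{\rm R}{\rm T}\1=(-1)^{|x|}{\rm R}(x{\rm T}\1)+[x,{\rm R}]{\rm T}\1$. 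The first summand vanishes since $x{\rm T}\1=0$ in $L'$ and ${\rm R}\in U(\mathfrak g_-)$ preserves $J'$. For the second, Lemma \ref{l3.15} writes $[x,{\rm R}]$ as a sum of terms $u\,(2M_0+\frac{p^2-1}{12}{\bf c}_M)$ and $u'g$ with $u,u'\in U(\mathcal M_-+\mathcal Q_-)$ and $g\in\mathfrak g_+$; the first kind applied to ${\rm T}\1$ gives $u\,(2p{\rm S}\1+\phi(p){\rm T}\1)=2p\,u\,{\rm S}\1=0$ in $L'$, and the second gives $u'(g{\rm T}\1)=0$. Hence $\mathfrak g_+[{\rm R},{\rm T}]\1=0$.

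A degree parity observation then closes the argument cleanly. The vector $[{\rm R},{\rm T}]\1$ is homogeneous of degree $\tfrac p2+p=\tfrac{3p}{2}$, a half-integer when $p$ is odd, so it cannot lie in $\mathbb C\1$; if it were a nonzero singular vector of $L'(c_L,c_M,h_{p,1},h_M)$, then Lemma \ref{hmsubsingular} would force ${\rm hm}([{\rm R},{\rm T}]\1)=L_{-p}^{r}\1$ of integer degree $rp$, a contradiction. Therefore $[{\rm R},{\rm T}]\1=0$ in $L'(c_L,c_M,h_{p,1},h_M)$, so $[{\rm R},{\rm T}]\1\in J'=U(\mathfrak g_-){\rm R}\1$; by the PBW isomorphism $U(\mathfrak g_-)\cong V$ this upgrades to an identity $[{\rm R},{\rm T}]=z{\rm R}$ in $U(\mathfrak g_-)$ for some $z\in U(\mathfrak g_-)$, now independent of $h_L$. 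Finally, for arbitrary $h_L$, the relation ${\rm R}{\rm T}={\rm T}{\rm R}+z{\rm R}=({\rm T}+z){\rm R}$ gives ${\rm R}{\rm T}\1=0$ and hence $[{\rm R},{\rm T}]\1=0$ in $L'$; an induction ${\rm R}{\rm T}^{i}\1=z\,{\rm R}{\rm T}^{i-1}\1+{\rm T}\,{\rm R}{\rm T}^{i-1}\1=0$, using that $U(\mathfrak g_-)$ preserves $J'$, yields ${\rm R}{\rm T}^i\1=0$ for all $i$, and then ${\rm S}{\rm T}^i\1={\rm R}({\rm R}{\rm T}^i\1)=0$.

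I expect the main obstacle to be the singularity verification at $h_L=h_{p,1}$ in the second paragraph: it is what forces the specialization, and it requires combining Theorem \ref{subsingular} (so that $x{\rm T}\1=g{\rm T}\1=0$ for $x,g\in\mathfrak g_+$) with the structural Lemma \ref{l3.15} and the identity $(2M_0+\frac{p^2-1}{12}{\bf c}_M){\rm T}\1=2p{\rm S}\1$, all carried out with the correct super-signs coming from moving $x$ past the odd element $\rm R$. Once this is in place, the half-integer degree makes the contradiction with Lemma \ref{hmsubsingular} immediate; this is in fact cleaner than the even case, where the degree $2p$ is an integer multiple of $p$ and one must separately observe that ${\rm hm}([{\rm S},{\rm T}]\1)\neq L_{-p}^{2}\1$.
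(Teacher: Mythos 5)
Your proof is correct and follows essentially the same route as the paper, which simply adapts the argument of Lemma \ref{ST} with Lemma \ref{l3.15} in place of Lemma \ref{l3.15'}: show the would-be vector is singular in $L'$ at $h_L=h_{p,1}$, rule it out via Lemma \ref{hmsubsingular}, upgrade to $[{\rm R},{\rm T}]=z{\rm R}$ in $U(\mathfrak g_-)$ by PBW, and induct. Your half-integer-degree observation is a clean way to realize the contradiction with Lemma \ref{hmsubsingular} in the odd case, but it is the same argument.
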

	\begin{proof}
		It is essentially the same as that of  Lemma \ref{ST}, the only difference is that we shall use Lemma \ref{l3.15} here instead of Lemma \ref{l3.15'}.
		%
	\end{proof}
	%

	\subsection{Sufficient condition for the existence of subsingular vectors}
	For any $k\in\mathbb Z_+$, set
	\begin{equation}\mathcal M_-(p)={\rm span}_{\mathbb C}\{M_{-1}, M_{-2}, \cdots, M_{-p+1}\}\label{M_-(p)},\end{equation} 
	\begin{equation} U^{(k)}:={\rm span}_{\mathbb C}\Big \{x_{i_1}x_{i_2}\cdots x_{i_{k}}\mid x_{i_1}, x_{i_2}, \cdots,  x_{i_{k}}\in U(\mathcal M_-(p))\cup \{{\rm T}\} \Big\},\label{UTk}\end{equation} (each monomial can only have a maximum $k$ copies of  ${\rm T}$) and 
	$U^{(0)}=\mathbb C$.
	
	Clearly, $$U^{(0)}\subset U^{(1)}\subset \cdots\subset U^{(k)}\subset\cdots.$$

	First we   give the following lemmas by direct calculation to show the existence of singular vectors in $L'(c_L,c_M,h_L,h_M)_{rp}$.
	
	\begin{lem} \label{g+T}
		{\rm (a)}	For any $1\le i\le p$, we have
		\begin{eqnarray*}
			[L_i,{\rm T}]&=&a_0(M)\beta(L_0,{\bf c}_L)+b_0\left(2M_0+\frac{p^2-1}{12}{\bf c}_M\right)+ c_0{\rm R}\\
			&&+\sum_{i=1}^{i-1}a_i(M)L_{i}+\sum_{i=1}^{p-1}b_iM_{i}+\sum_{i=1}^{\lfloor \frac{p}{2}\rfloor}c_iQ_{-i+\frac12},
		\end{eqnarray*}
		where $b_i\in U(\frak g_-)$, $c_i\in U(\mathcal M_-+\mathcal Q_-)$, $\beta(h_{p,1},c_L)=0$ and all $a_i(M)\in U(\mathcal M_-(p))$. Moreover, $[L_i,{\rm T}]\1\in U(\mathcal M_-(p))\1$ in $L'(c_L,c_M,h_L,h_M)$.	
		
		{\rm (b)}	For any $x\in \mathcal M_+$, we have
		\begin{eqnarray*}
			[x,{\rm T}]\subset {U({\mathfrak{g}}_{-})}\left(2M_0+\frac{p^2-1}{12}{\bf c}_M\right)+{U({\mathcal M}_{-})}({\mathcal M_+}).
		\end{eqnarray*} Moreover, $[x, {\rm T}]\1=0$ in $L'(c_L,c_M,h_L,h_M)$.
		
		{\rm (c)}	For any $x\in \mathcal Q_+$, we have
		\begin{eqnarray*}
			[x,{\rm T}]\subset {U({\mathfrak{g}}_{-})}\left(2M_0+\frac{p^2-1}{12}{\bf c}_M\right)+ U(\mathcal{M}_-+\mathcal Q_-){\rm R}+{U({\mathfrak{g}}_{-})}({\mathcal M_++\mathcal Q_+}).
		\end{eqnarray*} Moreover, $[x, {\rm T}]\1=0$ in $L'(c_L,c_M,h_L,h_M)$.
	\end{lem}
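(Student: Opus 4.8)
The plan is to read each bracket off the explicit shape of ${\rm T}$ recorded in Corollary \ref{subsingular-T}, namely ${\rm T}=L_{-p}+\sum_{i=1}^{p-1}g_{p-i}(M)L_{-i}+u_p(M,Q)$ with $g_{p-i}(M)\in U(\mathcal M_-(p))$ (see \eqref{M_-(p)}) and $\ell_Q(u_p)=2$, and to split the work into two tasks for each of (a)--(c): the ``subset'' containment, an identity in $U(\frak g)$ obtained by commuting $x$ past ${\rm T}$ term by term, and the ``moreover'' statement, the action on $\1$ in $L'(c_L,c_M,h_L,h_M)$. The single fact driving the second task is that, by Lemma \ref{l4.4} and Theorem \ref{subsingular}, when $h_L=h_{p,1}$ the vector ${\rm T}\1$ is singular in $L'$, i.e.\ $\frak g_+{\rm T}\1=0$ there; moreover the element ${\rm T}\in U(\frak g_-)$, the ideal $J'=U(\frak g_-){\rm S}\1$ (resp.\ $U(\frak g_-){\rm R}\1$), and hence $L'$ itself do not depend on $h_L$. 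I will exploit this parameter freedom rather than grind through the full PBW expansion.

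For parts (b) and (c) I use that $M_j$ and $Q_{k-\frac12}$ commute with all of $\mathcal M$ and $\mathcal Q$, while $[M_j,L_{-i}]$ and $[Q_{k-\frac12},L_{-i}]$ reintroduce only $M$'s (with a $c_M$-central term) and $Q$'s, never $L$'s. Bracketing against $L_{-p}$ and each $g_{p-i}(M)L_{-i}$ then shows $[M_j,{\rm T}]\in U(\mathcal M\oplus\bC{\bf c}_M)$ and $[Q_{k-\frac12},{\rm T}]\in U(\mathcal M+\mathcal Q+\bC{\bf c}_M)$, the central term of $[M_p,L_{-p}]$ and of the relevant $[Q,Q]$ supplying the factor $2M_0+\frac{p^2-1}{12}{\bf c}_M$. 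Reorganising by PBW (lowering operators on the left, $M_0,{\bf c}_M$ in the middle, raising on the right) and using $\phi(p)=0$ together with the divisibility argument already applied in Lemmas \ref{l3.15'} and \ref{l3.15} yields the stated containments; the summand $U(\mathcal M_-+\mathcal Q_-){\rm R}$ in (c) is exactly what appears when a monomial containing $Q_{-\frac p2}$ is rewritten through ${\rm R}\1=0$ in the odd case. For the ``moreover'' I note that, for $x\in\mathcal M_+\cup\mathcal Q_+$, no $L_0$ is produced in reducing $x{\rm T}\1$, so (at fixed $c_L,c_M$) this is one vector independent of $h_L$; since it vanishes in $L'$ at $h_L=h_{p,1}$ by subsingularity and $L'$ is $h_L$-independent, it vanishes in $L'$ for every $h_L$. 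For (b) one even gets $[M_j,{\rm T}]\1=0$ already in $V$: directly from the defining property $M_k{\rm T}\1=0$ ($1\le k<p$) built into the $g_{p-i}(M)$, from the central term giving $p\phi(p)=0$ for $j=p$, and by degree for $j>p$.

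Part (a) is the genuine obstacle, because bracketing $L_i$ against the $L_{-j}$ inside ${\rm T}$ does create $L_0$ and ${\bf c}_L$, so $L_i{\rm T}\1$ really depends on $h_L$. The key computation is that, at fixed $c_L$, the only $L_0$-contribution comes from the single resonant bracket $[L_i,L_{-i}]=2iL_0+\frac{i^3-i}{12}{\bf c}_L$ sitting in front of $g_{p-i}(M)$; every other piece (the lowering $L_{i-p},L_{i-j}$, the $Q$'s from $[L_i,u_p]$, and the genuine $M$-terms, including the $c_L$-dependent coefficients of $u_p$) is $h_L$-free. Hence $L_i{\rm T}\1$ is affine-linear in $h_L$ with slope $2i\,g_{p-i}(M)\1$. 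Interpolating from the vanishing at $h_L=h_{p,1}$ then gives, in $L'$, the clean identity $L_i{\rm T}\1=2i\,(h_L-h_{p,1})\,g_{p-i}(M)\1$, which lies in $U(\mathcal M_-(p))\1$ since $g_{p-i}(M)\in U(\mathcal M_-(p))$. This simultaneously proves the ``moreover'' and identifies $a_0(M)=g_{p-i}(M)$ and $\beta(L_0,{\bf c}_L)=2i\bigl(L_0-h_{p,1}\bigr)$, a polynomial in $L_0$ and ${\bf c}_L$ (recall $h_{p,1}$ carries the $-\frac{p^2-1}{24}c_L$ term, see \eqref{atypical}) with $\beta(h_{p,1},c_L)=0$; the remaining $b_0(2M_0+\frac{p^2-1}{12}{\bf c}_M)$, $c_0{\rm R}$ and the raising-operator sums are collected precisely so as to vanish on $\1$ in $L'$. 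I have checked this formula against the $p=4,\ i=3$ instance of the example following Corollary \ref{subsingular-T}, where it reproduces $L_3{\rm T}\1=\tfrac{24}{c_M}(h_L-h_{4,1})M_{-1}\1$.

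The hardest point is exactly this bookkeeping in (a): one must confirm that no hidden $h_L$-dependence leaks in through the reduction of the $Q$-terms from $[L_i,u_p]$ or through the projection to $L'$ (it does not, since those brackets never manufacture $L_0$ and $L'$ is $h_L$-independent), and one must reconcile the resonant central element ${\bf c}_L$ with the scalar $c_L$ entering the coefficients of ${\rm T}$, normalising the factor $\beta(L_0,{\bf c}_L)$ so that $\beta(h_{p,1},c_L)=0$ while all auxiliary terms still die on $\1$. The interpolation argument is what lets me bypass a full term-by-term tracking for the crucial ``moreover'' assertion; treating $p=1$ and the boundary index $i=p$ (where the bracket has weight $0$ and $a_0(M)\in\bC$) as degenerate instances of the same computation then completes the proof.
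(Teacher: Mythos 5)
Your proposal is correct and rests on exactly the same pivot as the paper's (very terse) proof: since $[x,{\rm T}]\in U(\frak g)$ is independent of $h_L$ while $L'$ and $J'$ are too, the vanishing $x\,{\rm T}\1=0$ in $L'$ at the special value $h_L=h_{p,1}$ (Theorem \ref{subsingular}) forces the stated decomposition, with the $L_0$-dependent factor $\beta$ vanishing at $h_{p,1}$. Your interpolation argument identifying the slope $2i\,g_{p-i}(M)$ and the closed form $L_i{\rm T}\1=2i(h_L-h_{p,1})g_{p-i}(M)\1$ merely makes explicit what the paper compresses into ``then the formula follows,'' so this is the same route with the details filled in.
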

	\proof (a) We know that $[L_i,{\rm T}]\1=L_i{\rm T}\1=0$   in $L' (c_L,c_M, h_L,h_M)$ in the case of $h_L=h_{p, 1}$. Then the formula follows.
	
	The proofs for (b) and (c) are similar to that of (a). \qed

	\begin{lem} \label{W0Tk} For any  $k\in\mathbb Z_+$,  in $L'(c_L,c_M,h_L,h_M) $  with $\phi(p)=0$ we have
		
		{\rm (a)}	$M_{0}{\rm T}^{k}{\bf 1}=h_{M}{\rm T}^{k}{\bf 1}$, and $\left(M_{0}-\frac1{24}(p^2-1){\bf c}_M\right){\rm T}^{k}{\bf 1}=0$.
		
		{\rm (b)}  For $y={\rm S}, {\rm R}, M_i$ or $Q_{-i+\frac12}$ with $k, i\in\mathbb Z_+$,  we have   $yU^{(k)}\1=0$,
		where $U^{(k)}$ is defined in \eqref{UTk}.
		
	\end{lem}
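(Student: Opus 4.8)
The plan is to prove, by a single simultaneous induction on $k$, the following three statements in $L'(c_L,c_M,h_L,h_M)$, valid for \emph{every} $w\in U^{(k)}$: \ (I) $\left(2M_0+\frac{p^2-1}{12}{\bf c}_M\right)w\1=0$; \ (II) ${\rm S}w\1=0$, and moreover ${\rm R}w\1=0$ when $p$ is odd; \ (III) $M_iw\1=0$ and $Q_{i-\frac12}w\1=0$ for all $i\in\mathbb Z_+$. Since $\phi(p)=0$ forces $h_M=-\frac{p^2-1}{24}c_M$, statement (I) applied to the particular element $w={\rm T}^k$ reads $\left(2M_0+\frac{p^2-1}{12}{\bf c}_M\right){\rm T}^k\1=0$, equivalently $M_0{\rm T}^k\1=h_M{\rm T}^k\1$, which is exactly (a); and (II)--(III) are exactly (b). By \eqref{UTk} every element of $U^{(k)}$ is a linear combination of words $w=a_0{\rm T}a_1{\rm T}\cdots {\rm T}a_j$ with $a_0,\dots,a_j\in U(\mathcal M_-(p))$ and $j\le k$, so it suffices to treat such $w$. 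The base case $k=0$ is immediate: $\1$ is a highest weight vector, so $M_i\1=Q_{i-\frac12}\1=0$; ${\rm S}\1$ (for $p$ even) and ${\rm R}\1$ (for $p$ odd) generate $J'(c_L,c_M,h_L,h_M)$ and hence vanish in $L'$; and $\left(2M_0+\frac{p^2-1}{12}{\bf c}_M\right)\1=\phi(p)\1=0$.

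The engine of the inductive step is that $\mathcal M$ is abelian and $[\mathcal M,\mathcal Q]=0$, so each operator in the list $\left\{2M_0+\frac{p^2-1}{12}{\bf c}_M,\ {\rm S},\ {\rm R},\ M_i,\ Q_{i-\frac12}\right\}$ commutes with every factor $a_l\in U(\mathcal M_-(p))$ and may be slid freely to the right (with no super-sign, as ${\rm T}$ is even) until it meets the first copy of ${\rm T}$. Crossing that single ${\rm T}$ I would use \eqref{W0T} (i.e.\ $[M_0,{\rm T}]=p{\rm S}$, valid whenever $\phi(p)=0$), the relations $[{\rm S},{\rm T}]=y{\rm S}$ for $p$ even (Lemma \ref{ST}) and $[{\rm R},{\rm T}]=z{\rm R}$ for $p$ odd (Lemma \ref{RTcomm}), and Lemma \ref{g+T}(b),(c). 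In every case the commutator with ${\rm T}$ is a sum of terms of exactly three shapes: a left multiple of $\left(2M_0+\frac{p^2-1}{12}{\bf c}_M\right)$, a left multiple of ${\rm S}$ or ${\rm R}$, and a left multiple of some $M_{i'}$ or $Q_{i'-\frac12}$ with $i'\in\mathbb Z_+$. Each such commutator acts on the shortened word $a_1{\rm T}\cdots {\rm T}a_j\in U^{(k-1)}$, where its rightmost offending factor already annihilates $U^{(k-1)}\1$ by the inductive hypotheses; hence the whole term dies \emph{regardless} of its arbitrary $U(\mathfrak g_-)$-coefficient. The one surviving term, in which the operator has genuinely crossed ${\rm T}$ and now acts on $a_1{\rm T}\cdots {\rm T}a_j\1\in U^{(k-1)}\1$, vanishes by the inductive hypothesis as well.

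Concretely, for (I) the two ${\rm T}$-crossing contributions are disposed of by hypotheses (I) and (II) (via $[M_0,{\rm T}]=p{\rm S}$); for ${\rm S}$ and ${\rm R}$ in (II) only (II) is needed (and for $p$ odd ${\rm S}w\1={\rm R}^2w\1={\rm R}({\rm R}w\1)=0$ using ${\rm S}={\rm R}^2$ from Proposition \ref{t3.15}); for $M_i$ in (III) one needs (I) and (III) through Lemma \ref{g+T}(b); and for $Q_{i-\frac12}$ in (III) one needs all of (I), (II), (III) because of the ${\rm R}$-term in Lemma \ref{g+T}(c). Thus the three families of identities close up at level $k$ using only their counterparts at level $k-1$.

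The main obstacle is organizational rather than computational: one must verify that the three statements genuinely close under a single induction. What makes this succeed is the precise shape of the commutators in Lemma \ref{g+T}(b),(c) and in \eqref{W0T}: sliding an annihilation operator (or $M_0$) past ${\rm T}$ \emph{never} creates an $L_{i}$ with $i>0$, nor a fresh ${\rm T}$ on the wrong side, but only the central combination $2M_0+\frac{p^2-1}{12}{\bf c}_M$, the operators ${\rm S}/{\rm R}$, and lower annihilation operators --- so no operator outside the distinguished list is ever generated, and every newly produced factor lands on a word with strictly fewer copies of ${\rm T}$. I would state once, as a standing observation, that the $U(\mathfrak g_-)$-coefficients arising in these commutators are irrelevant because they left-multiply vectors that are already zero; this is exactly the device that lets the argument avoid any explicit coefficient calculation.
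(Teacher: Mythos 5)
Your proposal is correct and follows essentially the same route as the paper: the paper also proves (a) via $[M_0,{\rm T}]=p{\rm S}$ together with ${\rm S}{\rm T}^{k-1}\1=0$, and proves (b) by induction on the number of ${\rm T}$-factors, sliding ${\rm S},{\rm R},M_i,Q_{i-\frac12}$ across the $U(\mathcal M_-(p))$ factors and across ${\rm T}$ using ${\rm S}{\rm T},{\rm R}{\rm T}\in U(\mathfrak g_-){\rm R}+U(\mathfrak g_-){\rm S}$ and Lemma \ref{g+T}. Your version merely makes the simultaneous three-part induction explicit (and correctly invokes Lemma \ref{g+T}(c) for the $Q$-case, where the paper's text cites only part (b)).
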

	
	\proof (a) By \eqref{W0T}, we know that
	$M_{0}{\rm T}{\bf 1}=h_M{\rm T}{\bf 1}$ in $L'(c_L,c_M,h_L,h_M)$.
	By induction on $k$ and Lemma \ref{ST}, we get
	$$M_{0}{\rm T}^{k}{\bf 1}=[M_{0},{\rm T}]{\rm T}^{k-1}{\bf 1}+{\rm T}M_{0}{\rm T}^{k-1}{\bf 1}=p{\rm S}{\rm T}^{k-1}{\bf 1}+h_M{\rm T}^{k}{\bf 1}=h_M{\rm T}^{k}{\bf 1}.$$
	The rest of (a) are clear.
	
	(b) In the proof of  Lemma \ref{ST}, we see that ${\rm R}{\rm T}, {\rm S}{\rm T}\in
	U(\frak g_-){\rm R}+U(\frak g_-){\rm S}$.
	Using these we can deduce that ${\rm R}U^{(k)}\1={\rm S}U^{(k)}\1=0$.
	
	By Lemma \ref{g+T} (b) we have
	$M_i{\rm T}\1=0$ and $Q_{i-\frac12} {\rm T}\1=0$
	in $L'(c_L,c_M,h_L,h_M) $  (not assuming $h_L=h_{p,1}$). Consequently, $M_if_1{\rm T}f_2\1=Q_{i-\frac12}f_1{\rm T}f_2\1=0$ for any $f_1,f_2\in U(\mathcal{M}_-)$.
	The statements follow by induction on $k\in\mathbb{Z}_+$.
	\qed

	\begin{lem} \label{L0Tk}
		Let  $k\in \mathbb N$. In $L'(c_L,c_M,h_L,h_M) $  with $\phi(p)=0$ we have
		
		{\rm (a)}	$L_{0}{\rm T}^{k}{\bf 1}=(h_{L}+kp){\rm T}^{k}{\bf 1}$.
		
		{\rm (b)}  For any $L_i, i\in\mathbb Z_+$, we have $L_i{\rm T}^{k+1}\1\in U^{(k)}\1.$

	\end{lem}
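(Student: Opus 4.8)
The plan is to prove the two parts in order, obtaining (a) from a one-line grading computation and then using (a), together with Lemmas~\ref{g+T} and \ref{W0Tk}, to drive an induction on $k$ for (b).

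For (a) the point is that ${\rm T}$ is homogeneous of degree $p$ for the $L_0$-grading: by \eqref{subsingular2} every monomial occurring in ${\rm T}$ --- namely $L_{-p}$, each $g_{p-i}(M)L_{-i}$, and the summands of $u_p(M,Q)$ --- has total degree $p$, so ${\rm T}\in U(\mathfrak g_-)_p$ and hence $[L_0,{\rm T}]=p\,{\rm T}$. Since $[L_0,-]$ is a derivation of $U(\mathfrak g)$ this gives $[L_0,{\rm T}^k]=kp\,{\rm T}^k$, and therefore
\[
L_0{\rm T}^k{\bf 1}=[L_0,{\rm T}^k]{\bf 1}+{\rm T}^k L_0{\bf 1}=(h_L+kp){\rm T}^k{\bf 1}.
\]
This already holds in $V(c_L,c_M,h_L,h_M)$, hence descends to the quotient $L'(c_L,c_M,h_L,h_M)$, which is (a).

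For (b) I would induct on $k\in\mathbb N$, the assertion at stage $k$ being that $L_i{\rm T}^{k+1}{\bf 1}\in U^{(k)}{\bf 1}$ for every $i\in\mathbb Z_+$. Stage $0$ is exactly the last assertion of Lemma~\ref{g+T}(a), namely $L_i{\rm T}{\bf 1}=[L_i,{\rm T}]{\bf 1}\in U^{(0)}{\bf 1}$. For the inductive step I split
\[
L_i{\rm T}^{k+1}{\bf 1}=[L_i,{\rm T}]\,{\rm T}^k{\bf 1}+{\rm T}\,\bigl(L_i{\rm T}^k{\bf 1}\bigr),
\]
so that the second term lies in ${\rm T}\,U^{(k-1)}{\bf 1}\subseteq U^{(k)}{\bf 1}$ by the stage-$(k-1)$ hypothesis. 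For the first term I insert the decomposition of $[L_i,{\rm T}]$ from Lemma~\ref{g+T}(a) and evaluate it on ${\rm T}^k{\bf 1}$ summand by summand: using part (a) the term $a_0(M)\beta(L_0,{\bf c}_L){\rm T}^k{\bf 1}$ equals $\beta(h_L+kp,c_L)\,a_0(M){\rm T}^k{\bf 1}\in U(\mathcal M_-(p)){\rm T}^k{\bf 1}\subseteq U^{(k)}{\bf 1}$; the central term $b_0\bigl(2M_0+\tfrac{p^2-1}{12}{\bf c}_M\bigr){\rm T}^k{\bf 1}$ vanishes by Lemma~\ref{W0Tk}(a) and $\phi(p)=0$; the operator ${\rm R}$ (or ${\rm S}$ when $p$ is even) together with the raising operators $M_l$ and $Q_{l-\frac12}$ annihilate ${\rm T}^k{\bf 1}\in U^{(k)}{\bf 1}$ by Lemma~\ref{W0Tk}(b); and each remaining term $a_l(M)L_l{\rm T}^k{\bf 1}$ with $1\le l<i$ lies in $U(\mathcal M_-(p))\,U^{(k-1)}{\bf 1}\subseteq U^{(k)}{\bf 1}$ by the stage-$(k-1)$ hypothesis applied to $L_l$. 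Adding the pieces gives $L_i{\rm T}^{k+1}{\bf 1}\in U^{(k)}{\bf 1}$ and closes the induction.

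The crux is the control of the number of copies of ${\rm T}$: one must ensure that commuting $L_i$ past the outermost ${\rm T}$ lowers the ${\rm T}$-count by exactly one and never manufactures new copies. This is precisely why Lemma~\ref{g+T}(a) is indispensable --- it organizes the a priori complicated lower-mode remainder of $[L_i,{\rm T}]$ so that, modulo $J'(c_L,c_M,h_L,h_M)$, the only surviving new factor is a ${\rm T}$-free element of $U(\mathcal M_-(p))$, while every genuinely new generator ($M_l$, $Q_{l-\frac12}$, ${\rm R}$, ${\rm S}$, and the central combination) is killed on ${\rm T}^k{\bf 1}$ by Lemma~\ref{W0Tk}. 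A secondary but essential point is that (a) and (b) are interlocked: it is part (a) that converts $\beta(L_0,{\bf c}_L)$ into the scalar $\beta(h_L+kp,c_L)$ on ${\rm T}^k{\bf 1}$, so (a) must be proved first and then invoked inside the induction for (b).
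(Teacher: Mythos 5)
The proposal is correct and is precisely the argument the paper intends (its own proof is just ``(a) follows from $[L_0,{\rm T}]=p{\rm T}$; (b) follows from Lemma \ref{g+T} and induction on $k$''): part (a) by homogeneity of ${\rm T}$, and part (b) by commuting $L_i$ past the outer ${\rm T}$, expanding $[L_i,{\rm T}]$ via Lemma \ref{g+T}(a), and killing the $\left(2M_0+\frac{p^2-1}{12}{\bf c}_M\right)$, ${\rm R}$, $\mathcal M_+$ and $\mathcal Q_+$ contributions with Lemma \ref{W0Tk}. The only wrinkle is the base case: Lemma \ref{g+T}(a) yields $[L_i,{\rm T}]{\bf 1}\in U(\mathcal M_-(p)){\bf 1}\subseteq U^{(1)}{\bf 1}$ rather than $U^{(0)}{\bf 1}=\mathbb C{\bf 1}$ (for $1\le i<p$ the vector $L_i{\rm T}{\bf 1}$ has positive degree, so it lies in $\mathbb C{\bf 1}$ only if it vanishes), but this off-by-one is inherent in the paper's own convention $U^{(0)}=\mathbb C$ versus the statement of the lemma, and it does not affect the inductive step or any later application.
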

	
	\begin{proof} 
		(a) follows from the fact that $[L_0, {\rm T}]=p{\rm T}$.

		(b) follows from Lemma \ref{g+T} and induction on $k$.
	\end{proof}

	\begin{lem}\label{LpT} 
		{\rm (a)} In $U(\frak g)$, we have 
		$$
		[L_{p},{\rm T}] =\alpha(L_0, M_0, {\bf c}_L, {\bf c}_M)+\sum_{i=1}^{p-1}a_i(M)L_{p-i}
		+\sum_{i>0}b_iM_i+\sum_{i>0}c_iQ_{i-\frac{1}{2}},
		$$ where
		\begin{eqnarray*}  
			\alpha(L_0, M_0, {\bf c}_L, {\bf c}_M)&=&2p\left(L_0+\frac{p^2-1}{24}{\bf c}_L\right)+\sum_{i=1}^{p-1}\frac{24(2p-i)}{c_M(p-i)}\left(M_0+\frac{i^2-1}{24}{\bf c}_M\right)\\
			&&-\sum_{i=1}^{\lfloor \frac{p}{2}\rfloor }\frac{12(\frac{3}{2}p+\frac{1}{2}-i)}{c_M(p-2i+1)}\left(M_0+\frac{i^2-i}{6}{\bf c}_M\right),
		\end{eqnarray*}  and $a_i(M)\in U(\mathcal M_-(p))$, $b_i\in U(\frak g_-), c_i\in U(\mathcal M_-+\mathcal Q_-)$.

		{\rm (b)}  For $k\in\mathbb Z_+$, $$L_p{\rm T}^k\1-2kp(h_L-h_{p, k}){\rm T}^{k-1}\1\in U^{(k-2)}\1.$$

		{\rm (c)} Let $k\in\mathbb N$, then  
		$$L_pU^{(k+1)}\1\subset U^{(k)}\1.$$
		
		%
		%

	\end{lem}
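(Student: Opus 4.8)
\emph{Plan for (a).} I would compute $[L_p,{\rm T}]$ directly from the explicit expression for ${\rm T}$ in Corollary \ref{subsingular-T}, namely ${\rm T}=L_{-p}+\sum_{i=1}^{p-1}g_{p-i}(M)L_{-i}+u_p(M,Q)$, whose leading $L$- and $Q$-parts are $\tfrac{12}{i(p-i)c_M}M_{-(p-i)}L_{-i}$ and $-\tfrac{6}{(p-2i+1)c_M}Q_{i-p-\frac12}Q_{-i+\frac12}$. Expanding by the Leibniz rule and collecting only the weight-zero contributions, the term $[L_p,L_{-p}]=2p\bigl(L_0+\tfrac{p^2-1}{24}{\bf c}_L\bigr)$ supplies the $L_0$-part. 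The weight-zero terms in $M_0,{\bf c}_M$ arise solely from $[L_p,M_{-(p-i)}]=(2p-i)M_i$ together with $M_iL_{-i}=L_{-i}M_i+2iM_0+\tfrac{i^3-i}{12}{\bf c}_M$, and from $[L_p,Q_{i-p-\frac12}]=(\tfrac{3p}2+\tfrac12-i)Q_{i-\frac12}$ together with $Q_{i-\frac12}Q_{-i+\frac12}=2M_0+\tfrac{i^2-i}{3}{\bf c}_M-Q_{-i+\frac12}Q_{i-\frac12}$. A quick degree check shows that every other term (including the length-$\ge2$ tails of $g_{p-i}$ and $u_p$) carries a strictly positive mode $M_{j},L_{j},Q_{j-\frac12}$ ($j\ge1$) and is therefore absorbed into $\sum a_i(M)L_{p-i}+\sum b_iM_i+\sum c_iQ_{i-\frac12}$. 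Assembling the two weight-zero sums reproduces $\alpha$ verbatim.

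\emph{Plan for (b).} I would induct on $k$, writing $L_p{\rm T}^k\1=[L_p,{\rm T}]{\rm T}^{k-1}\1+{\rm T}L_p{\rm T}^{k-1}\1$ and evaluating $[L_p,{\rm T}]$ by (a). Since ${\rm T}^{k-1}\1\in U^{(k-1)}\1$, Lemma \ref{W0Tk}(b) annihilates the $\sum b_iM_i$ and $\sum c_iQ_{i-\frac12}$ terms, Lemma \ref{L0Tk}(b) sends $\sum a_i(M)L_{p-i}{\rm T}^{k-1}\1$ into $U^{(k-2)}\1$, and Lemmas \ref{L0Tk}(a), \ref{W0Tk}(a) replace $L_0,M_0$ in $\alpha$ by the scalars $h_L+(k-1)p,\,h_M$. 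Hence $[L_p,{\rm T}]{\rm T}^{k-1}\1\equiv\alpha(h_L+(k-1)p,h_M,c_L,c_M){\rm T}^{k-1}\1$ modulo $U^{(k-2)}\1$, while the inductive hypothesis gives ${\rm T}L_p{\rm T}^{k-1}\1\equiv 2(k-1)p(h_L-h_{p,k-1}){\rm T}^{k-1}\1$. The crucial input is the value of $\alpha$: the $r=1$ case of the computation in the proof of Theorem \ref{necessity} is exactly the identity $\alpha(h_L,h_M,c_L,c_M)=2p(h_L-h_{p,1})$, and since $\alpha$ is affine in its first argument with slope $2p$ we get $\alpha(h_L+(k-1)p,h_M,c_L,c_M)=2p(h_L-h_{p,k})+(k-1)p^2$ after substituting $h_{p,1}=h_{p,k}+\tfrac{(k-1)p}2$. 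Using $h_{p,k-1}=h_{p,k}+\tfrac p2$, the two contributions add to $2kp(h_L-h_{p,k}){\rm T}^{k-1}\1$ modulo $U^{(k-2)}\1$, closing the induction.

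\emph{Plan for (c).} I would induct on the ${\rm T}$-degree $m=k+1$, writing a spanning vector of $U^{(m)}\1$ as $w_0{\rm T}\eta\1$ with $w_0\in U(\mathcal M_-(p))$ and $\eta\1\in U^{(m-1)}\1$ (a ${\rm T}$-free word $w\1$ gives $L_pw\1=[L_p,w]\1=0$ at once). Commuting $L_p$ to the right, the feature special to $L_p$ is that $[L_p,M_{-l}]=(p+l)M_{p-l}$ is a strictly positive mode for all $1\le l\le p-1$, so $[L_p,w_0]{\rm T}\eta\1=0$ by Lemma \ref{W0Tk}(b). In $w_0L_p{\rm T}\eta\1$ I expand $L_p{\rm T}\eta\1=[L_p,{\rm T}]\eta\1+{\rm T}L_p\eta\1$: the recursion term lands in ${\rm T}\,L_pU^{(m-1)}\1\subset{\rm T}\,U^{(m-2)}\1\subset U^{(m-1)}\1$ by induction; and in $[L_p,{\rm T}]\eta\1$ the $\alpha$-term is a scalar multiple of $\eta\1$ (Lemmas \ref{W0Tk}(a), \ref{L0Tk}(a)), the positive modes $M_i,Q_{i-\frac12}$ annihilate, leaving only $\sum a_i(M)L_{p-i}\eta\1$.

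\emph{Main obstacle.} The remaining and decisive point is that the positive Virasoro modes $L_1,\dots,L_{p-1}$ preserve the filtration, i.e. $L_jU^{(m-1)}\1\subset U^{(m-1)}\1$ for $1\le j\le p-1$; granting this, $a_i(M)L_{p-i}\eta\1\in U(\mathcal M_-(p))U^{(m-1)}\1\subset U^{(m-1)}\1$ and (c) follows. I would establish this preservation by a parallel induction on $m$ from Lemma \ref{g+T}(a): the $\beta(L_0,{\bf c}_L)$-scalar and the $a_0(M)L_{i'}$-terms stay in the filtration, the factor $2M_0+\tfrac{p^2-1}{12}{\bf c}_M$ acts as $\phi(p)=0$, and ${\rm R},{\rm S}$ annihilate by Lemmas \ref{W0Tk}(b), \ref{RTcomm}. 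The genuinely delicate contribution is the negative odd modes $Q_{-i'+\frac12}$ occurring in $[L_j,{\rm T}]$, which do not kill $\eta\1$; to keep them inside $U^{(m-1)}\1$ I would write $[L_j,{\rm T}]\eta\1=\eta[L_j,{\rm T}]\1+[[L_j,{\rm T}],\eta]\1$, feed the first summand into the \emph{moreover} clause $[L_j,{\rm T}]\1\in U(\mathcal M_-(p))\1$ of Lemma \ref{g+T}(a), and reduce the double commutator $[[L_j,{\rm T}],\eta]\1$ by a secondary induction on the length of $\eta$ using the commutator structure of Lemma \ref{g+T}(a)--(c). I expect this $Q$-mode bookkeeping inside the quotient $L'(c_L,c_M,h_L,h_M)$ to be the most technical step of the whole lemma.
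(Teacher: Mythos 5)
Your plans for (a) and (b) coincide with the paper's own proof: (a) is the same direct expansion of $[L_p,{\rm T}]$ from the leading $L$- and $Q$-parts of ${\rm T}$, and (b) is precisely the induction the paper indicates, your identity $\alpha(h_L,h_M,c_L,c_M)=2p(h_L-h_{p,1})$ being the $r=1$ instance of \eqref{e3.401} and the bookkeeping with $h_{p,1}=h_{p,k}+\tfrac{(k-1)p}{2}$ and $h_{p,k-1}=h_{p,k}+\tfrac{p}{2}$ checking out. For (c) the paper offers only ``direct calculations by induction,'' so your detailed plan supplies what the paper omits; it is sound, and the reduction to showing that the positive modes $L_j$ preserve $U^{(k)}\1$ is indeed the crux. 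However, the obstacle you single out as ``genuinely delicate'' is milder than you fear: in Lemma \ref{g+T}(a) the negative $Q$-content of $[L_j,{\rm T}]$ is already packaged into the $c_0{\rm R}$ term and the multiples of $2M_0+\frac{p^2-1}{12}{\bf c}_M$, both of which kill $U^{(k)}\1$ by Lemma \ref{W0Tk}, while the residual $\sum c_iQ_{\bullet}$ terms are positive modes (exactly as in the $\sum_{i>0}c_iQ_{i-\frac12}$ of part (a) of the present lemma) and hence also annihilate $U^{(k)}\1$. So your simultaneous induction on the word length closes directly, and the $\eta[L_j,{\rm T}]\1+[[L_j,{\rm T}],\eta]\1$ double-commutator detour is unnecessary.
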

	\begin{proof} (a) From (\ref{T-exp'}) we see that 
		\begin{eqnarray*} [L_{p},{\rm T}]&=& \left[L_{p},L_{-p}+\sum_{i=1}^{p-1} \frac{12}{i(p-i)c_M} M_{-p+i}L_{-i}-\sum_{i=1}^{\lfloor \frac{p}{2}\rfloor }\frac{6}{(p-2i+1)c_M}Q_{i-p-\frac{1}{2}}Q_{-i+\frac{1}{2}}\right]\\
			&&+\sum_{i=1}^{p-1}a_i(M)L_{p-i}+\sum_{i>0}b_iM_i
			+\sum_{i>0}c_iQ_{i-\frac{1}{2}}\\
			&=&2pL_0+\frac{p^3-p}{12}{\bf c}_L
			+\sum_{i=1}^{p-1}\frac{24(2p-i)}{c_M(p-i)}\left(M_0+\frac{i^2-1}{24}{\bf c}_M\right)\\ 
			&&-\sum_{i=1}^{\lfloor \frac{p}{2}\rfloor }\frac{12(\frac{3}{2}p+\frac{1}{2}-i)}{c_M(p-2i+1)}\left(M_0+\frac{i^2-i}{6}{\bf c}_M\right)\\ 
			&&+\sum_{i=1}^{p-1}a_i(M)L_{p-i}+\sum_{i>0}b_iM_i
			+\sum_{i>0}c_iQ_{i-\frac{1}{2}}
		\end{eqnarray*} for some $a_i(M)\in U(\mathcal M_-(p))$, $b_i\in U(\frak g_-), c_i\in U(\mathcal M_-+\mathcal Q_-)$.
		
		(b)  Using (a) and Lemma \ref{L0Tk} (b), (c),  we can get (b) by induction on $k$, where $\alpha(L_0, M_0, {\bf c}_L, {\bf c}_M)\1$ is calculated as \eqref{e3.401} in the proof  of Theorem \ref{necessity}.
		
		(c)  follows from (a) (b) and some direct calculations by using induction on $k$.
	\end{proof}
	
	For  any  $n,  k\in\mathbb N$,
	by Lemma \ref{LpT} (c)  and Lemma \ref{W0Tk} (b), we see that 
	\begin{cor}\label{LpUk} If $n>k\ge0$, then
		$ L_p^nU^{(k)}\1=0$. 
	\end{cor}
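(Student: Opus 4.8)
The plan is to prove this by induction on $k$, treating the assertion ``$L_p^n U^{(k)}\1 = 0$ for every $n > k$'' as a single inductive claim in $k$. The entire substance has already been packaged into Lemma \ref{LpT}(c), which asserts $L_p U^{(k+1)}\1 \subset U^{(k)}\1$ (equivalently $L_p U^{(k)}\1 \subset U^{(k-1)}\1$ for $k\geq 1$); the corollary is then a purely formal bookkeeping consequence of iterating this containment together with the highest-weight relation $L_p\1 = 0$. Lemma \ref{W0Tk}(b), which guarantees that ${\rm S},{\rm R},M_i,Q_{i-\frac12}$ annihilate $U^{(k)}\1$, is precisely the input that makes Lemma \ref{LpT}(c) hold (the extraneous $M_i$ and $Q_{i-\frac12}$ terms produced by $[L_p,{\rm T}]$ in Lemma \ref{LpT}(a) then drop out when acting on $U^{(k)}\1$).

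First I would dispose of the base case $k = 0$. Here $U^{(0)} = \mathbb C$, so $U^{(0)}\1 = \mathbb C\1$, and since the image of $\1$ is a highest weight vector in $L'(c_L,c_M,h_L,h_M)$ and $p \in \mathbb Z_+$, we have $L_p\1 = 0$; hence $L_p^n\1 = 0$ for every $n\geq 1$, which is exactly the claim for $n > 0$. For the inductive step I would assume the result for $k-1$, namely $L_p^m U^{(k-1)}\1 = 0$ whenever $m > k-1$. Given $n > k$, write $L_p^n U^{(k)}\1 = L_p^{\,n-1}\bigl(L_p U^{(k)}\1\bigr)$ and apply Lemma \ref{LpT}(c) to get $L_p U^{(k)}\1 \subset U^{(k-1)}\1$, so that $L_p^n U^{(k)}\1 \subset L_p^{\,n-1}U^{(k-1)}\1$. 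Since $n > k$ forces $n-1 > k-1$, the induction hypothesis yields $L_p^{\,n-1}U^{(k-1)}\1 = 0$, completing the step. Equivalently, one can telescope: applying Lemma \ref{LpT}(c) exactly $k$ times gives $L_p^{\,k}U^{(k)}\1 \subset U^{(0)}\1 = \mathbb C\1$, after which the remaining $n-k\geq 1$ copies of $L_p$ annihilate $\mathbb C\1$ because $L_p\1 = 0$.

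The only points requiring care are that the containment in Lemma \ref{LpT}(c) genuinely may be iterated — each application drops us one level down the filtration $U^{(0)}\subset U^{(1)}\subset\cdots$, so re-applying $L_p$ is legitimate at every stage — and that the terminal annihilation uses $L_p\1 = 0$ in the quotient, where $\1$ remains highest weight. I do not expect any genuine obstacle at this stage: all the real computation (the coefficient identities and the vanishing of the lower-order $M$- and $Q$-terms) has been absorbed into Lemmas \ref{LpT} and \ref{W0Tk}, so the corollary reduces to this short formal induction.
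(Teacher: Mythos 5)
Your proof is correct and follows essentially the same route as the paper, which derives the corollary directly from Lemma \ref{LpT}(c) (iterated descent along the filtration $U^{(0)}\subset U^{(1)}\subset\cdots$) together with Lemma \ref{W0Tk}(b) and the highest-weight relation $L_p\1=0$. Your explicit induction on $k$ merely spells out the bookkeeping the paper leaves implicit.
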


	\begin{lem}\label{lprtr} For $k\in\mathbb Z_+$,
		$L_{p}^{k}{\rm T}^{k}{\bf 1}=(2p)^kk!\prod_{i=1}^{k}(h_L-h_{p,i}){\bf 1}$ in $L'(c_L,c_M,h_L,h_M)$.
	\end{lem}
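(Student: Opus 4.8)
My plan is to prove the identity by induction on $k$, using Lemma \ref{LpT}(b) to strip off one copy of $L_p$ together with one copy of ${\rm T}$ at each stage, while Corollary \ref{LpUk} guarantees that the lower-order remainder produced along the way is annihilated before it can contribute.

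For the base case $k=1$, I would simply invoke Lemma \ref{LpT}(b) with $k=1$, which reads
$$L_p{\rm T}\,{\bf 1}-2p(h_L-h_{p,1}){\bf 1}\in U^{(-1)}{\bf 1},$$
where $U^{(-1)}$ is understood to be zero (the filtration in \eqref{UTk} starts at $U^{(0)}=\mathbb C$). Hence $L_p{\rm T}\,{\bf 1}=2p(h_L-h_{p,1}){\bf 1}$, which is exactly the claimed formula for $k=1$.

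For the inductive step I would assume the formula for $k-1$, with $k\ge 2$. By Lemma \ref{LpT}(b) there is a remainder $w\in U^{(k-2)}{\bf 1}$ with
$$L_p{\rm T}^k{\bf 1}=2kp(h_L-h_{p,k}){\rm T}^{k-1}{\bf 1}+w.$$
Applying $L_p^{k-1}$ to both sides, the remainder dies: since $k-1>k-2\ge 0$, Corollary \ref{LpUk} gives $L_p^{k-1}U^{(k-2)}{\bf 1}=0$, so $L_p^{k-1}w=0$. This leaves
$$L_p^k{\rm T}^k{\bf 1}=2kp(h_L-h_{p,k})\,L_p^{k-1}{\rm T}^{k-1}{\bf 1},$$
and inserting the induction hypothesis $L_p^{k-1}{\rm T}^{k-1}{\bf 1}=(2p)^{k-1}(k-1)!\prod_{i=1}^{k-1}(h_L-h_{p,i}){\bf 1}$ finishes the recursion, after collapsing the scalar $2kp\cdot(2p)^{k-1}(k-1)!=(2p)^k k!$ and absorbing $(h_L-h_{p,k})$ into the product $\prod_{i=1}^{k-1}(h_L-h_{p,i})=\prod_{i=1}^{k}(h_L-h_{p,i})$.

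The only real content is the control of the error term $w$; everything else is bookkeeping. The crux — and the one place I would be careful — is that $w$ lands precisely in the filtration piece $U^{(k-2)}{\bf 1}$, which is low enough relative to the $k-1$ subsequent applications of $L_p$ that Corollary \ref{LpUk} annihilates it; any off-by-one slip in these filtration degrees would break the argument. The scalar arithmetic (the factor $k\cdot(k-1)!=k!$ and the merge $2kp\cdot(2p)^{k-1}=(2p)^k k/\,$ cancelling correctly) is routine and can be verified directly.
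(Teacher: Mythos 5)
Your proof is correct and follows essentially the same route as the paper, which likewise proves the lemma by induction on $k$ using Lemma \ref{LpT}(b) for the leading term and Corollary \ref{LpUk} to annihilate the remainder in $U^{(k-2)}\mathbf{1}$. Your explicit handling of the base case (reading the $k=1$ remainder as zero) and the check that $k-1>k-2\ge 0$ makes Corollary \ref{LpUk} applicable are exactly the details the paper leaves implicit.
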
\proof
	Using induction on $k$  we obtain this result by Lemma \ref{LpT} and Corollary \ref{LpUk}.
	\qed

	%

	Now let's give the main theorem  about subsingular vectors.
	\begin{theo}\label{main3}
		Let  $(c_L,c_M,h_L,h_M)\in\bC^4$ such that $\phi(p)=0$ for some $p\in \mathbb Z_+$ with $c_M\ne0$. Then there exists a singular vector $L'(c_L,c_M,h_L,h_M)_n$ for $n\in\frac12\mathbb Z_+$
		if and only if $n=rp\in\mathbb Z_+$ for some $r\in\mathbb Z_+$ and $h_L=h_{p,r}$. Up to a scalar multiple, the only singular vector ${\rm T}_{p, r}\1\in L(c_L,c_M,h_L,h_M)_{rp}$ can written as
		\begin{eqnarray}\label{u'pr}
			{\rm T}_{p, r}\1=\left({\rm T}^r+v_1{\rm T}^{r-1}+\cdots +v_{r-1}{\rm T}+v_r\right)\1,
		\end{eqnarray}
		where $v_i\in U(\mathcal M_-)_{ip}$ does not involve $M_{-p}$.
	\end{theo}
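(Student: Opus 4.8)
The plan is to establish the equivalence in two directions and then pin down the shape of the vector, working throughout in the quotient $L'=L'(c_L,c_M,h_L,h_M)$, for which Lemma \ref{ll4.1} supplies an explicit PBW basis.

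\textbf{Necessity.} Let $u'\in L'_n$ be a nonzero singular vector with $u'\notin\mathbb C\mathbf1$. By Lemma \ref{hmsubsingular} we have ${\rm hm}(u')=L_{-p}^{r}\mathbf1$ and $\ell_L(u')=r$ for some $r\in\mathbb Z_+$. Since $L_{-p}^{r}\mathbf1$ has integer weight $rp$, this already forces $n=rp\in\mathbb Z_+$ and rules out every half-integer weight. Theorem \ref{necessity} then yields $h_L=h_{p,r}$, which settles the ``only if'' part (and shows that for a given $h_L$ at most one $r$ occurs, as $h_{p,r}$ is strictly decreasing in $r$).

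\textbf{Reduction of the singularity test.} For the converse fix $h_L=h_{p,r}$ and seek the vector inside the filtered space $U^{(r)}\mathbf1$ of \eqref{UTk}. The key simplification is Lemma \ref{W0Tk}(b): in $L'$ every element of $U^{(r)}\mathbf1$ is annihilated by all $M_i$ and all $Q_{i-\frac12}$ with $i\in\mathbb Z_+$, and by ${\rm S}$ and ${\rm R}$. As $\mathfrak g_+$ is spanned by the $M_i$, the $Q_{i-\frac12}$ and the $L_i$, and since $L_3,L_4,\dots$ are iterated brackets of $L_1$ and $L_2$, a weight vector $u'\in U^{(r)}\mathbf1$ is singular in $L'$ \emph{if and only if} $L_1u'=L_2u'=0$. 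I therefore make the ansatz
\begin{equation*}
 u'=\Big({\rm T}^{r}+\sum_{i=1}^{r}v_i{\rm T}^{r-i}\Big)\mathbf1,\qquad v_i\in U(\mathcal M_-(p))_{ip},
\end{equation*}
normalized to leading term ${\rm T}^{r}\mathbf1$; this $u'$ is nonzero in $L'$ since ${\rm hm}({\rm T}^{r}\mathbf1)=L_{-p}^{r}\mathbf1$ survives in the basis of Lemma \ref{ll4.1}.

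\textbf{Existence by descent along the ${\rm T}$-filtration.} With this ansatz the two conditions $L_1u'=0$, $L_2u'=0$ form a finite inhomogeneous linear system for $v_1,\dots,v_r$, which I would solve recursively along the ${\rm T}$-degree filtration $U^{(r)}\mathbf1\supset\cdots\supset U^{(0)}\mathbf1=\mathbb C\mathbf1$. By the commutator identities of Lemmas \ref{g+T}, \ref{L0Tk}(b) and \ref{LpT} the ${\rm T}$-degree $j$ part of $L_1u',L_2u'$ is expressed through $v_{r-j}$ and the already-chosen higher coefficients, so $v_1,v_2,\dots,v_r$ are determined in turn. The descent terminates in the one-dimensional bottom cell $\mathbb C\mathbf1$, where the residual obstruction is read off by applying $L_p^{\,r}$: since $L_p^{\,r}$ annihilates every lower ${\rm T}$-degree term by Corollary \ref{LpUk}, Lemma \ref{lprtr} gives
\begin{equation*}
 L_p^{\,r}u'=L_p^{\,r}{\rm T}^{r}\mathbf1=(2p)^{r}r!\prod_{i=1}^{r}(h_L-h_{p,i})\,\mathbf1,
\end{equation*}
and the factor $i=r$ makes this vanish precisely because $h_L=h_{p,r}$. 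Hence the system is consistent and yields the required singular vector ${\rm T}_{p,r}\mathbf1$, with $v_i\in U(\mathcal M_-(p))_{ip}\subset U(\mathcal M_-)_{ip}$ (in particular free of $M_{-p}$); the base case $r=1$ is Theorem \ref{subsingular}.

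\textbf{Uniqueness and the hard point.} Uniqueness up to scalar follows from Lemma \ref{hmsubsingular}: if $u'_1,u'_2$ are singular vectors both normalized to leading term ${\rm T}^{r}\mathbf1$, then $u'_1-u'_2$ has weight $rp$ but $\ell_L<r$; were it nonzero it would be a singular vector with ${\rm hm}=L_{-p}^{r}\mathbf1$ by Lemma \ref{hmsubsingular}, forcing $\ell_L=r$, a contradiction, so $u'_1=u'_2$. I expect the genuinely hard step to be existence, namely showing that at \emph{each} level of the descent the relevant homogeneous operator is injective on $U(\mathcal M_-(p))$ so that $v_{r-j}$ is uniquely solvable (morally the absence of lower-weight singular vectors in $L'$), and that no obstruction survives above the bottom cell. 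This is exactly where the explicit bracket identities of Lemmas \ref{g+T} and \ref{LpT} and the annihilation $L_p^{\,n}U^{(k)}\mathbf1=0$ for $n>k$ of Corollary \ref{LpUk} carry the weight of the argument, with the final eigenvalue coincidence $h_L=h_{p,r}$ entering only through Lemma \ref{lprtr}.
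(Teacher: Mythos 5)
Your necessity direction and your uniqueness argument are fine and agree with the paper, and the final obstruction computation via Lemma \ref{lprtr} is exactly the paper's key point. But there is a genuine gap in the existence step: your ansatz space is too small. You take $v_i\in U(\mathcal M_-(p))_{ip}$, i.e.\ polynomials in $M_{-1},\dots,M_{-p+1}$ only, so that $u'$ lies in $U^{(r)}\1$ and Lemma \ref{W0Tk}(b) and Corollary \ref{LpUk} apply verbatim. The theorem, however, only asserts $v_i\in U(\mathcal M_-)_{ip}$ \emph{not involving $M_{-p}$}, and this weaker condition is genuinely needed: in the paper's example with $p=2$, $r=2$ one has $v_2=\frac{6}{c_M}M_{-4}+\frac{216}{c_M^2}M_{-3}M_{-1}-\frac{5184}{c_M^4}M_{-1}^4$, which contains $M_{-4}$ and $M_{-3}$ and hence does not lie in $U(\mathcal M_-(2))=\mathbb C[M_{-1}]$. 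With your restricted ansatz the linear system $L_1u'=L_2u'=0$ is in general inconsistent, so the descent cannot produce the singular vector. (Relatedly, once the $v_i$ are allowed to contain $M_{-j}$ with $j>p$, the vector is no longer in $U^{(r)}\1$, so the reduction of the singularity test to $L_1,L_2$ and the use of Corollary \ref{LpUk} both need re-justification, e.g.\ via Lemma \ref{g+T}(b),(c) directly.)

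The second, related gap is one you flag yourself but do not close: you never show that at each intermediate level of the descent the inhomogeneous system determining $v_{r-j}$ is solvable, i.e.\ that the only obstruction sits in the bottom cell $\mathbb C\1$. The paper's proof is organized precisely to make this visible: it lists \emph{all} admissible monomials ${\rm T}^r,\ M_{-\lambda}{\rm T}^{r-i}$ (with $\lambda\in\mathcal P(ip)$ avoiding $M_{-p}$) in \eqref{sub-term}, pairs each with a test operator from the family $L_p^r,\ L_{p-1}L_1L_p^{r-1},\dots,L_1^{rp}$ in \eqref{operators}, and observes that the resulting coefficient matrix is lower triangular with all diagonal entries nonzero except the $(1,1)$-entry $(2p)^rr!\prod_{i=1}^r(h_L-h_{p,i})$, which vanishes exactly when $h_L=h_{p,r}$. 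That triangular structure is what replaces your unproved injectivity/consistency claim; without it (or some substitute) the existence half of the theorem is not established.
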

	\proof
	The uniqueness of the singular vector ${\rm T}_{p, r}\1\in L(c_L,c_M,h_L,h_M)_{rp}$ is guaranteed by Theorem \ref{necessity}.
	We need only to show the existence of ${\rm T}_{p, r}\1$.
	
	The case of $r=1$ follows from Theorem \ref{subsingular}.
	
	Let $r>1$. Assume that
	\begin{eqnarray}\label{sub-gen}
		{\rm T}_{p,r}={\rm T}^r+v_1{\rm T}^{r-1}+v_2{\rm T}^{r-2}+\cdots +v_{r-1}{\rm T}+v_r,
	\end{eqnarray}
	where $v_i\in U(\mathcal M_-)_{ip}$.

	We order all the possible summands of ${\rm T}_{p,r}$: 
	\begin{eqnarray}\label{sub-term}
		{\rm T}^r, M_{-(p-1)}M_{-1}{\rm T}^{r-1}, \cdots, M_{-1}^p{\rm T}^{r-1}, M_{-2p}{\rm T}^{r-2},\cdots,M_{-1}^{2p}{\rm T}^{r-2}, \cdots, M_{-rp},\cdots, M_{-1}^{rp},
	\end{eqnarray}
	where the summands above don't involve $M_{-p}$ as factors.  Note that ${\rm T}_{p, r}\1$ is a linear combination of the terms in (\ref{sub-term}).
	We will try to find a solution for the coefficients of above summands in ${\rm T}_{p, r}\1$. We only need to consider the action of ${\mathfrak{vir}}_+$.  By the PBW theorem, we  consider the corresponding  operators
	\begin{eqnarray}\label{operators}
		L_p^r, L_{p-1}L_1L_p^{r-1}, L_1^pL_p^{r-1},L_{2p}L_p^{r-2}, L_1^{2p}L_p^{r-2}, \cdots L_{rp},\cdots, L_1^{rp}.
	\end{eqnarray}
	we get the linear equations
	\begin{equation}\label{xTpr=0}
		x{\rm T}_{p, r}\1=0 \ \mbox{in}\ L'(c_L,c_M,h_L,h_M)
	\end{equation}  for all $x$ in \eqref{operators}. The coefficient matrix of this linear equations (\ref{xTpr=0}) is a lower triangular matrix, with $(1,1)$-entry $(2p)^rr!\prod_{i=1}^{r}(h_L-h_{p,i}){\bf 1}$, and all other diagonal entries non-zero. By Lemma \ref{lprtr}, we deduce that  ${\rm T}_{p, r}\1$ is the only singular vector up to a scalar multiple in $L'(c_L,c_M,h_L,h_M)$ if and only if $h_L=h_{p,r}$ for some $r\in\mathbb Z_+$.
	\qed

	\begin{exa}(cf. \cite{R}) Let $p=1,h_M=0$. Then
		\begin{eqnarray*}
			&(1)&h_L=-\frac{1}{2}: {\rm T}_{1,2}=L_{-1}^2+\frac{6}{c_M}M_{-2};\\
			&(2)&h_L=-1: {\rm T}_{1,3}=L_{-1}^3+\frac{24}{c_M}M_{-2}L_{-1}+\frac{12}{c_M}M_{-3};\\
			&(3)&h_L=-\frac{3}{2}: {\rm T}_{1,4}=L_{-1}^4+\frac{60}{c_M}M_{-2}L_{-1}^2+\frac{60}{c_M}M_{-3}L_{-1}+\frac{36}{c_M}M_{-4}+\frac{108}{c_M^2}M_{-2}^2.
		\end{eqnarray*}
	\end{exa}

	\begin{exa}
		$p=2,r=2, h_M=-\frac{1}{8}c_M, h_L=h_{2,2}=-\frac{1}{8}c_L+\frac{5}{16}:$
		\small{	\begin{eqnarray*}
				{\rm T}_{2,2}&=&L_{-2}^2+\frac{12}{c_M}M_{-1}L_{-3}+\frac{24}{c_M}M_{-1}L_{-2}L_{-1}+\frac{144}{c_M^2}M_{-1}^2L_{-1}^2-\frac{12}{c_M}M_{-3}L_{-1}\\
				&&-\frac{12}{c_M}Q_{-\frac{3}{2}}Q_{-\frac{1}{2}}L_{-2}+\left(\frac{174}{c_M^2}-\frac{12c_L}{c_M^2}\right)M_{-1}^2L_{-2}-\frac{144}{c_M^2}M_{-1}Q_{-\frac{3}{2}}Q_{-\frac{1}{2}}L_{-1}+\left(\frac{2088}{c_M^3}-\frac{144c_L}{c_M^3}\right)M_{-1}^3L_{-1}\\
				&&-\frac{3}{c_M}Q_{-\frac{7}{2}}Q_{-\frac{1}{2}}-\frac{3}{c_M}Q_{-\frac{5}{2}}Q_{-\frac{3}{2}}-\frac{72}{c_M^2}M_{-1}Q_{-\frac{5}{2}}Q_{-\frac{1}{2}}+\left(\frac{72c_L}{c_M^3}-\frac{1476}{c_M^3}\right)M_{-1}^2Q_{-\frac{3}{2}}Q_{-\frac{1}{2}}\\
				&&+\frac{12}{c_M}M_{-4}+\left(\frac{12c_L}{c_M^2}+\frac{6}{c_M^2}\right)M_{-3}M_{-1}+\left(\frac{36c_L^2}{c_M^4}-\frac{1044c_L}{c_M^4}+\frac{2385}{c_M^4}\right)M_{-1}^4.
		\end{eqnarray*}} 
		Note that
		$p=2,r=1, h_M=-\frac{1}{8}c_M, h_L=h_{2,1}=-\frac{1}{8}c_L+\frac{21}{16}:$
		\begin{eqnarray*}
			{\rm T}=L_{-2}+\frac{12}{c_M}M_{-1}L_{-1}+\left(\frac{87}{c_M^2}-\frac{6c_L}{c_M^2}\right)M_{-1}^2-\frac{6}{c_M}Q_{-\frac{3}{2}}Q_{-\frac{1}{2}}.
		\end{eqnarray*}
		By direct calculation, we get
		\begin{eqnarray*}
			{\rm T}_{2,2}=T^2+\frac{6}{c_M}M_{-4}+\frac{216}{c_M^2}M_{-3}M_{-1}-\frac{5184}{c_M^4}M_{-1}^4.
		\end{eqnarray*}
	\end{exa}
	\qed
	
	In the above arguments, starting  from Lemma \ref{ll4.1}  to Theorem \ref{main3}, by deleting  parts (or terms) involving $\mathcal Q$ we derive the following results about the subalgebra $W(2, 2)$ of $\frak g$:
	
	\begin{cor} \label{w22-sub} Let 	$(c_L,c_M,h_L,h_M)\in\bC^4$.
		The Verma module $V_{W(2,2)}(h_L, h_M, c_L, c_M)$ over $W(2,2)$ has a subsingular vector if and only if  $\phi(p)=0$ for some $p\in\mathbb Z_+$, and
		\begin{eqnarray*}
			h_L=h_{p, r}'=-\frac{p^2-1}{24}c_L+\frac{(13p+1)(p-1)}{12}+\frac{(1-r)p}{2},
		\end{eqnarray*}
		for some $r\in\mathbb Z_+$.
		
	\end{cor}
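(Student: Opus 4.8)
The plan is to transplant the entire development of Section~4, from Lemma~\ref{ll4.1} through Theorem~\ref{main3}, into the even subalgebra $W(2,2)=\frak g_{\bar 0}$, discarding at every stage the terms and operators that involve the odd generators $\mathcal Q$. First I would realize the $W(2,2)$-Verma module as $V_{W(2,2)}(h_L,h_M,c_L,c_M)=U(\frak g_{\bar 0})\1\subset V(c_L,c_M,h_L,h_M)$, whose PBW basis consists of the monomials $M_{-\lambda}L_{-\nu}\1$ with $\lambda,\nu\in\mathcal P$. By Corollary~\ref{main1-w22} all singular vectors are ${\rm S}^k\1$, and since ${\rm S}\in U(\mathcal M_-)$ already lies in $W(2,2)$, the submodule $J'$ generated by singular vectors and the quotient $L'$ are defined exactly as before; the $\mathcal Q$-free analogue of Lemma~\ref{ll4.1} then furnishes the PBW basis $\{M_{-\lambda}L_{-\nu}\1 : M_{-\lambda}\text{ does not involve }M_{-p}\}$ of $L'$.

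Next I would reprove the highest-monomial lemma and the necessity theorem in this reduced setting. Because $\frak g_{\bar 0}$ contains no $Q$-generators, the proof of Lemma~\ref{hmsubsingular} simplifies (the steps invoking $\ell_Q$ and $Q_{-\frac p2}$ disappear) and still yields that any singular vector $u'\in L'$ outside $\mathbb C\1$ satisfies ${\rm hm}(u')=L_{-p}^{r}\1$ for some $r\in\mathbb Z_+$. Repeating the proof of Theorem~\ref{necessity} with $g_1=\sum_{i=1}^{p-1}l_iM_{-i}L_{-(p-i)}+C$, where the bilinear $Q_{-p+i-\frac12}Q_{-i+\frac12}$ terms are now absent so all coefficients $n_i$ vanish, reproduces the same values $l_k=-r\tfrac{p^2-1}{2h_Mk(p-k)}$ from the $L_k$-actions, and the vanishing of the coefficient of $L_{-p}^{r-1}\1$ in $L_pu'$ becomes the analogue of \eqref{e3.401} with the entire $\sum_i n_i(\cdots)$ contribution deleted.

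The single genuinely new computation, and the heart of the corollary, is the evaluation of this reduced identity. Substituting the $l_k$ into $\sum_{i=1}^{p-1}2l_ih_Mi(2p-i)\tfrac{p^2-i^2}{p^2-1}$ collapses it to $-r\sum_{i=1}^{p-1}(2p-i)(p+i)=-r\,\tfrac{p(p-1)(13p+1)}{6}$, so that $rp\bigl((r-1)p+2h_L+\tfrac{p^2-1}{12}c_L\bigr)=r\,\tfrac{p(p-1)(13p+1)}{6}$; solving for $h_L$ yields exactly $h_{p,r}'=-\tfrac{p^2-1}{24}c_L+\tfrac{(13p+1)(p-1)}{12}+\tfrac{(1-r)p}{2}$. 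I expect the main obstacle to be precisely this arithmetic, namely verifying the closed form $\sum_{i=1}^{p-1}(2p-i)(p+i)=\tfrac{p(p-1)(13p+1)}{6}$, together with the bookkeeping needed to confirm that in $W(2,2)$ the vector $g_1$ carries no $\mathcal Q$-bilinear part, so that the $n_i$-contribution is genuinely absent and the $l_k$ alone govern the final constraint (contrast the remark after Theorem~\ref{necessity}, where the $n_i$ were present).

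Finally, for sufficiency I would carry Theorem~\ref{subsingular} and Theorem~\ref{main3} over almost verbatim after erasing the $\mathcal Q$-columns and $\mathcal Q$-operators from Tables~1 and~2: the reduced coefficient matrix $B_p$ acquires corank $1$ precisely when $h_L=h_{p,1}'$, producing the subsingular vector ${\rm T}\1$ with its $\mathcal Q$-free body, and the inductive construction of ${\rm T}_{p,r}\1$ via Lemma~\ref{lprtr}, with every occurrence of $h_{p,i}$ replaced by $h_{p,i}'$, goes through unchanged, giving a singular vector of weight $rp$ in $L'$ if and only if $h_L=h_{p,r}'$. This establishes both the necessity and the sufficiency asserted in the corollary.
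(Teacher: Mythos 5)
Your proposal is correct and follows essentially the same route as the paper, which proves this corollary simply by rerunning Lemma \ref{ll4.1} through Theorem \ref{main3} with all terms and operators involving $\mathcal Q$ deleted, and notes (in the remark following the corollary) that $h'_{p,r}$ arises by omitting the final summand of \eqref{e3.401}. Your explicit evaluation $\sum_{i=1}^{p-1}(2p-i)(p+i)=\tfrac{p(p-1)(13p+1)}{6}$ checks out and correctly reproduces the stated value of $h'_{p,r}$.
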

	
	\begin{rem}

		The value \( h_{p,r}' \) is obtained by omitting the final summand in equation (\ref{e3.401}). This corollary was first conjectured in \cite{R} and  further discussed in \cite{JZ} with some new ideas. 

	\end{rem}

	%
	
	\begin{cor}\label{main2-w22}  Let 	$(c_L,c_M,h_L,h_M)\in\bC^4$ such that 
		$\phi(p)=0$ for some $p\in\mathbb Z_+$, and $h_L=h_{p, r}'$ for some $r\in\mathbb Z_+$. Then
		$$u'_{p,r}=\left({\rm T}^r+v_1{\rm T}^{r-1}+\cdots +v_{r-1}{\rm T}+v_r\right)\1$$ for $v_i\in U({\mathcal{M}}_-)_{ip}$, is the unique subsingular vector of the Verma module $V_{W(2,2)}(h_L, h_M, c_L, c_M)$, up to a scalar multiple, where
		\begin{equation}\label{T-exp-W22}
			{\rm T}=L_{-p}+\sum_{i=1}^{p-1}g_{p-i}(M)L_{-i}+\sum_{\nu\in\mathcal P(p), \ell(\nu)\ge 2} d_\nu M_{-\nu},
		\end{equation} and $g_{i}(M)$ are given in \eqref{T-exp-ki}, and $d_\nu$ can be determined as in Corollary \ref{subsingular-T} by actions of $L_i, i=p-1, p-2, \cdots, 1$.
	\end{cor}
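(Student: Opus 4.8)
The plan is to transplant the entire argument of Section~4---from Lemma~\ref{ll4.1} through Theorem~\ref{main3}---into the enveloping algebra $U(W(2,2)_-)$, discarding every term that involves $\mathcal Q$. First I would record the structural setup. Since $W(2,2)=\mathfrak g_{\bar 0}$ and the brackets $[L_i,M_{-j}]$, $[L_i,L_{-j}]$, $[M_i,L_{-j}]$ never produce odd generators, the subspace $U(W(2,2)_-)\1\subset V(c_L,c_M,h_L,h_M)$ is closed under $L_0,M_0,{\bf c}_L,{\bf c}_M$ and under $L_i,M_i$ with $i>0$; hence it is a $W(2,2)$-submodule, and by PBW it is isomorphic to $V_{W(2,2)}(h_L,h_M,c_L,c_M)$. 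By Corollary~\ref{main1-w22} every singular vector of $V_{W(2,2)}$ is ${\rm S}^k\1$, so $J'_{W(2,2)}=\langle {\rm S}\1\rangle$; writing $L'_{W(2,2)}=V_{W(2,2)}/J'_{W(2,2)}$, the analogue of Lemma~\ref{ll4.1} gives the PBW basis $\{M_{-\la}L_{-\nu}\1\mid M_{-\la}\text{ does not involve }M_{-p}\}$.

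Next I would establish the highest-monomial normal form. The proof of Lemma~\ref{hmsubsingular} uses only the actions of $L_i$ and $M_i$ together with the ordering $\succ$; the passages concerning $\ell_Q$ and $Q_{-\frac p2}$ become vacuous, so one obtains verbatim that any singular vector $u'\in L'_{W(2,2)}$ not lying in $\mathbb C\1$ satisfies ${\rm hm}(u')=L_{-p}^{r}\1$ for some $r\in\mathbb Z_+$ with $\ell_L(u')=r$; in particular such a $u'$ of weight $rp$ is unique up to a scalar. For $r=1$, the arguments of Lemma~\ref{l4.4}, Theorem~\ref{subsingular} and Corollary~\ref{subsingular-T} specialize to produce the operator ${\rm T}$ of \eqref{T-exp-W22}: the coefficients $g_{p-i}(M)$ are still forced by $M_{p-i}{\rm T}\1=0$ and therefore obey the same recursion \eqref{T-exp-ki}, while the $d_\nu$ are fixed by the actions of $L_{p-1},\dots,L_1$; the $\mathcal Q$-summands of the super $u_p(M,Q)$ are simply absent.

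The one genuine point of departure is the eigenvalue $h_{p,r}'$, which I would treat by redoing the computation \eqref{e3.401}. In the super setting the degree-$p$ coefficient $g_1$ of \eqref{g1-exp} carries both the $M$-piece $\sum_i l_i M_{-i}L_{-(p-i)}$ and the $Q$-piece $\sum_j n_j Q_{-p+j-\frac12}Q_{-j+\frac12}$; over $W(2,2)$ only the $M$-piece survives, with the same $l_i$. Consequently the coefficient of $L_{-p}^{r-1}\1$ in $L_p u'$ loses exactly the final sum $\sum_{i=1}^{\lfloor \frac p2\rfloor} n_i(\cdots)$ of \eqref{e3.401}, i.e.\ the term flagged in the remark following Corollary~\ref{w22-sub}. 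Solving the resulting reduced scalar equation yields $h_L=h_{p,r}'$ as in Corollary~\ref{w22-sub}, and the corresponding $W(2,2)$-analogue of Lemma~\ref{lprtr} reads $L_p^k{\rm T}^k\1=(2p)^k k!\prod_{i=1}^k(h_L-h_{p,i}')\1$.

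Finally, for arbitrary $r$ I would follow the proof of Theorem~\ref{main3} literally. Setting $u'_{p,r}=\big({\rm T}^r+v_1{\rm T}^{r-1}+\cdots+v_r\big)\1$ with $v_i\in U(\mathcal M_-)_{ip}$ not involving $M_{-p}$, and imposing $x\,u'_{p,r}\1=0$ in $L'_{W(2,2)}$ for $x$ ranging over the operators \eqref{operators}, produces a lower-triangular coefficient matrix whose $(1,1)$-entry is $(2p)^r r!\prod_{i=1}^r(h_L-h_{p,i}')$ and whose remaining diagonal entries are non-zero; hence a nontrivial solution exists, and is unique up to a scalar, precisely when $h_L=h_{p,r}'$. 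I expect the only real work to lie in checking that excising the $\mathcal Q$-generators preserves every triangularity and non-vanishing claim---specifically that the recursions for $g_{p-i}(M)$ and for the $d_\nu$ still close, and that the truncated $\alpha$ indeed produces $h_{p,r}'$ rather than $h_{p,r}$; once these are confirmed, the rest of the argument is formally identical to the superalgebra case.
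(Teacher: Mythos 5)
Your proposal is correct and follows essentially the same route as the paper, which derives Corollary \ref{main2-w22} precisely by the remark that deleting all terms involving $\mathcal Q$ from the arguments of Lemma \ref{ll4.1} through Theorem \ref{main3} yields the $W(2,2)$ statements; your identification of the only substantive change --- dropping the $\sum_i n_i(\cdots)$ summand of \eqref{e3.401} so that the eigenvalue becomes $h'_{p,r}$, with the corresponding $W(2,2)$ version of Lemma \ref{lprtr} --- is exactly the point the paper flags in the remark following Corollary \ref{w22-sub}. In fact you spell out the verification (closure of $U(W(2,2)_-)\1$, persistence of the triangularity of the coefficient matrices, the recursion for $g_{p-i}(M)$ and $d_\nu$) in more detail than the paper does.
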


\section{Characters of irreducible highest weight modules and   composition series }

In this section, we provide the maximal submodules of $V(c_L,c_M,h_L,h_M)$ and the character formula for irreducible highest weight modules. We also derive the composition series (of infinite length) of $V(c_L,c_M,h_L,h_M)$.

Again we fix $(c_L,c_M,h_L,h_M)\in\bC^4$, and 
assume that  $\phi(p)=2h_M+\frac{p^2-1}{12}c_M=0$ for some $p\in \mathbb Z_+$ with $c_M\neq 0$. Let us first define atypical and  typical Verma module $V(c_L,c_M,h_L,h_M)$.

\begin{defi} For $c_L,c_M\in\mathbb C$, let 
	$$
	{\mathcal {AT} }(c_L,c_M)= \left\{ \left(h_{p,r}, \frac{1-p^2}{24}c_M\right) \mid p,r \in \mathbb{Z}_+ \right\},$$
	where $h_{p,r}$ is defined in (\ref{e3.37}). 
	We say the Verma module $V(c_L,c_M,h_L,h_M)$ to be  \textit{atypical} if   $(h_L,h_M)\in \mathcal {AT}(c_L, c_M)$,   otherwise to be \textit{typical} (see \cite{AR2}).  
\end{defi}

\begin{lem} \label{R-S-lemma} Let ${\rm T}_{p,r}$ be defined in Theorem \ref{main3}, then in $V(c_L,c_M,h_L,h_M)$, we have 
	\begin{eqnarray}
		M_{(r-1)p}{\rm T}_{p,r}\1=r!p^r{\rm S}\1+\delta_{r,1}h_M{\rm T}_{p,r}\1; \label{MS}
		\\
		Q_{(r-1)p+\frac{p}{2}}{\rm T}_{p,r}\1=r!p^r{\rm R}\1, \text{ \rm if $p$ is odd}.\label{QR}
	\end{eqnarray}
\end{lem}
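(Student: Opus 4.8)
The plan is to prove both identities by first locating the two left-hand sides inside the submodule $J'=\langle{\rm S}\1\rangle$ (for $p$ even) or $J'=\langle{\rm R}\1\rangle$ (for $p$ odd), and then pinning down the scalar by a leading-coefficient recursion in $r$. For the location step, recall that ${\rm T}_{p,r}\1$ is a singular vector of $L'=V/J'$ (Theorem \ref{main3}); since $Q_{(r-1)p+\frac p2}\in\frak g_+$ for all $r\ge1$ and $M_{(r-1)p}\in\frak g_+$ for $r\ge2$, both $M_{(r-1)p}{\rm T}_{p,r}\1$ and $Q_{(r-1)p+\frac p2}{\rm T}_{p,r}\1$ lie in $J'$. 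Their target weights are $p$ and $\frac p2$. The minimal-weight graded piece of a cyclic module on a singular generator is one-dimensional, so the weight-$\frac p2$ piece of $\langle{\rm R}\1\rangle$ is exactly $\mathbb C{\rm R}\1$, and for $p$ even the weight-$p$ piece of $\langle{\rm S}\1\rangle$ is exactly $\mathbb C{\rm S}\1$. Hence \eqref{QR}, and \eqref{MS} for $p$ even, each reduce to the computation of a single scalar.

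To find that scalar in \eqref{MS} I would compute the coefficient of the highest monomial $M_{-p}\1$, which equals the coefficient of ${\rm S}\1$ because ${\rm hm}({\rm S}\1)=M_{-p}\1$. Writing ${\rm T}_{p,r}={\rm T}^r+\sum_{i\ge1}v_i{\rm T}^{r-i}$ with $v_i\in U(\mathcal M_-)_{ip}$ not involving $M_{-p}$ (Theorem \ref{main3}), and using that $M_{(r-1)p}$ commutes with each $v_i$ while $M_{(r-1)p}{\rm T}^{r-i}\1$ has nonpositive weight for $i\ge2$, one checks that no $v_i$-term contributes an $M_{-p}\1$. The core is the commutator
$$[M_a,{\rm T}]=\sum_{i=1}^{p}(a+i)\,g_{p-i}(M)\,M_{a-i}\quad(g_0=1),$$
which lies in $U(\mathcal M)$ since $M_a$ commutes with all $M$'s and $Q$'s. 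Peeling it off via $M_{(r-1)p}{\rm T}^r\1=[M_{(r-1)p},{\rm T}]{\rm T}^{r-1}\1+{\rm T}M_{(r-1)p}{\rm T}^{r-1}\1$, only the $i=p$ term $rp\,M_{(r-2)p}{\rm T}^{r-1}\1$ can produce a bare $M_{-p}\1$: every $i<p$ term carries a factor $g_{p-i}(M)\in U(\mathcal M_-(p))$ with all indices $<p$ (see \eqref{M_-(p)}), and the second summand is a multiple of ${\rm T}(\cdots)\1$, which has no $M_{-p}\1$. This yields the recursion $b_r=rp\,b_{r-1}$ with $b_1=p$ (from \eqref{W0T}), so $b_r=r!\,p^r$. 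The same peeling for $Q$, where $[Q_{(r-1)p+\frac p2},{\rm T}]$ has leading term $rp\,Q_{(r-2)p+\frac p2}$, gives the identical recursion for the coefficient of $Q_{-\frac p2}\1$ and proves \eqref{QR}. The base case $r=1$ of \eqref{MS} is exactly \eqref{W0T}: $M_0{\rm T}\1=[M_0,{\rm T}]\1+h_M{\rm T}\1=p{\rm S}\1+h_M{\rm T}\1$, which produces the term $\delta_{r,1}h_M{\rm T}_{p,r}\1$.

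The one place requiring extra care, and which I expect to be the main obstacle, is \eqref{MS} for $p$ \emph{odd}. There the target weight $p$ is no longer the minimal weight of $J'=\langle{\rm R}\1\rangle$, so $J'_p$ has dimension larger than one (e.g. both ${\rm R}^2\1$ and $Q_{-\frac p2}{\rm R}\1$ have $M_{-p}\1$-coefficient $1$), and the location step alone does not force $M_{(r-1)p}{\rm T}_{p,r}\1$ into $\mathbb C{\rm S}\1$. The difficulty is genuine: the super-commutator $2M_{(r-1)p}=\{Q_{(r-1)p+\frac p2},Q_{-\frac p2}\}$ (central term vanishing for $r\ge2$) together with \eqref{QR} reduces the claim to understanding $Q_{(r-1)p+\frac p2}Q_{-\frac p2}{\rm T}_{p,r}\1$, but this cannot be closed by brackets alone. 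I would instead establish that $M_{(r-1)p}{\rm T}_{p,r}\1$ is a genuine singular vector of $V$: since $\frak g_+$ is generated as a Lie superalgebra by $L_1,L_2,Q_{\frac12},Q_{\frac32}$, it suffices to show these four operators annihilate it, after which Theorem \ref{t3.19}(2) forces it to be a multiple of ${\rm R}^2\1={\rm S}\1$ (the unique weight-$p$ singular vector when $p$ is odd, by Proposition \ref{t3.15}), and the scalar $r!\,p^r$ is then read off from the $M_{-p}\1$-coefficient computed above. Verifying that these generators annihilate the vector — equivalently, that the lower-order tail of $M_{(r-1)p}{\rm T}_{p,r}\1$ matches that of $r!\,p^r{\rm S}\1$ — is the technical heart of the odd case.
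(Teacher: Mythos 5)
Your argument for \eqref{QR}, and for \eqref{MS} when $p$ is even, is sound and is genuinely different in organization from the paper's: you first trap the vector in the one-dimensional bottom graded piece of $J'=\langle {\rm S}\1\rangle$ (resp. $\langle{\rm R}\1\rangle$) using subsingularity of ${\rm T}_{p,r}\1$, and then only track the coefficient of the highest monomial $M_{-p}\1$ (resp. $Q_{-\frac p2}\1$) through the peeling recursion $b_k=kp\,b_{k-1}$. The paper instead never invokes a location step: it first proves, by induction from Lemma \ref{g+T}(b)--(c), the auxiliary vanishing statements $M_{kp}{\rm T}^k\1=0$ and $M_{(k-1)p+j}{\rm T}^k\1=0$ for $j\in\mathbb Z_+$ (and the $Q$-analogues), after which the same peeling identity
$M_{(k-1)p}{\rm T}^k\1=[M_{(k-1)p},{\rm T}]{\rm T}^{k-1}\1+{\rm T}M_{(k-1)p}{\rm T}^{k-1}\1$
collapses to $kp\,M_{(k-2)p}{\rm T}^{k-1}\1$ as an identity of \emph{full vectors}, not merely of leading coefficients. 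That extra vanishing input is exactly what makes the even/odd dichotomy irrelevant.

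This matters because your proof of \eqref{MS} for $p$ odd has a genuine gap, which you yourself flag but do not close. For odd $p$ the degree-$p$ piece of $J'=\langle{\rm R}\1\rangle$ has dimension $\dim V(c_L,c_M,h_L,h_M)_{\frac p2}>1$, so locating $M_{(r-1)p}{\rm T}_{p,r}\1$ in $J'$ plus one coefficient does not determine it; the terms you discard as ``lower-order tail'' (for instance the $2p h_M{\rm T}\1$-type contribution already visible at $k=2$, and the $g_{p-i}(M)M_{(k-1)p-i}{\rm T}^{k-1}\1$ terms) must be shown to vanish or cancel, and your leading-coefficient bookkeeping says nothing about them. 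The rescue you sketch --- proving $M_{(r-1)p}{\rm T}_{p,r}\1$ is a singular vector of $V$ by checking that $L_1,L_2,Q_{\frac12},Q_{\frac32}$ annihilate it and then invoking Theorem \ref{t3.19} --- is only a plan: carrying it out requires controlling $M_j{\rm T}_{p,r}\1$ for all $j>0$ and $M_{(r-1)p}\,x\,{\rm T}_{p,r}\1$ for $x\in\frak g_+$, which is at least as much work as the missing cancellation itself. The economical fix is the paper's: establish $M_{kp}{\rm T}^k\1=0$ and $M_{(k-1)p+j}{\rm T}^k\1=0$ ($j>0$) directly from Lemma \ref{g+T}(b) by induction on $k$; these kill every term in the peeling identity except $kp\,M_{(k-2)p}{\rm T}^{k-1}\1$, so the recursion computes the whole vector and the odd case follows with no appeal to the structure of $J'_p$ at all.
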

\begin{proof}
	Let us first prove \eqref{MS}. This is clear for $r=1$ since ${\rm T}_{p, 1}={\rm T}$ and $[M_0, {\rm T}]=p{\rm S}$. Now we assume that $r>1$.
	By \eqref{u'pr}, we have $M_{(r-1)p}{\rm T}_{p,r}\1=M_{(r-1)p}{\rm T}^r\1+v_1M_{(r-1)p}{\rm T}^{r-1}\1$.
	
	%
	By Lemma \ref{g+T} (b) and by induction on $k\ge 1$ we see that $$M_{kp}{\rm T}^{k}\1=U(\mathcal M)\left(M_0+\frac1{24}(p^2-1)c_M\right)\1=0.$$
	
	By induction on $k\ge1$ and using Lemma \ref{g+T} (b),  we can prove that $M_{(k-1)p+j}{\rm T}^k\1= 0$ for any $j\in\mathbb Z_+$.

	Now by induction on $k\ge2$ we will prove that $M_{(k-1)p}{\rm T}^k\1= k!p^k{\rm S}\1$. This is clear for $k=2$ by direct computations. We compute that 
	$$\aligned M_{(k-1)p}{\rm T}^k\1=&[M_{(k-1)p}, {\rm T}]{\rm T}^{k-1}\1+{\rm T}M_{(k-1)p}{\rm T}^{k-1}\1=[M_{(k-1)p}, {\rm T}]{\rm T}^{k-1}\1\\
	=&[M_{(k-1)p}, L_{-p}]{\rm T}^{k-1}\1+\sum_{i=1}^{p-1}g_i(M)[M_{(k-1)p}, L_{-p+i}]{\rm T}^{k-1}\1\\
	=&
	(kp)M_{(k-2)p}{\rm T}^{k-1}\1+\sum_{i=1}^{p-1}g_i(M)M_{(k-2)p+i}{\rm T}^{k-1}\1\\
	=&(kp)M_{(k-2)p}{\rm T}^{k-1}\1\\
	=&k!p^k{\rm S}\1, \,\,\, (\text{induction used}).\endaligned$$ 
	So \eqref{MS} holds.
	
	Now we prove \eqref{QR}. By induction on $k\in\mathbb Z_+$  and using Lemma \ref{g+T} (c), we can prove that
	$Q_{(k-1)p+j+\frac{p}{2}}{\rm T}^k\1=0$ for any $j\in \mathbb Z_+$.
	
	Now by induction on $k\ge1$ we will prove that $Q_{(k-1)p+\frac{p}{2}}{\rm T}^k\1= k!p^k{\rm R}\1$. This is clear for $k=1$ by direct computations. We compute that 
	$$\aligned Q_{(k-1)p+\frac{p}{2}}{\rm T}^k\1=&[Q_{(k-1)p+\frac{p}{2}}, {\rm T}]{\rm T}^{k-1}\1+{\rm T}Q_{(k-1)p+\frac{p}{2}}{\rm T}^{k-1}\1\\
	=&[Q_{(k-1)p+\frac{p}{2}}, {\rm T}]{\rm T}^{k-1}\1\\
	=&kpQ_{(k-2)p+\frac{p}{2}}{\rm T}^{k-1}\1\\
	=& k!p^k{\rm R}\1, \,\,\, (\text{induction used}).\endaligned$$ 
	Then $Q_{(r-1)p+\frac{p}{2}}{\rm T}_{p,r}\1 =Q_{(r-1)p+\frac{p}{2}}{\rm T}^r\1=r!p^r{\rm R}\1.$ 
\end{proof}

\subsection{Maximal submodules and characters}

Now we are ready to present a couple of other main theorems in this paper. 

\begin{theo}\label{irreducibility} Let $(c_L,c_M,h_L,h_M)\in\bC^4$ such that $2h_M+\frac{p^2-1}{12}c_M=0$ for some $p\in \mathbb Z_+$ with $c_M\neq 0$ and $(h_L,h_M)\not\in \mathcal{AT}(c_L, c_M)$ (typical case). Then
	$J(c_L,c_M,h_L,h_M)$,  the maximal submodule of $V(c_L,c_M,h_L,h_M)$, is generated by $  {\rm S}\1 $ if $  p\in 2\mathbb Z_+$, by $ {\rm R}\1 $ if   $p\in 2\mathbb Z_+-1 $, and 
	the simple quotient  $L(c_L,c_M,h_L,h_M)=V(c_L,c_M,h_L,h_M)/J(c_L,c_M,h_L,h_M)$ has a basis	${\mathcal B}$ in (\ref{e4.1}) if $p\in 2\mathbb Z_+$; or the   basis	${\mathcal B}'$ in (\ref{e4.2}) if $p\in 2\mathbb Z_+-1$.
	Moreover,  
	$$
	{\rm char}\, L(c_L,c_M,h_L,h_M)= q^{h_L}(1-q^{\frac{p}2})\left(1+\frac12(1+(-1)^p)q^{\frac p2}\right)\prod_{k=1}^{\infty}\frac{1+q^{k-\frac{1}{2}}}{(1-q^{k})^{2}}.		
	$$
\end{theo}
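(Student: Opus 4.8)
The plan is to reduce the entire statement to two facts already in hand, namely the complete classification of singular vectors (Theorem \ref{t3.19}) and the nonexistence of subsingular vectors in the typical case (Theorem \ref{main3}), and then to read the character off the PBW basis of the quotient supplied by Lemma \ref{ll4.1}. First I would identify the submodule $J'=J'(c_L,c_M,h_L,h_M)$ generated by all singular vectors. By Theorem \ref{t3.19} every singular vector is, up to scalar, ${\rm S}^k\1$ (when $p$ is even) or ${\rm R}^k\1$ (when $p$ is odd) for some $k\in\mathbb N$; since ${\rm S}^k\1={\rm S}^{k-1}({\rm S}\1)\in\langle {\rm S}\1\rangle$ and likewise ${\rm R}^k\1\in\langle {\rm R}\1\rangle$, this gives $J'=\langle {\rm S}\1\rangle$ for $p$ even and $J'=\langle {\rm R}\1\rangle$ for $p$ odd.

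Next I would show that $J'=J$ in the typical case. Put $L'=V/J'$. Any nonzero proper submodule $N$ of $L'$ is $\tfrac12\mathbb N$-graded and cannot contain the image of $\1$ (otherwise $N=L'$), so its degree-$0$ component vanishes and the nonzero homogeneous component of $N$ of minimal degree is positive and is annihilated by $\mathfrak g_+$, i.e.\ consists of singular vectors of $L'$ of positive degree. By Theorem \ref{main3}, in the typical case $h_L\neq h_{p,r}$ for all $r\in\mathbb Z_+$, so $L'$ has no singular vector of positive degree; hence $N=0$ and $L'$ is irreducible. Therefore $J'$ is the maximal submodule $J$ and $L'=L$, which proves the first assertion (that $J$ is generated by ${\rm S}\1$, resp.\ ${\rm R}\1$); the basis assertion is then immediate from Lemma \ref{ll4.1}, which states that $\mathcal B$ in (\ref{e4.1}), resp.\ $\mathcal B'$ in (\ref{e4.2}), projects to a basis of $L=L'$.

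Finally I would compute $\mathrm{char}\,L$ directly from this basis. The character of $V$ factors as $q^{h_L}$ times the $M$-contribution $\prod_{k\ge1}(1-q^{k})^{-1}$, the $Q$-contribution $\prod_{k\ge1}(1+q^{k-\frac12})$, and the $L$-contribution $\prod_{k\ge1}(1-q^{k})^{-1}$. For $p$ even, passing to $\mathcal B$ merely deletes the generator $M_{-p}$, multiplying the character by $(1-q^{p})$; since $(-1)^p=1$, the asserted prefactor equals $(1-q^{p/2})(1+q^{p/2})=1-q^{p}$, matching. For $p$ odd, passing to $\mathcal B'$ deletes both $M_{-p}$ and $Q_{-p/2}$ (the latter of degree $p/2$), multiplying by $(1-q^{p})(1+q^{p/2})^{-1}=1-q^{p/2}$; since $(-1)^p=-1$, the asserted prefactor reduces to $1-q^{p/2}$, again matching. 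In both cases one obtains exactly the displayed formula.

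The hard part is the single step $J'=J$, that is, verifying that the quotient by the singular-vector submodule is already simple; this is precisely where the typicality hypothesis is indispensable and where Theorem \ref{main3} does the essential work, since without it one could not rule out a further (sub)singular vector. The remaining character bookkeeping is routine generating-function manipulation, and the parity-dependent factor $1+\tfrac12(1+(-1)^p)q^{p/2}$ is engineered exactly so that the even and odd computations collapse into one uniform statement.
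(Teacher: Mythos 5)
Your proposal is correct and follows essentially the same route as the paper, whose own proof is a one-line citation of the classification of singular vectors and of the necessary condition $h_L=h_{p,r}$ for a singular vector to exist in $V/J'$ (your Theorem \ref{main3} step), with the basis and character then read off from Lemma \ref{ll4.1} exactly as you do. The only detail you elide — that the minimal-degree component of a nonzero submodule of $L'$ must be refined to an $M_0$-eigenvector before Theorem \ref{main3} applies — is standard and does not affect correctness.
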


\begin{proof}
	It follows from  
	Theorems \ref{necessity} and \ref{main2}.
\end{proof}

\begin{theo}\label{irreducibility1}
 Let $(c_L,c_M,h_L,h_M)\in\bC^4$ such that $2h_M+\frac{p^2-1}{12}c_M=0$ for some $p\in \mathbb Z_+$ with $c_M\neq 0$ and $(h_L,h_M)\in \mathcal{AT}(c_L, c_M)$ (atypical case). Then
	$J(c_L,c_M,h_L,h_M)=\langle {\rm T}_{p,r}\1\rangle$ is generated by $ {\rm T}_{p,r}\1$ defined in Section 4, and $L(c_L,c_M,h_L,h_M)=V(c_L,c_M,h_L,h_M)/J(c_L,c_M,h_L,h_M)$ has  a basis
	\begin{equation}\label{B''}
		{\mathcal B_1}=\{M_{-\la}Q_{-\mu+\frac{1}{2}}L_{-\nu}{\bf 1}\mid    \la,\nu\in \mathcal P, \mu\in\mathcal{SP} \ \mbox{not involving}\ M_{-p}, L_{-p}^n (n\geq r) \}
	\end{equation} 
	if $p\in 2\mathbb Z_+$; or
	\begin{equation}\label{B''odd}
		{\mathcal B}_1'=\{M_{-\la}Q_{-\mu+\frac{1}{2}}L_{-\nu}{\bf 1}\mid   \la,\nu\in \mathcal P, \mu\in\mathcal{SP}  \ \mbox{not involving}\ M_{-p}, Q_{-\frac{p}{2}}, L_{-p}^n (n\geq r)\}
	\end{equation}
	if $ p\in 2\mathbb Z_+-1$.
	Moreover,  
	$$
	{\rm char}\, L(c_L,c_M,h_L,h_M)= q^{h_{p,r}}(1-q^{\frac{p}2})\left(1+\frac12(1+(-1)^p)q^{\frac p2}\right)(1-q^{rp})\prod_{k=1}^{\infty}\frac{1+q^{k-\frac{1}{2}}}{(1-q^{k})^{2}}.
	$$
\end{theo}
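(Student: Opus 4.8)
The plan is to establish, in order, that (i) the submodule $\langle{\rm T}_{p,r}\1\rangle$ already contains $J'(c_L,c_M,h_L,h_M)$, (ii) the quotient $L_\circ:=V(c_L,c_M,h_L,h_M)/\langle{\rm T}_{p,r}\1\rangle$ has the asserted character (and basis), and (iii) this quotient is irreducible, so that it coincides with $L(c_L,c_M,h_L,h_M)$ and $J(c_L,c_M,h_L,h_M)=\langle{\rm T}_{p,r}\1\rangle$. For (i), Lemma \ref{R-S-lemma} gives $M_{(r-1)p}{\rm T}_{p,r}\1=r!p^r{\rm S}\1+\delta_{r,1}h_M{\rm T}_{p,r}\1$, together with the analogous identity $Q_{(r-1)p+\frac p2}{\rm T}_{p,r}\1=r!p^r{\rm R}\1$ when $p$ is odd; since $r!p^r\ne0$, the generating singular vector ${\rm S}\1$ (resp. ${\rm R}\1$) of $J'$ lies in $\langle{\rm T}_{p,r}\1\rangle$, whence $J'\subseteq\langle{\rm T}_{p,r}\1\rangle$ by Theorem \ref{t3.19}. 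As ${\rm T}_{p,r}\1$ has positive degree, $\langle{\rm T}_{p,r}\1\rangle$ is a proper submodule, so $\langle{\rm T}_{p,r}\1\rangle\subseteq J(c_L,c_M,h_L,h_M)$ and $L_\circ$ is a quotient of $L'(c_L,c_M,h_L,h_M)=V/J'$.

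For the character I would work inside $L'$, whose PBW basis is $\mathcal B$ (resp. $\mathcal B'$) from Lemma \ref{ll4.1}, and study the image $A:=\langle\overline{{\rm T}_{p,r}\1}\rangle$ of $\langle{\rm T}_{p,r}\1\rangle$ in $L'$. This $A$ is the submodule generated by the singular vector $\overline{{\rm T}_{p,r}\1}$, hence a highest weight module of weight $(c_L,c_M,h_{p,r}+rp,h_M)$; this weight is typical, since $h_{p,r}+rp=h_{p,r'}$ would force $r'=-r<0$. By Lemmas \ref{ST} and \ref{RTcomm} we have ${\rm S}\,\overline{{\rm T}_{p,r}\1}=0$ (resp. ${\rm R}\,\overline{{\rm T}_{p,r}\1}=0$) in $L'$, and ${\rm S}\1$ (resp. ${\rm R}\1$) generates the maximal submodule of the corresponding typical Verma module by Theorem \ref{irreducibility}; therefore $A$ is a nonzero quotient of the typical irreducible $L(c_L,c_M,h_{p,r}+rp,h_M)$, so $A\cong L(c_L,c_M,h_{p,r}+rp,h_M)$ and its character is the typical one of Theorem \ref{irreducibility} with $h_L$ replaced by $h_{p,r}+rp$. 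From the short exact sequence $0\to A\to L'\to L_\circ\to0$ I then read off ${\rm char}\,L_\circ={\rm char}\,L'-{\rm char}\,A$, which is exactly the claimed formula. The explicit basis $\mathcal B_1$ (resp. $\mathcal B_1'$) comes from a leading term argument: since ${\rm hm}({\rm T}_{p,r}\1)=L_{-p}^r\1$ (Theorem \ref{main3}, Lemma \ref{hmsubsingular}), and the relations ${\rm S}\,\overline{{\rm T}_{p,r}\1}={\rm R}\,\overline{{\rm T}_{p,r}\1}=0$ let me rewrite each $y\,\overline{{\rm T}_{p,r}\1}$ with $y$ avoiding $M_{-p}$ and $Q_{-\frac p2}$, the leading terms of $A$ are precisely the basis monomials carrying $L_{-p}^{\,n}$ with $n\ge r$, i.e. the monomials deleted from $\mathcal B$ (resp. $\mathcal B'$) to form $\mathcal B_1$ (resp. $\mathcal B_1'$).

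It remains to prove (iii), that $L_\circ$ is irreducible, i.e. has no singular vector outside $\mathbb C\1$; any such vector lifts to a weight vector $w'\in L'$ with $\mathfrak g_+w'\subseteq A$. If $\deg w'\le rp$, then $\mathfrak g_+w'$ lands strictly below the minimal degree $rp$ of $A$, so $w'$ is genuinely singular in $L'$ and, by Theorem \ref{main3}, a multiple of $\overline{{\rm T}_{p,r}\1}$, hence zero in $L_\circ$. The essential difficulty, and the step I expect to be the main obstacle, is the case $\deg w'>rp$, where $\mathfrak g_+w'$ can be a genuinely nonzero element of $A$; here pure character bookkeeping does not suffice, because a hypothetical extra composition factor would necessarily be typical (its weight $h_{p,r}+\deg w'$ is typical) and is numerically compatible with ${\rm char}\,L_\circ$. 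To exclude it I would run the highest term analysis of Lemma \ref{hmsubsingular} inside $L_\circ$ relative to the basis $\mathcal B_1$, where the power of $L_{-p}$ is capped at $r-1$, so the normal form that would force ${\rm hm}(w')$ to be a pure power $L_{-p}^{\,r'}\1$ of the correct degree is unavailable; this is to be combined with the $L_p$-action estimates of Lemma \ref{lprtr} and Corollary \ref{LpUk}. Granting this last point, $L_\circ=L(c_L,c_M,h_L,h_M)$, the maximal submodule equals $\langle{\rm T}_{p,r}\1\rangle$, and the displayed character formula follows.
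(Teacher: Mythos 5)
Your overall architecture is the same as the paper's: use Lemma \ref{R-S-lemma} to get $\langle {\rm S}\1\rangle$ (resp.\ $\langle {\rm R}\1\rangle$) inside $\langle {\rm T}_{p,r}\1\rangle$, read off the basis/character of the quotient, and then rule out singular vectors in the quotient by a highest-term argument. Two remarks on where you diverge. First, your character computation goes through the identification of the image $A$ of $\langle{\rm T}_{p,r}\1\rangle$ in $L'$ with the typical irreducible $L(c_L,c_M,h_{p,r}+rp,h_M)$, using ${\rm S}\,\overline{{\rm T}_{p,r}\1}={\rm R}\,\overline{{\rm T}_{p,r}\1}=0$ from Lemmas \ref{ST} and \ref{RTcomm} together with Theorem \ref{irreducibility}; the paper instead extracts the basis $\mathcal B_1$ directly from the rewriting rule $L_{-p}^r\1=-(g_1L_{-p}^{r-1}+\cdots+g_r)\1$ and only establishes $\langle{\rm T}_{p,r}\1\rangle/\langle{\rm S}\1\rangle\cong L(c_L,c_M,h_{p,r}+rp,h_M)$ later, in Theorem \ref{main4-2}. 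Your route is legitimate and arguably cleaner, since it delivers the character without first justifying linear independence of $\mathcal B_1$. Second, on irreducibility you correctly identify the paper's strategy but stop one step short of closing it, and the case $\deg w'>rp$ that you single out as ``the main obstacle'' is not actually a separate case: once the Lemma \ref{hmsubsingular}-type analysis (run against the basis $\mathcal B_1$, in which the $L_{-p}$-power is capped at $r-1$) forces ${\rm hm}(x)=L_{-p}^{s}\1$ with $s\le r-1$, the degree of any candidate singular vector is pinned to $sp\le (r-1)p$, so nothing of degree $>rp$ can survive; the computation of Theorem \ref{necessity} then yields $h_{p,r}=h_{p,s}$, which is impossible because $h_{p,r}$ is strictly decreasing in $r$. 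That two-line conclusion is exactly what the paper supplies and what your ``granting this last point'' is asking for, so you should fold it in; with it, your argument is complete and matches the paper's.
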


\proof 
We first consider the case that  $p$ is even. By Theorem \ref{main3}, we know that ${\rm T}_{p,r}\1$ is a subsingular vector in $V(c_L,c_M,h_{p,r},h_M)$. By Lemma \ref{R-S-lemma} we have $\langle {\rm S}\1 \rangle\subset \langle {\rm T}_{p,r}\1\rangle$. Then
\begin{eqnarray*}
	{\rm char}\, V(c_L,c_M,h_{p,r},h_M)/\langle {\rm T}_{p,r}\1\rangle =q^{h_{p,r}}(1-q^{p})(1-q^{rp})\prod_{k=1}^{\infty}\frac{1+q^{k-\frac{1}{2}}}{(1-q^{k})^{2}}.
\end{eqnarray*}

Note that ${\rm T}_{p,r}\1=(L_{-p}^r+g_1L_{-p}^{r-1}+\cdots+g_{r-1}L_{-p}+g_{r})\mathbf 1$, i.e., 
$\label{rtor-1}
L_{-p}^r\1=-(g_1L_{-p}^{r-1}+\cdots+g_{r-1}L_{-p}+g_{r})\mathbf 1
$
in $V(c_L,c_M,h_{p,r},h_M)/\langle {\rm T}_{p,r}\1 \rangle$. Then we know that $\mathcal B_1$ is a basis for  $V(c_L,c_M,h_L,h_M)/\langle {\rm T}_{p,r} \1\rangle$.

Now we determine the irreducibility of the quotient module $V(c_L,c_M,h_{p,r},h_M)/\langle {\rm T}_{p,r}\1 \rangle$. Suppose that $x\in V(c_L,c_M,h_{p,r},h_M)/\langle {\rm T}_{p,r}\1 \rangle$ is a weight vector with $h_L=h_{p,r}$ such that $U(\mathfrak{g_+})x\subset \langle {\rm T}_{p,r}\1 \rangle$. Analogous to the proof of Lemma \ref{hmsubsingular}, one can prove that ${\rm hm}(x)=L_{-p}^s\1$ for some $s<r$.  Similar arguments as the proof of Theorem \ref{necessity}, one can show that $h_{p,r}=h_{p,s}$, which is a contradiction. So $J(c_L,c_M,h_L,h_M)=\langle {\rm T}_{p,r}\1\rangle$.

Next, we consider the case that   $p$ is odd. It is clear that 
\begin{eqnarray*}
	{\rm char}\, V(c_L,c_M,h_{p,r},h_M)/\langle {\rm T}_{p,r}\1\rangle=q^{h_{p,r}}(1-q^{\frac{p}{2}})(1-q^{rp})\prod_{k=1}^{\infty}\frac{1+q^{k-\frac{1}{2}}}{(1-q^{k})^{2}}.
\end{eqnarray*}

Similar as for the  case of even $p$, one can determine the irreducibility of the quotient module $V(c_L,c_M,h_{p,r},h_M)/\langle {\rm T}_{p,r}\1 \rangle$.
So $J(c_L,c_M,h_L,h_M)=\langle {\rm T}_{p,r}\1\rangle$ in this case also.
\qed


\begin{rem} 
	From the above two theorems, unlike the Virasoro algebra, any maximal submodule of the Verma  module $V(c_L,c_M,h_{p,r},h_M)$ is always generated by a single weight vector.
\end{rem}

\subsection {Composition series of submodules}
In this subsection, we investigate the composition series of submodules of the Verma module $V(c_L,c_M,h_L,h_M)$. 
We have seen that the chain length of the submodules is infinite in the reducible Verma module $V(c_L,c_M,h_L,h_M)$.
Now we consider both typical and atypical   cases.

\begin{theo}\label{main4-1}  Let $(c_L,c_M,h_L,h_M)\in\bC^4$ such that $2h_M+\frac{p^2-1}{12}c_M=0$ for some $p\in \mathbb Z_+$ with $c_M\neq 0$ and $(h_L,h_M)\notin \mathcal{AT}(c_L, c_M)$.
	
	$(1)$ If $p\in 2\mathbb Z_+$, then the Verma module $V(c_L,c_M,h_L,h_M)$ has the following infinite composition series of submodules
	\begin{eqnarray}\label{filtration3}
		V(c_L,c_M,h_L,h_M)\supset\langle {\rm S}\1 \rangle \supset \langle {\rm S}^2\1 \rangle\supset\cdots\supset \langle {\rm S}^n\1 \rangle\supset \cdots
	\end{eqnarray}
	Moreover, $\langle {\rm S}^n\1 \rangle/\langle {\rm S}^{n+1}\1 \rangle$ is isomorphic to the irreducible highest weight module $L(c_L,c_M,h_L+np,h_M)$ for any $n\in \mathbb{N}$.\par
	$(2)$ If   $p\in 2\mathbb Z_+-1$, then the Verma module $V(c_L,c_M,h_L,h_M)$ has  the following composition  series of submodules
	\begin{eqnarray}\label{filtration4}
		V(c_L,c_M,h_L,h_M)\supset \langle {\rm R}\1 \rangle\supset\langle {\rm R}^2\1 \rangle \supset\cdots \supset \langle {\rm R}^n\1 \rangle\supset\cdots
	\end{eqnarray}
	Moreover, $\langle {\rm R}^{2n}\1 \rangle/\langle {\rm R}^{2n+1}\1 \rangle$ is isomorphic to the irreducible highest weight module $L(c_L,c_M,h_L+np,h_M)$ and  $\langle {\rm R}^{2n+1}\1 \rangle/\langle {\rm R}^{2n+2}\1 \rangle$ is isomorphic to the irreducible highest weight module $L(c_L,c_M,h_L+np+\frac{p}{2},h_M)$ for any $n\in \mathbb{N}$.
\end{theo}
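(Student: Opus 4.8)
The plan is to prove that the chain in \eqref{filtration3} (resp. \eqref{filtration4}) is a genuine composition series by establishing two things: that each listed submodule strictly contains the next, and that each successive quotient is the irreducible highest weight module claimed. The central tool is Theorem \ref{irreducibility}, which identifies the maximal submodule of a \emph{typical} Verma module as $\langle {\rm S}\1\rangle$ (for $p$ even) or $\langle {\rm R}\1\rangle$ (for $p$ odd), together with the character formula recorded there. First I would treat the even case. By Lemma \ref{singular-Sk} each ${\rm S}^n\1$ is a singular vector, so $\langle {\rm S}^{n+1}\1\rangle\subseteq\langle {\rm S}^n\1\rangle$; strictness follows because ${\rm S}^n\1$ has weight $h_L+np$ and generates a submodule isomorphic to a Verma module, inside which ${\rm S}^{n+1}\1$ sits at strictly higher weight.

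The key observation is that each singular vector ${\rm S}^n\1$ generates a submodule isomorphic to the Verma module $V(c_L,c_M,h_L+np,h_M)$. This is because ${\rm S}^n\1$ is annihilated by $\mathfrak g_+$ and has the correct $\mathfrak g_0$-eigenvalues, so the universal property of Verma modules gives a homomorphism from $V(c_L,c_M,h_L+np,h_M)$ onto $\langle {\rm S}^n\1\rangle$; injectivity holds because, by Lemma \ref{l3.15'}, ${\rm S}$ acts on the lower Verma module without annihilating anything, or more directly because the induced map is a map between free $U(\mathfrak g_-)$-modules of the same graded dimension that is surjective. Granting this, the crucial point is that the parameter $(c_L,c_M,h_L+np,h_M)$ is \emph{again typical}: since $(h_L,h_M)\notin\mathcal{AT}(c_L,c_M)$ means $h_L\ne h_{p,r}$ for all $r\in\mathbb Z_+$, and shifting $h_L$ by $np$ merely reindexes the arithmetic progression $\{h_{p,r}\}$ (note $h_{p,r}-h_{p,r'}\in p\mathbb Z$), the shifted weight still avoids every $h_{p,r}$. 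Therefore Theorem \ref{irreducibility} applies to $V(c_L,c_M,h_L+np,h_M)$, whose maximal submodule is generated by its own ${\rm S}\1$, which corresponds under the isomorphism to ${\rm S}^{n+1}\1$. Hence
$$
\langle {\rm S}^n\1\rangle/\langle {\rm S}^{n+1}\1\rangle\cong L(c_L,c_M,h_L+np,h_M),
$$
which is irreducible, so the chain has no refinement and is a composition series.

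For the odd case the argument is parallel, using Proposition \ref{t3.15} (so that each ${\rm R}^n\1$ is singular and ${\rm R}^2={\rm S}$) and Lemma \ref{l3.15}. Here $\langle {\rm R}^n\1\rangle$ is isomorphic to the Verma module whose highest weight is shifted by $\frac{p}{2}$ for each factor of ${\rm R}$: the even powers ${\rm R}^{2n}\1$ sit at weight $h_L+np$, the odd powers ${\rm R}^{2n+1}\1$ at weight $h_L+np+\frac p2$. Typicality is again preserved under these shifts, so Theorem \ref{irreducibility} gives that the maximal submodule of $\langle {\rm R}^n\1\rangle$ is generated by the next ${\rm R}$-multiple, yielding the two claimed isomorphism types for the successive quotients according to the parity of $n$.

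The main obstacle I anticipate is justifying cleanly that $\langle {\rm S}^n\1\rangle$ (resp.\ $\langle {\rm R}^n\1\rangle$) is \emph{isomorphic} to a full Verma module rather than merely a quotient of one, since only then may Theorem \ref{irreducibility} be invoked verbatim to identify its maximal submodule. The surjection from the Verma module is automatic, but injectivity requires knowing that ${\rm S}$ (resp.\ ${\rm R}$) is a non–zero–divisor in the appropriate sense; I would settle this by comparing graded characters, showing $\dim\langle {\rm S}^n\1\rangle_m$ matches $\dim V(c_L,c_M,h_L+np,h_M)_{m-np}$ for all $m$, so the surjection is an isomorphism. A secondary point to verify carefully is that the displayed chain is exhaustive, i.e.\ that these are \emph{all} the proper submodules, which follows once each quotient is shown irreducible since any submodule must then coincide with some $\langle {\rm S}^n\1\rangle$.
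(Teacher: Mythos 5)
Your proposal is correct and follows essentially the same route as the paper's proof: each $\langle {\rm S}^n\1\rangle$ (resp.\ $\langle {\rm R}^n\1\rangle$) is identified with a Verma module of shifted highest weight, typicality is preserved under the shift because $h_{p,r}+np=h_{p,r-2n}$ (so avoiding all $h_{p,r'}$ with $r'\in\mathbb Z_+$ is stable), and Theorem \ref{irreducibility} then identifies the maximal submodule and hence the simple quotient; the paper packages this last step as an explicit character computation, but the content is identical. (Minor nitpick: $h_{p,r}-h_{p,r'}$ lies in $\frac p2\mathbb Z$ rather than $p\mathbb Z$, which does not affect your argument.)
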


\proof  
(1) We know that $\langle {\rm S}^n\1 \rangle$ is a submodule of $V(c_L,c_M,h_L,h_M)$ for any $n\in \mathbb{Z}_+$ and (\ref{filtration3}) follows from Theorem \ref{main1}. Note that $M_{0}{\rm S}^{n}\1=h_M {\rm S}^{n}\1$,  $L_{0}{\rm S}^{n}\1=(h_L+np) {\rm S}^{n}\1$. Clearly, the submodule $\langle {\rm S}^n\1 \rangle$ is isomorphic to $V(c_L,c_M,h_L+np,h_M)$ with the highest weight $(c_L,c_M,h_L+np,h_M)$.
Note that ${\rm S}^{n}\1+\langle {\rm S}^{n+1}\1 \rangle$ generates $\langle {\rm S}^{n}\1 \rangle/\langle {\rm S}^{n+1}\1 \rangle$ and $M_{0}({\rm S}^{n}\1+\langle {\rm S}^{n+1}\1 \rangle)\in h_M ({\rm S}^{n}\1+\langle {\rm S}^{n+1}\1 \rangle)$,  $L_{0}({\rm S}^{n}\1+\langle {\rm S}^{n+1}\1 \rangle)\in (h_L+np) ({\rm S}^{n}\1+\langle {\rm S}^{n+1}\1 \rangle)$. Then $\langle {\rm S}^{n}\1 \rangle/\langle {\rm S}^{n+1}\1 \rangle$ is isomorphic to a highest weight module with weight $(c_L,c_M,h_L+np,h_M)$ and highest weight vector ${\rm S}^{n}\1+\langle {\rm S}^{n+1}\1 \rangle$. By the calculation of character, we have 
\begin{eqnarray*}
	{\rm char}\, \langle {\rm S}^{n}\1 \rangle/\langle {\rm S}^{n+1}\1 \rangle    &=&q^{h_L+np}\prod_{k=1}^{\infty}\frac{1+q^{k-\frac{1}{2}}}{(1-q^{k})^{2}}-q^{h_L+(n+1)p)}\prod_{k=1}^{\infty}\frac{1+q^{k-\frac{1}{2}}}{(1-q^{k})^{2}}\\
	&=&q^{h_L+np}(1-q^p)\prod_{k=1}^{\infty}\frac{1+q^{k-\frac{1}{2}}}{(1-q^{k})^{2}}\\
	&=&{\rm char}\, L(c_L,c_M,h_L+np,h_M).
\end{eqnarray*} 
By Theorem \ref{irreducibility}, $\langle {\rm S}^{n+1}\1 \rangle$ is the maximal submodule of $\langle {\rm S}^n\1\rangle$ and the quotient module $\langle {\rm S}^n\1 \rangle/\langle {\rm S}^{n+1}\1 \rangle$ is isomorphic to $L(c_L,c_M,h_L+np,h_M)$ for $n\in \mathbb{N}$.

For Statement (2),   ${\rm R}^n{\bf 1}$ is a singular vectors in $V(c_L,c_M,h_L,h_M)$ for any $n\in\mathbb{Z}_+$ by Theorems \ref{main1}, \ref{main2}.  The submodule $\langle {\rm R}^n\1 \rangle$ generated by ${\rm R}^n\1$ is isomorphic to $V(c_L,c_M,h_L+\frac{np}{2},h_M)$. Clearly, $(\ref{filtration4})$ holds.  
Since $h_L\neq h_{p,r}$ for all $p\in 2\mathbb Z_+-1, r\in \mathbb Z_+$, the quotient $V(c_L,c_M,h_L,h_M)/\langle {\rm R}\1 \rangle=L(c_L,c_M,h_L,h_M)$ is irreducible by Theorem \ref{irreducibility}. Note that ${\rm R}^{2n}\1+\langle {\rm R}^{2n+1}\1 \rangle$ generates $\langle {\rm R}^{2n}\1 \rangle/\langle {\rm R}^{2n+1}\1 \rangle$. Moreover, $M_{0}({\rm R}^{2n}\1+\langle {\rm R}^{2n+1}\1 \rangle)\in h_M {\rm R}^{2n}\1+\langle {\rm R}^{2n+1}\1 \rangle$ and  $L_{0}({\rm R}^{2n}\1+\langle {\rm R}^{2n+1}\1 \rangle)\in (h_L+np) {\rm R}^{2n}\1+\langle {\rm R}^{2n+1}\1 \rangle$. We have 
\begin{eqnarray*}
	{\rm char}\, \langle {\rm R}^{2n}\1 \rangle/\langle {\rm R}^{2n+1}\1 \rangle    &=&q^{h_L+np}\prod_{k=1}^{\infty}\frac{1+q^{k-\frac{1}{2}}}{(1-q^{k})^{2}}-q^{h_L+(n+\frac{1}{2})p)}\prod_{k=1}^{\infty}\frac{1+q^{k-\frac{1}{2}}}{(1-q^{k})^{2}}\\
	&=&q^{h_L+np}(1-q^{\frac{p}{2}})\prod_{k=1}^{\infty}\frac{1+q^{k-\frac{1}{2}}}{(1-q^{k})^{2}}\\
	&=&{\rm char}\, L(c_L,c_M,h_L+np,h_M).
\end{eqnarray*} 
Clearly, $h_L+np\neq h_{p,r}$ for all $p\in 2\mathbb Z_+-1, r\in \mathbb Z_+$, and then the quotient  $\langle {\rm R}^{2n}\1 \rangle/\langle {\rm R}^{2n+1}\1 \rangle\cong L(c_L,c_M,h_L+np,h_M)$ for any $n\in \mathbb{N}$ is irreducible by Theorem \ref{irreducibility}.

Note that ${\rm R}^{2n+1}\1+\langle {\rm R}^{2n+2}\1 \rangle$ generates $\langle {\rm R}^{2n+1}\1 \rangle/\langle {\rm R}^{2n+2}\1 \rangle$. Moreover, $M_{0}({\rm R}^{2n+1}\1+\langle  {\rm R}^{2n+2}\1 \rangle)\in h_M {\rm R}^{2n+1}\1+\langle {\rm R}^{2n+2}\1 \rangle$ and $L_{0}({\rm R}^{2n+1}\1+\langle  {\rm R}^{2n+2}\1 \rangle)\in \left(h_L+np+\frac{p}{2}\right) {\rm R}^{2n+1}\1+\langle {\rm R}^{2n+2}\1 \rangle$.
Similarly,
\begin{eqnarray*}
	{\rm char}\,\langle {\rm R}^{2n+1}\1 \rangle/\langle {\rm R}^{2n+2}\1 \rangle &=&q^{h_L+np+\frac{p}{2}}\prod_{k=1}^{\infty}\frac{1+q^{k-\frac{1}{2}}}{(1-q^{k})^{2}}-q^{h_L+(n+1)p)}\prod_{k=1}^{\infty}\frac{1+q^{k-\frac{1}{2}}}{(1-q^{k})^{2}}\\
	&=&q^{h_L+np+\frac{p}{2}}(1-q^{\frac{p}{2}})\prod_{k=1}^{\infty}\frac{1+q^{k-\frac{1}{2}}}{(1-q^{k})^{2}}\\
	&=&{\rm char}\, L(c_L,c_M,h_L+np+\frac{p}{2},h_M).
\end{eqnarray*}
So $\langle {\rm R}^{2n+1}\1 \rangle/\langle {\rm R}^{2n+2}\1 \rangle\cong L(c_L,c_M,h_L+np+\frac{p}{2},h_M)$. 
\qed

We next consider the atypical case. 
\begin{theo} \label{main4-2}
	 Let $(c_L,c_M,h_L,h_M)\in\bC^4$ such that $2h_M+\frac{p^2-1}{12}c_M=0$ for some $p\in \mathbb Z_+$ with $c_M\neq 0$ and $(h_L,h_M)\in \mathcal{AT}(c_L, c_M)$.  
	
	$(1)$ If $p\in 2\mathbb Z_+$, then there exists $r\in\mathbb Z_+$ satisfying the Verma module $V(c_L,c_M,h_L,h_M)$ has the following infinite composition  series of submodules
	$$	\aligned\label{filtration-aS1}
	V(c_L,c_M,&h_L,h_M)=\langle {\rm S}^0\1 \rangle\supset\langle {\rm T}_{p, r}\1 \rangle \supset\langle {\rm S}\1 \rangle \supset \langle {\rm T}_{p, r-2}({\rm S}\1) \rangle\supset\cdots \\ 
	&\supset\langle {\rm S}^{\lfloor \frac{r-1}{2}\rfloor}\1 \rangle \supset\langle {\rm T}_{p, r-2\lfloor \frac{r-1}{2}\rfloor}({\rm S}^{\lfloor \frac{r-1}{2}\rfloor}\1) \rangle\supset\langle {\rm S}^{\lfloor \frac{r-1}{2}\rfloor+1}\1 \rangle\supset\langle {\rm S}^{\lfloor \frac{r-1}{2}\rfloor+2}\1 \rangle\supset \cdots
	\endaligned$$

	$(2)$  If $p\in 2\mathbb Z_+-1$, then there exists $r\in\mathbb Z_+$ satisfying the Verma module $V(c_L,c_M,h_L,h_M)$ has the following infinite composition  series of submodules
	$$	\aligned\label{filtration-aR1}\nonumber
	V(c_L,c_M,&h_L,h_M)=\langle {\rm R}^0\1 \rangle\supset\langle {\rm T}_{p, r}\1 \rangle \supset\langle {\rm R}\1 \rangle \supset \langle {\rm T}_{p, r-1}{\rm R}\1 \rangle \supset  \langle {\rm R}^2\1 \rangle \supset \langle {\rm T}_{p, r-2}{\rm R}^2\1 \rangle\supset\cdots\\ 
	&\supset\langle {\rm R}^{r-1}\1 \rangle \supset\langle {\rm T}_{p, 1}{\rm R}^{r-1}\1 \rangle\supset\langle {\rm R}^{r}\1 \rangle\supset\langle {\rm R}^{r+1}\1 \rangle\supset \cdots
	\endaligned$$
	
\end{theo}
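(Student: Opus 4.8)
The plan is to prove both parts by a single descent mechanism: each submodule generated by a singular vector is again a Verma module whose highest weight is atypical but with a \emph{strictly smaller} atypicality index, and this descent terminates once the index drops to zero, after which the typical Theorem~\ref{main4-1} governs the tail. By Theorems~\ref{main1}, \ref{main2} and~\ref{t3.19} the vectors ${\rm S}^k\1$ (for $p$ even) and ${\rm R}^k\1$ (for $p$ odd) are singular, and exactly as in the proof of Theorem~\ref{main4-1} the submodule each generates is a Verma module,
$$\langle {\rm S}^k\1\rangle\cong V(c_L,c_M,h_L+kp,h_M),\qquad \langle {\rm R}^k\1\rangle\cong V(c_L,c_M,h_L+\tfrac{kp}{2},h_M).$$
A one-line computation from~\eqref{e3.37} gives $h_{p,r}+kp=h_{p,r-2k}$ and $h_{p,r}+\tfrac{kp}{2}=h_{p,r-k}$; since $h_L=h_{p,r}$, we get $\langle {\rm S}^k\1\rangle\cong V(c_L,c_M,h_{p,r-2k},h_M)$ and $\langle {\rm R}^k\1\rangle\cong V(c_L,c_M,h_{p,r-k},h_M)$. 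Thus $\langle {\rm S}^k\1\rangle$ is atypical precisely for $0\le k\le\lfloor\frac{r-1}{2}\rfloor$ (i.e. $r-2k\ge1$), $\langle {\rm R}^k\1\rangle$ precisely for $0\le k\le r-1$ (i.e. $r-k\ge1$), and typical beyond.

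First I would treat $k$ in the atypical range; write $s=r-2k\ge1$ (even case) or $s=r-k\ge1$ (odd case), so $\langle{\rm S}^k\1\rangle\cong V(c_L,c_M,h_{p,s},h_M)$. Transporting the subsingular vector ${\rm T}_{p,s}\1$ of Theorem~\ref{main3} across the isomorphism $\1\mapsto {\rm S}^k\1$ shows, by Theorem~\ref{irreducibility1}, that the maximal submodule of $\langle{\rm S}^k\1\rangle$ is $\langle{\rm T}_{p,s}({\rm S}^k\1)\rangle$, while by Theorem~\ref{t3.19} the submodule generated by all its singular vectors other than the generator is $\langle{\rm S}^{k+1}\1\rangle$. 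Applying~\eqref{MS} to ${\rm S}^k\1$ in place of $\1$ gives $M_{(s-1)p}{\rm T}_{p,s}({\rm S}^k\1)=s!\,p^s{\rm S}^{k+1}\1+\delta_{s,1}h_M{\rm T}_{p,s}({\rm S}^k\1)$, whence $\langle{\rm S}^{k+1}\1\rangle\subset\langle{\rm T}_{p,s}({\rm S}^k\1)\rangle$ (the boundary value $s=1$ carries the harmless extra term); for $p$ odd one uses~\eqref{QR} identically. This yields, at each stage, the two-step refinement
$$\langle{\rm S}^k\1\rangle\supset\langle{\rm T}_{p,s}({\rm S}^k\1)\rangle\supset\langle{\rm S}^{k+1}\1\rangle$$
together with its ${\rm R}$-analogue, which is precisely the asserted series.

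It then remains to show both consecutive quotients are irreducible. The upper quotient $\langle{\rm S}^k\1\rangle/\langle{\rm T}_{p,s}({\rm S}^k\1)\rangle\cong L(c_L,c_M,h_{p,s},h_M)$ by Theorem~\ref{irreducibility1}. For the lower quotient $\langle{\rm T}_{p,s}({\rm S}^k\1)\rangle/\langle{\rm S}^{k+1}\1\rangle$ — a highest weight module of highest weight $h_{p,s}+sp$ generated by the image of the subsingular vector — I would compute the character by subtraction: writing $P(q)=\prod_{j\ge1}\frac{1+q^{j-1/2}}{(1-q^{j})^{2}}$, one has ${\rm char}\,\langle{\rm S}^k\1\rangle/\langle{\rm S}^{k+1}\1\rangle=q^{h_{p,s}}(1-q^{p})P(q)$, and subtracting ${\rm char}\,L(c_L,c_M,h_{p,s},h_M)=q^{h_{p,s}}(1-q^{p})(1-q^{sp})P(q)$ (Theorem~\ref{irreducibility1}, $p$ even) leaves $q^{h_{p,s}+sp}(1-q^{p})P(q)$. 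Since $h_{p,s}+sp=h_{p,-s}$ with $-s\notin\mathbb Z_+$ is typical, this equals ${\rm char}\,L(c_L,c_M,h_{p,s}+sp,h_M)$ from Theorem~\ref{irreducibility}; a highest weight module whose character equals that of the irreducible quotient of its top weight is itself irreducible, so the lower quotient is exactly $L(c_L,c_M,h_{p,s}+sp,h_M)$. The odd case is identical after replacing $(1-q^{p})$ by $(1-q^{p/2})$ and invoking~\eqref{QR}.

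Finally, once $k$ leaves the atypical range the weights $h_{p,r-2k}$ (resp. $h_{p,r-k}$) are typical, so Theorem~\ref{main4-1} applied to $\langle{\rm S}^{\lfloor(r-1)/2\rfloor+1}\1\rangle$ (resp. $\langle{\rm R}^{r}\1\rangle$) furnishes the tail $\cdots\supset\langle{\rm S}^{k}\1\rangle\supset\langle{\rm S}^{k+1}\1\rangle\supset\cdots$ with irreducible quotients; strictness of every inclusion is immediate from the weights of the generators, so concatenating the refinements above with this tail gives a composition series of infinite length. I expect the main obstacle to be the index bookkeeping — confirming that the atypicality index genuinely descends (by $2$ under ${\rm S}$, by $1$ under ${\rm R}$) and enters the typical regime exactly at the claimed place — together with the character identity pinning down each intermediate factor as the typical irreducible module $L(c_L,c_M,h_{p,s}+sp,h_M)$; the transition stage $s=1$ of~\eqref{MS} must be checked separately but presents no real difficulty.
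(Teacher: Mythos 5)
Your proposal is correct and follows essentially the same route as the paper's proof: it identifies $\langle {\rm S}^k\1\rangle$ (resp. $\langle {\rm R}^k\1\rangle$) as Verma modules with shifted atypical weights via $h_{p,r}+kp=h_{p,r-2k}$ (resp. $h_{p,r}+\tfrac{kp}{2}=h_{p,r-k}$), invokes Theorem \ref{irreducibility1} for the maximal submodules, uses Lemma \ref{R-S-lemma} for the inclusion $\langle {\rm S}^{k+1}\1\rangle\subset\langle {\rm T}_{p,s}({\rm S}^k\1)\rangle$, and pins down the intermediate quotients by the same character subtraction. The only cosmetic difference is your unified index bookkeeping in the odd case, which the paper splits into two weight identities.
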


\begin{proof} 
	Clearly,  $\langle {\rm S}^i\1\rangle$ and $\langle {\rm R}^i\1\rangle$ are Verma modules, and
	$\langle {\rm S}^i\1\rangle\cong V(c_L,c_M,h_{p,r}+ip,h_M)$, $\langle {\rm R}^i\1\rangle\cong V(c_L,c_M,h_{p,r}+{\frac 12}ip,h_M), i\in\mathbb N$.
	
	(1) Let $p\in 2\mathbb Z_+$.  By Theorem \ref{main3}, ${\rm T}_{p,r-2i}({\rm S}^i\1)$ is the unique subsingular vector of $\langle {\rm S}^i\1\rangle\cong V(c_L,c_M,h_{p,r}+ip,h_M)$  (here ${\rm S}^i\1$ is the highest weight vector of $V(c_L,c_M,h_{p,r}+ip,h_M)$).
	By Theorem \ref{irreducibility1}, $\langle {\rm T}_{p,r-2i}({\rm S}^i\1)\rangle$ is  the maximal submodule of $\langle {\rm S}^i\1\rangle$ for any $i=0, 1, \cdots, \lfloor \frac{r-1}{2}\rfloor$. 
	
	Note that
	\begin{eqnarray}\label{hpr-relation}
		h_{p,r}+ip=h_{p,1}+\frac{(1-r)p}{2}+ip=h_{p,1}+\frac{(1-(r-2i))p}{2}=h_{p,r-2i}
	\end{eqnarray}
	for $i\in \mathbb{Z}_+$ and $r-2i>0$.
	It implies 
	\begin{equation}\langle {\rm S}^i\1\rangle/\langle  {\rm T}_{p,r-2i}({\rm S}^i\1) \rangle\cong L(c_L, c_M, h_{p, r-2i}, h_M), \ i=0,1,\cdots,\textstyle{\left\lfloor \frac{r-1}{2}\right\rfloor}.\label{subquotient00}\end{equation}
	By Lemma \ref{R-S-lemma} we have $\langle {\rm S}\1\rangle\subset \langle {\rm T}_{p, r}\1 \rangle$.  
	Then
	\begin{eqnarray*}
		&&{\rm char}\, \langle {\rm T}_{p,r}\1\rangle /\langle {\rm S}\1\rangle\\
		&=&{\rm char}\,V(c_L,c_M,h_{p,r},h_M)-{\rm char}\,L(c_L,c_M,h_{p,r},h_M)-{\rm char}\,\langle {\rm S}\1 \rangle\\
		&=&q^{h_{p,r}}\prod_{k=1}^{\infty}\frac{1+q^{k-\frac{1}{2}}}{(1-q^{k})^{2}}
		-q^{h_{p,r}}(1-q^p)(1-q^{rp})\prod_{k=1}^{\infty}\frac{1+q^{k-\frac{1}{2}}}{(1-q^{k})^{2}}-q^{h_{p,r}+p}\prod_{k=1}^{\infty}\frac{1+q^{k-\frac{1}{2}}}{(1-q^{k})^{2}}\\
		&=&q^{h_{p,r}+rp}(1-q^{p})\prod_{k=1}^{\infty}\frac{1+q^{k-\frac{1}{2}}}{(1-q^{k})^{2}}=
		{\rm char}\ L(c_L,c_M,h_{p,r}+rp,h_M),
	\end{eqnarray*}
	where we have used that  $(h_{p,r}+rp=h_{p,-r},h_M)\notin \mathcal{AT}(c_L, c_M)$.
	So $\langle {\rm T}_{p,r}\1\rangle /\langle {\rm S}\1\rangle$  is isomorphic to the irreducible highest weight module $L(c_L,c_M,h_{p,r}+rp,h_M)$.

	By Lemma \ref{R-S-lemma} we have $\langle {\rm S}({\rm S}^i\1)\rangle\subset \langle {\rm T}_{p,r-2i}({\rm S}^i\1)\rangle$. Replaced $\1$ by ${\rm S}^i\1$ as above  we can get  
	\begin{eqnarray}\langle {\rm T}_{p,r-2i}({\rm S}^i\1)\rangle/\langle {\rm S}^{i+1}\1\rangle\cong L(c_L, c_M, h_{p, r}+(r-2i)p, h_M), i=0, 1, \cdots, \textstyle{\left\lfloor \frac{r-1}{2}\right\rfloor}.\label{TquotientS}\end{eqnarray}
	So the statement (1) follows from \eqref{subquotient00} and \eqref{TquotientS}.

	(2) Let $p\in 2\mathbb Z_+-1$. Note that \eqref{hpr-relation} also holds and
	\begin{eqnarray}\label{hpr-relation1}
		h_{p,r}+ip+\frac{p}{2}=h_{p,1}+\frac{(1-(r-2i-1))p}{2}=h_{p,r-2i-1}
	\end{eqnarray}
	for $i\in \mathbb{N}$ and $r-2i-1>0$.
	By Theorem \ref{irreducibility1}, $\langle {\rm T}_{p,r-i}{\rm R}^{i}\1\rangle$ is the maximal module of $\langle {\rm R}^{i}\1\rangle$. 
	Combining with \eqref{hpr-relation} and \eqref{hpr-relation1} we get
	\begin{equation*}\langle {\rm R}^{i}\1\rangle/\langle  {\rm T}_{p,r-i}{\rm R}^i\1 \rangle\cong L(c_L, c_M, h_{p, r-i}, h_M), \ i=0,1,\cdots, r-1.\label{subquotient1}\end{equation*}

	At the same time, by Lemma \ref{R-S-lemma}, $\langle {\rm R}\1\rangle$ is a submodule of $\langle {\rm T}_{p, r}\1\rangle$. 
	By direct calculation,  we can get
	\begin{eqnarray*}
		{\rm char}\, \langle {\rm T}_{p,r}\1\rangle /\langle {\rm R}\1\rangle=q^{h_{p,r}+rp}(1-q^{\frac{p}{2}})\prod_{k=1}^{\infty}\frac{1+q^{k-\frac{1}{2}}}{(1-q^{k})^{2}}=
		{\rm char}\ L(c_L,c_M,h_{p,r}+rp,h_M),
	\end{eqnarray*}
	where we have used that $(h_{p,r}+rp=h_{p,-r},h_M)\notin \mathcal{AT}(c_L, c_M)$.
	So $\langle {\rm T}_{p,r}\1\rangle /\langle {\rm R}\1\rangle$ is isomorphic to the irreducible highest weight module $L(c_L,c_M,h_{p,r}+rp,h_M).$
	
	Replaced $\1$ by ${\rm R}^i\1$, we can get $\langle {\rm T}_{p, r-i}{\rm R}^i\1\rangle/\langle {\rm R}^{i+1}\1\rangle \cong L(c_L, c_M, h_{p, r}+(r-i)p, h_M)$ is also irreducible for all $i=0, 1, \cdots, r-1$.
	So the statement (2) holds.
	%
\end{proof}

%
%
%
%

\begin{rem}
	Theorem \ref{main4-2} demonstrates that a submodule of $V(c_L,c_M,h_L,h_M)$ is not necessarily a highest weight module (or generated by singular vectors), which markedly contrasts with the case of the Virasoro algebra \cite{FF}. Indeed, the $\frak g$-module $\langle {\rm T}_{p, r}\1\rangle$ represents an extension of the Verma module $\langle {\rm S}\1\rangle$ by the irreducible highest weight module $L(c_L, c_M, h_{p, r}+rp, h_M)$, and cannot be generatable by singular vectors.
\end{rem}

In the proofs of Theorems \ref{main4-1} and \ref{main4-2}, by deleting  part (or terms) involving $\mathcal Q$ we derive the following results about the subalgebra $W(2, 2)$ of $\frak g$. 

Set $
{\mathcal {AT}}_{W(2,2)}(c_L,c_M)= \left\{ \left(h_{p,r}', \frac{1-p^2}{24}c_M\right) :\, p,r \in \mathbb{Z}_+ \right\}$,
where $h_{p,r}'$ is defined in Corollary \ref{w22-sub}. 

\begin{cor}[\cite{R}] \label{main3-w22}  Let $(c_L,c_M,h_L,h_M)\in\bC^4$ such that $2h_M+\frac{p^2-1}{12}c_M=0$ for some $p\in \mathbb Z_+$ with $c_M\neq 0$ and
  $(h_L,h_M)\not\in {\mathcal {AT}}_{W(2,2)}(c_L,c_M)$. Then the Verma module $V_{W(2,2)}(c_L,c_M,h_L,h_M)$ has the following infinite composition  series of submodules
	\begin{eqnarray*}
		V_{W(2,2)}(c_L,c_M,h_L,h_M)\supset\langle {\rm S}\1 \rangle \supset \langle {\rm S}^2\1 \rangle\supset\cdots\supset \langle {\rm S}^n\1 \rangle\supset \cdots
	\end{eqnarray*}
\end{cor}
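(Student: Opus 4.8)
The plan is to mirror the proof of Theorem \ref{main4-1}(1) almost verbatim, working inside the $W(2,2)$-Verma module $V_{W(2,2)}(c_L,c_M,h_L,h_M)=U(W(2,2))\1$ and discarding every operator and every PBW monomial involving $\mathcal Q$; here $\langle {\rm S}^n\1\rangle$ denotes the $W(2,2)$-submodule generated by ${\rm S}^n\1$. Since ${\rm S}\in U(\mathcal M_-)$ lies in the even subalgebra, Corollary \ref{main1-w22} already tells us that the singular vectors of $V_{W(2,2)}(c_L,c_M,h_L,h_M)$ are exactly the ${\rm S}^n\1$, $n\in\mathbb N$. First I would record that ${\rm S}^n\1$ is a highest weight vector of weight $(c_L,c_M,h_L+np,h_M)$ and that the natural map $V_{W(2,2)}(c_L,c_M,h_L+np,h_M)\to V_{W(2,2)}(c_L,c_M,h_L,h_M)$ sending the highest weight vector to ${\rm S}^n\1$ is injective, so that $\langle {\rm S}^n\1\rangle\cong V_{W(2,2)}(c_L,c_M,h_L+np,h_M)$. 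The strict descending chain of submodules then follows.

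The next step is to propagate the typicality hypothesis along the chain. Using the explicit value $h_{p,r}'=-\frac{p^2-1}{24}c_L+\frac{(13p+1)(p-1)}{12}+\frac{(1-r)p}{2}$ from Corollary \ref{w22-sub}, a one-line computation gives the shift identity $h_{p,r}'+np=h_{p,r-2n}'$. Hence if $h_L+np=h_{p,r}'$ for some $r\in\mathbb Z_+$, then $h_L=h_{p,r+2n}'$ with $r+2n\in\mathbb Z_+$, contradicting $(h_L,h_M)\notin{\mathcal{AT}}_{W(2,2)}(c_L,c_M)$. Therefore $(h_L+np,h_M)\notin{\mathcal{AT}}_{W(2,2)}(c_L,c_M)$ for every $n\in\mathbb N$, so each $\langle {\rm S}^n\1\rangle$ is again a typical $W(2,2)$-Verma module.

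To see the chain is a composition series I would establish the $W(2,2)$-analogue of Theorem \ref{irreducibility}: for a typical $V_{W(2,2)}$, the maximal submodule is generated by ${\rm S}\1$. This follows by the deletion principle from Corollary \ref{main1-w22} (all singular vectors are powers of ${\rm S}$, so the submodule $J'$ generated by all singular vectors equals $\langle {\rm S}\1\rangle$) together with Corollary \ref{w22-sub} (no subsingular vector exists in the typical case), so that $V_{W(2,2)}/\langle {\rm S}\1\rangle$ has no nonzero singular vector and is irreducible, whence $\langle {\rm S}\1\rangle$ is maximal. Applying this to the typical Verma module $\langle {\rm S}^n\1\rangle$ shows $\langle {\rm S}^{n+1}\1\rangle$ is its maximal submodule. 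A character count then pins down the successive quotient: using the $\mathcal Q$-free character $\mathrm{char}\,V_{W(2,2)}(c_L,c_M,h_L,h_M)=q^{h_L}\prod_{k=1}^{\infty}(1-q^{k})^{-2}$, one obtains
$$\mathrm{char}\,\langle {\rm S}^n\1\rangle/\langle {\rm S}^{n+1}\1\rangle=q^{h_L+np}(1-q^{p})\prod_{k=1}^{\infty}\frac{1}{(1-q^{k})^{2}}=\mathrm{char}\,L_{W(2,2)}(c_L,c_M,h_L+np,h_M),$$
so $\langle {\rm S}^n\1\rangle/\langle {\rm S}^{n+1}\1\rangle\cong L_{W(2,2)}(c_L,c_M,h_L+np,h_M)$ for all $n\in\mathbb N$, which is precisely the assertion that the chain is a composition series.

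The main obstacle is not any single computation but the rigorous justification of the deletion principle at the one place where it genuinely carries content, namely the claim that the maximal submodule of a typical $V_{W(2,2)}$ is generated by the single vector ${\rm S}\1$. This requires checking that the analysis of Section 4 (from Lemma \ref{hmsubsingular} through Theorem \ref{main3}) remains valid after all $\mathcal Q$-terms are removed, so that no new subsingular vector is created in the $W(2,2)$ setting beyond those recorded by ${\mathcal{AT}}_{W(2,2)}$; the propagation identity $h_{p,r}'+np=h_{p,r-2n}'$ is exactly what guarantees that atypicality is never triggered at any stage of the chain, keeping every successive quotient irreducible.
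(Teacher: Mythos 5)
Your proposal is correct and follows essentially the same route as the paper, which obtains this corollary by applying the ``delete all $\mathcal Q$-terms'' principle to the proof of Theorem \ref{main4-1}(1); your argument simply makes explicit the steps the paper leaves implicit (the shift identity $h'_{p,r}+np=h'_{p,r-2n}$ propagating typicality, the identification $\langle {\rm S}^n\1\rangle\cong V_{W(2,2)}(c_L,c_M,h_L+np,h_M)$, and the character count for the successive quotients).
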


\begin{cor}
	 Let $(c_L,c_M,h_L,h_M)\in\bC^4$ such that $2h_M+\frac{p^2-1}{12}c_M=0$ for some $p\in \mathbb Z_+$ with $c_M\neq 0$ and $(h_L,h_M)\in {\mathcal {AT}}_{W(2,2)}(c_L,c_M)$ and $r\in \mathbb{Z}_+$. Then the Verma module $V_{W(2,2)}(c_L,c_M,h_L,h_M)$ has the following infinite composition  series of submodules
	$$	\aligned
	V_{W(2,2)}(c_L,c_M,&h_L,h_M)=\langle {\rm S}^0\1 \rangle\supset\langle {\rm T}_{p, r}\1 \rangle \supset\langle {\rm S}\1 \rangle \supset \langle {\rm T}_{p, r-2}({\rm S}\1) \rangle\supset\cdots\nonumber\\ 
	&\supset\langle {\rm S}^{\lfloor \frac{r-1}{2}\rfloor}\1 \rangle \supset\langle {\rm T}_{p, r-2\lfloor \frac{r-1}{2}\rfloor}({\rm S}^{\lfloor \frac{r-1}{2}\rfloor}\1) \rangle\supset\langle {\rm S}^{\lfloor \frac{r-1}{2}\rfloor+1}\1 \rangle\supset\langle {\rm S}^{\lfloor \frac{r-1}{2}\rfloor+2}\1 \rangle\supset \cdots
	\endaligned$$
\end{cor}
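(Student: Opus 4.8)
The plan is to run the argument of Theorem~\ref{main4-2}(1) inside the even subalgebra $W(2,2)=\frak g_{\bar 0}$, simply discarding every summand that involves the odd generators $\mathcal Q$. Because $W(2,2)$ has no odd part, the singular vector ${\rm R}$ never occurs (it lives in $\mathcal Q$), so only the chain built from ${\rm S}$ and the subsingular operator ${\rm T}$ survives; this is precisely why the displayed filtration is modeled on the even case of Theorem~\ref{main4-2}. First I would collect the $W(2,2)$ ingredients: by Corollary~\ref{main1-w22} all singular vectors of $V_{W(2,2)}(c_L,c_M,h_L,h_M)$ are the ${\rm S}^k\1$, and by Corollary~\ref{w22-sub} and Corollary~\ref{main2-w22} the unique subsingular vector, when $h_L=h_{p,r}'$, is ${\rm T}_{p,r}\1$ with ${\rm T}$ given by \eqref{T-exp-W22}. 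Note that both ${\rm S}$ and this ${\rm T}$ already lie in $U(W(2,2)_-)$ (they involve only $L$'s and $M$'s), so restricting from $\frak g$ to $W(2,2)$ causes no difficulty.

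Next I would identify each link of the chain as a $W(2,2)$-Verma module: $\langle {\rm S}^i\1\rangle\cong V_{W(2,2)}(c_L,c_M,h_{p,r}'+ip,h_M)$ with highest weight vector ${\rm S}^i\1$. The crucial arithmetic is the $W(2,2)$-analog of \eqref{hpr-relation}: from the formula for $h_{p,r}'$ in Corollary~\ref{w22-sub} one checks directly that $h_{p,r-2i}'-h_{p,r}'=ip$, so that $h_{p,r}'+ip=h_{p,r-2i}'$ remains atypical exactly when $0\le i\le \lfloor\frac{r-1}{2}\rfloor$ (i.e.\ $r-2i\ge 1$). Hence for each such $i$ the module $\langle {\rm S}^i\1\rangle$ is itself atypical and, by the $\mathcal Q$-free form of Theorem~\ref{main3}, carries the unique subsingular vector ${\rm T}_{p,r-2i}({\rm S}^i\1)$; by the $\mathcal Q$-free form of Theorem~\ref{irreducibility1} its maximal submodule is $\langle {\rm T}_{p,r-2i}({\rm S}^i\1)\rangle$, and the quotient is the irreducible module $L_{W(2,2)}(c_L,c_M,h_{p,r-2i}',h_M)$.

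To close the remaining links I would invoke the $\mathcal Q$-free part of Lemma~\ref{R-S-lemma}, namely the relation \eqref{MS} (which contains no $\mathcal Q$), with $\1$ replaced by ${\rm S}^i\1$, to obtain $\langle {\rm S}^{i+1}\1\rangle\subset\langle {\rm T}_{p,r-2i}({\rm S}^i\1)\rangle$. A character subtraction identical to the one in Theorem~\ref{main4-2}(1), but with the $W(2,2)$-character $q^{h_L}\prod_{k\ge1}(1-q^k)^{-2}$ (no fermionic factor), then gives
\[
{\rm char}\,\langle {\rm T}_{p,r-2i}({\rm S}^i\1)\rangle/\langle {\rm S}^{i+1}\1\rangle
= q^{\,h_{p,r}'+(r-i)p}(1-q^{p})\prod_{k=1}^{\infty}\frac{1}{(1-q^{k})^{2}},
\]
whose highest weight $h_{p,r}'+(r-i)p=h_{p,-(r-2i)}'$ has $-(r-2i)\notin\mathbb Z_+$ and is therefore typical, so this intermediate quotient is an irreducible highest weight module. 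Finally, once $i>\lfloor\frac{r-1}{2}\rfloor$ the weight $h_{p,r}'+ip$ is no longer of the form $h_{p,s}'$ with $s\in\mathbb Z_+$, so $\langle {\rm S}^i\1\rangle$ is typical and the tail $\langle {\rm S}^{\lfloor\frac{r-1}{2}\rfloor+1}\1\rangle\supset\langle {\rm S}^{\lfloor\frac{r-1}{2}\rfloor+2}\1\rangle\supset\cdots$ has irreducible successive quotients by Corollary~\ref{main3-w22}. Splicing the atypical head and the typical tail yields the stated infinite composition series.

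I do not expect a genuine obstacle here, since the harder super case is already settled; the only point requiring care is the bookkeeping that guarantees the $\mathcal Q$-free restrictions of Lemma~\ref{R-S-lemma}, Theorem~\ref{main3}, and Theorem~\ref{irreducibility1} remain valid over $W(2,2)$. This is safe because ${\rm S}$ and the $W(2,2)$-version of ${\rm T}$ involve only $L$'s and $M$'s, so every commutator computation used in those proofs restricts cleanly to $U(W(2,2))$, and the $\mathcal Q$-free PBW basis of $L'$ from Lemma~\ref{ll4.1} supports the very same rank/corank matrix arguments used to detect the unique subsingular vector.
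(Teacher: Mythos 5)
Your proposal is correct and follows essentially the same route as the paper, which proves this corollary simply by deleting the $\mathcal Q$-terms from the proof of Theorem~\ref{main4-2}(1) and replacing $h_{p,r}$ and ${\rm T}$ by their $W(2,2)$ counterparts $h'_{p,r}$ and \eqref{T-exp-W22}. Your bookkeeping of the intermediate subquotient weights (in particular $h'_{p,r}+(r-i)p=h'_{p,-(r-2i)}$, which is typical) agrees with, and is if anything slightly more careful than, the paper's own computation.
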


\section*{Acknowledgements}

This work was supported by the National Natural Science Foundation of China (12071405)  and NSERC (311907-2020).

\bibliographystyle{amsalpha}

\end{document}